\documentclass[11pt]{amsart}

\usepackage[margin=1in, marginparwidth=2cm]{geometry} 
\usepackage{changepage} 
\usepackage{enumitem}   
\usepackage{appendix}   

\usepackage{mathtools}  
\usepackage{amssymb}    
\usepackage{mathrsfs}   
\usepackage{diffcoeff}  

\usepackage{graphicx}   
\usepackage[all]{xy}    

\usepackage{tikz}
\usepackage{tikz-cd}    

\usetikzlibrary{
    patterns,
    arrows.meta,
    bending,
    calc,
    intersections
}

\usepackage{cite}       
\usepackage[colorlinks=true, linkcolor=blue, citecolor=green, urlcolor=black]{hyperref} 

\usepackage{marginnote}
\usepackage[colorinlistoftodos, textsize=tiny]{todonotes}

\newcommand{\bea}[1]{\marginpar{\tiny \color{blue} Bea: #1}} 
\newcommand{\jia}[1]{\marginpar{\tiny \color{red} Jiajun: #1}}

\theoremstyle{plain} 
\newtheorem{thm}{Theorem}[section]
\newtheorem{lemma}[thm]{Lemma}
\newtheorem{cor}[thm]{Corollary}
\newtheorem{prop}[thm]{Proposition}

\theoremstyle{definition} 
\newtheorem{defn}[thm]{Definition}
\newtheorem{ex}[thm]{Example}

\theoremstyle{remark} 
\newtheorem*{rmk}{Remark}

\newcommand{\mb}{\mathbb}

\newcommand{\mc}{\mathcal}

\newcommand{\Z}{\mathbb{Z}}
\newcommand{\R}{\mathbb{R}}
\newcommand{\C}{\mathbb{C}}

\newcommand{\N}{\mathbb{N}}

\newcommand{\inv}{^{-1}} 

\DeclarePairedDelimiter{\paren}{(}{)}           
\DeclarePairedDelimiterX{\innerproduct}[2]{\langle}{\rangle}{#1, #2}
\DeclarePairedDelimiter{\abs}{\lvert}{\rvert}   
\DeclarePairedDelimiter{\norm}{\lVert}{\rVert}  
\DeclarePairedDelimiterX{\set}[2]{\{}{\}}{#1 \mathrel{}\delimsize\vert\mathrel{} #2} 

\DeclareMathOperator{\CAT}{CAT}
\newcommand{\supp}[1]{\operatorname{supp}(#1)}

\newcommand{\til}{\widetilde}
\newcommand{\wS}{\widetilde{\Sigma}}
\newcommand{\wt}{\widetilde}
\newcommand{\wideS}{\widetilde{\Sigma}}
\newcommand{\wideF}{\widetilde{\mathcal{F}}}

\newcommand{\calF}{\mathcal{F}}
\newcommand{\calC}{\mathcal{C}}

\title{Finsler metrics on $1/n$-translation structures on surfaces}
\author{Beatrice Pozzetti and Jiajun Shi}
\date{\today}

\begin{document}

\begin{abstract}
We define compatible Finsler distances on $1/n$-translation surfaces, we study their geodesics, and construct a Liouville current for each such metric, that is a geodesic current that encodes the information of the length of the closed curves. The construction is based on multi-foliations, a generalization of measured foliations of independent interest.
\end{abstract}

\maketitle

\tableofcontents

\section{Introduction}

The goal of the paper is to study Finsler metrics on singular flat surfaces. More specifically for every $n\in \N$, we consider \emph{$1/n$-translation structures} on compact surfaces. These can be constructed  from a collection of Euclidean polygons by identifying pairs of edges via translation and rotations of order $n$ to obtain a topological surface $\Sigma$ with a singular flat geometric structure. Vertices of polygons typically give rise to singularities called \emph{cone points}. We require all cone angles to be at least $2\pi$, which forces the genus $g$ to be at least $2$ by Gauss--Bonnet Theorem. Examples of such surfaces are translation surfaces, which are encompassed in the case $n=1$ and whose theory is linked with the study of abelian differentials on surfaces \cite{wright2014translationsurfacesorbitclosures}, and the so-called half-translation surfaces, arising from quadratic differentials. Both the study of abelian and half-translation surfaces are central in Teichmüller theory and more generally in low dimensional geometry and topology.

In a $1/n$-translation surface, the tangent space at any regular point, namely any point that is not in the image of one of the vertices, is well-defined and can be identified with $\R^2$ in a way that is canonical up to a rotation of angle multiple of $2\pi/n$. In particular any choice of a norm $\norm\cdot$ on $\R^2$ that is invariant under rotations of order $n$ naturally induces a \emph{Finsler metric} $F$  on any $1/n$ translation surface. Special instances of these metrics, more specifically the metric arising from the hexagonal norm on $\R^2$ for  1/3-translation surfaces, and that induced from the $\ell^1$-norm in the case of 1/2- and 1/4-translation surfaces, have played an important role in the study of compactifications of character varieties, see  \cite{burger2021weyl, burger2023realspectrumcompactificationcharacter, ouyang2023length, Reid}. Even in the case of  translation surfaces arising from abelian differentials, the study of Finsler distances is intriguing and not well explored.  In this paper we study these metrics, with particular focus on the length of closed curves.


The Finsler metric $F$ induces  a distance function $d^F$. When the associated norm is not strictly convex, the induced metric on $1/n$-translation surfaces  is typically not uniquely geodesic and even worse, local geodesics in the same homotopy class might have different length. Our first result explicitly constructs a bi-combing, namely a choice of  geodesics between any pair of points, which  realizes the minimal length in the homotopy class. Our bi-combing does not depend on the Finsler metric but only on the $1/n$-translation structure. More specifically, we know, using CAT(0) geometry, that every homotopy class relative to endpoints admits a unique representative which is a concatenation of straight segments that meet at singular points and make an angle at least $\pi$ on both sides. Our bi-combing is obtained by choosing such path in each equivalence class. Calling such paths the \emph{piecewise-straight} paths, we prove:

\begin{thm}\label{thmA}
    For any  norm $\norm\cdot$ on $\R^2$ invariant by rotations of order $n$, and every $1/n$-translation structure $\Sigma$, the piecewise-straight paths are geodesics for $d^F$ and
    realize the smallest length in (relative) homotopy classes.
\end{thm}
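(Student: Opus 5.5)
The plan is to pass to the universal cover $\wS$, which is CAT(0) for the flat metric, and compare $F$-lengths there. A path of minimal $F$-length in its relative homotopy class lifts to a path between fixed lifts $\tilde x,\tilde y$, and the piecewise-straight path lifts to the unique CAT(0) geodesic $\gamma$ between them. So it suffices to prove the following: for every rectifiable path $c$ in $\wS$ from $\tilde x$ to $\tilde y$, we have $L^F(c)\ge L^F(\gamma)$. The geodesic property for $d^F$ then follows by applying the same inequality to all subsegments of $\gamma$. Indeed, $d^F$ on $\Sigma$ is realized by lengths of paths, and every path is homotopic to some piecewise-straight path.

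The key tool is a family of $1$-Lipschitz "projections". Since the structure is a $1/n$-translation structure, any linear functional $\phi$ on $\R^2$ gives a well-defined closed $1$-form only up to the rotation group of order $n$. On $\wS$, however, the holonomy can be used to define the multi-valued form $\{\phi\circ R^k\}_{k}$. I would work with the dual unit ball $B^*$ of $\norm\cdot$, which is also invariant under rotations of order $n$. For a unit-norm direction $v$ and a dual vector $\phi\in B^*$ with $\phi(v)=\norm v$, one has $|\phi(w)|\le\norm w$ for all $w$. So if we can integrate $\phi$ along a "transverse" family, we get a function $f$ on (part of) $\wS$ with $|df|\le F$. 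Then $L^F(c)\ge f(\tilde y)-f(\tilde x)$, with equality along a path whose tangent is always $F$-dual to $\phi$.

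To realize this, I would decompose $\gamma$ into its straight segments $\gamma_1,\dots,\gamma_m$ meeting at cone points $p_1,\dots,p_{m-1}$ with angles $\ge\pi$ on both sides. On each segment $\gamma_i$ I choose $\phi_i\in B^*$ norming its direction. The aim is to build a $1$-Lipschitz (for $F$) function $f:\wS\to\R$ with $f\circ\gamma$ increasing at $F$-unit speed. I would construct $f$ as $f(z)=\phi_i$-"height" in a region $U_i$ foliated by leaves of $\ker\phi_i$ through $\gamma_i$, that is, a Busemann-like function along the foliation. The regions $U_i$ are glued along the directions at $p_i$.

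The main obstacle is the gluing at the cone points. The angle condition $\ge\pi$ on both sides at $p_i$ is what should allow one to choose $\phi_i$ and $\phi_{i+1}$ consistently. Concretely, the incoming and outgoing directions are separated by at least $\pi$ on each side, so the sector of $\wS$ near $p_i$ can be split by two rays, each in a direction on which the two functionals agree in sign appropriately. Then a piecewise definition of $f$ (taking a max/min of the affine pieces, as with Busemann functions) stays $1$-Lipschitz. I would first try to handle this using the $n$-fold rotational symmetry of $B^*$: rotating $\phi_i$ by the extra angle at the cone point keeps it in $B^*$. If a global $f$ proves hard to build, the fallback is a local argument. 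A shortest $F$-path exists by Arzelà–Ascoli, and I would show it can be straightened segment by segment without increasing $F$-length. In flat pieces this holds because straight segments minimize any norm; near a cone point, straightening would use the angle-$\ge\pi$ convexity. Uniqueness of piecewise-straight representatives in homotopy classes, from the CAT(0) geometry, then identifies the resulting path with $\gamma$.
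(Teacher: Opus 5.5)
Your main route is the paper's argument in dual form. If $\phi_i\in B^*$ norms $\gamma_i'$, then $\ker\phi_i$ is spanned by a supporting vector $w_i$ in the paper's sense. Your leaves of $\ker\phi_i$ through $\gamma_i$ are then the paper's $w$-brush lines, and your $f$ would be $F$-arclength along $\gamma$ composed with the brush projection $\pi_w$. But the proposal stops at the two steps that make up the proof.

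\textbf{Defining $f$ on all of $\wS$.} This is needed because the competitor $c$ can go anywhere. Yet $\phi_i$ gives no single-valued affine function on $\wS$, since the holonomy around each cone point is a nontrivial rotation. So ``max/min of the affine pieces'' has no global meaning. The paper instead does three things:
\begin{itemize}
\item it continues each brush line through the cone points it meets as a left- or right-turning straight geodesic;
\item it shows by CAT(0) comparison (no bigons, triangle angle sums at most $\pi$) that these lines never cross;
\item it sets $f$ constant on the convex wedges between bifurcating rays, whose two sides meet $\gamma$ at the same point.
\end{itemize}

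\textbf{Gluing at a cone point $p_i$ of $\gamma$.} The half-discs swept near $p_i$ by the brushes of $\gamma_i$ and $\gamma_{i+1}$ must not overlap. Your suggested mechanism cannot give this. The symmetry of $B^*$ only absorbs chart changes, which are rotations by multiples of $2\pi/n$. The turning angle of $\gamma$ at $p_i$ is arbitrary, and $\phi_{i+1}$ must norm $\gamma_{i+1}'$, so it is not a rotate of $\phi_i$. On a side where the angle at $p_i$ is at least $2\pi$, the angle condition alone suffices. Otherwise that side develops into the plane, and you need monotonicity of supporting lines of the convex unit ball. As the unit vector turns through less than $\pi$, the supporting line turns the same way, or it would cut the interior of the ball. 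This is the paper's lemma that consecutive supporting fields are unlinked, and nothing in your sketch replaces it.

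\textbf{Your fallback.} This is a genuinely different and simpler route, and it can be completed, though not as written. ``Straighten segment by segment'' has no termination argument, since cutting corners and re-straightening creates new corners. A standard repair works:
\begin{itemize}
\item Among $F$-shortest paths from $\tilde x$ to $\tilde y$, choose one, $c^*$, of least CAT(0) length. It exists by Arzel\`a--Ascoli and lower semicontinuity of both lengths.
\item If $c^*$ is not locally straight at a regular point, replace a subarc by the chord in a flat convex chart.
\item At a cone point where one of the two angles is less than $\pi$, that sector develops onto a planar triangle; cut the corner there.
\item Neither move increases $F$-length, by the triangle inequality for $\|\cdot\|$, which is chart-independent by $\Z/n\Z$-invariance. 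Both strictly decrease CAT(0) length, a contradiction.
\end{itemize}
So $c^*$ is a local, hence global, CAT(0) geodesic, i.e.\ $c^*=\gamma$. Note that the cone-point move uses an angle $<\pi$. Where both angles are $\ge\pi$, nothing needs to be done, so no ``angle-$\ge\pi$ convexity'' enters.

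This route avoids brushes entirely. Unlike the paper's, it produces no explicit $F$-Lipschitz retraction onto $\gamma$.

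You also omit closed geodesics in free homotopy classes, which the paper's proof covers and later uses. For these, minimize on $\Sigma$ within the free homotopy class and invoke the flat strip theorem: any two closed CAT(0) geodesics in a class are parallel, so they have the same $F$-length.
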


Our proof of Theorem \ref{thmA} is geometric: we  construct a norm-non-increasing projection of the universal cover $\widetilde \Sigma$ to a piecewise-straight path. The construction is based on what we call \emph{brush lines}, a partial foliation of  $\widetilde \Sigma$ that depends both the $1/n$-translation structure and the  norm.

A natural question about geometric structures is to understand the smallest length of closed curves in a homotopy class. In the case of hyperbolic structures on surfaces, Bonahon introduced \emph{geodesic currents} as an efficient tool to address this problem \cite{bonahon1988geometry}. These are $\pi_1(\Sigma)$-invariant, flip-invariant measures on pairs of distinct points in $\partial\wideS$. Both curves and negatively curved metrics on surfaces naturally give rise to geodesic currents. Thanks to Bonahon's intersection, a generalization of the geometric intersection of curves, a geodesic current gives a way of measuring the length of closed curves. It is a classical result of Otal \cite{otal1990spectre} that every negatively curved metric on a surface admits a \emph{Liouville current}, namely a current whose associated length function agrees with the length function induced by the metric. Similar results were obtained by Duchin--Leininger--Rafi for the CAT(0) metric associated to a quadratic differential \cite{duchin2010length},  and by Bankovich--Leininger for general singular flat structures without restriction on the holonomy \cite{bankovic2018marked}.

Inspired by the construction in \cite{duchin2010length}, we show that all the Finsler metrics we introduce in the paper also admit Liouville currents:

\begin{thm}\label{thm:liouville}
    There exists a Liouville current for each $1/n$-translation structure with any compatible Finsler metric.
\end{thm}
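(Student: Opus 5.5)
Following Duchin--Leininger--Rafi, the plan is to build the current as an integral, over directions, of transverse measures of straight-line foliations, weighted by the dual norm.

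\emph{Step 1: foliations by direction.} Fix $\theta\in\R/2\pi\Z$. On the regular part of $\Sigma$, straight lines in direction $\theta$ are only well defined up to rotation by multiples of $2\pi/n$. So we get a multi-foliation $\calF_\theta$: locally a union of the $n$ (or fewer) foliations by lines in directions $\theta+2\pi k/n$. Its transverse measure is $|d\langle\cdot, e_\theta^\perp\rangle|$, summed over the branches.

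For a piecewise-straight closed geodesic $\gamma$ (Theorem~\ref{thmA}), the total transverse measure is
\[
i(\gamma,\calF_\theta)=\sum_{\text{segments } s}\sum_{k=0}^{n-1}\abs{\innerproduct{v_s}{e^\perp_{\theta+2\pi k/n}}}.
\]
Here $v_s$ is the holonomy vector of the segment $s$. One must also check that the piecewise-straight representative minimizes this transverse measure in its homotopy class. The reason is that the angle condition ($\geq\pi$ on both sides at each cone point) rules out bigons between $\gamma$ and the leaves. I would prove this in $\wideS$ by a CAT(0) argument: a leaf and a piecewise-straight path meet at most once.

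\emph{Step 2: currents from multi-foliations.} Lift $\calF_\theta$ to $\wideS$. Each nonsingular leaf, and each leaf obtained by turning at cone points with angle at least $\pi$ on both sides, is a quasi-geodesic. It therefore has two well-defined endpoints in $\partial\wideS$. Pushing forward the transverse measure gives a $\pi_1(\Sigma)$-invariant, flip-invariant measure $\mu_\theta$ on pairs of distinct boundary points. One then checks that $i(\mu_\theta,\gamma)=i(\gamma,\calF_\theta)$ for every closed curve $\gamma$, using the bigon criterion from Step 1.

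\emph{Step 3: representing the norm.} The norm is invariant under rotation by $2\pi/n$. Its unit ball is centrally symmetric and convex, so the norm is a support function:
\[
\norm{v}=\int_{\R/2\pi\Z}\abs{\innerproduct{v}{e^\perp_\theta}}\,d\nu(\theta).
\]
Here $\nu$ is a positive measure, which we may take invariant under rotation by $2\pi/n$. For smooth norms $d\nu=\tfrac12(h+h'')d\theta$, where $h$ is the support function of the dual ball; in general one uses the curvature measure of the polar body. This is the classical fact that every planar norm is a zonoid norm.

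Define $L=\frac1n\int\mu_\theta\,d\nu(\theta)$. By Steps 1 and 2 and the invariance of $\nu$,
\[
i(L,\gamma)=\frac1n\int\sum_s\sum_k\abs{\innerproduct{v_s}{e^\perp_{\theta+2\pi k/n}}}\,d\nu(\theta)=\sum_s\norm{v_s}.
\]
This is the $F$-length of the piecewise-straight representative. By Theorem~\ref{thmA} it equals the minimal $F$-length in the homotopy class, so $L$ is a Liouville current.

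\emph{Main obstacle.} The delicate part is Step 2 together with the minimality in Step 1. There are two problems at cone points. First, leaves of the multi-foliation branch, so a well-defined choice of leaves (or a measure spread over branches) is needed that still maps to $\partial\wideS\times\partial\wideS$. Second, we need $i(\mu_\theta,\gamma)$ to count intersections exactly, with no over- or under-counting at saddle connections or closed leaves.

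My first attempt would be to use only the leaves that meet every cone point with angle exactly $\pi$ on one side. These are geodesics in the CAT(0) metric, so they have distinct endpoints. Measurability and continuity in $\theta$ would follow as in Duchin--Leininger--Rafi, since the discontinuities are only a countable set of saddle-connection directions, which is $\nu$-null after smoothing or handled by atoms.
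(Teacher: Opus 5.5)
Your plan is essentially the paper's proof. Its ingredients are:
\begin{itemize}
\item currents $\mu_\theta$ supported on the straight (left/right-turning) leaves of the multi-foliation;
\item the identity $i(\mu_\theta^s,\delta_\gamma)=\ell_\theta([\gamma])$, which the paper verifies with small boxes and boxing rectangles, the key geometric input being your observation that a straight leaf transverse to $\widetilde\gamma$ meets it in a single point;
\item a representation of the norm as a $\nu$-average of web norms;
\item Fubini combined with Theorem~\ref{thmA}.
\end{itemize}

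The only divergence is Step~3. You quote the planar zonoid theorem, whereas the paper rederives it by polygonal approximation with decreasing dual perimeters. The paper's $\nu=\sum K_i\delta_{V_i}/4$ is one quarter of the surface-area measure of the dual polygon, i.e.\ the discrete version of your measure. This also shows your density should be $\tfrac14(h+h'')\,d\theta$ on the full circle, not $\tfrac12(h+h'')\,d\theta$. Finally, central symmetry of the unit ball does not follow from $\Z/n\Z$-invariance when $n$ is odd; it is an extra hypothesis, which the paper also imposes.
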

The currents we construct depend both on the $1/n$-translation structure and on the chosen  norm in a fairly explicit way, giving tools to understand their support, as well as typical geodesics.

More specifically we prove that, for any  norm $\|\cdot\|$ on $\R^2$, inducing a Finsler metric $d^F$ on a  $1/n$-translation surface $\Sigma$, the Liouville current of $d^F$ belongs to the convex hull of the set of \emph{measured multi-foliations currents} for $\Sigma$, special geodesic currents that we introduce and play the role of measured laminations for translation surfaces. By means of convex geometry we describe how to determine, knowing $\|\cdot\|$, which combinations of the measured multi-foliations correspond to the Liouville current for the distance $d^F$.

We conclude this introduction by discussing more in detail the measured multi-foliations currents and their support. A measured multi-foliation is determined by an $1/n$-translation structure $\Sigma$ and a choice of angle $\theta$, and, loosely speaking, corresponds to the union of the straight lines at angles $\theta+2\pi k/n$. In Section \ref{s.mutheta2} we use these singular foliations to construct a geodesic current $\mu_\theta$ on $\Sigma$, and we show that if the  $1/n$-translation surface is induced by the $n$-th power of an abelian differential, the $\theta$-multi-foliation is the sum of the measured laminations corresponding to the $n$ relevant angles (Corollary \ref{degenerate multi-foliation}). In general, when $n\ge 3$, measured multi-foliations are associated with particularly simple Finsler metrics (Corollary \ref{theta length}) and as a first step in the proof of Theorem \ref{thm:liouville} we show that the multi-foliations currents are the Liouville currents for the associated metric (Theorem \ref{foliation intersection}). In the case of cubic differentials the multi-foliation current $\mu_\theta$ was independently constructed by Charlie Reid in \cite[Section 7]{Reid} with different techniques.

In fact our multi-foliation currents are naturally  oriented geodesic currents, namely measures on $\partial\widetilde\Sigma^2$ that avoid the diagonal (see Section \ref{s.mutheta2} for details). Generalizing work of Charlie Reid in higher rank Teichmüller theory, we get some concrete bounds on the possible intersection patterns of geodesics in the support of $\mu_\theta$. To make this precise, we say $k$ pairs $(x_1,y_1),\ldots,(x_k,y_k)\in \partial\widetilde\Sigma^2$ form a \emph{positive k-crossing} if the $2k$-tuple $(x_1,\ldots, x_k,y_1,\ldots,y_k)\in \partial\widetilde\Sigma^{2k}$ is counterclockwise oriented.

\begin{thm}\label{thmINTRO:crossing}
Any geodesic in $\supp\mu$ is contained in a positive $\lfloor n/2\rfloor$-crossings contained in $\supp{\mu_\theta}$. No positive $(\lfloor n/2\rfloor+1)$-crossing is contained in the support of the $\theta$-multi-foliation current $\mu_\theta$ associated to a $1/n$-translation surface. 
\end{thm}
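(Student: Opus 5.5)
We describe $\supp{\mu_\theta}$ concretely and then argue with angles. By the construction of $\mu_\theta$ in Section~\ref{s.mutheta2}, an oriented pair $(x,y)$ lies in $\supp{\mu_\theta}$ exactly when $x$ and $y$ are the endpoints of a (limit of) \emph{$\theta$-leaf} in $\wideS$. A $\theta$-leaf is a piecewise-straight bi-infinite geodesic. Each straight segment points in a direction $\theta+2\pi k/n$. At every cone point it turns so that the outgoing direction is again one of these $n$ directions, and the angle on each side is at least $\pi$.

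The key local object is the \emph{$\theta$-star} at a regular point $p$. This is the set of $n$ oriented straight lines through $p$ with directions $\theta+2\pi k/n$, $k=0,\dots,n-1$. The lines with directions $\theta+2\pi k/n$ and $\theta+2\pi k/n+\pi$ have the same underlying segment but opposite orientation. Their relative position depends on the parity of $n$.
\begin{itemize}
\item For $n$ even, the directions come in antipodal pairs. This gives $n/2$ unoriented lines through $p$, each appearing with both orientations.
\item For $n$ odd, no two directions are antipodal. This gives $n$ distinct oriented lines.
\end{itemize}

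\textbf{First claim (existence of crossings).} Take a geodesic in $\supp{\mu_\theta}$. By density and closedness we may take it to be a nonsingular $\theta$-leaf $\ell$ passing through a regular point $p$, with direction $\theta$ there. Choose $\lfloor n/2\rfloor$ directions among $\theta+2\pi k/n$ that lie in an open half-plane and are cyclically ordered counterclockwise. Examples are $k=0,1,\dots,\lfloor n/2\rfloor-1$ when $n$ is odd, and the analogous half-turn's worth of directions when $n$ is even. Extend each chosen direction from $p$ to a $\theta$-leaf.

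These leaves pairwise intersect only at $p$. Two $\theta$-leaves are CAT(0) geodesics of $\wideS$, so they cross at most once. Hence their endpoints at infinity interlace. The counterclockwise cyclic order of the outgoing directions at $p$ gives the cyclic order $(x_1,\dots,x_k,y_1,\dots,y_k)$ on $\partial\wideS$. This step uses that a CAT(0) geodesic ray leaving $p$ in a given direction stays on the corresponding side of the others, which follows by a Gauss--Bonnet/angle comparison.

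\textbf{Second claim (no larger crossing).} Suppose $(x_i,y_i)$, $i=1,\dots,m$, form a positive $m$-crossing in $\supp{\mu_\theta}$. Perturbing slightly, and using that the set of positive crossings is open, we may assume each pair is realized by a $\theta$-leaf $\ell_i$, and that these leaves pairwise cross transversally at regular points.

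Now apply a combinatorial/angle argument. Orient everything along $\ell_1$. At each point of $\ell_1$ the other leaves cross with directions in $\{\theta+2\pi k/n\}$, and the direction of $\ell_1$ itself is constant on its straight segments. At a cone point the holonomy lies in the rotations of order $n$, so directions stay in the same set. Positivity of the crossing forces the sequence of crossing angles to be strictly increasing in $(0,\pi)$, measured relative to $\ell_1$. To make this precise, one checks that the cyclic order condition $x_1,\dots,x_m,y_1,\dots,y_m$ implies $\ell_j$ crosses $\ell_i$ from right to left for $i<j$. The angle function measured counterclockwise from $\ell_i$ to $\ell_j$ then takes values in $\{2\pi k/n\}\cap(0,\pi)$.

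The main obstacle is making this monotonicity precise across cone points and for crossings that are not all at one point. For this I would use the developing map on a flat strip or a Gauss--Bonnet argument on the geodesic polygon cut out by $\ell_1,\ell_i,\ell_j$. The point is that extra cone angle only increases angles, so the ordering is preserved. This gives $m-1$ distinct values in a set of size $\lceil n/2\rceil-1$, and for $n$ odd that bounds $m\le (n+1)/2=\lfloor n/2\rfloor+1$. So the count needs refinement: use the total turning of the $m$ leaves around a polygon, which must sum to at most $\pi$ in angle, giving $m\le\lfloor n/2\rfloor$.

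The main work is this angle-sum inequality. The ideal $2m$-gon formed by the crossing, with vertices among the $x_i,y_i$, has a convex core. Its interior angles are multiples of $2\pi/n$ and each is at least $2\pi/n$. Gauss--Bonnet in nonpositive curvature then bounds $m\cdot 2\pi/n$ by $\pi$, whence $m\le\lfloor n/2\rfloor$.
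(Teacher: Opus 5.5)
Your existence argument is essentially the paper's. You pick a regular point $p$ on the leaf and take the straight leaves through $p$ in consecutive web directions, whose endpoints are ordered at infinity by Proposition~\ref{p.endpoint-orientation}. No density step is needed: by Proposition~\ref{p.support}, $\supp{\mu_\theta}=G^+(\wideF)$ consists of endpoints of leaves, and every leaf contains regular points.

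The non-existence part has two problems.

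\textbf{The key lemma is missing.} The step you call ``the main obstacle'' is exactly the paper's key lemma, and you do not prove it. Proposition~\ref{lem:cross_angle_sum} shows $\angle(l_1,l_3)\ge\angle(l_1,l_2)+\angle(l_2,l_3)$ for any positive 3-crossing of leaves. The proof applies the CAT(0) bound (angle sum of a geodesic triangle at most $\pi$) to the triangle with vertices $l_1\cap l_2$, $l_2\cap l_3$, $l_1\cap l_3$. It splits into four cases, according to whether $l_2$ is left- or right-turning and on which side of $l_1$ the point $l_2\cap l_3$ lies. Your perturbation to regular crossing points is not justified as written, and it is also unnecessary. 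Definition~\ref{d.angle} measures the angle on the side where the straight leaf $l_1$ has angle exactly $\pi$. So the angle is a multiple of $2\pi/n$ in $(0,\pi)$ even at cone points (Lemma~\ref{lem:cross_possible_angle}). Iterating, a positive $m$-crossing forces $(m-1)\cdot 2\pi/n\le\angle(l_1,l_m)<\pi$.

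\textbf{The ``refinement'' is wrong and cannot be repaired.} The bound $m\cdot 2\pi/n\le\pi$ has no basis. In the extremal configuration all $m$ leaves pass through a single point, so there is no polygon at all. For a genuine convex $k$-gon the CAT(0) angle bound is $(k-2)\pi$, not $\pi$. The only valid inequality is $(m-1)\cdot 2\pi/n<\pi$, i.e.\ $m\le\lceil n/2\rceil$, which is exactly what your first count gave.

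For $n$ odd this bound is sharp, so the statement itself fails. Run your own existence construction with the $(n+1)/2$ directions $\theta+2\pi k/n$, $0\le k\le (n-1)/2$. These all lie in $[\theta,\theta+\pi)$, so they yield a positive $(\lfloor n/2\rfloor+1)$-crossing in $\supp{\mu_\theta}$. For $n=3$ this is just two leaves through a regular point meeting at angle $2\pi/3$.

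So the second sentence of the statement is true only for $n$ even, and there your count already suffices once superadditivity is proved. For $n$ odd, the correct claim is that there is no positive $(\lfloor n/2\rfloor+2)$-crossing. That is in fact all the paper's proof establishes as written: it sets $m=\lfloor n/2\rfloor+1$, assumes a positive $(m+1)$-crossing, and reaches a contradiction from $m\cdot 2\pi/n>\pi$.
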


In fact, any geodesic in $\supp{\mu_\theta}$ is contained in many positive $\lfloor n/2\rfloor$-crossings contained in $\supp\mu$ (see the proof of Theorem \ref{thm:crossing_free} for details). Furthermore, the set $\supp{\mu_\theta}$ is maximal among subsets avoiding positive $(\lfloor n/2\rfloor +1)$-crossing in the following sense: for any $(x,y)\notin\supp{\mu_\theta}$ whose geodesic is not always tangent to the $\theta$-multi-foliation, the set $\supp{\mu_\theta}\cup \{(x,y)\}$ contains a positive $(\lfloor n/2\rfloor +1)$-crossing (see Theorem \ref{thm:crossing_free}). 
Theorem \ref{thmINTRO:crossing} follows from a precise understanding of the support of $\mu_\theta$ (Proposition \ref{p.support}). That  no positive $(\lfloor n/2\rfloor+1)$-crossing is contained in the support of $\mu_\theta$ generalizes the property of measured laminations of having vanishing self-intersection, and is in strong contrast with Liouville currents of negatively curved metrics, whose support contains positive $k$-crossings for all $k$. 

It follows from results of Loftin--Tamburelli--Wolf \cite{LTW} and Reid \cite{reid0} that  the limit of the currents associated to representations in cubic rays in the Hitchin component in ${\rm SL}_3(\mathbb R)$ is the  horizontal multi-foliations currents for the 1/3-translation structure associated to the cubic differential. Using this analytic result as a black-box, we obtain, as a corollary of Theorem \ref{thmINTRO:crossing}, an entirely different geometric proof of a special case of \cite[Theorem 1]{Reid}.

We expect that in general the precise  information about the support of the Liouville currents will provide important tools to study rigidity and flexibility questions.

\addtocontents{toc}{\protect\setcounter{tocdepth}{1}}
\subsection*{Comparison with other results}Our construction is inspired from and generalizes the construction of Duchin--Leininger--Rafi for the CAT(0) metric associated to a quadratic differential \cite{duchin2010length}. In the special case of the asymmetric triangular norm associated to cubic differentials on surfaces, Reid constructs Liouville currents in \cite{Reid} via cross-ratio over points in the horoboundary of the Finsler metric. 

\subsection*{Structure of the paper}In Section \ref{sec:def_section} we introduce the main objects of study, namely $1/n$-translation surfaces and compatible Finsler metrics. We also give a basic example of Finsler metrics induced by $\theta$-web (see Definition \ref{web_definition}). In Section \ref{sec:CAT(0)_geodesic}, we show that CAT(0) geodesics realize the minimal length in the homotopy class by constructing a norm non-increasing projection. In Section \ref{sec:multifoliation} we define measured multi-foliations, and construct their assiciated oriented geodesic current. In Section \ref{sec:propertiesmulti} we discuss the intersection of multi-foliation currents and closed curves, and prove Theorem \ref{thmINTRO:crossing} on positive $k$-crossings in the support of $\mu_\theta$. In Section \ref{sec:Liouville_current}, we construct the Liouville current for a generic Finsler metric by studying convex combinations of measured multi-foliations.
\addtocontents{toc}{\protect\setcounter{tocdepth}{2}}

\section{Singular Finsler metrics on $1/n$-translation surfaces}\label{sec:def_section}
We  discuss in this section  $1/n$-translation surfaces from the viewpoint of $(G,X)$-structures (Section \ref{sec:nTS}) and consider the associated CAT(0) metric, which will be useful also in the study of general Finsler metrics. In Section \ref{sec:Finsler} we endow $1/n$-translation surfaces with Finsler metrics.

\subsection{The $1/n$-translation surfaces and their CAT(0) metric}\label{sec:nTS}
A \emph{$(G,X)$-structure} on a manifold $M$, where $X$ is a manifold acting as model space and $G$ is a group acting on $X$ by homeomorphisms, is the datum of an atlas on $M$ whose charts are homeomorphisms onto open subsets of $X$, and whose change of charts are restrictions of elements of $G$. In this paper we will consider  $(\R^2 \rtimes \Z/n\Z, \R^2)$-structures for integers $n\in\N$: here $\R^2 \rtimes \Z/n\Z$ is the subgroup of the group of Euclidean isometries of $\R^2$ on which $\R^2$ acts as translations and $\Z/n\Z$ is the group generated by a $\frac{2\pi}{n}$-rotation. 

\begin{defn}
A topological surface $\Sigma$ has a $1/n$-\emph{translation structure} if there exists a discrete set of points $P\subset\Sigma$ and an $(\R^2 \rtimes \Z/n\Z, \R^2)$-structure on $\Sigma \setminus P$ such that every $p\in P$ has a neighborhood homeomorphic to a neighborhood of a cone point of cone angle $\frac{2k\pi}{n}$ for some $k>n$, which is an isometry away from $p$.   We refer to points in $P$ as \emph{cone points} or \emph{singular points}.
\end{defn}
If a surface admits a $1/n$-translation structure then it is oriented. We will mostly focus on the case in which either $\Sigma$ is compact, or the $1/n$-translation structure induced on the universal cover of a compact $1/n$-translation surface. Examples of $1/n$-translation surfaces can be obtained by glueing families of polygons in the plane with translations and rotations by angles multiple of $2\pi/n$. In fact every $1/n$-translation surface arises this way. See Figure \ref{fig:examples} for some explicit examples. 

\begin{figure}[h]
    \centering
    \includegraphics[width=0.75\linewidth]{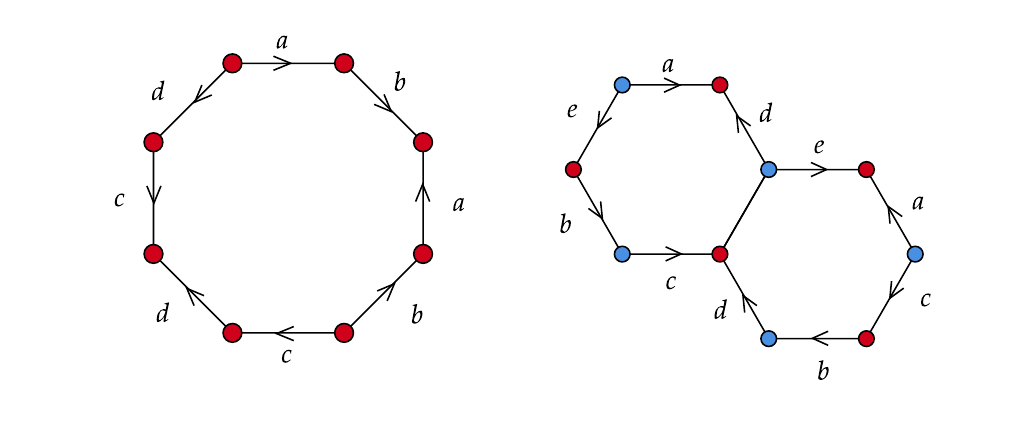}
    \caption{A $\frac14$-translation surface on the left and a $\frac 13$ translation surface on the right. Sides with the same label are identified isometrically. The surface on the left has one cone point of angle $6\pi$, the surface on the right has two cone points of angles $4\pi$, both surfaces have genus two. }
    \label{fig:examples}
\end{figure}
By a combinatorial version of Gauss-Bonnet Theorem (see \cite[Theorem 17.1]{schwartzMostlySurfaces2011} for a proof), if $\Sigma$ is closed, and $\theta(p)$ is the cone angle at $p$, we have the following formula:
\begin{equation}\label{eq:GB}
    \sum_{p\in P}(\theta(p)-2\pi)=4\pi(g-1).
\end{equation}
    In particular, $g\ge 2$ and there are at most $2n(g-1)$ cone points as $\theta(p)-2\pi\ge \frac{2\pi}{n}$.

 The $(\R^2 \rtimes \Z/n\Z, \R^2)$-structure equips $\Sigma\setminus P$ with a Euclidean metric. We refer to this Euclidean metric with conical singularities as a \emph{flat cone metric}. The requirement on cone angles implies that the flat cone metric is locally CAT(0), and its universal cover is CAT(0).

\begin{defn}\label{defn:CAT(0)_geodesic}
   The \emph{distance} between two points $p,q$ in a $1/n$-translation surface is the infimum of the length of piecewise $C^1$ curves joining $p$ and $q$. A \emph{geodesic} is a curve $\alpha:I\to\Sigma$ which locally realizes the distance, where $I$ is a connected subset of $\R$. We say that a geodesic is \emph{regular} if it doesn't contain cone points in its interior, \emph{singular} otherwise. A \emph{saddle connection} is a regular geodesic whose endpoints are cone points. 
\end{defn}
Note that, in contrast with the cases of abelian differential and quadratic differential where a saddle connection is embedded, in our case, saddle connections are generally just immersed.

Since the metric is locally Euclidean it is easy to characterize geodesics. The completeness assumption always holds if $\Sigma$ is compact or if it the universal cover of a compact translation surface.

\begin{prop}[See {\cite[\S 2.4]{duchin2010length}} for $n=2$]\label{geodesic behavior}
  Let $\Sigma$ be a complete $1/n$-translation surface with its flat cone metric, then:
    \begin{enumerate}
        \item Each homotopy class of curves (relative to endpoints for non-closed curves) admits a geodesic representative.
        \item Geodesics are concatenations of saddle connections and possibly geodesic rays such that two adjacent segments make angles at least $\pi$ on both sides at the common point.
    \end{enumerate}
\end{prop}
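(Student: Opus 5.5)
The plan is to work in the universal cover $\wS$ and use CAT(0) geometry, as in \cite[\S 2.4]{duchin2010length}. Completeness of $\Sigma$ lifts to completeness of $\wS$. The flat cone metric is locally CAT(0) because every cone angle $2k\pi/n$ with $k>n$ exceeds $2\pi$; the link condition at a cone point holds since its link is a circle of length greater than $2\pi$. By the Cartan--Hadamard theorem for length spaces, $\wS$ is therefore a complete CAT(0) space, hence uniquely geodesic.

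For (1), first take a path $\gamma$ between $p$ and $q$. Lift it to $\tilde\gamma$ from $\tilde p$ to $\tilde q$, take the unique CAT(0) geodesic $[\tilde p,\tilde q]$, and project it back to $\Sigma$. This projection is homotopic to $\gamma$ rel endpoints because $\wS$ is contractible: geodesic contraction gives the homotopy. It is a local geodesic because the covering map is a local isometry.

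Now take a closed curve representing a nontrivial deck transformation $g$. Because $\Sigma$ is compact, or is the universal cover of a compact surface in the cases the paper uses, $g$ acts cocompactly on $\wS$ and so is a semisimple isometry (Bridson--Haefliger). A nontrivial $g$ is hyperbolic, since the action is free. It therefore has an axis: a $g$-invariant geodesic line, whose quotient is a closed geodesic freely homotopic to the curve. If $\Sigma$ is only assumed complete, I would restrict the closed-curve part to the cocompact setting. The alternative is to minimize length over the free homotopy class with Arzelà--Ascoli, which needs a lower bound on injectivity.

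For (2), the key step is a local analysis. Away from $P$ the metric is Euclidean, so a local geodesic is a straight segment there. Since $P$ is discrete, a geodesic is a union of straight pieces whose endpoints lie in $P$ or at the ends of the domain. The interior pieces are therefore saddle connections, and the possibly infinite terminal pieces are rays.

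At a cone point $c$ of angle $\alpha>2\pi$, use the Euclidean cone model. A concatenation of two segments through $c$ making angle $\beta<\pi$ on one side can be shortened: replace a neighbourhood of $c$ by the straight chord inside the flat sector of angle $\beta$. That sector embeds isometrically in the plane, and there the chord is strictly shorter by the triangle inequality. So a local geodesic needs angle at least $\pi$ on both sides.

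Conversely, if both angles are at least $\pi$, the path is a local geodesic. The distance in a cone between two points at distances $r,s$ from $c$ with angle $\beta$ between them is $r+s$ whenever $\beta\ge\pi$, and the CAT(0)/cone comparison gives exactly this. Such a path therefore locally realizes distance.

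I expect the main obstacle to be the rigorous cone-point computation, namely the formula $d=\sqrt{r^2+s^2-2rs\cos\min(\beta,\pi)}$ on a Euclidean cone of angle greater than $2\pi$. The other steps are standard CAT(0) facts I would cite from Bridson--Haefliger.
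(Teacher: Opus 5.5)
Your proposal is correct. Part (2) is essentially the paper's argument: straight pieces away from $P$, and an angle $<\pi$ at a cone point is shortcut by a chord in a flat sector. For part (1), however, you take a genuinely different route.

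The paper argues variationally. A length-minimizing sequence in the class, parametrized at constant speed, lies in a compact set (the compact surface or a fundamental domain), and Arzel\`a--Ascoli gives a uniform limit, which is taken to be the geodesic. You instead check local CAT(0)-ness via the link condition and apply Cartan--Hadamard to get a uniquely geodesic universal cover. For arcs you project the CAT(0) geodesic. For closed curves you use that nontrivial deck transformations are hyperbolic and have axes.

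Your route buys more:
\begin{itemize}
\item It gives uniqueness of the representative rel endpoints, which the paper invokes at the start of Section~\ref{sec:CAT(0)_geodesic} for the bi-combing behind Theorem~\ref{thmA}.
\item It works for arcs on any complete $\Sigma$, without compactness.
\item Your converse via the cone distance formula shows that concatenations with angles $\ge\pi$ on both sides are geodesics. The paper uses this implicitly (e.g.\ in Proposition~\ref{p.halfspaces}), although the statement itself does not require it.
\end{itemize}

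The paper's route is more elementary. It does, however, depend on compactness, and it leaves implicit why the uniform limit stays in the homotopy class and is locally minimizing.

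Restricting the closed-curve case to the cocompact setting costs you nothing relative to the paper, whose argument also needs the curves to lie in a compact set.

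One phrasing fix: a single $g$ does not act cocompactly on $\widetilde\Sigma$. What you need is that $\pi_1(\Sigma)$ acts properly and cocompactly by isometries, so that each of its elements is semisimple.
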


\begin{figure}[h]
    \centering
    \includegraphics[width=0.5\linewidth]{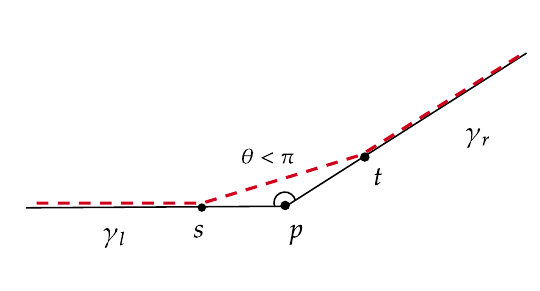}
    \caption{If one angle formed by $\gamma_l,\gamma_r$ is less than $\pi$, then the length of red curve is smaller than the sum of the length of $\gamma_l,\gamma_r$}
    \label{angle less pi}
\end{figure}

\begin{proof} 
1. Let $[\gamma]$ be a homotopy class of closed curves (or a homotopy class of geodesic segments relative to endpoints). Any sequence of  of piecewise $C^1$ curves  $\gamma_n\in[\gamma]$ with $\lim_{n}\ell(\gamma_n)=\inf_{\eta\in[\gamma]}\ell(\eta)$ has uniformly bounded length. 
After reparametrization, we may assume that the domain of definition of $\gamma_n$ is the unit interval and the curve has constant speed $\ell(\gamma_n)$. Moreover, we may assume all $\gamma_n$ lie in a compact set, either the compact surface itself or a fundamental domain in the universal cover. By Arzela-Ascoli,  there exists a subsequence converging uniformly to a curve, which is the seeked geodesic.

2. Consider an arbitrary geodesic $\gamma=\gamma(t)$. If $p=\gamma(t_0)$ is a regular point, then it has a  Euclidean neighborhood, and for sufficiently small $\epsilon$, the intersection of the interval $\paren{\gamma(t_0-\epsilon),\gamma(t_0+\epsilon)}$ with the neighborhood must be an Euclidean geodesic, namely a straight line. In particular, a saddle connection is locally an immersed straight line. If $p$ is a singular point on $\gamma$, the geodesic is locally concatenation of two regular geodesics $\gamma_l,\gamma_r$ forming two angles at $p$, each of which must be at least $\pi$: if one angle were acute, we would  find points $s,t$ close to $p$ on $\gamma_l,\gamma_r$ contained in a common Euclidean chart where they could be joined by a shorter path than the concatenation of $\gamma_l,\gamma_r$ by triangle inequality, contradicting the definition of a geodesic.
\end{proof}

We collect here some useful additional properties of locally CAT(0) metric on a closed surface. We say that a \emph{geodesic triangle} is a topological disk whose boundary is formed by three geodesics meeting at its vertices. 

\begin{prop}\label{CAT0 property}
    Suppose $(\Sigma,g)$ is a closed surface with a locally $\CAT(0)$ metric $g$, then: 
    \begin{enumerate}
        \item The universal cover $\wideS$ is homeomorphic to $\R^2$ and $g$ is lifted to a $\CAT(0)$ metric on $\wideS$ \cite[Corollary II 1.5, Theorem II 4.1]{bridson2013metric};
        \item  The sum of the interior angles of a geodesic triangle 
        is at most $\pi$ \cite[Proposition II 1.7]{bridson2013metric}. In particular, there is no bigon.
    \end{enumerate}
\end{prop}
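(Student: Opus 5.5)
Both assertions are essentially citations of standard CAT(0) theory, so the plan is to verify that the hypotheses of the cited results hold and then derive the consequences.

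For (1), the Cartan--Hadamard theorem \cite[Theorem II 4.1]{bridson2013metric} applies. $\Sigma$ is closed, so the metric is complete, and the length metric lifts to the universal cover $\wideS$. Cartan--Hadamard then says $\wideS$ with the lifted metric is CAT(0), and in particular uniquely geodesic and contractible. Since $\wideS$ is a simply connected surface without boundary, the uniformization/classification of simply connected surfaces leaves only $S^2$ or $\R^2$. The sphere is excluded because a CAT(0) space is contractible, or alternatively because a compact surface with noncompact universal cover cannot be covered by $S^2$ when the cover is contractible. Hence $\wideS\cong\R^2$. Corollary II 1.5 of \cite{bridson2013metric} supplies the unique geodesics and contractibility.

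For (2), first lift the geodesic triangle to $\wideS$. A triangle is a topological disk, so it lifts homeomorphically and its geodesic sides lift to geodesics. There, Proposition II 1.7(4) of \cite{bridson2013metric} states that the Alexandrov angles of a geodesic triangle in a CAT(0) space sum to at most $\pi$. The only point needing care is that the "interior angles" in our sense agree with the Alexandrov angles.
\begin{itemize}
\item At a regular vertex, the two notions agree because the metric is locally Euclidean.
\item At a cone vertex, the interior angle of the disk is at least the Alexandrov angle, which is the minimum of the two sides' angles, capped at $\pi$.
\end{itemize}
So the relevant inequality needs the interior angles at cone points to be at most $\pi$. I would get this from the observation that a vertex with interior angle at least $\pi$ forces the sum to already reach $\pi$. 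Alternatively, apply Gauss--Bonnet \eqref{eq:GB} to the disk directly: the sum of interior angles equals $\pi$ minus the sum over interior cone points of $(\theta(p)-2\pi)$, and each such term is positive. This Gauss--Bonnet version is cleaner, and it is what I would actually write.

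For the consequence that there are no bigons, suppose two distinct geodesics join the same endpoints $p,q$ in $\wideS$. Together they would bound a disk. Regard it as a degenerate triangle by adding a third vertex in the interior of one side, where the angle is $\pi$. Then the two other angles are positive, so the angle sum exceeds $\pi$, which is a contradiction. Uniqueness of geodesics in CAT(0) spaces gives the same conclusion directly.

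The main obstacle I expect is reconciling the angle notions at cone points. I would resolve it by using the combinatorial Gauss--Bonnet formula on the disk rather than Alexandrov angles.
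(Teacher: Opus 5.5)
Part (1) follows the paper's cited route: Cartan--Hadamard \cite[Theorem II 4.1]{bridson2013metric} together with contractibility of CAT(0) spaces, which excludes $S^2$. The first of your two reasons for excluding $S^2$ already suffices.

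For part (2) you take a genuinely different route. The paper only cites \cite[Proposition II 1.7]{bridson2013metric}, which bounds the sum of \emph{Alexandrov} angles and holds for any locally CAT(0) metric. You notice that this is not quite the statement needed. At any vertex, regular or singular, the interior angle of the disk equals the Alexandrov angle exactly when it is at most $\pi$, because cone angles are at least $2\pi$. So the citation alone does not rule out an interior angle larger than $\pi$. This also means your bullet claiming the two notions agree at regular vertices needs that caveat. You then get the bound for the actual interior angles from Gauss--Bonnet on the disk. That is exactly the form the paper uses later, e.g.\ in Proposition \ref{lem:cross_angle_sum}, where the interior angle $\theta_2'$ at a possibly singular vertex is only bounded from below. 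The cost is that your argument needs a flat cone metric. For a general locally CAT(0) metric one must read ``interior angle'' as Alexandrov angle and rely on the citation.

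Two repairs are needed in the write-up. First, drop the ``observation'' that an interior angle of at least $\pi$ forces the sum to reach $\pi$. That is precisely what has to be ruled out, so it proves nothing. Second, your Gauss--Bonnet identity omits boundary terms from cone points in the interior of the sides, which the paper explicitly allows. For the disk the correct identity is
\[
\alpha_1+\alpha_2+\alpha_3=\pi-\sum_{p\ \text{interior}}\bigl(\theta(p)-2\pi\bigr)-\sum_{x\ \text{on the sides}}(\alpha_x-\pi),
\]
where $\alpha_x\geq\pi$ because the sides are geodesics. Both subtracted sums are non-negative, so the inequality still holds. Also note that this is the version of Gauss--Bonnet for a disk with corners, not \eqref{eq:GB} itself, which is stated for closed surfaces.
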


 We conclude the subsection discussing properties of regular closed geodesics, namely closed geodesics that do not contain singular points in its interior, in analogy with translation surfaces we refer to these as \emph{cylinder curves}. Note however that, while if $n\leq 2$, cylinder curves are simple and contained in flat cylinders embedded in the surface, if $n\geq 3$ a cylinder curve might have non-trivial self-intersections.

\begin{prop}\label{p.closed_conepoint}
    Every cylinder curve $\gamma$ is contained in a one parameter family of parallel curves, forming an immersed cylinder. The two extremal curves in the family are singular.
\end{prop}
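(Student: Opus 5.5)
Since $\gamma$ is a closed geodesic avoiding $P$, its image is compact and disjoint from the cone points, so it has positive distance from $P$. I will build the family by sliding $\gamma$ sideways along unit normals, and then show the sliding must stop at a cone point.

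\textbf{Step 1: holonomy and the normal vector field.} Parametrize $\gamma$ by arc length, $\gamma:[0,L]\to\Sigma\setminus P$. In each local chart $\gamma$ is a straight segment. The unit normal $\nu(t)$, rotated by $\pi/2$ counterclockwise from $\gamma'(t)$, is therefore a parallel vector field along $\gamma$. The holonomy of the flat metric along $\gamma$ is a rotation by a multiple of $2\pi/n$, and it sends $\gamma'(0)$ to $\gamma'(L)=\gamma'(0)$. A rotation fixing a nonzero vector is the identity, so the holonomy is trivial and $\nu(L)=\nu(0)$. In particular, $\nu$ is a well-defined parallel field along the closed curve.

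\textbf{Step 2: the one-parameter family.} Define $\Phi(t,s)=\exp_{\gamma(t)}(s\,\nu(t))$, meaning we move a distance $s$ along the straight segment from $\gamma(t)$ in direction $\nu(t)$. This is defined for $|s|<\epsilon$, where $\epsilon$ is the distance from $\gamma$ to $P$.
\begin{itemize}
\item In a chart around a subarc, $\Phi$ is the affine map $(t,s)\mapsto \gamma(t)+s\nu$. It is therefore a local isometry from the flat strip $[0,L]/\!\sim\times(-\epsilon,\epsilon)$.
\item By Step 1 it descends to a local isometry from the flat annulus $S^1_L\times(-\epsilon,\epsilon)$.
\item Each curve $\gamma_s=\Phi(\cdot,s)$ is a closed straight curve freely homotopic to $\gamma$ with the same length $L$. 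This is the parallel family, and it forms an immersed cylinder.
\end{itemize}
Self-intersections are allowed, since $\Phi$ is only required to be a local isometry.

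\textbf{Step 3: maximal extension.} Let $s_+=\sup\{s>0:\Phi$ extends as a local isometry on $S^1_L\times[0,s)\}$, with the extension defined by continuing straight normal segments. I claim $s_+<\infty$. If not, then $\Phi$ extends to $S^1_L\times[0,\infty)$, and lifting gives a locally isometric immersion of a flat half-strip $\R\times[0,\infty)$ into $\wideS$. I see two ways to reach a contradiction:
\begin{itemize}
\item \emph{Area.} On the compact surface $\Sigma$, the closed straight curves $\gamma_s$ would cover area $Ls$ for all $s$. This is not immediately contradictory for an immersion, so it does not settle the question by itself.
\item \emph{Geometry of the universal cover (preferred).} The lifts $\tilde\gamma_0$ and $\tilde\gamma_s$ are $\langle\gamma\rangle$-invariant geodesic lines at Hausdorff distance at most $s$. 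By the flat strip theorem in the CAT(0) space $\wideS$, they bound a flat strip, and the image of the immersion is an isometrically embedded flat half-plane. But $\Sigma$ is closed with $g\ge2$ and contains cone points by \eqref{eq:GB}. Hence a fundamental domain contains a cone point, and its translates occur within bounded distance of any point of $\wideS$. Therefore some cone point would lie in the interior of the flat half-plane, which is a contradiction.
\end{itemize}

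\textbf{Step 4: the extremal curves are singular.} At $s=s_+$, the curve $\gamma_{s_+}$ is defined as a limit and is a closed curve made of straight pieces. If it contained no cone point, the argument of Step 2 applied to it would extend $\Phi$ beyond $s_+$, contradicting maximality. So $\gamma_{s_+}$ passes through a cone point, i.e. it is singular. The same argument applies at $s_-$.

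The main obstacle is Step 3, the finiteness of $s_+$. I would handle it via the universal cover, using the flat strip theorem together with the cocompactness of the cone points, rather than by an area count.
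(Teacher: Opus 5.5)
Your proof is correct and follows essentially the same route as the paper. The paper works directly in $\wideS$: it takes the largest flat strip around a lift $\widetilde\gamma$, on which the deck transformation of $\gamma$ acts by translations (the upstairs version of your trivial-holonomy Step 1), and concludes by maximality that the boundary lines are singular, which is your Steps 2 and 4. Your Step 3 adds a point the paper leaves implicit, namely that the maximal width is finite because cone points are cobounded in $\wideS$; the flat strip theorem is not needed there, since a local isometry from the convex flat half-plane into the CAT(0) space $\wideS$ sends segments to local geodesics, hence geodesics, and so is already an isometric embedding.
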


\begin{proof}
    Let $\gamma$ be a cylinder curve. Since the cone point set $P$ is discrete, the distance $d(\gamma, P)=\inf_{p\in P}d(\gamma,p)$ for a cylinder curve  is strictly positive. Consider an oriented lift $\widetilde\gamma$ of $\gamma$ to the universal covering $\wideS$. The $d$ neighbourhood of $\widetilde\gamma$ does not contain cone points, and is thus isometric to a flat strip on which the element in the deck transformation group corresponding to $\gamma$ acts by translations. The desired neighbourhood is the image in $\Sigma$ of the largest flat strip containing $\widetilde \gamma$. Its boundary is constituted of singular geodesics by maximality. 
\end{proof}

The following lemma about cylinders will be useful for the discussion in Section \ref{sec:multifoliation}.

\begin{prop}[{\cite[Corollary 2.3]{shi2024liouville}}]\label{p.bound_cylinder}
    If two geodesics in the universal cover stay bounded distance to each other, then they project  to cylinder curves and bound a common cylinder.
\end{prop}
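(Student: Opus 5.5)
We argue in the universal cover $\wideS$, which by Proposition \ref{CAT0 property} is a CAT(0) plane. Let $\alpha,\beta$ be two complete geodesics with $d(\alpha(t),\beta)\le D$ for all $t$. The strategy is to show that the region between them is a flat strip containing no cone points, and then to use compactness of $\Sigma$ to deduce that the strip is preserved by a deck transformation. This reduces us to the cylinder picture of Proposition \ref{p.closed_conepoint}.

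\textbf{Step 1: the region between $\alpha$ and $\beta$ is a flat strip.} First we may assume $\alpha$ and $\beta$ are disjoint. If they met twice, uniqueness of geodesics in CAT(0) spaces (no bigons, Proposition \ref{CAT0 property}) would force them to coincide. If they meet once, the distance $d(\alpha(t),\beta)$ is convex and bounded, hence constant. Since it vanishes at the intersection point, the curves coincide, or we treat this as the degenerate case.

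Since $\wideS\cong\R^2$, two disjoint complete geodesics bound a closed region $S$ homeomorphic to a strip. The function $t\mapsto d(\alpha(t),\beta)$ is convex and bounded on $\R$, so it is constant. The Flat Strip Theorem \cite[Theorem II.2.13]{bridson2013metric} then says that the convex hull of $\alpha\cup\beta$ is isometric to a flat Euclidean strip $[0,w]\times\R$. In our surface setting this hull is exactly $S$.

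A cone point lies in the interior of $S$ only if the total angle there is $2\pi$, so $S$ contains no cone points in its interior. Consequently $\alpha$ and $\beta$ contain no cone points on their inner sides, and $S$ is foliated by parallel straight lines.

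\textbf{Step 2: the strip is invariant under a deck transformation.} This is the step I expect to be the main obstacle. I would enlarge $S$ to the maximal flat strip $S'$ containing it, bounded by singular geodesics as in the proof of Proposition \ref{p.closed_conepoint}. Its width is at most $\operatorname{diam}(\Sigma)$ plus a constant, because a fundamental domain translated along $S'$ must contain cone points, and there are cone points since $g\ge 2$.

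The boundary of $S'$ therefore contains cone points; pick infinitely many of them, $p_k$, along one side. The set of cone points is the orbit of finitely many points under the deck group $\Gamma$. Projecting to $\Sigma$, pigeonhole together with compactness gives $k\neq l$ and some $g\in\Gamma$ with $g p_k=p_l$ and $g$ mapping the local germ of $S'$ at $p_k$ to the germ at $p_l$. The key input here is that the direction of the strip at a cone point, and the boundary angle pattern, take only finitely many values modulo holonomy; this fact needs care.

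Since $g$ is an isometry, $gS'$ is a maximal flat strip sharing an open piece with $S'$. The union of two overlapping flat strips with parallel direction is flat, so by maximality $gS'=S'$. The element $g$ is nontrivial and acts on $S'$ as a translation along its direction. Note that a rotation is excluded, since a reflection-type flip is impossible for orientation reasons, and a nontrivial element cannot fix a point.

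\textbf{Step 3: conclusion.} Both $\alpha$ and $\beta$ are lines in $S'$ parallel to the boundary, and both are $g$-invariant. They therefore project to closed regular geodesics, that is, cylinder curves in the cylinder $S'/\langle g\rangle$, and the region $S$ projects into this common immersed cylinder. If $\alpha$ or $\beta$ is itself a boundary line of $S'$, the same argument applies, with the cylinder being the closure of the image.
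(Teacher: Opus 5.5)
The paper gives no proof of this statement; it is quoted from \cite{shi2024liouville}, so your argument has to stand on its own. Step~1 is fine. The Flat Strip Theorem gives a Euclidean strip $S$ between $\alpha$ and $\beta$ with no cone points in its interior, and your fundamental-domain bound on the width makes a maximal parallel strip $S'\supseteq S$ well defined.

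\textbf{The gap.} It is in Step~2, where you assert that $\partial S'$ contains cone points, infinitely many of them along one side. The proof of Proposition~\ref{p.closed_conepoint} produces singular boundary lines only because the curve is already known to be closed. Invariance under a deck transformation makes $d(\gamma,P)$ positive, so the strip can be widened until a cone point actually lies on its boundary. Your strip is an arbitrary flat strip in $\wideS$, since periodicity is exactly what you are proving. For such a strip, maximality only says that each boundary line either contains a cone point or has cone points accumulating on it from outside; a priori none need lie on it. Moreover, if a boundary line does contain a cone point, a single one already blocks any widening, so nothing forces an infinite sequence $p_k$ on one side. Without that sequence your pigeonhole has nothing to act on and $g$ is never produced. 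That $\partial S'$ carries infinitely many cone points is a consequence of periodicity, not an input to it.

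\textbf{The repair.} Run the pigeonhole on points whose flat neighbourhoods have uniform size, namely the central line $\tilde m$ of $S'$. If $w$ is the width, every point of $\tilde m$ has a Euclidean ball of radius $w/2$. Consider the unit tangent vectors to the projection of $\tilde m$ at the times $j\in\mathbb{Z}$. They lie in the compact set of unit vectors based at points at distance at least $w/2$ from $P$ whose direction is congruent to that of the strip modulo $2\pi/n$. Hence for some $k<l$ they are $\epsilon$-close, with $\epsilon$ small compared to $\min(w/2,1)$ and $2\pi/n$. Lifting gives a deck transformation $g$ with $g\tilde m(l)$ within $\epsilon$ of $\tilde m(k)$ and $dg(\dot{\tilde m}(l))$ parallel to $\dot{\tilde m}(k)$. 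It is nontrivial because $d(\tilde m(k),\tilde m(l))\ge 1>\epsilon$. Then $gS'$ is a maximal flat strip of the same width, parallel to $S'$ and overlapping it in an open set. Your own union-of-overlapping-strips argument gives $gS'=S'$, so $g$ translates along $S'$, and Step~3 goes through unchanged.

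Alternatively, $g\tilde m$ is a line of $S'$ parallel to $\tilde m$, so $g$ fixes the common endpoints of $\tilde m,\alpha,\beta$ in $\partial\wideS$. A nontrivial deck transformation fixing two boundary points is hyperbolic, and since it preserves orientation it translates every line joining them.
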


\subsection{Finsler metrics}\label{sec:Finsler}
In this subsection we introduce Finsler metrics compatible with $1/n$-translation structure. We will show by examples that Finsler metrics on a $1/n$-translation surface are in general not uniquely geodesic.

\begin{defn}
    A \emph{norm} on a vector space $V$ is a continuous map $\norm \cdot:V\to\R$ such that 
    \begin{itemize}
        \item (positivity) $\norm v\ge 0$, for every $v\in V$, and equality holds if and only if $v=0$.
        \item (positive homogeneity) $\norm{\lambda v}=\lambda \norm v$, for every $v\in V,\, \lambda\ge 0$.
        \item (triangle inequality) $\norm{v+w}\le \norm v+ \norm w$, for every $ v,w\in V$.
    \end{itemize}
\end{defn}

It is easy to see that $\norm\cdot$ is a norm on $V$ if and only if the unit ball 
$$B_{\norm\cdot}^1:=\{v\in V:\norm v \le1\}$$
is a star shaped convex set  in $V$, namely  that for any $ v,w\in B_{\norm\cdot}^1$ and any  $t\in (0,1)$, $tv+(1-t)w\in B_{\norm\cdot}^1$ and for any $t\in[0,1)$,  $t v\in B_{\norm\cdot}^1$. If $Q\subset\mathbb R^2$ is a star-shaped convex set, we will denote by $\norm\cdot_Q$ the associated norm.

    A norm is  \emph{symmetric} if $\norm{-v}=\norm v$, or equivalently if its unit ball is invariant under the isometry $-{\rm Id}$, and is
    \emph{strictly convex} if $\norm {v+w}<\norm v+\norm w$ unless $v=\lambda w$ for some $\lambda>0$, or equivalently if its unit ball is strictly convex.
We will mostly consider symmetric  norms, but  non-strictly convex ones will play an important role.
%
A  norm $\norm\cdot$ is  induced by an inner product if and only if its unit sphere is an ellipse: this is equivalent to $\norm\cdot$ satisfying the parallelogram identity.

\medskip

A $1/n$-translation structure on a surface $\Sigma$ induces an identification of the tangent space $T_x\Sigma$ at any regular point $x$ with $\mathbb R^2$, which is determined up to rotations of angles $2\pi/n$.
\begin{defn}\label{d.cFinslerm}
    A \emph{Finsler metric} on a $1/n$ translation surface $\Sigma$ is a continuous map $$F:T(\Sigma\setminus P)\to \R$$ such that for every $ x\in \Sigma$, $F|_{T_x\Sigma}$ is a  norm. We  say that $F$ is \emph{induced from a $\mathbb Z/n\mathbb Z$-invariant norm $\|\cdot\|_Q$}  if, under the standard identification  $T_x\Sigma\cong\mathbb R^2$, $F|_{T_x\Sigma}=\|\cdot\|_Q$. In this case we say that the Finsler metric is \emph{compatible} with the $1/n$-translation structure. 
\end{defn}
In this paper we will only consider compatible Finsler metrics: these are very special elements within the class of Finsler metrics; however, as they can be equivalently characterized as being the pull-back of Finsler metrics on  the manifold $\R^2$ that are invariant under the group $\R^2\rtimes \Z/n\Z$, they are natural in the context of $(G,X)$-structures. 
%
%
%
%
%
%

We will make precise in Section \ref{sec:Liouville_current} that the  Finsler metrics induced by the following norms can be used as building blocks to construct any compatible Finsler metric. See Figure \ref{fig:web} for examples.

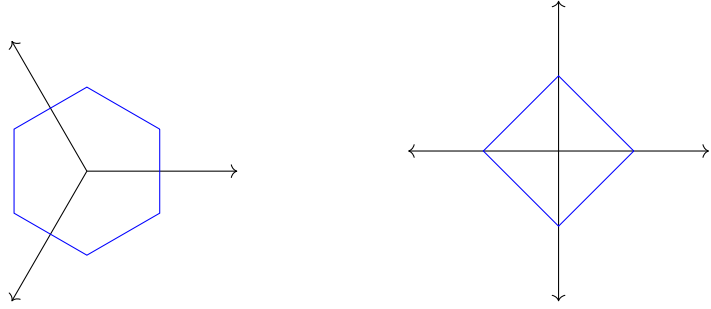
\begin{figure}[h]
\begin{center}
\begin{tikzpicture} 
    \draw[->] (0:0) to (0:2);
    \draw[->] (0:0) to (120:2);
    \draw[->] (0:0) to (240:2);  
    \draw[blue] (30:{sqrt(5)/2}) to (90:{sqrt(5)/2})to (150:{sqrt(5)/2}) to (210:{sqrt(5)/2}) to (270:{sqrt(5)/2}) to (330:{sqrt(5)/2}) to (30:{sqrt(5)/2});
\end{tikzpicture}
\hspace{2cm}
\begin{tikzpicture} 
    \draw[->] (0:0) to (0:2);
    \draw[->] (0:0) to (90:2);
    \draw[->] (0:0) to (180:2);        \draw[->] (0:0) to (270:2);
    \draw[blue] (0:1) to (90:1) to (180:1) to (270:1) to (0:1);
\end{tikzpicture}
\end{center}
\caption{The black vectors form the 3-web at angle 0, and the 4-web at angle 0, and the blue polygons are the unit spheres of the associated norms.}\label{fig:web}
\end{figure}

\begin{defn}\label{web_definition}\
    \begin{itemize}
        \item The $\theta$-\emph{web} is the $n$-tuple of vectors $v_{[\theta]}=\{\exp(i(\theta+2\pi k/n)), \;k=0,1,\ldots,n-1\}$ in  $\R^2\cong \C$.
        \item The \emph{$\theta$-norm} is 
        \[
    \|u\|_\theta=\sum_{w\in v_{[\theta+\frac\pi 2]}} \abs{\innerproduct{u}{w}}.
    \]
      %
    \end{itemize}
\end{defn}
Note that the $\theta$-norm is obtained by considering absolute values of inner products with the vectors in the \emph{orthogonal} $\theta$-web because this is the norm that will play a role in the construction of measured multi-foliations.

For every $1/n$-translation surface $\Sigma$ we will still denote by $v_{[\theta]}$ the $\theta$-web in $T(\Sigma\setminus P)$, namely the field of $n$-vectors  induced by the $\theta$-web in $\R^2$ under the identification $T(\Sigma\setminus P)\cong\R^2$. 
We will refer to the Finsler metric $F_\theta$ induced by the $\theta$-norm $\|\cdot\|_\theta$ on $\mathbb R^2$ as the \emph{$\theta$-metric}. It holds
\begin{equation}\label{e.Finsler_example}
    F_\theta(u)=\sum_{w\in v_{[\theta+\frac \pi 2],p}} \abs{\innerproduct{u}{w}}
\end{equation}
for every  vector $u\in T_p\Sigma$, and the $\theta$-web $v_{[\theta]}$ on the $1/n$-translation surface $\Sigma$.

\begin{defn}
    Let $\Sigma$ be a $1/n$-translation surface  with a compatible Finsler metric $F_Q$. 
    \begin{itemize}
    \item The \emph{length} of a piecewise $C^1$ curve $\gamma:[0,T]\to \Sigma$ is $\ell_Q(\gamma)=\int_0^T F_Q(\gamma'(t))dt$.
    \item    A piecewise $C^1$ curve $\gamma:[0,1]\to\Sigma$ is a \emph{geodesic} if it minimizes length in its homotopy class relative to endpoints.
    \item A closed piecewise $C^1$-curve $\gamma:\mathbb S^1\to\Sigma$ is geodesic if it minimizes length in its free homotopy class.
    \end{itemize}
\end{defn}

As opposed to CAT(0) geodesics (Definition \ref{defn:CAT(0)_geodesic}), the definition of geodesics for general Finsler metrics is not local. This is for a reason: while local minimizers for the CAT(0)-length are necessarily minimizers in their homotopy class, already in $\R^2$ for non-strictly convex Finsler metrics, there exist curves that locally minimize length, but far from minimizing total length.

\begin{ex}
    Consider $\R^2$ with the $\ell^1$ metric. The piecewise straight path $(0,0)-(1,0)-(1,1)-(0,1)$ depicted in blue in Figure \ref{fig:ell1} locally minimizes length but doesn't minimize length in its homotopy class: it has length 3 while the homotopic curve in red has length 1.
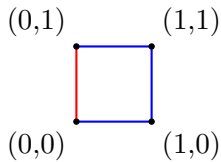
\begin{figure}[h]
\begin{center}
\begin{tikzpicture} 
    \tikzset{every picture/.style={line width=0.75pt}} 
\draw [thick, blue](0,0) to (1,0)to (1,1) to(0,1);
\draw [thick, red](0,0) to(0,1);
\filldraw (0,0) circle (1pt) node [below left] {(0,0)}; 
\filldraw (1,0) circle (1pt) node [below right] {(1,0)}; 
\filldraw (1,1) circle (1pt) node [above right] {(1,1)}; 
\filldraw (0,1) circle (1pt) node [above left] {(0,1)}; 
\end{tikzpicture}
\end{center}
\caption{A local length minimizer for the $\ell^1$-metric in $\R^2$ in blue, which is not a Finsler geodesic.}\label{fig:ell1}
\end{figure}
\end{ex}

We will denote by $d_Q$ the distance function on $\Sigma$ associated to the (compatible) Finsler metric $F_Q$. Observe that by compactness of $\Sigma$ the distance is geodesic, but if the underlying norm $\|\cdot\|_Q$ is not strictly convex, geodesics are in general not unique.

\section{CAT(0) geodesics and Finsler metrics}\label{sec:CAT(0)_geodesic}

In the following, we will use the terms \textit{CAT(0) geodesic} and \textit{Finsler geodesic} to distinguish geodesics with respect to different norms.
 The goal of the subsection is to prove Theorem \ref{thmA} from the introduction, showing that  $\CAT(0)$ geodesics on $1/n$-translation surfaces are  Finsler geodesics for any compatible Finsler metric.

We will work in the universal cover $\wideS$ and define a length non-increasing projection $\pi_w: \widetilde\Sigma\to\gamma$ for any compatible Finsler metric and any $\CAT(0)$ geodesic $\gamma\subset\wideS$.

\subsection{Straight geodesics}
We know from Proposition \ref{geodesic behavior} that geodesics in a $1/n$-translation surface are concatenations of straight segments. Some special geodesics have locally constant tangent vectors. See Figure \ref{p.left-turning} for an illustration. 

\begin{defn}\label{d.left_turning_geodesic}
    A \emph{bi-infinite left-turning} (resp. \emph{right-turning}) \emph{straight $\CAT(0)$ geodesic} (in short \emph{straight geodesic}) in a $1/n$-translation surface $\Sigma$ is an oriented $\CAT(0)$ geodesic $\gamma\subset \Sigma$ such that, for any singular point $p$ in $\gamma$, the angle at $p$ to the left (resp. right) of $\gamma$ is $\pi$.
\end{defn}

\begin{figure}[htbp]
    \centering
    \includegraphics[width=0.35\textwidth]{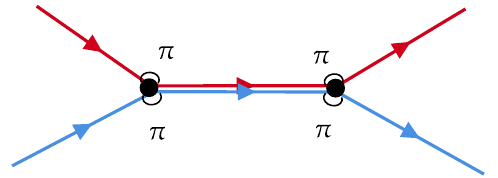}
    \caption{A left-turning straight $\CAT(0)$ geodesic in red, and a right-turning straight $\CAT(0)$ geodesic in blue sharing a saddle connection.}\label{p.left-turning}
\end{figure}

Note that in the definition of straight geodesic the set of singular point in $\gamma$ could be empty, that is, regular geodesics are examples of straight geodesics.

Intuitively, a tangent vector determines a left-turning (resp.~right-turning) straight geodesic. But in general the tangent space at a singular point $y\in P\subset\Sigma$ is not well defined as a vector space and multiplication by scalar $-1$ does not make sense. However a neighbourhood of $y$ can be obtained by glueing open neighbourhoods of the origin in sectors of $\R^2$ bounded by geodesic rays, and thus has a natural piecewise linear structure, making it isometric to a cone of angle $\theta(p)$. We use this structure to talk about the generalized tangent space $T_y\Sigma$ at a singular point $y$, while remembering that, if $y$ is singular, only multiplication by a positive real number is well defined. Note that the orientation of $\Sigma$ also induces a cyclic orientation of $T_p\Sigma$ at singular points.


\begin{prop}\label{p.halfspaces}
    Let $\Sigma$ be a $1/n$-translation surface. Then any vector $v\in T(\Sigma)$ is contained in a unique bi-infinite left- (resp. right-)turning  straight $\CAT(0)$ geodesic. Moreover, if $\Sigma=\wideS$ is simply connected, every bi-infinite left- (resp. right-)turning  straight $\CAT(0)$ geodesic separates $\widetilde \Sigma$ into two convex halfspaces. 
\end{prop}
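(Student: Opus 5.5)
The plan is to build the geodesic by continuation and then argue separation in the universal cover. Take $v\in T_x\Sigma$. If $x$ is regular, the straight segment in direction $v$ (and in direction $-v$) is well defined locally, since the chart is Euclidean. We extend it as a locally straight segment until it reaches a cone point $p$. At $p$, the incoming direction determines a point of the link, which is a circle of length $\theta(p)>2\pi$. The left-turning rule says the outgoing direction is the unique point of the link at angle exactly $\pi$ counterclockwise (to the left) from the incoming direction. It is unique because the link is a circle and the orientation of $\Sigma$ fixes the cyclic order. Since $\theta(p)\ge 2\pi+2\pi/n$, the right-side angle is $\theta(p)-\pi\ge\pi$. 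By Proposition \ref{geodesic behavior}(2) the concatenation is therefore a CAT(0) geodesic, with left angle exactly $\pi$. We do the same backwards from $x$, turning so that the left angle is again $\pi$ with respect to the orientation of the geodesic.

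If $x$ itself is singular, $v$ only fixes the outgoing direction. The backward direction is then chosen as the point of the link at angle $\pi$ to the left, which is again unique. Since $P$ is discrete and the structure is complete, each straight segment either hits a cone point after positive length or continues forever. On compact $\Sigma$ or its universal cover, cone points are uniformly separated, so the continuation is defined for all times and we obtain a bi-infinite geodesic. Uniqueness holds because at every step the continuation is forced: locally by uniqueness of Euclidean straight lines, and at cone points by the angle-$\pi$ condition. The right-turning case is symmetric.

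For the second claim, let $\Sigma=\wideS$, which is homeomorphic to $\R^2$ and CAT(0) by Proposition \ref{CAT0 property}. A local geodesic in a CAT(0) space is a global geodesic, so $\gamma$ is a properly embedded bi-infinite line. By the Jordan curve theorem for proper lines in the plane, it separates $\wideS$ into two components $H_L$ and $H_R$. For convexity, take $a,b$ in the closure of $H_L$ and suppose the unique geodesic $[a,b]$ leaves it. Then some subarc of $[a,b]$ has endpoints $c,d\in\gamma$ and interior in $H_R$. This subarc and the segment of $\gamma$ between $c$ and $d$ are two distinct geodesics with the same endpoints. That gives a bigon, which contradicts uniqueness of geodesics in CAT(0) spaces (Proposition \ref{CAT0 property}(2)). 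The same argument applies to $H_R$, so both closed halfspaces are convex. Note that convexity does not depend on which side has angle $\pi$. That condition only matters for uniqueness, while the geodesic property of $\gamma$ alone gives convexity.

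I expect the main obstacle to be bookkeeping at singular starting points. There one must say what "contains $v$" means, orient the link consistently, and make sure the backward choice also produces left angle exactly $\pi$. The proof that the continuation is bi-infinite, meaning cone points do not accumulate along it, should follow from completeness and discreteness of $P$.
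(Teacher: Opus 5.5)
Your proposal is correct and follows essentially the same route as the paper. Existence and uniqueness come from forced local continuation (straight in charts, a unique angle-$\pi$ choice at cone points) together with completeness, and convexity comes from the bigon formed by $\gamma$ and an excursion of $[a,b]$ into the other side, which contradicts CAT(0) uniqueness of geodesics; you also make explicit the Jordan-curve separation and the cone-point bookkeeping that the paper leaves implicit. One small orientation slip to fix: if the incoming direction is the backward-pointing point of the link, the region to the left of $\gamma$ is swept \emph{clockwise} from it to the outgoing direction, not counterclockwise.
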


\begin{proof}
    In order to verify the first claim, observe that straight geodesics are locally uniquely determined by their tangent vectors; since the $\CAT(0)$ distance is complete, local straight geodesic can be extended for all times.
    Suppose one of the half space is not convex, then there is a geodesic with endpoints in this half space and intersecting the other one. As a consequence, there is a bigon enclosed by segments of the boundary and this geodesic, which creates a degenerate geodesic triangle with total inner angle greater than $\pi$. It contradicts Proposition \ref{CAT0 property}.
\end{proof}

Let us now endow $\wideS$ with the CAT(0) metric induced by the $1/n$-translation structure, and consider the Gromov boundary $\partial\wideS$. It is well known that $\partial\wideS$ is homeomorphic to the circle, and the orientation of $\Sigma$ induces the counter-clockwise cyclic orientation on $\partial\wideS$. Note that any oriented geodesic $\gamma$ determines a forward endpoint in $\gamma^+\in\partial\Sigma$ and a backward endpoint  $\gamma^-\in\partial\Sigma$. 

\begin{defn}
Two pairs $(x_1,y_1),(x_2,y_2)\in \partial\widetilde\Sigma\times\partial\widetilde\Sigma$ are \emph{linked} if $x_2,y_2$ are in different connected components of $\partial\wideS\setminus\{x_1,y_1\}$. Two geodesics are said to be \emph{transverse} if their endpoints in $\partial\wideS$ are linked. The notion of \emph{unlinked} pairs and \emph{non-transverse} geodesics are defined analogously.
\end{defn}

For $1/n$-translation surface and $\CAT(0)$ geodesics, transverse and non-transverse pairs of geodesics behave in a special way.

\begin{prop}\label{p.transverse_intersection}
    For a $1/n$-translation surface, if two $\CAT(0)$ geodesics $\gamma_1,\gamma_2$ are transverse, then one of the following holds:
    \begin{enumerate}
        \item $\gamma_1\cap\gamma_2$ is one single point $p$ and the two geodesics intersect transversely at $p$;
        \item $\gamma_1\cap\gamma_2$ is a concatenation of saddle connections, and $\gamma_1$ starts from one side of $\gamma_2$, travels along $\gamma_2$ for those common saddle connections, and then leaves $\gamma_2$ on the other side.
    \end{enumerate}
    If either $\gamma_1$ or $\gamma_2$ is left-turning or right-turning, then only the first case can happen.
\end{prop}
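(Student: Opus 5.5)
We work in the universal cover $\wideS$, which is CAT(0) and homeomorphic to $\R^2$ by Proposition \ref{CAT0 property}. The argument has three steps: show the intersection is a connected geodesic segment, analyse its two possible shapes, and then handle the turning case.

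\textbf{Step 1: connectedness.} We first show that $\gamma_1\cap\gamma_2$ is nonempty and connected. Since the endpoints are linked, $\gamma_2^\pm$ lie in different components of $\partial\wideS\setminus\{\gamma_1^\pm\}$. The bi-infinite geodesic $\gamma_1$ is a properly embedded line, so it separates $\wideS\cong\R^2$ into two components whose closures at infinity are the two arcs. Hence $\gamma_2$ must meet $\gamma_1$.

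If $p,q\in\gamma_1\cap\gamma_2$, the subsegments of $\gamma_1$ and $\gamma_2$ between $p$ and $q$ are both CAT(0) geodesics. By uniqueness of geodesics in a CAT(0) space they coincide; this is the "no bigon" statement of Proposition \ref{CAT0 property}. So the intersection is a connected closed subset $I$ of both geodesics. It is compact: were it unbounded, the two geodesics would share an endpoint at infinity, contradicting linkedness.

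\textbf{Step 2: the two cases.} If $I$ is a single point $p$, then, since the geodesics are transverse and meet only at $p$, $\gamma_1$ crosses from one side of $\gamma_2$ to the other at $p$. This is case (1).

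If $I$ is a nondegenerate segment, its endpoints $a$ and $b$ must be cone points. At a regular point a geodesic is locally straight, so two geodesics sharing an initial segment cannot branch there. Thus $I$ is a concatenation of saddle connections. The separation argument of Step 1, applied to the ends of $\gamma_1$ near its two endpoints at infinity, shows that the incoming ray of $\gamma_1$ at $a$ lies on one side of $\gamma_2$ and the outgoing ray at $b$ lies on the other side. This is case (2).

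Some care is needed with sides at the cone points $a$ and $b$: we use the cyclic order of directions in $T_a\Sigma$ and $T_b\Sigma$. The other possibility, that $\gamma_1$ leaves on the same side it entered, would make the endpoints unlinked, since then $\gamma_1$ would not cross $\gamma_2$.

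\textbf{Step 3: the turning case.} This step is the main obstacle. Suppose $\gamma_2$ is left-turning, and that case (2) holds with $I$ a nondegenerate segment. At the branching cone point $b$, the geodesic $\gamma_1$ leaves $\gamma_2$ into the side of $\gamma_2$ opposite to where it came in.

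Consider the sector at $b$ on the left of $\gamma_2$. Its angle is exactly $\pi$. If $\gamma_1$ exits into this left side, then the angle of $\gamma_1$ at $b$ on that side, measured between the incoming direction along $I$ and the outgoing direction, is strictly less than the left angle of $\gamma_2$, which is $\pi$. This contradicts Proposition \ref{geodesic behavior}(2).

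So at every branching point $\gamma_1$ exits into the right side of $\gamma_2$. Applying the same argument at $a$ with the orientation reversed, $\gamma_1$ enters from the right side as well. Hence it enters and exits on the same side, contradicting Step 2.

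The right-turning case is symmetric. If instead $\gamma_1$ is the turning geodesic, the argument is the same with the roles of $\gamma_1$ and $\gamma_2$ swapped, since the angle condition is symmetric.

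The delicate points are making "side" precise at cone points and checking the orientation conventions, namely that the entering side at $a$ corresponds to the left or right angle as claimed. I would verify this with a local picture in the cone of angle $\theta(a)$.
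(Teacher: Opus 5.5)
Your proposal is correct and follows essentially the same route as the paper: connectedness of $\gamma_1\cap\gamma_2$ via uniqueness of CAT(0) geodesics (no bigons), the half-plane separation argument to rule out empty or unbounded intersection, cone points at the ends of a nondegenerate common segment, and a local angle comparison at the two branching cone points for the turning case. The only difference is packaging. You argue by contradiction: the left angle of exactly $\pi$ of a left-turning $\gamma_2$, together with the geodesic angle condition on $\gamma_1$, forces both free branches of $\gamma_1$ into the right half-plane of $\gamma_2$. The paper instead shows directly that each geodesic has left angle $>\pi$ at one branch point and right angle $>\pi$ at the other.
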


\begin{proof}
    By CAT(0) geometry, the intersection $\gamma_1\cap\gamma_2$ of two geodesics is either empty or a connected subset, otherwise segments of $\gamma_1,\gamma_2$ joining different components of $\gamma_1\cap \gamma_2$ creates a bigon. If the intersection is empty the geodesic $\gamma_2$ lies in one half-space determined by $\gamma_1$, and thus the endpoints cannot be linked. Suppose now $\gamma_1\cap\gamma_2$ is not reduced to a single point, and denote by $p_1,p_2$ the endpoints of the common geodesic segment. Then, by Proposition \ref{geodesic behavior}, the total angle at  $p_i$ is greater than $2\pi$,  and thus the points $p_i$ are cone points. Therefore, $\gamma_1\cap\gamma_2$ is a concatenation of saddle connections.

    In case (2), orient the geodesics $\gamma_i$ so that $p_1$ is before $p_2$ (compare Figure \ref{f.transverse_intersection}). With this orientation we can assume, up to switching the roles of $\gamma_1$ and $\gamma_2$ that the 4-tuple $(\gamma_1^-,\gamma_2^-,\gamma_1^+,\gamma_2^+)$ is positively oriented. With these choices, the angle at $p_2$ to the left of $\gamma_1$ is strictly bigger than the angle at $p_2$ to the left of $\gamma_2$, and is in particular strictly bigger than $\pi$, similarly the angle at $p_1$ to the right of $\gamma_1$ is strictly bigger than $\pi$.  Thus, $\gamma_1$ cannot be left-turning or right-turning, the proof for $\gamma_2$ is analogous.
\end{proof}

\begin{figure}[h]
    \centering
    \includegraphics[width=0.5\linewidth]{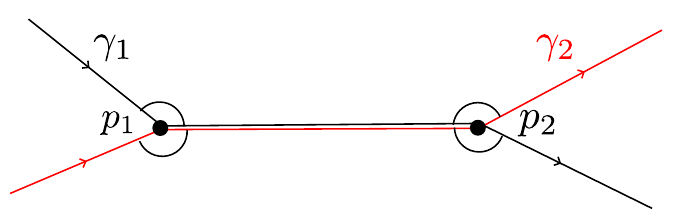}
    \caption{Two transverse geodesics share saddle connections. All the marked angles are no less than $\pi$.}
    \label{f.transverse_intersection}
\end{figure}

\begin{prop}\label{p.unlinked}
    For a $1/n$-translation surface, if two $\CAT(0)$ geodesics $\gamma_1,\gamma_2$ are non-transverse, then one of the following holds:
    \begin{enumerate}
        \item $\gamma_1\cap\gamma_2=\emptyset$;
        \item $\gamma_1\cap\gamma_2$ is a concatenation of saddle connections.
        \end{enumerate}
\end{prop}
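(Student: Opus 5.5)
We follow the same strategy as in the proof of Proposition \ref{p.transverse_intersection}. We work in the universal cover $\wideS$ with its $\CAT(0)$ metric and assume $\gamma_1\cap\gamma_2\neq\emptyset$. The aim is to show that the intersection is a concatenation of saddle connections. Three points need care: that the intersection is connected, that it cannot be a single point, and that its endpoints (when it is not all of $\gamma_1=\gamma_2$) are cone points.

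\emph{Connectedness.} Suppose $\gamma_1\cap\gamma_2$ had two components. Take a bounded arc of $\gamma_1$ and the arc of $\gamma_2$ joining the same two points of consecutive components. These would bound a bigon, which is impossible by Proposition \ref{CAT0 property}(2) (equivalently, by uniqueness of geodesics in a $\CAT(0)$ space). So $\gamma_1\cap\gamma_2$ is a connected subset of both geodesics: a point, a segment, a ray, or a full line.

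\emph{The intersection is not a single point.} Suppose $\gamma_1\cap\gamma_2=\{p\}$. Near $p$, the geodesic $\gamma_2$ has one incoming and one outgoing branch, and neither branch lies on $\gamma_1$. By Proposition \ref{geodesic behavior}(2), $\gamma_1$ divides a small disk around $p$ into two sectors, and each sector has angle at least $\pi$. If the two branches of $\gamma_2$ left $p$ into the same sector, then near $p$ the curve $\gamma_2$ would lie on one side of $\gamma_1$ and only touch it at $p$. We rule this out with the convexity of half-spaces, which we adapt from Proposition \ref{p.halfspaces}. The complement $\wideS\setminus\gamma_1$ has two components, and each closed component is convex: if a geodesic left a component and returned, it would create a bigon with $\gamma_1$. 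Now $\gamma_2$ touches $\gamma_1$ only at $p$, so away from $p$ it lies in one open component, say $H$. The two sectors at $p$ each have angle at least $\pi$. Since $\gamma_2$ has angle at least $\pi$ on both sides at $p$, it cannot stay inside one sector and still meet the requirement on its other side: one of its sides contains the whole complementary sector together with $\gamma_1$'s rays. The cleanest way to finish is to perturb. Pick points $a,b$ of $\gamma_2$ near $p$ on either side. The straight segment from $a$ to $b$ stays in $H$ and avoids $p$, and it is strictly shorter because the angle of $\gamma_2$ at $p$ on the $\gamma_1$-free side is less than $\pi$ inside a single sector. This contradicts Proposition \ref{geodesic behavior}(2). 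Hence the branches go into different sectors. Then $\gamma_2$ crosses from one side of $\gamma_1$ to the other, its endpoints lie in different components of $\partial\wideS\setminus\{\gamma_1^\pm\}$, and the geodesics are transverse, which is a contradiction.

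\emph{Endpoints of the intersection.} If the intersection is a nondegenerate segment or ray, let $q$ be a finite endpoint. At $q$ the two geodesics branch apart. Together with the common branch, the two outgoing branches cut a neighbourhood of $q$ into three sectors, and by Proposition \ref{geodesic behavior}(2) two of these sectors have angle at least $\pi$ each, while the third is positive. So the total angle at $q$ exceeds $2\pi$, and $q$ is a cone point. A geodesic segment between cone points is, by Proposition \ref{geodesic behavior}(2), a concatenation of saddle connections, which gives case (2). A ray or line in the intersection is a concatenation of saddle connections possibly ending in rays, which we include in case (2) as well.

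The main obstacle is the single-point case. This is where non-transversality really has to be used, via the local sector analysis at $p$ combined with the convexity of half-spaces. We would write this step most carefully, using the angle bounds from Proposition \ref{geodesic behavior}.
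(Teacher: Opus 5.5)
Your connectedness step (no bigons) and your angle count at a finite endpoint of a nondegenerate intersection are correct. They are the argument the paper relies on: it writes no separate proof here and implicitly reuses the proof of Proposition \ref{p.transverse_intersection}.

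The gap is the middle step. The claim that $\gamma_1\cap\gamma_2$ can never be a single point is false, so no argument can establish it. The failing sentence says the angle of $\gamma_2$ at $p$ on its $\gamma_1$-free side is less than $\pi$ because both branches lie in one sector of $\gamma_1$. In fact that angle is only bounded by the angle of the sector. At a cone point of angle $\Theta>2\pi$, the two sectors of $\gamma_1$ have angles $\alpha$ and $\Theta-\alpha$, both at least $\pi$, so at least one of them exceeds $\pi$.

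Here is a concrete configuration. Parametrize the directions at $p$ by $\R/\Theta\Z$. Let $\gamma_1$ leave $p$ in directions $0$ and $\pi$, and let $\gamma_2$ leave $p$ in directions $\pi+\delta$ and $2\pi+\delta$, with $0<\delta<\Theta-2\pi$. Extend the four rays to geodesic rays.
\begin{itemize}
\item Both curves make angles $\pi$ and $\Theta-\pi$ at $p$, so they are local geodesics, hence geodesics of $\wideS$.
\item Their intersection is connected and locally equal to $\{p\}$, hence equal to $\{p\}$.
\item Both rays of $\gamma_2$ start into the sector $(\pi,\Theta)$, so $\gamma_2\setminus\{p\}$ lies in a single component of $\wideS\setminus\gamma_1$. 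The two geodesics are therefore non-transverse and meet in exactly one point.
\end{itemize}
Convexity of half-spaces does not exclude this, since touching is compatible with convexity. Your perturbation argument only works when the sector containing both branches has angle exactly $\pi$, for instance when $p$ is regular.

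The correct conclusion of this step is that a single-point intersection of non-transverse geodesics must be a cone point. Suppose instead that $p$ were regular. Then both sectors have angle $\pi$, so the two branches of $\gamma_2$ enter opposite sides of $\gamma_1$. The two rays of $\gamma_2$ from $p$ then lie in different components of $\wideS\setminus\gamma_1$. Their endpoints differ from $\gamma_1^\pm$, because in a $\CAT(0)$ space the geodesic ray from $p$ to a given point of $\partial\wideS$ is unique. You also need this uniqueness in your crossing case, where you currently skip it. So the endpoints are linked, a contradiction.

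The remaining single cone point must be read as the degenerate case of (2), a concatenation of zero saddle connections. This is also how the paper's statement has to be understood, since the configuration above genuinely occurs. With the single-point step replaced by this, the rest of your proof goes through.
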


 More generally given a tangent vector $w\in T_p\Sigma$ at an either regular or singular point, we denote by $w^+_l$ the forward endpoint of the unique left-turning straight geodesic tangent to $w$, and analogously for $w^-_l, w^+_r, w^-_r$. Denote by $\gamma_l,\gamma_r$ the corresponding left-turning and right-turning geodesics.
 With the same arguments as in the proof of Proposition \ref{p.transverse_intersection} we get
 
\begin{prop}\label{p.endpoint-orientation}\
\begin{enumerate}
\item If $w^+_l$ is distinct to $w^+_r$, and $w^-_l$ is distinct to $w^-_r$, then the four-tuple $(w^+_r, w^+_l, w^-_l, w^-_r)$ is positively oriented. In particular, geodesics $\gamma_l,\gamma_r$ are non-transverse.
\item Let $v_1, v_2, v_3\in T_p\wideS$ be positively oriented, then the triple $({v_1}^+, {v_2}^+, {v_3}^+)$ is positively oriented where $v_i^+$ can be either $(v_i)^+_l$ or $(v_i)^+_r$.
\item Let $I\subset\wideS$ be an oriented regular segment and choose two points $p_1,p_2\in I$ so that $p_2$ comes after $p_1$. Denote also by $I$ the tangent vector along $I$ compatible with the orientation. Let $v_i\in T_{p_i}\wideS$ be a pair of parallel vectors such that $\{I,v_i\}$ form a positively oriented basis. Then $(I_l^+, (v_2)^+_l, (v_1)^+_r, I_l^-)$ is positively oriented. 
\end{enumerate}
\end{prop}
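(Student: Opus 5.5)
Throughout we work in $\wideS$, which is CAT(0) and homeomorphic to $\R^2$ by Proposition \ref{CAT0 property}. The plan is to reduce all three items to two facts: the half-space statement of Proposition \ref{p.halfspaces}, and the no-bigon property. Each item will be proved by comparing two straight geodesics through a common point, or through points of a common regular segment, and reading off their endpoints from the side on which each one lies.

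\emph{Item (1).} Both $\gamma_l$ and $\gamma_r$ pass through $p$ with tangent $w$. Consider the maximal common segment $\gamma_l\cap\gamma_r$; it is connected, since otherwise we would get a bigon. Beyond its forward end $q$, the left angle of $\gamma_l$ at $q$ is $\pi$ and the left angle of $\gamma_r$ is at least $\pi$. Hence, if they separate, $\gamma_r$ leaves to the left of $\gamma_l$ there, i.e.\ it goes strictly into the left half-space of $\gamma_l$ (convex by Proposition \ref{p.halfspaces}). Because geodesics do not reintersect, $w_r^+$ lies in the closure of the left side of $\gamma_l$ in $\partial\wideS$. The same argument at the backward end $q'$ shows that $\gamma_r$ leaves on the left of $\gamma_l$ as well: going backwards, the left angle of $\gamma_r$ is still at least $\pi=$ the left angle of $\gamma_l$. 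The hypothesis that the endpoints are distinct turns ``closure'' into strict inequality. So both ends of $\gamma_r$ lie on the same side of $\gamma_l$, which gives non-transversality, and the cyclic order $(w^+_r,w^+_l,w^-_l,w^-_r)$ follows by tracking which arc of $\partial\wideS\setminus\{w_l^\pm\}$ is the left one under the counterclockwise convention. The delicate point is checking that the orientation convention matches; I would verify it on a local picture at a cone point.

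\emph{Item (2).} The three straight geodesics issuing from $p$ in directions $v_1,v_2,v_3$ pairwise intersect only along an initial segment, which is just $\{p\}$ if $p$ is regular. Each forward ray is a geodesic ray from $p$, and distinct rays from a point in a CAT(0) plane cannot cross again, by no bigons. The rays therefore cut $\wideS$ into sectors whose cyclic order at $p$ equals the cyclic order of their endpoints on $\partial\wideS$. If two of the rays share an initial saddle connection before branching, the branching angle comparison is the same as in (1): the branch order at the separation point preserves the order of the initial directions. Degenerate coincidences of endpoints would correspond to rays bounding a flat strip or sector, as in Proposition \ref{p.bound_cylinder}, and I would treat them as allowed, in the weak sense of ``positively oriented''.

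\emph{Item (3).} Let $\gamma_I$ be the left-turning extension of $I$. Since each $v_i$ points to the left of $I$, the rays along $v_1$ and $v_2$ start in the open left half-space of $\gamma_I$, and by convexity of the right half-space they never return. Their endpoints therefore lie in the left arc from $I_l^+$ to $I_l^-$. It remains to show that $(v_2)_l^+$ precedes $(v_1)_r^+$. The two geodesics start at parallel vectors at $p_1$ and $p_2$, with $p_2$ ahead along $I$. The plan is to apply (2) at $p_1$ to $v_1$ and the vector pointing along the segment towards $p_2$, and then use a non-crossing argument between the geodesic through $v_2$ and the one through $v_1$. Both are straight, so by Proposition \ref{p.transverse_intersection} they meet at most once, transversally. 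But parallel straight lines cannot meet in a flat region, and a first meeting at a cone point would force a turning-angle violation: the left-turning one would have to turn right. Hence they are disjoint or asymptotic, and the ordering follows. I expect this last non-crossing step to be the main obstacle, since the region between them may contain cone points.
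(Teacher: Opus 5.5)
\textbf{Item (3) has a genuine gap, exactly at the non-crossing step you flag.} Proposition \ref{p.transverse_intersection} only says that a crossing of two straight geodesics would be a single transverse point; it does not rule one out. ``Parallel straight lines cannot meet in a flat region'' has no force once a cone point lies between the two rays. The vectors $v_1,v_2$ are parallel only along $I$. After the rays pass on different sides of a cone point their directions are no longer comparable, and a priori they could meet at a regular point at a nonzero angle. The claimed turning violation at a cone point is also not justified.

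The missing idea is the angle-sum bound of Proposition \ref{CAT0 property}(2), which is essentially the paper's entire proof. Suppose the ray $r_1$ from $p_1$ along $v_1$ and the ray $r_2$ from $p_2$ along $v_2$ met, and let $q$ be the first point of $r_2$ on $r_1$. Consider the geodesic triangle $p_1p_2q$:
\begin{itemize}
\item its angle at $p_1$ is $\alpha$, the angle from $I$ to $v_1$;
\item its angle at $p_2$ is $\pi-\alpha$, because $I$ is regular and $v_1,v_2$ are parallel along it;
\item its angle at $q$ is strictly positive.
\end{itemize}
So the angle sum exceeds $\pi$, a contradiction. This uses nothing about left/right-turning and is insensitive to cone points inside the triangle or on its sides. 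Once non-crossing is known, your half-space argument does give the order. Note that $(v_2)^+_l=(v_1)^+_r$ genuinely occurs when both rays run inside a flat cylinder, so your ``asymptotic'' caveat is warranted and (3) must be read weakly there.

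\textbf{Item (1) puts $\gamma_r$ on the wrong side.} The structure is fine and matches the paper's bigon argument: $\gamma_l\cap\gamma_r$ is connected, and the geodesics do not reintersect after branching. At the forward branch point $q$, measure angles counterclockwise from the ray pointing back along the common segment. Then $\gamma_r$ (right angle $\pi$) leaves at angle $\pi$, while $\gamma_l$ (left angle $\pi$) leaves at angle $\theta(q)-\pi>\pi$. The sector $(0,\theta(q)-\pi)$ is the right side of $\gamma_l$, so $\gamma_r$ exits to the right. A left angle larger than $\pi$ means turning right, not left. The same holds at the backward branch point. Hence $\gamma_r$ lies in the right half-space of $\gamma_l$, which is what yields $(w^+_r,w^+_l,w^-_l,w^-_r)$. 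With $\gamma_r$ on the left as you wrote, you would instead obtain $(w^+_l,w^+_r,w^-_r,w^-_l)$, contradicting the statement.

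\textbf{Item (2) is correct, and your hedges there are unnecessary.} Rays from $p$ with distinct initial directions meet only at $p$, even when $p$ is a cone point. They also have distinct endpoints, because in a CAT(0) space the ray from a basepoint to a boundary point is unique. So the strict statement holds.
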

\begin{proof}
It follows from CAT(0) geometry: if the rays were to intersect in an interior point we would obtain either a triangle whose sum of interior angles is bigger than $\pi$, or a bigon, and neither is possible. 
\end{proof}
\subsection{$w$-brushes}
We now introduce $w$-brushes, partial singular foliations of subsets of $\wideS$ determined by a geodesic (segment) $\gamma$ and parallel transverse vectors along $\gamma$.

Let $\gamma:[0,1]\to\widetilde \Sigma$ be a regular geodesic segment. This implies, in particular, that $\gamma$ is contained in an Euclidean neighbourhood, and thus we can talk about parallel transport along $\gamma$. Every  vector $w_0\in T_{\gamma(0)}\wideS$ defines by parallel transport a tangent vector $w_t$ at every point $\gamma(t)$. We will denote by $w$ the corresponding parallel vector field along $\gamma$ and write $w_t=w_{\gamma(t)}$. We say that the vector field $w$ is \emph{positive} if at one interior point (equivalently any point) of $\gamma$, the pair of vectors $\gamma'(t), w_t$ forms a positive basis.

\begin{defn}
    Let $\gamma$ be an oriented regular geodesic segment parametrized by $[0,1]$ and $w$ a positive parallel vector field along $\gamma$. The \emph{$w$-brush} $B_w$ from $\gamma$ is the union of all bi-infinite straight $\CAT(0)$ geodesics tangent to $w$, with the exception that at $t=0$ we only add the right-turning geodesic, at $t=1$ only the left-turning geodesic.
\end{defn}

\begin{figure}[htbp]
    \centering
    \includegraphics[width=0.75\linewidth]{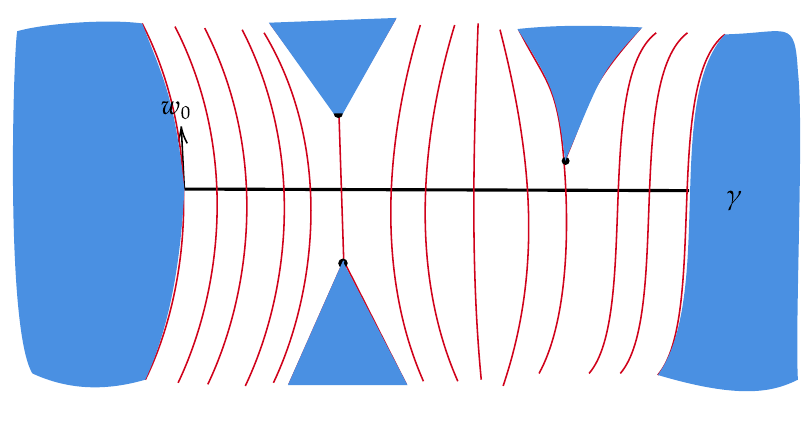}
    \caption{The $w$-brush from the regular geodesic segment $\gamma$, and the blue region is the complement of the brush.}
    \label{f.Brush}
\end{figure}  

We have the following strengthening of Proposition \ref{p.halfspaces}, see Figure \ref{f.Brush} for an illustration:

\begin{prop}\label{p.complementaryregion}
    A $w$-brush from $\gamma$ is a singular foliation of a closed subset of $\widetilde \Sigma$ whose complementary regions are convex and bounded by one of the following:
    \begin{itemize}
        \item two straight geodesics rays meeting at a  cone point,
        \item a straight geodesic ray containing an endpoint of $\gamma$.
    \end{itemize}
    Singularities of the foliation arise precisely at cone points where left- and right-turning geodesics bifurcate.
\end{prop}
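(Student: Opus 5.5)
The strategy is to reduce everything to the ordering and disjointness of straight geodesics, using Propositions \ref{p.halfspaces}, \ref{p.transverse_intersection} and \ref{p.endpoint-orientation}. Parametrize the leaves of $B_w$ by $t\in[0,1]$ and by a side label $l/r$. Write $\gamma^l_t,\gamma^r_t$ for the left- and right-turning straight geodesics tangent to $w_t$; when the two coincide we count the geodesic once. Since $\gamma$ is regular, $w_t$ lies at a regular point for every $t$, and each $\gamma^\ast_t$ crosses $\gamma$ transversally at $\gamma(t)$.

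\textbf{Step 1: distinct leaves do not cross.} Two leaves through distinct points $\gamma(s),\gamma(t)$ with $s<t$ cannot intersect transversely. If they did, the two leaves together with the segment $\gamma|_{[s,t]}$ would bound a geodesic triangle. The angles at $\gamma(s)$ and $\gamma(t)$ are supplementary, since $w$ is parallel, so they already sum to $\pi$. The third angle is positive, contradicting Proposition \ref{CAT0 property}(2). Proposition \ref{p.transverse_intersection} excludes the other transverse configuration, because the leaves are left- or right-turning. Non-transverse leaves can therefore meet only along common saddle connections, by Proposition \ref{p.unlinked}. In that case they leave $\gamma$ on the same side, share a ray from some cone point onward, and then bifurcate or coincide.

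\textbf{Step 2: leaves are monotonically ordered.} Proposition \ref{p.endpoint-orientation}(1) and (3) give that the forward endpoints $(w_t)^+_l,(w_t)^+_r$ are weakly cyclically monotone in $t$, with $r$ preceding $l$ at each fixed $t$; the same holds for the backward endpoints. Together with the convexity of half-spaces in Proposition \ref{p.halfspaces}, this shows that each leaf separates the leaves before it from the leaves after it. Two leaves therefore either coincide on a ray or are disjoint.

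\textbf{Step 3: closedness and the foliation structure.} Take a sequence of points on leaves $\gamma^{\ast}_{t_k}$ converging in $\wideS$, and pass to $t_k\to t$. Straight geodesics from nearby regular initial data converge locally uniformly to a left- or right-turning geodesic tangent to $w_t$, namely the appropriate one according to the side from which $t_k$ approaches. At $t=0,1$ only the included limits occur, which is why the definition keeps only $\gamma^r_0$ and $\gamma^l_1$. This shows $B_w$ is closed. Away from cone points, nearby leaves are parallel straight segments in a Euclidean chart, so $B_w$ is locally a product. At a cone point $p$ lying on some leaf, $\gamma^l_t$ and $\gamma^r_t$ agree up to $p$ and then split, so $p$ is a singularity of the foliation. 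Conversely, a leaf through a cone point with angle exactly $\pi$ on both sides cannot occur, since cone angles exceed $2\pi$. Hence singularities of the foliation are exactly these bifurcations.

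\textbf{Step 4: complementary regions.} A component $U$ of $\wideS\setminus B_w$ is bounded by limit leaves. If $U$ lies between leaves for parameters strictly inside $(0,1)$, then by monotonicity and Step 3 it lies between $\gamma^l_t$ and $\gamma^r_t$ for a single $t$. These agree up to the first cone point where they bifurcate, so $U$ is bounded by two straight rays from that cone point. Otherwise $U$ lies beyond $\gamma^r_0$ or $\gamma^l_1$, so it is a half-space cut off by a straight geodesic through an endpoint of $\gamma$; only the portion from the endpoint onward faces $U$ as a ray. Convexity follows in both cases because $U$ is an intersection of half-spaces bounded by straight geodesics, which are convex by Proposition \ref{p.halfspaces}.

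\textbf{Main obstacle.} The delicate part is the continuity in Step 3, namely identifying the limit of $\gamma^\ast_{t_k}$ as $t_k\downarrow t$ versus $t_k\uparrow t$ when the limiting leaf passes through cone points. I would handle this locally at each cone point: nearby parallel segments on either side of the limiting ray enter the sector on that side, and the resulting limit turns accordingly. The convention for which side is which would be fixed using Proposition \ref{p.endpoint-orientation}(3).
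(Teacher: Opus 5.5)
Your Step~1 stops short of the one fact the paper's proof rests on. You run the triangle argument only for a transverse crossing. You then explicitly allow two leaves through distinct points $\gamma(s)\neq\gamma(t)$ to meet non-transversely and ``share a ray from some cone point onward''. Accordingly, Step~2 concludes only that two leaves ``either coincide on a ray or are disjoint''. If such merging could occur, the merge point would be a singularity of $B_w$ that is not an $l/r$ bifurcation. Then neither the foliation property nor the ``precisely'' in the statement would follow, and your Step~3 classification of singularities silently assumes the merging away. The paper reduces the foliation claim exactly to showing that straight geodesics meeting $\gamma$ at different points with the same angle are \emph{disjoint}. It gets this from Proposition~\ref{p.endpoint-orientation}, whose proof is the same triangle/bigon argument, and then reads off the complementary regions.

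The repair is already in your hands, because the triangle argument works at any common point, not only at a transverse crossing. Suppose the $w$-side rays $\rho_s,\rho_t$ of $\gamma^\ast_s,\gamma^\ast_t$ meet, and let $q$ be the first point of $\rho_s\cap\rho_t$ along $\rho_s$. Then $[q,\gamma(s)]$ and $[q,\gamma(t)]$ leave $q$ in different directions, so the angle at $q$ is positive. The angles at $\gamma(s),\gamma(t)$ still sum to $\pi$, which contradicts Proposition~\ref{CAT0 property}. A $w$-side ray and a $(-w)$-side ray cannot meet, since they lie in opposite open half-spaces of a straight extension of $\gamma$. Hence leaves with distinct parameters are disjoint. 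The only coincidence is $\gamma^l_t=\gamma^r_t$ along the segment through $\gamma(t)$ up to the first cone point on each side; this is a segment, not a ray.

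With this in place, your one-sided-limit argument for closedness goes through; it nicely explains why $\gamma^r_0$ and $\gamma^l_1$ are the leaves kept. Step~4 also goes through, up to minor fixes:
\begin{itemize}
\item $\gamma(0),\gamma(1)$ may be cone points, since ``regular'' only restricts the interior and in the application $\gamma$ is a saddle connection.
\item The region beyond $\gamma^r_0$ is the whole half-plane bounded by the bi-infinite geodesic $\gamma^r_0$, as in the paper's proof, not a region bounded by a ray.
\item A wedge is not literally an intersection of half-spaces. The closed half-spaces of $\gamma^l_t$ and $\gamma^r_t$ meet in both wedges (one on each side of $\gamma$), joined by the common segment. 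The wedge is nonetheless convex, because a geodesic between two of its points cannot pass through the apex twice.
\end{itemize}
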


\begin{proof}
    In order to show that the $w$-brush foliates its image, it is enough to prove that two straight geodesics intersecting the regular segment $\gamma$ at different points at the same angle are disjoint, which follows from Proposition \ref{p.endpoint-orientation}. It's clear by construction that complementary regions are either half-planes or bounded by two straight geodesic rays. A similar argument to Proposition \ref{p.halfspaces} implies that they are convex.
\end{proof}
Let us now turn to singular geodesics $\gamma$,  the following condition will guarantee that the brushes along the different saddle connections constituting $\gamma$ are disjoint.
\begin{defn}
   Let $\gamma_1\ast\gamma_2$ be a concatenation of oriented saddle connections meeting at a cone point $p$. Let $w^i$ be a positive parallel field along $\gamma_i$, and denote by $w^i_p\in T_p\wideS$ the corresponding vector. The fields $w^1,w^2$ are \emph{unlinked} if the four-tuple $((w^2_p)^+_r,(w^1_p)^+_l,(w^1_p)^-_l,(w^2_p)^-_r)$ is positively oriented.
\end{defn}
Let us now consider an arbitrary geodesic $\gamma$, which we write as concatenation $\gamma=\gamma_1\ast\gamma_2\ldots\ast \gamma_k$ of saddle connections. We denote by $p_i=\gamma_i\cap\gamma_{i+1}$ the cone point where two consecutive saddle connections meet. Furthermore we choose, for every saddle connection a positive vector field $w^i$ along $\gamma_i$.

\begin{prop}\label{prop.unlinkedbrush}
   Let $\gamma=\gamma_1\ast\gamma_2\ldots\ast \gamma_k$ be a geodesic, and $w_i$ positive fields along $\gamma_i$, if for every $i$ the fields $w^i,w^{i+1}$ are unlinked, then for every $i<j$ the brushes $B_{w^i}, B_{w^j}$ are disjoint if $j>i+1$, and intersect in a common segment containing the point $p_i$ (possibly reduced to the point $p_i$) if $j=i+1$.
   \end{prop}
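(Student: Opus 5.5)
The plan is to reduce everything to a separation statement: each brush $B_{w^i}$ lies in a closed region $R_i$ of $\wideS$ bounded by two straight geodesics, the ``boundary leaves'' of the brush. These are the right-turning geodesic through $p_{i-1}$ tangent to $w^i_{p_{i-1}}$ and the left-turning geodesic through $p_i$ tangent to $w^i_{p_i}$; the endpoints of $\gamma_i$ are exactly where the brush keeps only one turning direction. I will prove that consecutive regions $R_i$ and $R_{i+1}$ meet only along a common piece of their shared boundary near $p_i$, and that non-consecutive regions are separated by the intermediate ones.

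\textbf{Step 1: each brush lies in a strip.} By Proposition \ref{p.complementaryregion}, $B_{w^i}$ foliates a closed set. Using Proposition \ref{p.endpoint-orientation}(3), each leaf of $B_{w^i}$ has its endpoints in the arcs of $\partial\wideS$ cut out by the two boundary leaves. So, by convexity of halfspaces (Proposition \ref{p.halfspaces}), $B_{w^i}$ lies in the intersection $R_i$ of two halfspaces. One halfspace is bounded by the right-turning geodesic at $p_{i-1}$, namely the side containing $\gamma_i$. The other is bounded by the left-turning geodesic at $p_i$.

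\textbf{Step 2: consecutive brushes.} The unlinked condition at $p_i$ says that
$$((w^{i+1}_{p_i})^+_r,(w^i_{p_i})^+_l,(w^i_{p_i})^-_l,(w^{i+1}_{p_i})^-_r)$$
is positively oriented. By Proposition \ref{p.endpoint-orientation}(1), this means the left-turning leaf $\lambda_i$ (last leaf of $B_{w^i}$) and the right-turning leaf $\rho_{i+1}$ (first leaf of $B_{w^{i+1}}$) both pass through $p_i$ and are non-transverse. Proposition \ref{p.unlinked} then gives that $\lambda_i\cap\rho_{i+1}$ is either the point $p_i$ or a concatenation of saddle connections through $p_i$. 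The positivity of the four-tuple puts $R_i$ and $R_{i+1}$ on opposite sides of $\lambda_i\cup\rho_{i+1}$. Hence $B_{w^i}\cap B_{w^{i+1}}\subset\lambda_i\cap\rho_{i+1}$, which is the claimed common segment containing $p_i$. For this I must check that $\lambda_i$ and $\rho_{i+1}$ are the only leaves of their brushes meeting this boundary. That follows from Step 1 together with the leaves of a single brush being disjoint.

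\textbf{Step 3: non-consecutive brushes.} For $j>i+1$, Step 2 shows that $R_{i+1}$ lies on the far side of $\lambda_i$, and $R_j$ lies beyond $\rho_{i+2}$, which sits inside the halfspace cut out by $\lambda_{i+1}$. The main point is that the halfspace bounded by $\lambda_i$ containing $B_{w^i}$ and the halfspace bounded by $\rho_j$ containing $B_{w^j}$ are disjoint. I would argue by induction on $j-i$, tracking the endpoints on $\partial\wideS$: by Step 1 the endpoints of the leaves of each brush lie in two arcs, and the unlinked conditions make these arcs for $i,i+1,\dots,j$ successively nested in cyclic order. Disjointness then follows because two straight geodesics with unlinked, distinct endpoints that do not share a saddle connection are disjoint, by Proposition \ref{p.unlinked} and the absence of bigons (Proposition \ref{CAT0 property}).

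The hardest step is excluding that $\lambda_i$ and $\rho_j$ share a saddle connection far away, possibly with coinciding endpoints at infinity. For this I would use that $\gamma$ is a geodesic, so the angle conditions at each $p_k$ (Proposition \ref{geodesic behavior}) hold. Any such common segment, joined with $\gamma$ between $p_i$ and $p_{j-1}$, would produce a geodesic triangle or bigon with angle sum exceeding $\pi$, contradicting Proposition \ref{CAT0 property}.
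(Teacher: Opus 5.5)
Your Steps 1 and 2 match the paper's treatment of consecutive brushes; the paper only says this case follows from the unlinked hypothesis via Proposition \ref{p.endpoint-orientation}. The gap is in Step 3.

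First, the claim that two straight geodesics with unlinked distinct endpoints and no common saddle connection are disjoint is false. Non-transverse geodesics can touch at a single cone point, exactly as $\lambda_i$ and $\rho_{i+1}$ may do at $p_i$ in your own Step 2, and absence of bigons does not exclude this.

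More seriously, the argument for your ``hardest step'' does not close. Suppose $\lambda_i$ and $\rho_j$ met at a point $q$. The side of the triangle $p_i,p_{j-1},q$ along $\gamma$ passes through $p_{i+1},\dots,p_{j-2}$, where the angle of $\gamma$ on the triangle's side can be much larger than $\pi$. Geodesicity of $\gamma$ gives no relation between the direction of $\lambda_i$ at $p_i$ and that of $\rho_j$ at $p_{j-1}$. The only relation comes from the chain of unlinked conditions. At each intermediate $p_k$, that condition lets the brush direction turn, relative to $\gamma$, by as much as the excess over $\pi$ of the angle of $\gamma$ at $p_k$ on that side. Hence for $j\ge i+3$ the triangle's angles at $p_i$ and $p_{j-1}$ can sum to much less than $\pi$. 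Proposition \ref{CAT0 property}, which only bounds the sum of the three vertex angles, then gives no contradiction. You would need a Gauss--Bonnet count that includes the intermediate vertices.

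What is missing is the fact behind the paper's remark that the half-spaces bounded by brush lines are nested. For each $k$, the two boundary leaves of $B_{w^k}$, namely $\rho_k$ through $p_{k-1}$ and $\lambda_k$ through $p_k$, are \emph{disjoint}. They are tangent to a parallel field at the two distinct endpoints of the single saddle connection $\gamma_k$. Here the triangle argument does work: the base angles are $\beta$ and $\pi-\beta$, and there are no intermediate vertices.

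Define the following closed half-spaces:
\begin{itemize}
\item $E_k$, bounded by $\rho_k$ and containing $B_{w^k}$;
\item $F_k$, bounded by $\lambda_k$ and containing $B_{w^k}$;
\item $E'_k$, the other closed half-space bounded by $\lambda_k$.
\end{itemize}
Disjointness of $\rho_k$ and $\lambda_k$ gives $E'_k\subset E_k$ and $\rho_k\cap E'_k=\emptyset$. Your Step 2, phrased for half-spaces, gives $E_{k+1}\subset E'_k$ and $F_k\cap E_{k+1}=\lambda_k\cap\rho_{k+1}$. Chaining these, for $j\ge i+2$ one gets $B_{w^j}\subset E_j\subset E'_{i+1}\subset E_{i+1}$, so
\[
B_{w^i}\cap B_{w^j}\subset F_i\cap E_{i+1}\cap E'_{i+1}=(\lambda_i\cap\rho_{i+1})\cap E'_{i+1}\subset \rho_{i+1}\cap E'_{i+1}=\emptyset .
\]
This replaces both the endpoint bookkeeping and the angle count along $\gamma$.
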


\begin{proof}
   Note that, for each saddle connection $\gamma_i$,  the half-spaces bounded by brush lines are nested. Therefore, it suffices to show that, for every $i$, the brushes $B_{w_i}$ and $B_{w_{i+1}}$  based at the saddle connections $\gamma_i,\gamma_{i+1}$ only intersect in their respective geodesics based at $p_i$. This follows from the hypothesis using Proposition \ref{p.endpoint-orientation}.
%
\end{proof}

    

\subsection{Supporting vectors}
We will be interested in brushes adapted to a Finsler metric $F_Q:T(\widetilde \Sigma\setminus \widetilde P)\to \R$. Recall from Definition \ref{d.cFinslerm} that we assume that $F_Q$ is induced by a norm $\|\cdot\|_Q$ on $\R^2$. In the next definition we choose an identification of $T_x(\Sigma\setminus P)$ with $\mathbb R^2$ induced by the $1/n$-translation structure. Since the norm $\|\cdot\|_Q$ is invariant under $\Z/n\Z$ the definition doesn't depend on the chosen identification.
\begin{defn}

A vector $w$ at a regular geodesic segment $\gamma$ (or equivalently a parallel vector field) is \emph{supporting} for  $F_Q$ if $\norm{\gamma'+tw}_Q\ge\norm{\gamma'}_Q$ for every $t\in \R$.
\end{defn}
\begin{figure}[h]
    \centering
    \includegraphics[width=0.25\linewidth]{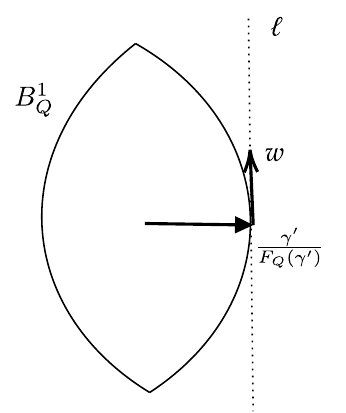}
    \caption{The unit ball of a Finsler norm $F_Q$ and a supporting vector $w$ for a regular geodesic segment.}\label{f.supporting}
\end{figure}

\begin{lemma}
   Any regular geodesic segment $\gamma$ admits a supporting transverse vector.
\end{lemma}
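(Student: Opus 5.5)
The plan is to reduce the statement to a supporting line argument for the convex unit ball $B^1_{\norm\cdot_Q}$ in $\R^2$. Since $\gamma$ is a regular geodesic segment, it lies in a Euclidean chart. Its tangent vector $\gamma'$ is therefore a constant nonzero vector $v\in\R^2$, once we fix an identification $T_{\gamma(t)}\wideS\cong\R^2$. Because $\norm\cdot_Q$ is $\Z/n\Z$-invariant, supporting vectors do not depend on this identification, so it suffices to work in $\R^2$. We may rescale and assume $\norm v_Q=1$: if $w$ is supporting for $v/\norm v_Q$, positive homogeneity gives $\norm{v+tw}_Q=\norm v_Q\,\norm{v/\norm v_Q+(t/\norm v_Q)w}_Q\ge\norm v_Q$.

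So $v$ lies on the boundary of the compact convex set $B=B^1_{\norm\cdot_Q}$, which contains $0$ in its interior by positivity and continuity. By the supporting hyperplane theorem in dimension two there is a supporting line $L$ through $v$. That is, there is a nonzero linear functional $\phi$ with $\phi(v)=1$ and $\phi\le 1$ on $B$. Here $\phi(v)>0$ because $0$ is interior, so we can normalize. Let $w$ span $\ker\phi$, so that $L=v+\R w$. Then $w$ is transverse to $v$, because $\phi(v)\neq0$.

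We then check that $w$ is supporting. For every $t$ we have $\phi(v+tw)=1$. For any $u\ne0$, the point $u/\norm u_Q$ lies in $B$, which gives $\phi(u)\le\norm u_Q$. Hence $\norm{v+tw}_Q\ge\phi(v+tw)=1=\norm v_Q$.

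Finally, we fix the sign of $w$, replacing it by $-w$ if needed, so that $(\gamma',w)$ is a positive basis. This is harmless because the defining condition quantifies over all $t\in\R$. Parallel transport along $\gamma$ then gives the supporting positive transverse parallel field.

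The only point needing care is the existence of the supporting functional at a boundary point of a convex body, including the case where $v$ is a corner of $B$ and the supporting line is not unique. The standard Hahn–Banach or separation argument in $\R^2$ covers this, and nothing else in the argument is delicate.
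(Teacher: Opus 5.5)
Your proposal is correct and follows the same argument as the paper. You normalize $\gamma'$ to the unit sphere of $\|\cdot\|_Q$, take a supporting line to the convex unit ball at that point, and use a direction tangent to that line as the supporting vector. You also spell out two things the paper leaves implicit: why such a direction is supporting (via $\phi(u)\le\|u\|_Q$), and why it is transverse to $\gamma'$ (because $\phi(\gamma')\neq 0$).
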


\begin{proof}
 Here we identify the tangent space with $\R^2$ and the unit ball $B^1_Q$ with $Q$. Since the unit ball $Q$ is convex, for any unit vector $v=\gamma'/F_Q(\gamma ')\in Q$ there exists an affine line $l$ containing the point $v$ such that $Q$ is contained in one closed half-space determined by $l$. The vectors tangent to $l$ are supporting for $\gamma'$ (see Picture \ref{f.supporting}). 
 Note that $l$, and thus supporting vectors up to scalar multiples, is unique if and only if $Q$ is $C^1$ at $v$. 
\end{proof}

We will consider in the proof of Theorem \ref{thmA} brush lines associated to supporting vectors.
\begin{prop}\label{p.unlinked}
Let $\gamma=\gamma_1\ast\gamma_2$ be a concatenation of two saddle connections meeting at a cone point $p$. If the positive parallel field $w_i$ is supporting for $\gamma_i$, then $w_1$ and $w_2$ are unlinked.
\end{prop}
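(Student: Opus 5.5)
The plan is to reduce the unlinking condition to a comparison of angles in the tangent cone $T_p\wideS$, and then to check that comparison using the convexity of $Q$ (monotonicity of supporting lines). Orient $\gamma$ so that $\gamma_1$ ends at $p$ and $\gamma_2$ starts at $p$. Let $u_2\in T_p\wideS$ be the outgoing direction of $\gamma_2$ and $b_1$ the direction pointing back along $\gamma_1$. By Proposition \ref{geodesic behavior}, the sector to the left of $\gamma$ (counterclockwise from $u_2$ to $b_1$) has angle $\alpha\ge\pi$, and the sector to the right has angle $\beta\ge\pi$.

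\textbf{Step 1: locate $w^1_p,w^2_p$.} Since $w^1$ is positive, its angle $\varphi_1$ from $\gamma_1'$ lies in $(0,\pi)$, so $w^1_p$ sits in the left sector at angle $\alpha-\pi+\varphi_1$ counterclockwise from $u_2$. Likewise $w^2_p$ sits at angle $\varphi_2\in(0,\pi)$ counterclockwise from $u_2$.

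\textbf{Step 2: draw the two straight geodesics through $p$.}
\begin{itemize}
\item The left-turning geodesic tangent to $w^1_p$ consists of the ray along $w^1_p$ and a backward ray making angle exactly $\pi$ on its left. That backward ray sweeps past $b_1$ into the right sector.
\item The right-turning geodesic tangent to $w^2_p$ consists of the ray along $w^2_p$ and a backward ray at angle $\pi$ clockwise, which sweeps past $u_2$ into the right sector.
\end{itemize}
By Proposition \ref{p.endpoint-orientation} (the CAT(0) no-bigon/triangle argument), these two geodesics have endpoints in the required cyclic order $((w^2_p)^+_r,(w^1_p)^+_l,(w^1_p)^-_l,(w^2_p)^-_r)$ exactly when they do not cross at $p$. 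Non-crossing at $p$ amounts to two conditions:
\begin{itemize}
\item In the left sector, $w^2_p$ lies weakly clockwise of $w^1_p$, i.e. $\varphi_2\le \alpha-\pi+\varphi_1$.
\item The two backward rays are ordered compatibly in the right sector. I expect this second condition to follow from the first together with $\beta\ge\pi$, by adding up the angles around $p$.
\end{itemize}

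\textbf{Step 3: prove $\varphi_2\le\alpha-\pi+\varphi_1$ from the supporting property.} Develop the left sector into a chart; this is legitimate because $\|\cdot\|_Q$ is $\Z/n\Z$-invariant, so the choice of chart is irrelevant. In this chart, $\gamma_2'$ is obtained from $\gamma_1'$ by a clockwise rotation of angle $\alpha-\pi\ge0$.
\begin{itemize}
\item A supporting vector for a unit vector $v\in\partial Q$ is a tangent direction of a supporting line of $Q$ at $v$.
\item The map sending $v$ to its set of supporting directions is monotone along the convex curve $\partial Q$ (it is the Gauss map of a convex curve).
\item Hence turning $v$ clockwise by $\alpha-\pi$ turns the supporting direction clockwise by an amount that is at least $\alpha-\pi$ minus the turn of $v$ itself measured relative to $v$. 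Expressed as the angle relative to $v$, this is precisely $\varphi_2\le\varphi_1+(\alpha-\pi)$.
\end{itemize}
I plan to carry out this comparison by writing both supporting lines in the chart and using that $Q$ lies on one side of each.

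\textbf{Main obstacle.} The hard part is Step 3 in the degenerate situation: $\alpha=\pi$, or more generally $\alpha-\pi$ a multiple of $2\pi/n$ that brings $\gamma_2'$ back to the same point of $\partial Q$ where $Q$ has a corner. There, monotonicity alone allows any pair $\varphi_1,\varphi_2$ inside the normal cone, and the inequality can fail. I would first check whether a pair $w^1,w^2$ violating the inequality still meets the non-strict (weakly oriented) version of the unlinked condition once one traces through the brush definitions. If it does not, the fallback is to add the convention that at such corners the supporting fields are chosen with $w^2$ clockwise of $w^1$, i.e. with supporting directions non-increasing along $\gamma$. Such a choice is always possible, and it is all that Proposition \ref{prop.unlinkedbrush} needs downstream.
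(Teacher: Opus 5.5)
Your Step 3 is essentially the paper's argument. The paper shows that if $w^2_p$ were strictly counterclockwise of $w^1_p$ in the developed left sector, the supporting line $v_2+tw_2$ would pass through the interior point $\frac{t_1}{t_0}v_1$ of $Q$; that is your monotonicity of supporting directions along $\partial Q$. You should first dispose of the case $\alpha-\pi+\varphi_1\ge\pi$, where the inequality is trivial since $\varphi_2<\pi$. In the remaining case $\alpha<2\pi-\varphi_1<2\pi$, so the left sector can be developed.

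\textbf{Gap in Step 2.} The backward-ray condition does not follow from the forward one by adding up angles. In your notation the two non-crossing conditions are $\varphi_2-\varphi_1\le\alpha-\pi$ and $\varphi_1-\varphi_2\le\beta-\pi$. The identity $\alpha+\beta=\theta(p)$ does not relate them: the first is automatic whenever $\alpha\ge2\pi$, while the second is a genuine constraint unless $\beta\ge2\pi$. The second condition needs the supporting property a second time, now in a chart of the right sector, where $\gamma_2'$ is $\gamma_1'$ rotated counterclockwise by $\beta-\pi$. The paper phrases this as rerunning the argument for the reversed path.

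\textbf{The degenerate case.} Your worry is justified, and it is in fact a counterexample to the statement as written, not only an obstacle to your proof. Take $n=4$, the $\ell^1$-norm, and $\gamma$ with left angle exactly $\pi$ at $p$ and $\gamma_1'=\gamma_2'=(1,0)$ in the chart of the left half-disc; such configurations exist, e.g.\ on square-tiled surfaces. Then $w_1=(1,1)$ and $w_2=(-1,1)$ are positive and supporting. Measured counterclockwise from $\gamma_2'$, the four rays at $p$ are:
\begin{itemize}
\item $\pi/4$: forward ray of the left-turning geodesic through $w^1_p$;
\item $3\pi/4$: forward ray of the right-turning geodesic through $w^2_p$;
\item $5\pi/4$: backward ray of the left-turning geodesic;
\item $\theta(p)-\pi/4$: backward ray of the right-turning geodesic.
\end{itemize}
The two straight geodesics therefore cross at $p$, their endpoints are strictly linked, and even the weak version fails.

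The paper's proof misses this case too. When $v_1\parallel v_2$ one gets $t_0=1$, the rotation is about $t_0v_2=v_1$, hence $t_1=1$ and there is no contradiction. Your extra worry about $\alpha-\pi\in\frac{2\pi}{n}\Z_{>0}$ is unfounded: in a single chart $v_1\neq v_2$, and monotonicity applies.

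\textbf{Your repair is one-sided.} Because of the gap in Step 2, you never treat a cone point where instead $\beta=\pi$. There the second condition reads $\varphi_1\le\varphi_2$, which your argument never checks and which the rule ``$w^2$ clockwise of $w^1$'' would violate.

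A consistent fix is to choose $w_i$ from $\gamma_i'$ by a $\Z/n\Z$-equivariant rule, e.g.\ always the most counterclockwise supporting direction. Then $w_1=w_2$ in the chart of the half-disc where the angle is $\pi$. The two straight geodesics share a ray at $p$ and separate in the right order, as in Proposition \ref{p.endpoint-orientation}(1). This gives the (weakly) positive order, which is all that Proposition \ref{prop.unlinkedbrush} and Theorem \ref{thmA} use.
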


\begin{proof}
We orient $\gamma$ so that $\gamma_1$ comes before $\gamma_2$, and denote by $\gamma_1^-$ (resp. $\gamma_2^+$) the backward endpoint of the left-turning geodesic ray tangent to $\gamma_1'$ (resp. the forward endpoint of the right-turning geodesic ray tangent to $\gamma_1'$). It is enough to show that the four-tuple $\big(\gamma_2^+,(w_2)^+_r,(w_1)^+_l,\gamma_1^-\big)$ is positively oriented, or equivalently that the angle at $p$ to the left of $\gamma$ between $\gamma_2'$ and $w_1$ is bigger than that between $\gamma_2'$ and $w_2$. Indeed the same argument, after reversing the orientation gives that $\big(\gamma_1^-,(w_1)^-_l,(w_2)^-_r,\gamma_2^+\big)$ is positively oriented, which shows that $w_1,w_2$ are unlinked.

Note that since $w_i$ are positive supporting fields, since the angle at $p$ to the left of $\gamma$ between $\gamma_2'$ and $w_2$ is smaller than $\pi$, if the angle at $p$ between $\gamma_1'$ and $\gamma_2'$ is at least $\pi$, we are done. It is thus enough to consider the case in which $\gamma_1$ and $\gamma_2$ bound a Euclidean sector with angle less than $2\pi$. 

We choose in this case an identification, induced by the $(G,X)$-structure, with a sector in $\R^2$ and denote by $v_1$ the unit vector of $\gamma_1'$ and by $v_2$ the unit vector of $\gamma_2'$. Note that by assumption the angle between $v_1$ and $v_2$ is smaller than $\pi$, and we can assume, up to rotating the norm $\|\cdot\|_Q$, that $v_2$ is horizontal (compare Figure \ref{fig:projection_supporting_vector}). 

\begin{figure}[h]
    \centering
    \includegraphics[width=0.75\linewidth]{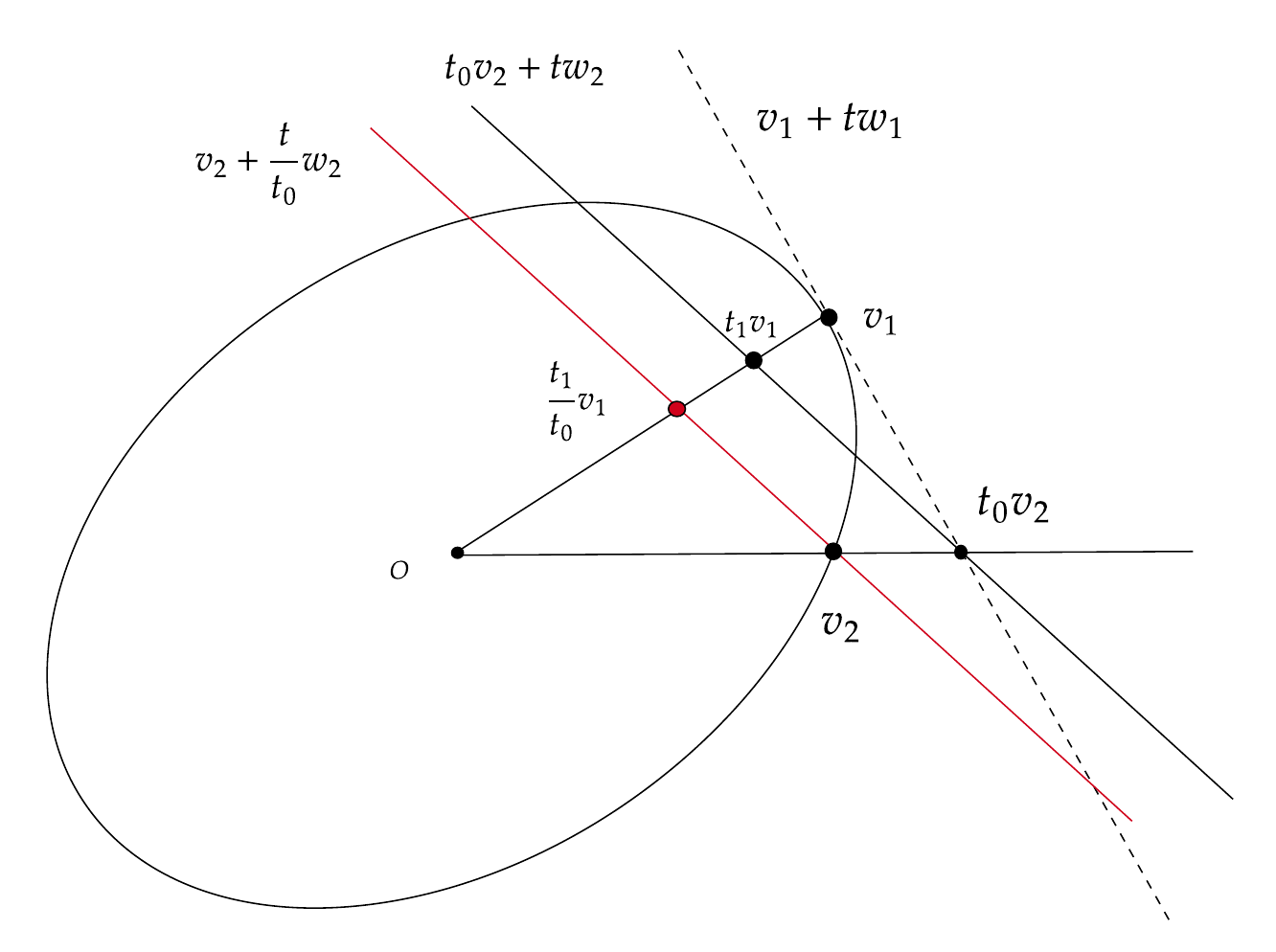}
    \caption{The unit tangent space with respect to a Finsler norm $F_Q$ at the cone point $p$. When the angle between $v_2$ and $w_2$ is greater than the angle between $v_2$ and $w_1$, the brush line at $  v_2$ (the red line) is forced to intersect the interior of the unit ball.}
    \label{fig:projection_supporting_vector}
\end{figure}

We distinguish two cases:
   \begin{itemize}
        \item If the angle of $w_1$ and $v_2$ to the left of $v_2$ is at least $\pi$, then we are done as in the previous case.
        \item Otherwise the affine line 
        $v_1+tw_1$ intersects the ray spanned by $  v_2$ at $t_0  v_2$ for some $t_0\ge1$, see Figure \ref{fig:projection_supporting_vector}. By assumption the angle between $w_1$ and $  v_2$ is smaller than the one of $w_2$. 
        Then the line $t_0  v_2+t  w_2$ will intersect the ray spanned by $  v_1$ at $t_1  v_1$ for some $t_1>0$. Note that both $t_0  v_2+t  w_2$ and $  v_1+t  w_1$ passe through the point $t_0  v_2$. The assumption on the angles of $w_1$ and $w_2$ implies that the line $t_0  v_2+t  w_2$ can be obtained from $  v_1+t  w_1$ by counterclockwise rotation, which forces $t_1<1$. Thus, the point $\frac{t_1}{t_0}  v_1$, which is in the interior of the unit ball as $\frac{t_1}{t_0}<1$, belongs to the supporting line $  v_2+t  w_2$, a contradiction.
    \end{itemize}
\

\end{proof}

\subsection{The projection along a brush and the proof of Theorem \ref{thmA}}

Let now $\gamma$ be any geodesic, and choose for any saddle connection $\gamma_i$ constituting $\gamma$ a positive supporting parallel vector field along $\gamma_i$. Since the vector fields are unlinked (Proposition \ref{p.unlinked}), the brushes $B_{w_i}$ only possibly intersect in their boundaries (Proposition \ref{prop.unlinkedbrush}), and their union is a (mildly) singular foliation $B_w$ of a subset of $\wideS$ whose complementary regions are convex and bounded by two straight geodesic rays meeting at a singular point or at an endpoint of $\gamma$ (Proposition \ref{p.complementaryregion}).

By construction the two straight geodesics containing the two rays bounding a complementary region $\mathcal C$ meet $\gamma$ in the same point, which we denote by $\pi_w(\calC)$. If $\mc C$ is a half-plane, then $\pi_w(\calC)$ is defined to be the corresponding endpoint of $\gamma$. As a result we have:

\begin{defn}\label{brush definition}
    The \emph{projection along a $w$-brush} is the map 
    \[\pi_{w}:\widetilde \Sigma\to \gamma\]
    mapping points in the $w$-brush to the unique intersection of the corresponding straight geodesic with $\gamma$ and points in the complementary region $\calC$ to $\pi_w(\calC)$.
\end{defn}
We then get:
\begin{prop}\label{p.piw}
The projection $\pi_w$ along the $w$-brush is continuous. If the fields $w_i$ are supporting, it is distance non-increasing.
\end{prop}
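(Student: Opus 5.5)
We plan to argue separately for continuity and for the non-increasing property. In both parts the key object is the decomposition of $\wideS$ into the brush $B_w=\bigcup_i B_{w_i}$ and its complementary regions $\calC$.

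\emph{Continuity.} On each closed brush $B_{w_i}$ the map $\pi_w$ sends a point to the foot on $\gamma_i$ of the straight geodesic through it. These straight geodesics depend continuously on the foot point, with one exception: at a cone point the left- and right-turning geodesics bifurcate. The leaves of $B_{w_i}$ are disjoint (Proposition \ref{p.complementaryregion}), and by Proposition \ref{p.endpoint-orientation}(3) they are ordered along $\gamma_i$. Hence the foot point is a monotone function of the leaf, and its graph over the closed set $B_{w_i}$ is closed. This gives continuity on $B_{w_i}$.

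By Proposition \ref{prop.unlinkedbrush}, two consecutive brushes meet only along the common leaf through $p_i$, and there both definitions give $p_i$. Every complementary region $\calC$ is bounded by two straight rays whose leaves both have foot $\pi_w(\calC)$, so the constant map on $\overline{\calC}$ agrees with $\pi_w$ on $\partial\calC$. Finally, $\wideS$ is a locally finite union of these closed pieces, and the pasting lemma gives continuity. Local finiteness holds because the cone points are discrete and each complementary region is attached to a cone point.

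\emph{Distance non-increase.} It suffices to show that $\ell_Q(\pi_w\circ c)\le \ell_Q(c)$ for every piecewise $C^1$ path $c$. Since $d_Q$ is a length metric, taking infima then gives $d_Q(\pi_w(x),\pi_w(y))\le d_Q(x,y)$. The complementary regions are collapsed to points and contribute nothing. Inside a brush $B_{w_i}$, away from the discrete set of cone points, I work in a local Euclidean chart in which the leaves are straight segments parallel to $w_i$. In such a chart $\pi_w$ is the linear projection onto the direction of $\gamma_i$ along $w_i$. Write a tangent vector as $u=a\gamma_i'+bw_i$; its image under the differential of $\pi_w$ is $a\gamma_i'$. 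The supporting property $\norm{\gamma_i'+tw_i}_Q\ge\norm{\gamma_i'}_Q$, combined with homogeneity (apply it with $t=b/a$ when $a\neq0$), gives
\[
\norm{a\gamma_i'}_Q\le \norm{a\gamma_i'+bw_i}_Q=\norm{u}_Q .
\]
If $a=0$ the inequality is trivial. Because the Finsler metric and the parallel structure are both $\Z/n\Z$-compatible, this estimate is chart-independent. Integrating it along the pieces of $c$ lying in the brushes gives the length inequality. A piece of $c$ passing through cone points or along brush boundaries is handled by subdividing it, using continuity and the fact that such points form a measure-zero set.

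\emph{Main obstacle.} The hardest step is the chart description inside the brush. Leaves through distinct points of a brush lie in different sheets around cone points they have passed. I therefore need to check that locally, near any regular point of $B_{w_i}$, the projection really coincides with a linear projection along $w_i$ in a single developing chart. I expect to establish this by developing a thin strip of leaves along the straight geodesics back to $\gamma_i$, which is possible because straight geodesics turn by exactly angle $\pi$ on one side. Thus on the side of the foliation they are locally straight in the developed picture, and a small neighborhood of leaves develops isometrically onto a parallel strip in $\R^2$.
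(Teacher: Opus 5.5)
Your key step is the paper's second case. Near a brush point you develop along the leaves back to $\gamma_i$; once a leaf has passed a cone point you do this one-sidedly, since there it turns by exactly $\pi$ on the brush side. In that chart $\pi_w$ is the linear projection along $w_i$, and the supporting property gives $\|a\gamma_i'\|_Q\le\|a\gamma_i'+bw_i\|_Q$.

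You differ from the paper in two places.

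\emph{Continuity.} The paper reads continuity off the same chart. You argue without charts: the leaves of $B_{w_i}$ are disjoint and ordered along $\gamma_i$, which should give a closed graph into the compact segment $\gamma_i$. You then paste over the brushes and the closures of the complementary regions. This uses only the combinatorics of the brush, which is pleasant, but it needs two additions.
\begin{enumerate}
\item The closed-graph claim needs the fact that each leaf separates $\wideS$ (Proposition \ref{p.halfspaces}). Then leaves whose feet lie on opposite sides of a given foot sit in opposite open half-planes. Monotonicity alone does not give closedness.
\item Local finiteness of the cover does not follow from discreteness of cone points alone. You must also exclude wedges attached to far-away cone points $q$ re-entering a compact set. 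One way: the CAT(0) geodesic from a wedge point to its foot passes through $q$, because the angles at $q$ are at least $\pi$ on both sides.
\end{enumerate}

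\emph{The global inequality.} The paper shows $\pi_w$ is locally $1$-Lipschitz and chains along a path; you integrate a pointwise bound on the differential. Integrating requires $\pi_w\circ c$ to be absolutely continuous. ``Continuity plus a null exceptional set'' does not give this: a Cantor-type function is continuous with zero derivative almost everywhere. Moreover, the set of times $c$ spends on wedge boundaries need not be null. What closes the gap is a local Lipschitz bound at wedge boundaries and cone points: $\pi_w$ is linear on the brush side and constant on the wedge side. That is exactly the paper's third case, and with it your integration argument works.

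\emph{A caveat shared with the paper.} For $a<0$, the identity $\|a\gamma_i'+bw_i\|_Q=|a|\,\|\gamma_i'+\tfrac ba w_i\|_Q$ uses $\|{-v}\|_Q=\|v\|_Q$, which positive homogeneity does not give. So both arguments really cover symmetric norms, which is automatic when $n$ is even. For an asymmetric $\Z/3\Z$-invariant norm, such as the triangular one, a direction supporting $\gamma'$ need not support $-\gamma'$. In that case the projection can increase $d_Q$ for backward displacements.
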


\begin{proof}
Since the projection is constant on complementary regions, and agrees with its value on their boundary, it is enough to show continuity at points in the $w$-brush. Since in every compact region of $\widetilde \Sigma$ there are only finitely many singular points, for every point $x$ in the $w$-brush not in the boundary of a complementary region, we can find an $\R^2$-chart containing the segment between $x$ and $\pi_w(x)$, foliated by segments in the $w$-brush. In this chart the map $\pi_w$ is simply the restriction of the linear projection of $R^2=\R\gamma'+\R w$ on the first factor, which is clearly continuous. The continuity on points in the boundary of a complementary region follows analogously.
%
%
%
 To prove the second claim, by continuity, it is enough to show that $\pi_w$ is locally distance non-increasing around every  point $x\in\widetilde \Sigma$. 
 We consider three cases.
\begin{itemize}
\item    If $x$ belongs to a complementary region $\calC$, it admits a neighbourhood where $\pi_w$ is constant. 
\item If there is no singular point in the segment in the brush between $x$ and $\pi_w(x)$, then in a suitable $\R^2$-chart the projection $\pi_w$ agrees with the linear projection on the first factor of the splitting $\R^2=\R\gamma'\oplus\R w$. This is norm non-increasing because $w$ is supporting:
    \[\|a\gamma'+bw\|_Q=|a|\|\gamma'+\frac ba w\|_Q\geq |a|\|\gamma'\|_Q=\|a\gamma'\|_Q.\]
 \item   The third case, when $x$ belongs to the boundary of a complementary region follows combining these two cases.
 \end{itemize}
\end{proof}

\begin{proof}[Proof of Theorem \ref{thmA}]
    We will first show that CAT(0) geodesic segments are Finsler geodesics in their homotopy class relative to endpoints. Let $\gamma=\gamma(t),\, t\in[0,1]$ be a $\CAT(0)$ geodesic in $\Sigma$, and $\eta=\eta(t),\, t\in[0,1]$ be another curve homotopic to $\gamma$ relative to endpoints. We choose lifts $\widetilde{\gamma}$ and $\widetilde{\eta}$ in the universal cover $\wideS$ with the same endpoints. Choosing a supporting vector $w$ for $\widetilde \gamma$ with respect to the norm $\|\cdot\|_Q$ inducing the compatible Finsler metric, we have $$\ell_Q(\widetilde\eta)\geq \ell_Q(\pi_w(\widetilde\eta))\geq \ell_Q(\widetilde \gamma),$$
    where the first inequality follows since $\pi_w$ is norm non-increasing, and the second since $\pi_w$ is a continuous projection onto $\til\gamma$ (Proposition \ref{p.piw}).

    Let now $\gamma$ be a closed CAT(0) geodesic,  and $\eta$ be any curve  homotopic to $\gamma$. Choose lifts $\widetilde\gamma,\widetilde\eta: \R\to \wideS$ with the same forward and backward limit points. Note that since $\gamma$ and $\eta$ are homotopic, there is a  deck transformation $\gamma:\wideS\to\wideS$ preserving $\widetilde\gamma$ and $\widetilde\eta$. Note that $\gamma$ is an isometry for both the CAT(0) and the Finsler metric, and preserves the $w$-brush $B_w$ from $\widetilde\gamma$. This implies that if $\widetilde\eta|_{[0,1]}$ is a fundamental domain for the $\gamma$-action on $\widetilde\eta$, then $\pi_w(\widetilde\eta(0))$ and $\pi_w(\widetilde\eta(1))$ must bound a fundamental domain of $\gamma$, and we can apply the same argument as the case of geodesic segments, which finishes the proof.
\end{proof}

\section{Multi-foliations and associated geodesic currents}\label{sec:multifoliation}
The goal of this section is to define multi-foliation currents, the central object in the paper. We first discuss in Section \ref{s.multifol} multi-foliations in a  $1/n$-translation surface $\Sigma$ , in order to construct their associated geodesic currents we then turn to the study in Section \ref{s.mutheta} of small boxes, subsets of $\partial\wideS^2$ that we can understand geometrically, and construct in Section \ref{s.mutheta2} a premeasure defined on such sets.

\subsection{Multi-foliations}\label{s.multifol}
Measured foliations and their associated measured laminations are a key tool in the study of (half-)translation surfaces. For $1/n$-translation surfaces, we will replace them with  \emph{multi-foliations} that we introduce in this section, and for the purpose of construction in Section \ref{s.mutheta2}, we will further give an orientation for each leaf.

In order to obtain an object invariant by the group $\R^2\rtimes\Z/n\Z$, we need to consider collections of $n$-lines through any point (see Figure \ref{f.R2multi}):

\begin{defn}[multi-foliation in $\R^2$]\label{multi-foliation in R}
    For every angle $\theta$, we denote by $\hat{\calF_\theta^n}$ on $\R^2$ the \emph{multi-foliation} given by the union of all \emph{oriented} straight lines in direction $\theta +\frac{2\pi k}n$ for $0\leq k\leq n-1$. The \emph{transverse measure} of a vector $v$ based at the origin is $\sum_{k=0}^{n-1}\abs{\innerproduct{v}{v_{\theta+\frac{2k\pi}{n}+\frac{\pi}{2}}}}=\innerproduct{v}{v_{[\theta+\frac{\pi}{2}]}}$, where $v_{[\theta+\frac{\pi}{2}]}$ is the $n$-web in Definition \ref{web_definition}. 
\end{defn}
\begin{figure}[h]
    \centering
    \includegraphics[width=0.75\linewidth]{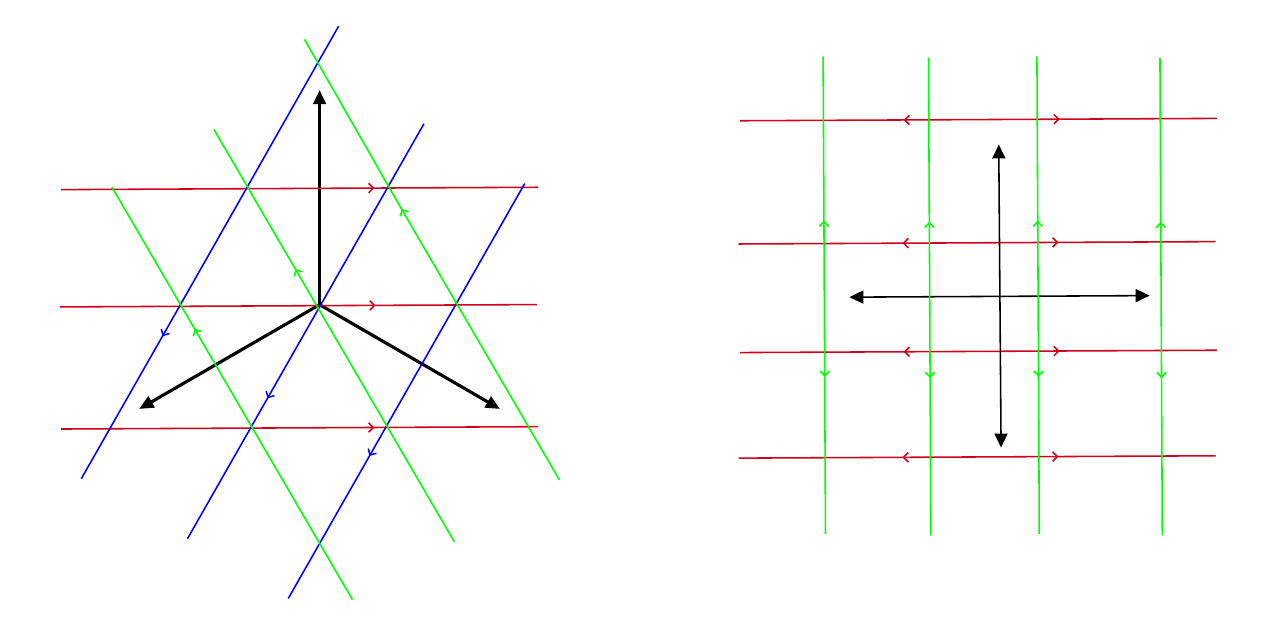}
    \caption{Some horizontal multi-foliations in $\R^2$. The left one is the multi-foliation of a 3-web, and the right one is the multi-foliation for a 4-web.}\label{f.R2multi}
\end{figure}
        
In other words,  the multi-foliation in $\R^2$ is the union of the straight lines orthogonal to one of the vectors in an $n$-web $v_{[\theta+\frac{\pi}{2}]}$, which is oriented so that their tangent and the orthogonal vector in  $v_{[\theta]}$  form a positive basis.

If $n$ is odd, then every point $x\in\R^2$ belongs to $n$ leaves of the multi-foliation, while if $n=2m$ is even, every point belongs to $m$ straight lines but $2m$ oriented lines. In the latter case, the measure is still the sum of $2m$ inner products.

\begin{defn}[multi-foliation on $1/n$-translation surfaces]\label{multi-foliation via local charts}
  Using the $1/n$-translation structure we can pullback, for every angle $\theta$ the multi-foliation $\hat{\calF_\theta^n}$ on $\R^2$ to a well-defined \emph{multi-foliation} $\calF_\theta^n$ on $\Sigma\setminus P$. 
\end{defn}
Of course for every $0\leq k\leq n-1$, the multi-foliation at angle $\theta$ and that at angle $(\theta+\frac{2\pi k}n)$ agree, i.e., $\calF_\theta^n=\calF_{\theta+\frac{2\pi k}n}^n$.
In the following, when there is no confusion, we will just denote the multi-foliation by $\mc F$ or $\mc F_\theta$, depending on whether we want to emphasize the importance of the angle.

In analogy with measured foliations we define:
\begin{defn}\label{d.follength}
The \emph{length function} associated to the  multi-foliation $\mc F_\theta$ on the $1/n$-translation surface $\Sigma$ is the function $\ell_\theta$ such that for every $C^1$-curve $\gamma:[a,b]\to \Sigma$, it holds
\[
\ell_\theta(\gamma)=\int_a^b \innerproduct{v}{v_{[\theta+\frac{\pi}{2}]}} dt.
\]
%
Furthermore, if $\gamma$ is a closed curve, we define the length function of the free homotopy class $[\gamma]$ of $\gamma$ as   the infimum $\ell_{\theta}([\gamma])$ of $\ell_{\theta}(\eta)$ over all closed curves $\eta$ freely homotopic to $\gamma$. 
\end{defn}
We then have

\begin{cor}\label{theta length}
    The length function $\ell_\theta$ is induced by the Finsler metric of the $(\theta+\frac{\pi}{2})$-web on the $1/n$-translation surface $\Sigma$.
\end{cor}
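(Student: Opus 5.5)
The claim is essentially a matter of unwinding the definitions. I would compare the integrand in Definition \ref{d.follength} pointwise with the $(\theta+\frac{\pi}{2})$-metric, then pass to the infimum over free homotopy classes.

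\emph{Pointwise identity.} Fix a regular point $p\in\Sigma\setminus P$ and a vector $u\in T_p\Sigma$. Choose a chart of the $1/n$-translation structure identifying $T_p\Sigma$ with $\R^2$. By Definition \ref{multi-foliation in R} and Definition \ref{d.follength}, the integrand is
\[
\innerproduct{u}{v_{[\theta+\frac{\pi}{2}]}}=\sum_{k=0}^{n-1}\abs{\innerproduct{u}{e^{i(\theta+\frac{\pi}{2}+\frac{2\pi k}{n})}}}.
\]
By Definition \ref{web_definition} and \eqref{e.Finsler_example}, this is exactly $\|u\|_\theta=F_\theta(u)$.

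\emph{Well-definedness.} The web $v_{[\theta+\frac{\pi}{2}]}$ is permuted by the rotation by $2\pi/n$. Hence the sum is invariant under the change-of-chart group $\R^2\rtimes\Z/n\Z$: translations act trivially on tangent vectors, and rotations of order $n$ only permute the terms. So the identity holds independently of the chart, and $\ell_\theta$ is the length functional of a compatible Finsler metric in the sense of Definition \ref{d.cFinslerm}.

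\emph{Naming the metric.} There is a convention point here. Definition \ref{web_definition} defines the $\theta$-norm using the orthogonal web $v_{[\theta+\frac\pi2]}$, so ``the Finsler metric of the $(\theta+\frac{\pi}{2})$-web'' means the norm $u\mapsto\sum_{w\in v_{[\theta+\frac\pi2]}}|\langle u,w\rangle|$. This is $F_\theta$ in the paper's notation. I would state this identification explicitly.

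\emph{Integrating and taking infima.} Integrating the pointwise identity along any piecewise $C^1$ curve $\gamma$ gives $\ell_\theta(\gamma)=\int F_\theta(\gamma'(t))\,dt$, which is the Finsler length of $\gamma$. This works provided $\gamma$ meets $P$ only in a measure-zero set of times. Since $P$ is discrete, we may either perturb $\gamma$ or note that cone points contribute nothing to the integral. Taking the infimum over closed curves $\eta$ freely homotopic to $\gamma$ then identifies $\ell_\theta([\gamma])$ with the Finsler length of the class.

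\emph{Optional strengthening.} By Theorem \ref{thmA}, the infimum in that last step is realized by the closed CAT(0) geodesic in the class.

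\emph{Main obstacle.} There is no real analytic difficulty. The only delicate point is the bookkeeping of which web (the $\theta$-web or its orthogonal) indexes the norm. I would resolve it by writing both sides explicitly in a single chart, as above.
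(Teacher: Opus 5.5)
Your proposal is correct and is exactly the paper's implicit argument: the paper states this corollary without proof, as a direct unwinding of Definitions \ref{web_definition}, \ref{multi-foliation in R} and \ref{d.follength}, since the integrand of $\ell_\theta$ is literally $\|\gamma'\|_\theta=F_\theta(\gamma')$. One small correction: your assertion that this defines a compatible Finsler metric in the sense of Definition \ref{d.cFinslerm} needs $n\ge 3$, because for $n=1,2$ the web $v_{[\theta+\frac\pi2]}$ does not span $\R^2$ and $\|\cdot\|_\theta$ is only a seminorm (the paper's introduction likewise invokes this corollary only for $n\ge 3$), although the identity of the two length functionals holds for every $n$.
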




\subsection{Leaves of the multi-foliation and small boxes}\label{s.mutheta}
We will work in this section with the \emph{space of oriented geodesics} 
\[
G^+(\widetilde\Sigma):=(\partial\wideS \times \partial\wideS)\setminus\Delta.
\]
More precisely, since the metric $g$ is CAT(0), the space $G^+(\widetilde\Sigma)$ can be identified with equivalence classes of oriented bi-infinite geodesics $\gamma$ up to parallelism, with the first point being backward limit point $\gamma^-$ and the second point being forward limit point $\gamma^+$. 

We denote by $\widetilde\calF_\theta$ the lift of $\calF_\theta$ to the universal cover $\widetilde \Sigma$, we furthermore drop $\theta$ for the sake of convenience and assume $\theta=0$. 
 Recall from Definition \ref{d.left_turning_geodesic} that a geodesic in $\wideS$ is \emph{straight} if it is left-turning or right-turning.
\begin{defn}
A  \emph{leaf} of the multi-foliation $\widetilde\calF$ is a {bi-infinite straight geodesic}  tangent to $\widetilde\calF$ at all points. We denote by  $G^+(\widetilde\calF)\subset G^+(\wideS)$ the set of oriented pair of endpoints of leaves of $\widetilde\calF$.
\end{defn}
 By Propostion \ref{p.bound_cylinder}, two different leaves $l_1, l_2$ of $\widetilde\calF$ have the same endpoints in $G^+(\wideS)$ if and only if they are contained in a common cylinder.

\begin{lemma}\label{l.suppclosed}
The set $G^+(\widetilde\calF)$ is closed.
\end{lemma}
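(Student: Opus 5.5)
The plan is to take a sequence $(x_k,y_k)\in G^+(\widetilde\calF)$ converging to some $(x,y)\in G^+(\wideS)$ with $x\neq y$, and to produce a leaf of $\widetilde\calF$ with endpoints $(x,y)$. For each $k$ we pick a leaf $l_k$ realizing $(x_k,y_k)$; this is a straight geodesic that is everywhere tangent to one of the finitely many directions $0+2\pi j/n$ of the multi-foliation. First we localize the leaves. Because $x\neq y$, standard CAT(0)/Gromov hyperbolic geometry in $\wideS$ gives the following: after fixing a basepoint $o$, the geodesics with endpoints close to $(x,y)$ all pass through a fixed compact ball $K$ around a point of a geodesic from $x$ to $y$. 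So $l_k$ meets $K$, and we choose a point $z_k\in l_k\cap K$.

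Next we extract a limiting leaf. Passing to a subsequence, $z_k\to z$. Since $K$ contains only finitely many cone points, we may also assume the unit tangent vectors of $l_k$ at $z_k$ converge in the generalized tangent spaces to a vector $v$ at $z$ that is tangent to $\widetilde\calF$. Here we use that the allowed directions form a finite set in each chart, so that after a further subsequence the direction is eventually constant. By Arzelà--Ascoli the parametrized geodesics $l_k$ (unit speed, with $l_k(0)=z_k$) converge uniformly on compacts to a bi-infinite CAT(0) geodesic $l$ through $z$. Every regular segment of $l$ is a limit of segments of $l_k$ lying in the same finite set of directions, so $l$ is tangent to $\widetilde\calF$ at all regular points.

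The step I expect to be the main obstacle is showing that $l$ is still \emph{straight}, i.e.\ left- or right-turning. At each cone point the $l_k$ may turn on different sides, and a limit of geodesics passing near a cone point may pass through it. We handle this as follows. Passing to a further subsequence, either infinitely many $l_k$ are left-turning or infinitely many are right-turning; say left. Left-turning means that the angle to the left at every cone point is exactly $\pi$. Consider a cone point $p$ on $l$. If the $l_k$ pass through $p$, they make left angle $\pi$ there, and this passes to the limit. If instead the $l_k$ pass near $p$ but not through it, they pass on one side of $p$. Angle comparison (upper semicontinuity of Alexandrov angles in CAT(0) spaces, together with the fact that the $l_k$ are regular near $p$) shows that the limit makes angle $\pi$ on the side where the $l_k$ were, which is a regular side. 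Applying this at each cone point, we want the resulting sides to be consistent, so that $l$ is left- or right-turning. If consistency fails, the fallback is Proposition \ref{p.halfspaces}: the limiting tangent vector $v$ determines a unique left-turning straight geodesic $l'$. We would show that the $l_k$ converge to $l'$ by convexity of the half-spaces bounded by $l'$ and the nesting in Proposition \ref{p.endpoint-orientation}. This would replace $l$ by $l'$ with the same endpoints.

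Finally, the endpoints of $l$ are $(x,y)$. This follows from the uniform convergence on compacts combined with the convergence $(x_k,y_k)\to(x,y)$, since the endpoint map is continuous for geodesics passing through the fixed compact $K$ in a CAT(0) plane with circle boundary. Hence $(x,y)\in G^+(\widetilde\calF)$, and the set is closed.
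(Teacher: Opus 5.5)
Your overall route is the paper's. The localization step is the closedness of the endpoint map, which the paper imports from Bankovi\'c--Leininger, and tangency of the limit to $\widetilde\calF$ uses the same local-parallelism argument. The gap is in the step you flag yourself: you never show that the limit $l$ is straight. You leave open that $l_k$ might pass to the left of one cone point of $l$ and to the right of (or through) another, and your fallback cannot work. After passing to your subsequence, $l_k\to l$ uniformly on compacts, so the $l_k$ cannot also converge to a different geodesic $l'$. Moreover, if $l$ were not left-turning, the left-turning geodesic $l'$ through $v$ would branch off $l$ at some cone point. By uniqueness of geodesic rays from a point to a boundary point in a CAT(0) space, $l'$ would then have a different endpoint, so ``replace $l$ by $l'$ with the same endpoints'' fails. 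Your justification of the one-sided step also goes the wrong way. Upper semicontinuity of Alexandrov angles only gives the lower bound $\geq\pi$, which is automatic, and Alexandrov angles are truncated at $\pi$, so they cannot tell which side has angle exactly $\pi$. What you need is the Euclidean-cone fact: a straight segment in a cone neighbourhood of $p$ that misses $p$ subtends an angle $<\pi$ at $p$, tending to $\pi$ as the segment approaches $p$. So if such segments converge to $l$ from one side, the sector of $l$ on that side has angle exactly $\pi$.

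The missing idea is that discreteness of the $\calF$-directions prevents the $l_k$ from switching sides. Fix a compact subsegment $\sigma\subset l$; it contains finitely many cone points. For $k$ large, two things hold.
\begin{enumerate}
\item If $l_k$ passes through a cone point $c\in\sigma$, its incoming and outgoing directions at $c$ converge to those of $l$ inside a finite set, hence eventually coincide with them. So $l_k$ cannot join or leave $l$ at a point of $\sigma$.
\item $l_k$ cannot cross $l$ transversely near $\sigma$. Crossing angles between $\calF$-directions are nonzero multiples of $2\pi/n$, while $C^0$-convergence forces them to tend to $0$; near cone points of $l$ this uses the same cone picture.
\end{enumerate}
Hence on $\sigma$, each $l_k$ with $k$ large either coincides with $l$ or lies strictly on one side of it. Now exhaust $l$ by segments $\sigma_j$ and take a diagonal subsequence on which both this behaviour and the turning type of $l_k$ are constant. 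Either the $l_k$ coincide with $l$ on larger and larger pieces, so $l$ inherits their turning side, or they accumulate on one side, so $l$ has angle $\pi$ on that side at every cone point by the cone fact above. This is exactly the one-sided accumulation the paper invokes with ``if $l_n$ accumulates to $l$ on its left (resp.\ right)''. With it, your argument is complete.
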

\begin{proof}
Equip the surface with the CAT(0) metric coming from the $1/n$-translation structure. Let $\mc G$ be the space of bi-infinite geodesics in $\wideS$. According to \cite[Proposition 2.2]{bankovic2018marked}, the map sending a bi-infinite geodesic to its endpoints is a closed map. The same proof applies to the map sending a geodesic to oriented pair of endpoints. So it is enough to show that the set of leaves forms a closed subset of the set of all bi-infinite geodesics. Let $l_n$ be a sequence of leaves converging to a bi-infinite geodesic $l$. Choose a regular point $p\in l$ and an Euclidean neighborhood $U$ of $p$. Then $l_n\cap U$ are parallel straight lines, so $l$ is  locally a straight line parallel to them. Thus, the limit is tangent to $\widetilde\calF$. If $l_n$ accumulates to $l$ on its left (resp. right), then at any singular point $q\in l$ the leaf $l$ must  make an angle $\pi$ on the left (resp. right).
\end{proof}

We now discuss the structure of $G^+(\widetilde \calF)$. We would like to construct for each point in $G^+(\widetilde\calF)$ a neighborhood such that all leaves with endpoints in this neighborhood are``parallel". See Figure \ref{orthogonal segment} for an illustration. 

Let us first fix some notation. If $A$ is a subset of $G^+(\wideS)$, we denote by $L(A)$ the set of bi-infinite straight leaves $l$ in $\wideF$ with endpoints $(\ell^-,\ell^+)\in A$. We use \emph{interval} for a proper connected subset of $\partial\wideS$.

\begin{prop}\label{define measure}
For every pair $(z,w)\in G^+(\widetilde\calF)$ there exists a regular segment $I\subset \widetilde\Sigma$ and open 
intervals $C_z$ and $C_w\subset\partial\wS$ 
such that $z\in C_z, w\in C_w$ and every leaf in $L(C_z\times C_w)$ meets $I$ orthogonally. 
\end{prop}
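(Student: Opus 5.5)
Fix a leaf $l$ of $\widetilde\calF$ with endpoints $(z,w)$, choose a regular point $p\in l$ with a Euclidean neighbourhood $U$, and let $I\subset U$ be a short regular segment through $p$ orthogonal to $l$, centred at $p$. The aim is to choose small intervals $C_z\ni z$, $C_w\ni w$ so that every leaf with endpoints in $C_z\times C_w$ crosses $I$, and then to check that any such crossing is orthogonal.

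\textbf{Step 1: leaves through $I$ bound a sub-box.} Let $I^\pm$ be the two endpoints of $I$. Through $I^-$ and $I^+$, in the direction of $l'$, take straight geodesics: the right-turning one at one end and the left-turning one at the other, as in the definition of a $w$-brush. Denote them $\gamma_-$ and $\gamma_+$. By Proposition~\ref{p.endpoint-orientation}(3), their endpoints are arranged around $l$, so that $\gamma_-^\pm$ and $\gamma_+^\pm$ lie on opposite sides of $l$ in $\partial\wS$.

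\textbf{Step 2: use the endpoints to separate.} In the generic case these four endpoints are distinct from $z$ and $w$. Then we let $C_w$ be the open interval between $\gamma_-^+$ and $\gamma_+^+$ containing $w$, and $C_z$ the open interval between $\gamma_+^-$ and $\gamma_-^-$ containing $z$.

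By Proposition~\ref{p.halfspaces}, $\gamma_-$ and $\gamma_+$ bound convex half-spaces. Their intersection is a region $R$ containing $I$, and $R$ is the band swept out by the brush from $I$ together with the convex complementary regions (Proposition~\ref{p.complementaryregion}).

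Now take a leaf $m$ with endpoints in $C_z\times C_w$. Its endpoints are unlinked from $\gamma_\pm$, so by Propositions~\ref{p.transverse_intersection} and~\ref{p.unlinked}, $m$ either stays in $R$ or shares saddle connections with $\gamma_\pm$. Since $m$ is straight, case (1) of Proposition~\ref{p.transverse_intersection} excludes transverse crossings with $\gamma_\pm$, and $m$ cannot enter and leave $R$ through the same side without forming a bigon (Proposition~\ref{CAT0 property}). Hence $m$ runs from one end of the band to the other, separating it.

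\textbf{Step 3: $m$ meets $I$ orthogonally.} Here we use that $m$ is tangent to $\widetilde\calF$, so at points of $U$ its direction is one of the finitely many web directions $2\pi k/n$. We shrink $I$, so that $R$ near $I$ is a thin strip, and shrink $C_z$, $C_w$ further, so that $m$ must pass near $p$. The intended mechanism is that leaves with endpoints close to $(z,w)$ converge to geodesics with endpoints $(z,w)$ uniformly on compacta; this is the closedness argument of Lemma~\ref{l.suppclosed} and \cite[Proposition 2.2]{bankovic2018marked}.

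Such a limit is either $l$ itself or a leaf parallel to $l$ in a common cylinder (Proposition~\ref{p.bound_cylinder}). Either way, near $p$ the limit has direction $l'$. Since the set of admissible directions is discrete, $m\cap U$ is then a straight segment parallel to $l$, so it crosses $I$ orthogonally.

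\textbf{Expected obstacle.} The main difficulty is the degenerate case, which needs care in Step 2. This happens when $l$ lies in a cylinder, or when $\gamma_\pm$ share an endpoint with $l$ because the brush lines through $I^\pm$ bifurcate at cone points on $l$. Then the intervals cut out by $\gamma_\pm$ may not contain $z$ and $w$ in their interior.

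My first attempt is to choose $I$ long enough to cross the whole cylinder, or to go past the adjacent bifurcating cone points on both sides of $l$. This works because there are only finitely many singular points near $p$, which separates the endpoints. As a fallback, in the cylinder case I would take $I$ across the full width of the maximal cylinder from Proposition~\ref{p.closed_conepoint}.
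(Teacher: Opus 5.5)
\textbf{Step 2 has a gap; Step 3 is sound.} The mechanism of your Step 3 works, and it differs from the paper's. The paper gets orthogonality combinatorially, from the finitely many leaves through $p$ and Proposition \ref{p.endpoint-orientation} (the intervals $B_z,B_w$). You get it from a limiting argument plus discreteness of the web directions. The gap is in Step 2, which is what supplies the compact set that Step 3 needs.

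Your band argument only works if $z$ and $w$ are \emph{interior} points of the arcs cut out by $\gamma_\pm^-$ and $\gamma_\pm^+$. You assert this ``in the generic case'' and treat its failure only heuristically. Since $\gamma_\pm$ are parallel to $l$ along the regular segment $I$, they never meet $l$. A meeting point would give a geodesic triangle with two right angles at $I$ and a positive third angle, which violates the CAT(0) angle-sum bound. So your scenario of brush lines through $I^\pm$ bifurcating at cone points on $l$ cannot occur. The only way to get, say, $\gamma_+^+=w$ is that $\gamma_+$ is forward-asymptotic to $l$ without meeting it.

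To show this happens only when $l$ lies in a closed cylinder strip that $I$ does not fully cross, you need a statement about behaviour at infinity. For instance: two disjoint asymptotic straight rays eventually bound a flat half-strip, which must lie in a cylinder. Nothing of this kind is in the paper (Proposition \ref{p.bound_cylinder} concerns bi-infinite geodesics), and the finiteness of cone points near $p$ does not give it. Relatedly, in the cylinder case you cannot ``shrink $I$'' as Step 3 proposes.

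\textbf{How to repair it.} Drop the band argument and use the compactness input from the paper's first step. By \cite[\S 4.14]{ballmann2006manifolds}, geodesics with endpoints in small enough neighbourhoods of $z$ and $w$ all meet a fixed compact set. Take a regular segment $J$ orthogonal to $l$ that meets every leaf with endpoints $(z,w)$; in particular $J$ crosses the whole closed cylinder strip if $l$ is a cylinder leaf. Take a regular segment $I$ containing $J$ in its interior.

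Suppose no $C_z\times C_w$ worked. Then there are leaves $m_j$ with endpoints tending to $(z,w)$ that either miss $I$ or meet it non-orthogonally. They subconverge, uniformly on compacta, to a leaf with endpoints $(z,w)$ (closedness as in Lemma \ref{l.suppclosed}). By Proposition \ref{p.bound_cylinder} this limit is $l$ or a parallel leaf of its cylinder, so it crosses the interior of $I$ orthogonally at a regular point. In a Euclidean neighbourhood of that point the $m_j$ are straight segments in web directions converging to the limit segment. Hence for large $j$ they cross $I$, in the same direction by discreteness, and only once since $m_j\cap I$ is connected. This is a contradiction.

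This single argument gives both crossing and orthogonality. Its first half is the paper's first step, and your discreteness argument then replaces the paper's construction of $B_z,B_w$.
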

\begin{figure}[h]
    \centering
    \includegraphics[width=\linewidth]{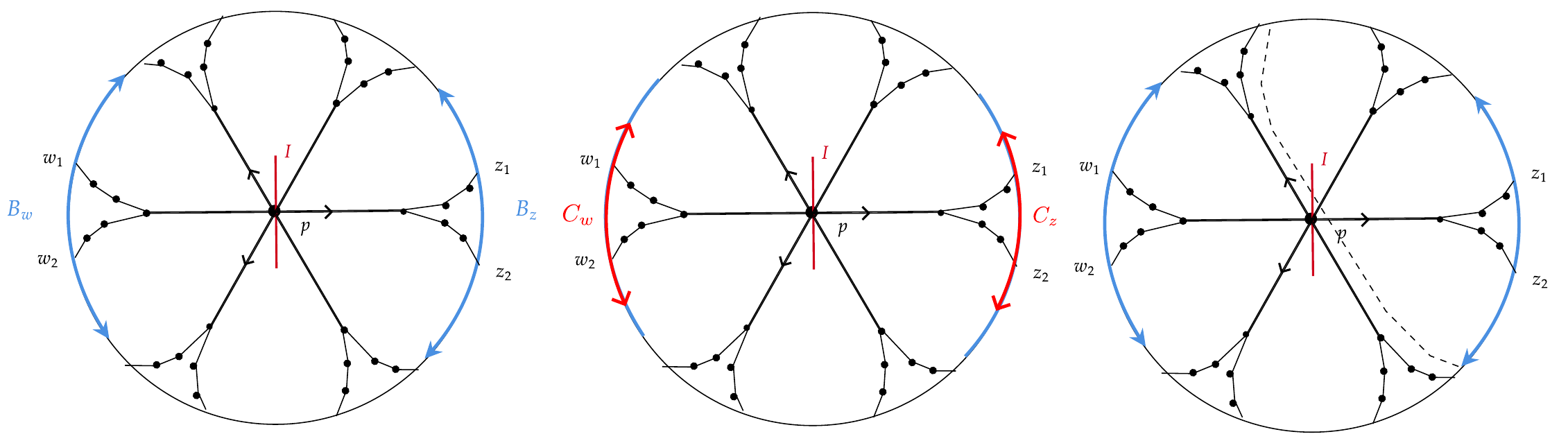}
    \caption{Construction of the neighborhoods in the case of $\frac{1}{3}$ multi-foliation. The picture on the left illustrates all leaves passing through $p$ (left-turning and right-turning) and the intervals $B_z,B_w$ in the boundary. The picture in the middle illustrates the intervals $C_z, C_w$. The picture on the right shows a leaf $l'$ which intersects $I$ in direction $\frac{2\pi}{3}$. It is locally parallel to $l_1$ in a neighborhood of $I$ and its endpoins lie in the semi-circle bounded by $l_1$.}
    \label{orthogonal segment}
\end{figure}
\begin{proof}
     Let $\widetilde{l}_{zw}\subset\widetilde\Sigma$ be a leaf with endpoints $(z,w)$ and $l_{zw}\subset\Sigma$ be its projection. Let $J$ be a regular geodesic arc  orthogonal to $\widetilde{l}_{zw}$ whose endpoints do not belong to regular cylinder leaves (we can always assume this up to possibly enlarging $I$ past a cylinder), and let $I$ be a regular geodesic arc containing $J$ in its interior. 

    We first show that there exist open neighborhoods $A_z,A_w\subset \partial\wS$ of $z$ and $w$ such that every leaf in $L(A_z\times A_w)$  intersects $I$ transversely. Suppose there are no such $A_z,A_w$. Then there exists a sequence of pairs $(z_n,w_n)$ converging to $(z,w)$ such that there exist leaves $\wt{l}_{z_nw_n}$ with endpoints $(z_n,w_n)$ not intersecting $I$. By \cite[\S 4.14]{ballmann2006manifolds}, there exists a compact set $K\subset \wideS$ such that for every $n$, the leaf $\wt{l}_{z_nw_n}$ intersects $K$. Thus, there exists a convergent subsequence of $\wt{l}_{z_nw_n}$ whose limit has endpoints $(z,w)$. Since, by construction, any leaf with endpoints $(z,w)$ intersects $J$ and intersecting the interior of $I$ transversely is an open condition, we deduce that for for $n$ big enough $\wt{l}_{z_nw_n}$ intersects $I$, a contraddiction.

    Next we will show that, up to shrinking $A_z\times A_w$, every leaf intersects $I$ orthogonally. Recall that we assume that $\wideF$ is horizontal. Let $p=\widetilde {l}_{zw}\cap I$, then by the choice of $I$, $p$ is a regular point. Consider the bi-infinite straight leaves of $\wideF$ passing through $p$. Each direction gives rise to at most two leaves, namely one left-turning and one right-turning. In particular, there are only finitely many such leaves. Denote by $(z_1,w_1),(z_2,w_2)$ the endpoints of the leaves through $p$ in  the same direction as $\widetilde{l}_{zw}$: depending on $p$ it might be that $z_1=z_2$ or $w_1=w_2$. By construction the pair $(z,w)$ is either $(z_1,w_1)$ or $(z_2,w_2)$. The endpoints of leaves passing through $p$, which are finitely many points, separate $\partial\wideS$ into disjoint open intervals. Let $B_z$ be the union of $[z_2,z_1]$ with its two neighboring intervals. The open intervals $B_w, B_z$ are pictured in the left of Figure \ref{orthogonal segment}. Define $C_z=A_z\cap B_z$. Similarly we construct $B_w$ and $C_w$. 

    We claim that every leaf with endpoints in $C_z\times C_w$ intersects $I$ orthogonally. Suppose by contradiction that there is a leaf $l'$ that intersects $I$ but not orthogonally. Since $I$ is a regular segment, it lies in an Euclidean neighborhood. The leaves are locally of the form $\mc F_{\frac{2k\pi}{n}}$ and $l'$ intersects $I$ in directions $\frac{2k\pi}{n}$ for some $k\neq 0$. By Proposition \ref{p.endpoint-orientation}, both endpoints of $l'$ lie in the same half-circle bounded by the endpoints of $l_k$. By construction the endpoints of $l'$ cannot be in $B_z\times B_w$ and thus not in $C_z\times C_w$. See Figure \ref{orthogonal segment}.
\end{proof}

\begin{defn}\label{d.small_box}
    A \emph{small box} is a subset of $G^+(\wideS)$ of the form $C\times D$ where $C,D\subset \partial \wS$ are pre-compact intervals, and there exists a closed regular geodesic segment $I\subset \widetilde \Sigma$  such that every leaf in $L(C\times D)$ intersects $I$ orthogonally. 
    \end{defn}
Note that $L(C\times D)$  might be empty, and if $I$ intersects a leaf in a cylinder, then it must intersect all leaves in the same cylinder. 
We record here two useful consequences of the construction in Proposition \ref{define measure}:
\begin{cor}\label{parallel}
Let $C\times D\subset G^+(\wideS)$ be a small box. Then two leaves in $L(C\times D)$ are either disjoint or overlap along a saddle connection tangent to $\widetilde\calF$.
\end{cor}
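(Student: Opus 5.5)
The plan is to reduce everything to the behaviour of two leaves along the segment $I$ and then use the rigidity of straight geodesics. Let $l_1,l_2\in L(C\times D)$. By the definition of small box, both meet the regular segment $I$ orthogonally, at points $q_1,q_2\in I$. Since $I$ lies in a Euclidean chart, near $I$ both leaves are straight lines perpendicular to $I$ and hence parallel there. Moreover, because both are oriented leaves of $\widetilde\calF$ with forward endpoints in the same interval $D$ and backward endpoints in $C$, they cross $I$ in the same direction. Indeed, if they crossed in opposite directions, the argument of Proposition \ref{define measure} (via Proposition \ref{p.endpoint-orientation}) would put their endpoint pairs in opposite half-circles, which is incompatible with lying in $C\times D$ once $C$ and $D$ are the small neighborhoods built there. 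So $l_1,l_2$ are tangent to parallel vectors $w_1,w_2$ at the points $q_1,q_2$ of $I$.

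\textbf{Case $q_1\neq q_2$.} Orient $I$ so that $q_2$ comes after $q_1$, with $\{I,w_i\}$ positive (replacing $I$ by its reverse if needed). Each $l_i$ is a straight geodesic, left- or right-turning. The leaves through parallel transverse vectors at distinct points of $I$ are exactly lines of a $w$-brush from $I$, so by Proposition \ref{p.complementaryregion} (equivalently, Proposition \ref{p.endpoint-orientation}(3)) the straight geodesics through $q_1$ and $q_2$ cannot cross transversally. By Proposition \ref{p.transverse_intersection}, any intersection of two straight geodesics that are not transverse is either empty or a union of common saddle connections (Proposition \ref{p.unlinked}). By CAT(0) convexity (no bigons, Proposition \ref{CAT0 property}), this intersection is connected. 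So $l_1\cap l_2$ is empty or a single geodesic segment which is a concatenation of saddle connections. Since both leaves are tangent to $\widetilde\calF$, any common segment is tangent to $\widetilde\calF$. A common subsegment starting away from $I$ must then consist of saddle connections between cone points where one leaf turns left and the other right.

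\textbf{Case $q_1=q_2$.} The two leaves share the tangent vector $w_1$ there, so one is the left-turning and the other the right-turning straight geodesic through it (Proposition \ref{p.halfspaces}), or they coincide. These two geodesics agree until the first cone point in either direction and then separate; by Proposition \ref{p.endpoint-orientation}(1) they never meet again. Their intersection is therefore a single regular segment, extended to the first cone points. If that segment were unbounded in both directions, the two leaves would coincide. If it is bounded by cone points on both sides, it is a saddle connection, or a chain of saddle connections tangent to $\widetilde\calF$.

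\textbf{Main obstacle.} The expected difficulty is this last case when the common piece is unbounded in one direction: a ray with no cone point. Then the two leaves share a ray and hence an endpoint. I would try to rule this out, or absorb it, by noting that the two leaves then have the same endpoint on that side. I would then invoke Proposition \ref{p.bound_cylinder} to see that they bound a cylinder, forcing parallel translation and contradicting that they meet. The alternative is to interpret ``disjoint or overlap along saddle connections'' as allowing this degenerate identification. I would also double-check that overlaps must consist of full saddle connections rather than a partial segment. This follows because departures of straight geodesics happen only at cone points.
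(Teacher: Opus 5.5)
Your skeleton is the same as the paper's. The paper's proof is one line: two leaves meeting the same segment $I$ orthogonally cannot cross transversely, and the convexity of the half-spaces of a straight geodesic (Proposition \ref{p.halfspaces}) then forces any intersection to be a common segment. Your split into $q_1\neq q_2$ and $q_1=q_2$ is a more careful version of that.

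The gap is the case you flag yourself, and your first proposed fix fails. Proposition \ref{p.bound_cylinder} applies only to geodesics at bounded distance, i.e.\ with the same pair of endpoints. Two leaves sharing only a forward ray have distinct backward endpoints. They separate at a cone point $p$, since its cone angle exceeds $2\pi$. If they had equal endpoints they would bound a flat strip, and that strip would have width zero because both pass through $p$, so they would coincide.

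In fact this case cannot be excluded at all. Take $n=1$, a genus-$2$ translation surface, and a direction with no saddle connections (e.g.\ an irrational direction on a square-tiled surface). Let $q$ be a point on an infinite separatrix leaving a cone point $p$. The left- and right-turning leaves through $q$ are distinct. Any sufficiently short segment $I$ through $q$ orthogonal to them is a small interval by Proposition \ref{l.smallbox_noncylinder}, since there are no cylinders in that direction. So both leaves lie in $L(C_I\times D_I)$, and their intersection is the whole ray from $p$ through $q$, not a saddle connection.

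So the statement is only correct in the form ``disjoint, or overlapping along a saddle connection or a geodesic ray''; the paper's one-line proof glosses over this too. Your second alternative is the one to adopt, and you should commit to it.

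Two smaller points. First, your argument that the leaves cross $I$ in the same direction relies on the particular neighbourhoods built in Proposition \ref{define measure}, while the corollary concerns an arbitrary small box. You don't need it: the claim is about the leaves as subsets, and reversing a leaf's orientation just exchanges left- and right-turning.

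Second, both cases can be sharpened:
\begin{itemize}
\item For $q_1\neq q_2$ the leaves are disjoint, not merely non-transverse. This is the disjointness of brush lines through distinct points of a regular segment used in the proof of Proposition \ref{p.complementaryregion}. Directly: a common point on the same side of $I$ would give a geodesic triangle with right angles at $q_1,q_2$ and a positive third angle, contradicting Proposition \ref{CAT0 property}. Points on opposite sides are separated by the half-spaces of a straight extension of $I$.
\item For $q_1=q_2$ the overlap is exactly the maximal regular segment through $q_1$. That is a single saddle connection, a ray, or the whole leaf, never a chain of saddle connections.
\end{itemize}
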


\begin{proof}
    The leaves in $L(C\times D)$ are straight geodesics orthogonal to the same interval, thus they cannot have transverse intersection. The result follows from Proposition \ref{p.halfspaces}.
\end{proof}

\begin{cor}
Open small boxes generate the topology of $G^+(\wideS)$.
\end{cor}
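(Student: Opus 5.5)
We have to show that every open set $U\subset G^+(\wideS)$ and every point $(z,w)\in U$ admit an open small box $C\times D$ with $(z,w)\in C\times D\subset U$. Product sets $C\times D$, with $C,D$ open intervals, already form a basis of the product topology on $\partial\wideS\times\partial\wideS$. So it suffices to prove one thing: for every $(z,w)\in G^+(\wideS)$ and all sufficiently small open intervals $C\ni z$, $D\ni w$, the set $C\times D$ is a small box. Such intervals are automatically precompact proper intervals. We also use that the small box condition is inherited by subsets: if $C'\subset C$ and $D'\subset D$, then $L(C'\times D')\subset L(C\times D)$, so the same segment $I$ still works.

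We split into two cases.

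\emph{Case 1: $(z,w)\in G^+(\wideF)$.} Proposition \ref{define measure} gives a regular segment $I$ and open intervals $C_z\ni z$, $C_w\ni w$ such that every leaf in $L(C_z\times C_w)$ meets $I$ orthogonally. Inside $U\cap(C_z\times C_w)$, pick open intervals $C\ni z$ and $D\ni w$ with compact closures contained in $C_z$ and $C_w$ respectively. The definition requires a closed segment, so we replace $I$ by its closure; that closure is still regular, since Proposition \ref{define measure} lets us take $I$ to be a regular arc strictly containing $J$. By heredity, $C\times D$ is then a small box.

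\emph{Case 2: $(z,w)\notin G^+(\wideF)$.} Lemma \ref{l.suppclosed} says $G^+(\wideF)$ is closed, so $(z,w)$ has a product neighbourhood $C\times D\subset U$ that avoids $G^+(\wideF)$. Then $L(C\times D)=\emptyset$, and the condition in Definition \ref{d.small_box} holds vacuously for any closed regular segment $I$. Shrinking $C$ and $D$ makes them precompact without leaving $U$.

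The only delicate point is bookkeeping: the intervals from Proposition \ref{define measure} are open, and we must pass to precompact subintervals with a closed regular segment. Heredity handles both, so no real obstacle is expected. Combining the two cases, every point of every open set lies in an open small box contained in that set, so open small boxes generate the topology.
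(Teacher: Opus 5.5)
Your proof is correct and follows the same route as the paper. You use heredity of the small-box condition when the intervals are shrunk, then split into the cases $(z,w)\in G^+(\wideF)$ (handled by Proposition \ref{define measure}) and $(z,w)\notin G^+(\wideF)$ (handled by closedness from Lemma \ref{l.suppclosed}, which gives boxes with $L(C\times D)=\emptyset$), and your extra bookkeeping on precompactness, the closed regular segment, and obtaining an actual basis only spells out what the paper's short argument leaves implicit.
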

\begin{proof}
Note that if $C\times D$ is a small box, then for every intervals $C_1\subset C$, $D_1\subset D$, the product $C_1\times D_1$ is also a small box. Since $G^+(\widetilde\calF)$ is closed by Lemma \ref{l.suppclosed}. If $(x,y)$ does not belong to $G^+(\widetilde\calF)$, then $(x,y)$ is contained in arbitrary small boxes outside $G^+(\widetilde\calF)$, while if $(x,y)\in G^+(\widetilde\calF)$, it is contained in arbitrarily small boxes by Proposition \ref{define measure}. 
\end{proof}

We finish the section by discussing an useful converse of Proposition \ref{define measure}: any sufficiently small interval determines a small box.
We begin with a preliminary observation, illustrated in  Figure \ref{leaf intersection connected}.

\begin{prop}\label{l.Iclosed}
  Let $C\times D$ be a small box, and $I\subset \widetilde\Sigma$ be an associated segment. The intersection   $I\cap L(C\times D)$ is  a connected sub-segment $I_0$ of $I$, and any bi-infinite straight leaf of $\calF$ intersecting orthogonally the interior of $I_0$ belongs to $L(C\times D)$.
\end{prop}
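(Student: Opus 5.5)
Both assertions reduce to the monotonicity of endpoints of orthogonal leaves along $I$. Fix the orientation of $I$ and the direction of $\widetilde\calF$ orthogonal to it, which is the same for every leaf in $L(C\times D)$ because $I$ lies in a Euclidean chart. At each regular point $q\in I$ there are at most two bi-infinite straight leaves orthogonal to $I$ in that direction: the left-turning one $\gamma_l(q)$ and the right-turning one $\gamma_r(q)$, given by Proposition \ref{p.halfspaces}. These are exactly the brush lines of the $w$-brush along $I$, where $w$ is the parallel unit field orthogonal to $I$.

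\textbf{Monotonicity.} By Proposition \ref{p.endpoint-orientation}(3), for $q_1<q_2$ on $I$ the backward endpoints move monotonically in one direction along $\partial\wideS$ and the forward endpoints move monotonically in the opposite direction. Concretely, the four-tuple formed by the endpoints of the leaves through $q_2$ and $q_1$ is positively oriented in the pattern
\[
(I^+_l,\ (v_2)^+_l,\ (v_1)^+_r,\ I^-_l),
\]
together with the analogous statement for backward endpoints. Part (1) of the same proposition says that, at a single point $q$, the endpoints of $\gamma_r(q)$ and $\gamma_l(q)$ are weakly ordered in the same way. The upshot is that the maps $q\mapsto \gamma^\pm(q)$, where we list $\gamma_r(q)$ before $\gamma_l(q)$, are weakly monotone from $I$ into $\partial\wideS$. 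Thus $q\mapsto \gamma^-(q)$ is monotone into one arc and $q\mapsto\gamma^+(q)$ is monotone in the reverse sense into the complementary arc bounded by the endpoints of $I^\pm_l$.

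\textbf{Connectedness of $I_0$.} Let $q_1<q<q_2$ with $q_1,q_2\in I_0$, witnessed by leaves $\ell_1,\ell_2\in L(C\times D)$. By the monotonicity above, each of the two leaves through $q$ has backward endpoint lying weakly between $\ell_1^-$ and $\ell_2^-$, and forward endpoint lying weakly between $\ell_1^+$ and $\ell_2^+$. Here "between" means in the arc that does not contain the other pair, which is the right arc since these lie on opposite sides of $I$. Since $C$ and $D$ are intervals containing $\ell_i^-$ and $\ell_i^+$ respectively, they contain these sub-arcs, so the leaves through $q$ lie in $L(C\times D)$ and $q\in I_0$. Hence $I_0$ is a sub-segment.

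\textbf{Interior points.} Let $q\in\operatorname{int}(I_0)$ and let $\ell$ be any bi-infinite straight leaf meeting $I$ orthogonally at $q$. Choose $q_1<q<q_2$ in $I_0$ and apply the same sandwich argument: the endpoints of $\ell$ lie between those of leaves of $L(C\times D)$ through $q_1$ and $q_2$, so $\ell\in L(C\times D)$.

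One point needs care. The leaf must have the orthogonal direction used by the box, not the opposite orientation $-w$; it is not yet clear whether "orthogonally" should include the opposite direction. If it does, I would check that a leaf with direction $-w$ through an interior point has endpoints in the reversed half-circles, so it cannot lie in $C\times D$ when $C\times D$ contains leaves of direction $w$ through nearby points. This would make the claim hold with the direction $w$ fixed by the box.

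The main obstacle is turning Proposition \ref{p.endpoint-orientation} into a clean weak monotonicity statement, in particular handling coinciding endpoints (cylinders and shared saddle connections) so that "between" remains valid with closed arcs inside the intervals $C,D$. Using Corollary \ref{parallel}, leaves through distinct points of $I$ are disjoint or share saddle connections but never cross. I would therefore argue via the convex half-spaces of Proposition \ref{p.halfspaces}: the leaf through $q$ lies in the strip between $\ell_1$ and $\ell_2$, so its endpoints lie in the closed boundary arcs of that strip. This strip containment is the cleanest formulation to write down.
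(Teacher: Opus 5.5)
Your argument is correct and is essentially the paper's sandwich argument: the leaves orthogonal to $I$ at a point between two points of $I_0$ have endpoints in the arcs cut out by the endpoints of the leaves of $L(C\times D)$ through those two points, which you make precise via the monotonicity from Proposition~\ref{p.endpoint-orientation} (or equivalently via non-crossing brush lines and the convex half-spaces of Proposition~\ref{p.halfspaces}). The one justification to fix is why all leaves of $L(C\times D)$ cross $I$ in the same oriented direction $w$. This does not follow from $I$ lying in a Euclidean chart, since for $n$ even $-w$ is also a leaf direction; it is instead built into the paper's oriented meaning of ``orthogonally'' (in the proof of Proposition~\ref{define measure} the direction with $k=n/2$ counts as non-orthogonal), which is also the reading under which the second clause holds, as you suspected.
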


\begin{figure}[h]
    \centering
    \includegraphics[width=0.35\linewidth]{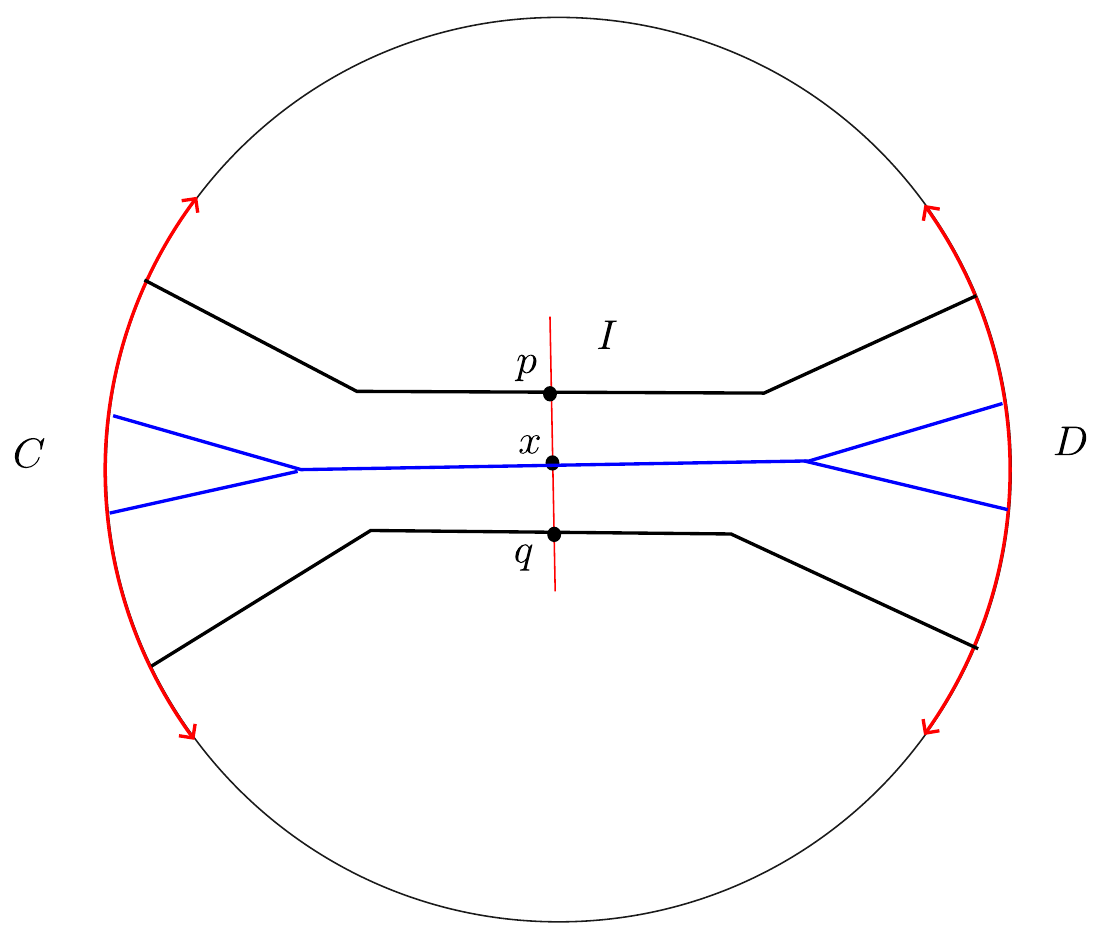}
    \caption{The segment $I\cap L(C\times D)$ is connected. Blue lines are leaves passing through $x$.}
    \label{leaf intersection connected}
\end{figure}

\begin{proof}
Let $p,q \in I\cap L(C\times D)$ and $x\in I$ between $p,q$; since $x$ is regular, there are either one or two leaves of $\calF$ containing $x$ and orthogonal to $I$.
The endpoints of either leaf  lies in the intervals cut out by endpoints of leaves passing through $p$ or $q$. Thus, the leaves are in $L(C\times D)$ and $x\in I\cap L(C\times D)$.
\end{proof}

Each small box is associated with a regular orthogonal segment, which is not unique as the segment can always be extended. Conversely, we want to construct a canonical small box for each short enough regular segment. Let now $I=\overline{x_1x_2}\subset\wideS$ be a regular segment positively orthogonal to a leaf $l$ of $\wideF$. From the local structure of $\wt\calF$, we know there exists a family of leaves through $I$ orthogonally. Let $l_1$ be the left turning leaf of $\wt\calF$ through $x_1$ and $l_2$ be the right-turning leaf of $\wt\calF$ through $x_2$. We denote by $C_I\subset\partial\wideS$ the closed interval $[l_2^-,l_1^-]$ and by $D_I$ the interval $[l_1^+,l_2^+]$. Note that the intervals $C_I$, $D_I$ are degenerate precisely when $l_1,l_2$ are contained in a cylinder.
We want to make precise when the set $C_I\times D_I$ is a small box. See Figure \ref{f.smallbox_construction}.

\begin{defn}\label{d.smallinterval}
A \emph{small interval} $I\subset\wideS$ is a regular geodesic segment such that $C_I\times D_I$ is a small box and $I$ intersects all leaves in $L(C\times D)$ orthogonally. 
\end{defn}


\begin{prop}\label{l.smallbox_cylinder}
    For any cylinder $C\subset\wideS$ tangent to $\til\calF$ every interval $I$ with endpoints in the boundary leaves of $C$ and orthogonal to the leaves in the cylinder is a small interval. In this case 
    $C_I\times D_I=\{l^-\}\times \{l^+\}$ where $l$ is any leaf in the cylinder. 
\end{prop}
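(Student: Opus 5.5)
Fix a maximal flat strip in $\wideS$ tangent to $\til\calF$; this is the lift of an immersed cylinder $C$ as in Proposition \ref{p.closed_conepoint}. Let $I=\overline{x_1x_2}$ be a segment orthogonal to the leaves of the cylinder, with $x_1$ on one boundary leaf and $x_2$ on the other. The plan has three steps: compute $C_I\times D_I$, verify that every leaf of $L(C_I\times D_I)$ crosses $I$ orthogonally, and check that $I$ is regular.

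\textbf{Step 1: $C_I\times D_I$ is a single point.} Recall that $l_1$ is the left-turning leaf through $x_1$ and $l_2$ the right-turning leaf through $x_2$, both in the direction of the cylinder. Since the strip is flat and has no cone points in its interior, and $I$ is orthogonal to the cylinder leaves, these are exactly the two boundary leaves of the strip, oriented like the other cylinder leaves. All leaves of the cylinder are parallel, so they stay at bounded distance from each other. Hence they all have the same oriented endpoints $(l^-,l^+)$, namely the fixed points of the deck transformation translating the strip. Therefore $C_I=[l_2^-,l_1^-]=\{l^-\}$ and $D_I=\{l^+\}$.

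\textbf{Step 2: every leaf in $L(\{l^-\}\times\{l^+\})$ meets $I$ orthogonally.} Let $l'$ be any leaf with endpoints $(l^-,l^+)$. Then $l'$ stays at bounded distance from the cylinder leaves. By Proposition \ref{p.bound_cylinder}, $l'$ and a cylinder leaf bound a common cylinder, so $l'$ is a regular geodesic parallel to the strip and lies in a flat strip containing our cylinder leaves. Our strip is maximal, as in the proof of Proposition \ref{p.closed_conepoint}, so $l'$ lies in the closed strip, that is, it is one of the parallel leaves. Each such leaf crosses $I$ orthogonally. This shows that $C_I\times D_I$ is a small box with associated segment $I$, and together with Step 1 that $I$ is a small interval.

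\textbf{Main obstacle: regularity of $I$ and the boundary leaves.} The boundary leaves of the strip are singular by Proposition \ref{p.closed_conepoint}, so one must check that the endpoints $x_i$ of $I$ can be taken regular, and how the boundary leaves sit. A boundary leaf that is straight (turning toward the strip side) stays at bounded distance from the strip, so Proposition \ref{p.bound_cylinder} still applies to it. The leaves passing through cone points on the far side need not lie at bounded distance, but those have different endpoints and are therefore excluded. To make $I$ a closed regular segment, I would choose $x_i$ away from the discrete set of cone points on the boundary leaves; this is possible since the boundary leaves contain regular points. I would also verify that $I$ contains no cone point in its interior, which holds because the open strip is flat.

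Finally, the degenerate case: the remark before Definition \ref{d.smallinterval} notes that $C_I$ and $D_I$ are degenerate exactly when $l_1,l_2$ lie in a cylinder, which is consistent with the conclusion above.
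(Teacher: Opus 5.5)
Your proposal is correct and follows the same route as the paper, whose proof simply observes that $L(C_I\times D_I)$ consists exactly of the leaves of the (maximal) cylinder, all of which cross $I$ orthogonally. You supply the details the paper leaves implicit: identifying $l_1,l_2$ with the boundary leaves, and using Proposition \ref{p.bound_cylinder} together with maximality of the strip to rule out other leaves with endpoints $(l^-,l^+)$.

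Two small corrections. First, the boundary leaves themselves belong to $L(\{l^-\}\times\{l^+\})$ and are singular, so in Step 2 you should only conclude that $l'$ lies in the closed strip, not that it is regular. Second, you need not move $x_1,x_2$ off cone points: in this paper a regular segment only excludes cone points from its interior, so your argument already covers every $I$, as the statement requires.
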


\begin{proof}
   Indeed in this case $L(C_I\times D_I)$ consists of all the leaves in the cylinder, and by construction they intersect $I$ orthogonally.
\end{proof}

\begin{lemma}\label{l.finite_cylinder}
    There are only finitely many maximal cylinders in $\Sigma$ foliated by leaves of $\mc F$.
\end{lemma}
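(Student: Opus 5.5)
The idea is to show that every maximal cylinder foliated by leaves of $\mc F$ has a boundary containing a saddle connection tangent to $\mc F$, and that there are only finitely many such saddle connections, each bordering at most boundedly many cylinders.

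First, let $\mathcal C$ be a maximal cylinder in $\Sigma$ whose leaves are closed regular leaves of $\mc F$, i.e.\ cylinder curves tangent to $\mc F$. By Proposition \ref{p.closed_conepoint} its two extremal curves are singular closed geodesics. Each is a concatenation of saddle connections parallel to the core curve, hence tangent to $\mc F$, and it contains at least one cone point. In particular every maximal cylinder has in its boundary at least one saddle connection tangent to $\mc F$ and of length at most the circumference of $\mathcal C$.

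The key step is bounding the circumference of $\mathcal C$, or equivalently the length of the boundary saddle connections. I would use an area argument. Maximal cylinders tangent to $\mc F$ in a fixed direction class are embedded in the direction transverse to the leaves. Two distinct maximal cylinders have leaves that are either parallel or meet at angle $2\pi k/n$. Parallel cylinders cannot overlap in their interiors, since by maximality they would coincide. So for each of the finitely many direction classes, the interiors of parallel maximal cylinders are disjoint in the unit tangent bundle restricted to that direction. The total area of these cylinders, counted with immersion multiplicity, is therefore bounded by $n\cdot\mathrm{Area}(\Sigma)$.

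Alternatively, and more cleanly, I would argue as follows. Each maximal cylinder is determined by a boundary saddle connection $s$ tangent to $\mc F$, together with a choice of side. For each cone point $p$, there are at most $2\theta(p)n/(2\pi)$ directions at $p$ tangent to $\mc F$, and hence only finitely many outgoing directions. Following a fixed outgoing direction from $p$ determines a unique straight ray, and so a unique first saddle connection in that direction, if the ray hits a cone point at all. Since $P$ is finite by the Gauss--Bonnet bound (\ref{eq:GB}), there are only finitely many pairs of a cone point and an $\mc F$-tangent direction at it. Each such pair is the starting germ of at most one saddle connection.

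A cylinder boundary is a closed singular geodesic made of saddle connections tangent to $\mc F$ that all turn by angle exactly $\pi$ on the cylinder side. It is therefore determined by its first saddle connection and the side. Hence there are at most finitely many boundary curves, at most two cylinders per boundary curve, and so finitely many maximal cylinders.

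The main obstacle is the claim that each germ (cone point, direction) starts at most one saddle connection and that the boundary is determined by it. This is immediate from uniqueness of straight geodesics in Proposition \ref{p.halfspaces}, since the boundary curve is the left- or right-turning straight geodesic through that tangent vector. One must also check that the boundary really contains a cone point, and this is exactly the content of Proposition \ref{p.closed_conepoint}.
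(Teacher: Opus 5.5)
Your germ-counting argument is correct and is essentially the paper's proof. Finitely many cone points, each with finitely many $\mc F$-directions, give finitely many saddle connections tangent to $\mc F$, and each such saddle connection, together with a choice of side, borders at most one maximal cylinder; you justify this via uniqueness of straight geodesics (Proposition \ref{p.halfspaces}), whereas the paper simply asserts it. The area/circumference paragraph is unnecessary and would not give finiteness on its own (bounded total area does not bound the number of cylinders), so you can drop it.
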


\begin{proof}
    By Proposition \ref{p.closed_conepoint} the boundary of a cylinder foliated by leaves of $\mc F$ is a concatenation of saddle connections tangent to $\calF$. Since there are finitely many cone points, and for any cone point $p$ of cone angle $\frac{2k\pi}{n}$  there are at most $k$ saddle connections tangent to $\mc F$ with endpoint $p$, there are at most finitely many such saddle connections. Each saddle connection appears in the boundary of at most two maximal cylinders, which concludes the proof.
\end{proof}
\begin{defn}\label{d.boxing}
An Euclidean rectangle $R\subset\wideS$ is a \emph{boxing rectangle} if 
\begin{enumerate}
\item its horizontal sides are tangent to $\widetilde\calF$, \item its diagonals make an angle with the horizontal sides smaller or equal to $2\pi/n$, 
\item neither horizontal side of $R$ is contained in a regular cylinder leaf. 
\end{enumerate}
\end{defn}
 The following proposition explains the terminology:
\begin{prop}\label{l.smallbox_noncylinder}
Assume that the regular interval  $I$ is a vertical side of a boxing rectangle, then $I$ is a small interval.
\end{prop}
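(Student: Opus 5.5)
We need to show: if $I$ is a vertical side of a boxing rectangle $R$, then every leaf of $\wideF$ with endpoints in $C_I\times D_I$ meets $I$ orthogonally. Write $I=\overline{x_1x_2}$. Let $l_1$ be the left-turning leaf through $x_1$ and $l_2$ the right-turning leaf through $x_2$, both horizontal. By condition (1), the horizontal sides of $R$ lie on these leaves, or on leaves parallel to them. The plan is to argue in three steps.

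\textbf{Step 1: leaves are trapped in the strip between $l_1$ and $l_2$.} Take a leaf $l$ with $(l^-,l^+)\in C_I\times D_I=[l_2^-,l_1^-]\times[l_1^+,l_2^+]$. By Proposition \ref{p.halfspaces}, $l_1$ and $l_2$ bound convex half-planes. Their endpoints are unlinked, by Proposition \ref{p.endpoint-orientation}(3). Since the endpoints of $l$ are separated from the exterior half-planes by the endpoints of $l_1$ and $l_2$, the leaf $l$ cannot cross $l_1$ or $l_2$ transversely. Here we use Proposition \ref{p.transverse_intersection}: a straight geodesic meets another straight geodesic either once transversely or not at all. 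So $l$ lies in the closed region $S$ between $l_1$ and $l_2$.

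\textbf{Step 2: $l$ must cross the rectangle $R$.} This is where the diagonal condition (2) enters, and it is the main obstacle. The idea is that $R$ separates $S$ into a left part and a right part: $R$ is a Euclidean rectangle whose horizontal sides lie on $l_1$ and $l_2$. Since $l^-\in C_I$ and $l^+\in D_I$, the leaf $l$ travels from the left end of $S$ to the right end, so it must pass through $R$.

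Inside $R$, which is a Euclidean chart, $l$ is a straight segment in one of the directions $2\pi k/n$. We need to rule out $k\neq0$, i.e.\ show $l$ is horizontal there. A segment in direction $2\pi k/n$ with $k\neq 0$ has slope at least $2\pi/n$ in absolute value. By (2), the diagonals of $R$ make angle at most $2\pi/n$ with the horizontal. Hence such a segment enters and exits $R$ through the horizontal sides, or enters through a vertical side and exits through a horizontal side. In either case it crosses $l_1$ or $l_2$ transversely at a regular point. This contradicts Step 1, except in the degenerate case where it only touches a corner of $R$. In the corner case we use the left/right-turning choice of $l_1$ and $l_2$ together with Proposition \ref{p.endpoint-orientation}(2) to place its endpoints outside $C_I\times D_I$.

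Therefore $l\cap R$ is a horizontal segment, and $l$ meets the vertical side $I$ orthogonally.

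\textbf{Step 3: the box is pre-compact, and the cylinder case.} Condition (3) ensures that $l_1$ and $l_2$ are not regular cylinder leaves. Hence the intervals $C_I$ and $D_I$ are proper closed intervals, so they are pre-compact, and there is no issue with leaves in a cylinder escaping $I$. If $l_1$ and $l_2$ lie in a common cylinder, we are instead in the situation of Proposition \ref{l.smallbox_cylinder}.

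Combining the three steps, $C_I\times D_I$ is a small box with associated segment $I$, and every leaf of $L(C_I\times D_I)$ meets $I$ orthogonally. So $I$ is a small interval.
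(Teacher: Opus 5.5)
Your route is the paper's. A leaf of $L(C_I\times D_I)$ meeting $I$ non-orthogonally is forced by the diagonal condition to cross a horizontal side of $R$, hence $l_1$ or $l_2$, transversely. That is incompatible with its endpoints lying in $C_I\times D_I$. Your Steps 1--2 also make explicit that every leaf of $L(C_I\times D_I)$ actually reaches $I$, which the paper takes for granted.

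\textbf{The gap.} The angle claim in Step 2 fails as written. The exit argument needs the angle between the \emph{unoriented} line and the horizontal, and that is not always at least $2\pi/n$. For $n$ odd, the directions $2\pi k/n$ with $k=(n\pm1)/2$ make angle only $\pi/n$ with the horizontal; for $n$ even, $k=n/2$ is itself horizontal. Take $n=3$: condition (2) is vacuous, and both non-horizontal directions $\pm 2\pi/3$ make angle $\pi/3$ with the horizontal. In a rectangle whose diagonal is steeper than $\pi/3$, such a leaf can meet $I$ and leave $R$ through the opposite vertical side without touching $l_1$ or $l_2$. So ``hence such a segment \dots\ exits through a horizontal side'' is false for these directions, and the rectangle alone cannot rule them out. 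The paper's proof asserts the same crossing for all $1\le k<n$, so it glosses over the same point.

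\textbf{The repair.} The missing ingredient is orientation, which your setup already provides.
\begin{enumerate}
\item $l\subset S$, and $I$ separates $S$ into a piece accumulating on $C_I$ and one accumulating on $D_I$; these arcs are disjoint.
\item A non-orthogonal leaf meets $I$ in a single point. So $l^-\in C_I$ and $l^+\in D_I$ force $l$ to cross $I$ from the $C_I$-side to the $D_I$-side.
\item Hence its direction at the crossing has positive horizontal component, i.e.\ $2\pi k/n\in(-\pi/2,\pi/2)$ mod $2\pi$.
\item For such $k\neq 0$ the angle with the horizontal is $|2\pi k/n|\ge 2\pi/n$, which is at least the diagonal angle.
\item Your exit and corner arguments now apply. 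The crossing is transverse even at a cone point on a horizontal side, because $l_1,l_2$ have angle $\pi$ on the side facing $R$.
\end{enumerate}
Leftward leaves, including the horizontal ones with $k=n/2$, are excluded by this orientation argument, not by the rectangle.

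\textbf{A smaller point.} Condition (3) belongs in Step 1, not Step 3. Not crossing $l_1$ transversely does not by itself place $l$ in $S$: a leaf with the same endpoints as $l_1$ could lie on the far side of $l_1$. By Proposition~\ref{p.bound_cylinder} it would bound a flat strip with $l_1$. That makes the left-turning leaf $l_1$ flat on both sides, i.e.\ a regular cylinder leaf, which (3) forbids. Also, (3) does not make $C_I,D_I$ non-degenerate, since $l_1,l_2$ may be the two boundary leaves of one cylinder; you do treat that case separately.
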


\begin{figure}[h]
    \centering
    \includegraphics[width=0.5\textwidth]{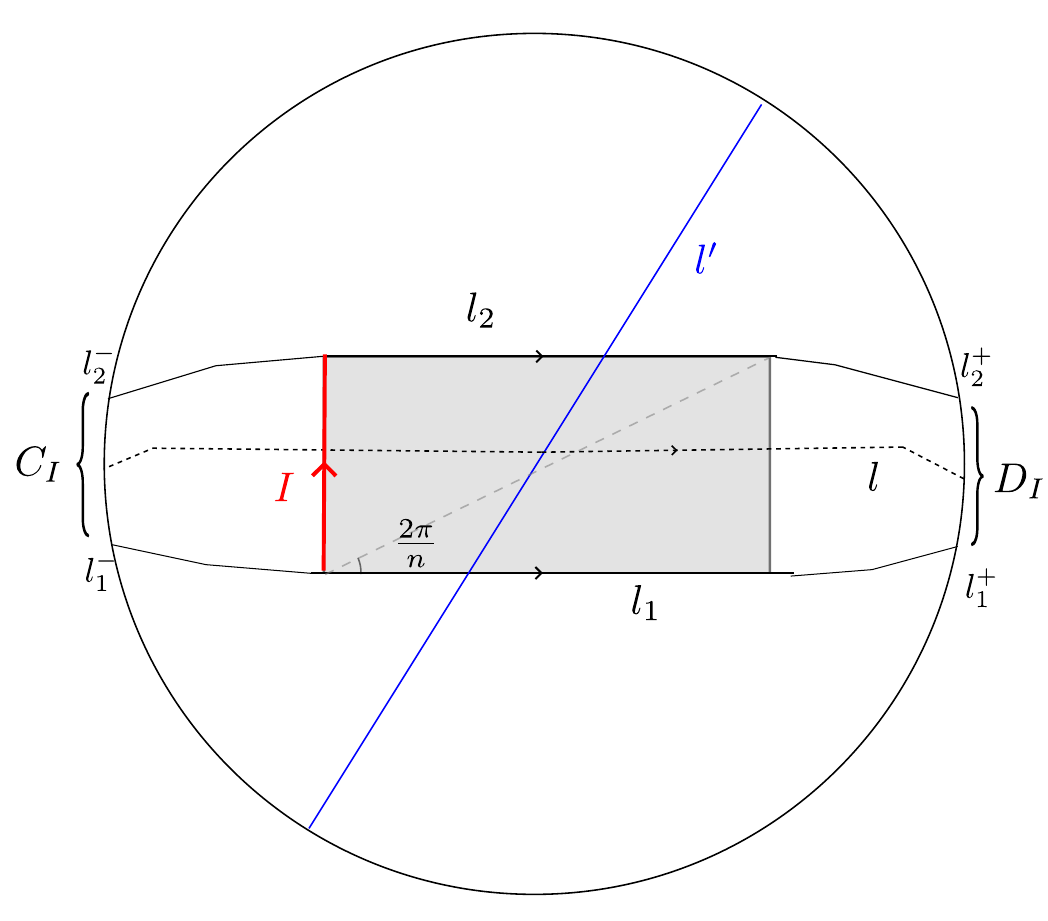}
    \caption{Construction of the small box. Two boundary leaves are left-turning and right-turning respectively. Any leaf $l'$ of $\wideF$ intersecting $I$ not orthogonally have endpoints outside of $C_I\times D_I$.}
    \label{f.smallbox_construction}
\end{figure}


\begin{proof}
Since neither horizontal side of the rectangle is contained in a regular cylinder leaf, if a cylinder leaf intersects $I$ orthogonally, then the entire cylinder intersects $I$ orthogonally. We need to show that any leaf  with endpoints in $C_I\times D_I$ intersects $I$ orthogonally. If by contradiction, a leaf $l'\in L(C_I\times D_I)$ were to meet $I$ not orthogonally, in the $\R^2$-chart in which $I$ is vertical it would correspond to an affine line of angle $2\pi k/n$ for some $1\leq k<n$. The second assumption on $R$ implies that $l$ would  intersect transversely one of the two horizontal sides of $R$, which contradicts the assumption that $l\in L(C_I\times D_I)$ by Proposition \ref{p.endpoint-orientation} (2).  
%
%
\end{proof}

\begin{rmk}
Note that if, instead, the geodesic segment $I$ is too long there can very well be leaves in $C_I\times D_I$ that intersect $I$ not orthogonally. 
\end{rmk}
\subsection{The multi-foliation current $\mu_\theta$}\label{s.mutheta2}
We now construct of the multi-foliation current $\mu_\theta$, a geodesic current on $\Sigma$ that encodes the length function of the multi-foliation $\mathcal F_\theta$. This parallels the construction of the measured lamination associated to the horizontal foliation of a quadratic differential, see for instance \cite[Section 11.3]{kapovich2001hyperbolic}.
\begin{defn}
    An \emph{oriented geodesic current} is a $\pi_1(\Sigma)$-invariant, locally finite, Borel measure on $G^+(\wideS)$. 
\end{defn}

The first step in the construction is to define the measure of small boxes. 
\begin{prop}
Let $C\times D$ be a small box and $I, J\subset \widetilde\Sigma$ be  regular geodesic segments intersecting  every leaf in $L(C\times D)$ orthogonally. Then $|I\cap L(C\times D)|=|J\cap L(C\times D)|$.  
\end{prop}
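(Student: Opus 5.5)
By Proposition \ref{l.Iclosed}, the sets $I_0:=I\cap L(C\times D)$ and $J_0:=J\cap L(C\times D)$ are connected sub-segments of $I$ and $J$. Every leaf in $L(C\times D)$ meets both of them orthogonally. The plan is to show that $I_0$ and $J_0$ are the two ends of a flat Euclidean strip foliated by the leaves of $L(C\times D)$. Their lengths are then the width of that strip, so they agree.

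First I would dispose of degenerate cases. If $L(C\times D)$ is empty, both lengths vanish. If $I_0$ is a single point, I need to show $J_0$ is too. Leaves through interior points of $J_0$ are all in $L(C\times D)$ and hit $I$. So if $J_0$ had positive length, the injection $J_0\to I_0$ constructed below would give $I_0$ positive length as well.

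The main step is to construct the map $\phi\colon I_0\to J_0$. For $x\in I_0$, take a leaf $l$ in $L(C\times D)$ through $x$ orthogonal to $I$, and let $\phi(x)=l\cap J$.
\begin{itemize}
\item \emph{Single point.} The intersection $l\cap J$ is a single point because $l$ and $J$ are CAT(0) geodesics meeting transversally, and there are no bigons (Proposition \ref{CAT0 property}).
\item \emph{Independence of the choice of leaf.} If two leaves through $x$ both lie in $L(C\times D)$, then by Corollary \ref{parallel} they are either disjoint or overlap along saddle connections tangent to $\widetilde\calF$. Since both pass through the regular point $x$ orthogonally to $I$, they coincide near $x$. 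They can only branch at cone points. Both are orthogonal to the regular segment $J$ and meet it exactly once, so if they met $J$ at different points they would have branched between $I$ and $J$. I would rule this out using Proposition \ref{p.endpoint-orientation}: a left-turning and a right-turning branch would separate, and by Proposition \ref{l.Iclosed} applied to $J$, points of $J$ strictly between the two hits would carry leaves of the box. Those leaves would be trapped in the convex region between the branches (Proposition \ref{p.halfspaces}), so they could not reach $I$. That contradicts orthogonality to $I$ for every leaf in $L(C\times D)$.
\item \emph{Bijectivity and order.} Using Corollary \ref{parallel}, distinct orthogonal points of $I_0$ give disjoint (or saddle-connection-overlapping but separating) leaves. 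Hence $\phi$ is injective and order preserving, and by symmetry it is a bijection.
\end{itemize}

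Next I would show that $\phi$ is an isometry. The quadrilateral bounded by $I_0$, $J_0$ and the two extremal leaves is a topological disk foliated by leaf segments orthogonal to both ends. I would argue that its interior contains no cone points. A cone point inside would be a branching point, and the argument above shows branching between $I$ and $J$ cannot occur for leaves in the box. The region is therefore flat, and it develops into $\R^2$ as a rectangle with $I_0$ and $J_0$ as opposite sides. This gives $|I_0|=|J_0|$.

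The hard part is the claim that no cone point sits strictly between $I$ and $J$ inside the family. If the direct argument stalls, I would fall back on the local computation: with horizontal leaves in a chart, the transverse measure is the vertical displacement. This is preserved along each leaf segment through regular points, and cone points on a leaf only change which side is straight, not the transverse coordinate of neighbouring leaves on the side where the angle is $\pi$. Summing over a fine subdivision then gives the equality of lengths.
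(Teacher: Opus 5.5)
Your proposal is correct in outline and follows the paper's overall plan. Transport $I_0$ to $J_0$ along the leaves of $L(C\times D)$, then show that $I_0$, $J_0$ and two bounding leaves enclose a Euclidean rectangle. Your ``independence of the choice of leaf'' step is the paper's claim $f^l=f^r$. The paper deduces it from $f^l$ and $f^r$ both being monotone bijections $I_0\to J_0$: every leaf through the interior of $J_0$ lies in the box, so it reaches $I$. This is the same idea as your trapping argument, expressed through order rather than convexity.

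The genuine difference is how flatness is obtained. The paper never looks inside the quadrilateral. It takes $L$ to be the left-turning leaf from one endpoint of $I_0$ and $R$ the right-turning leaf from the other. The geodesic quadrilateral $I_0,L,J_0,R$ then has a right angle at each corner, because $I$ and $J$ are regular and orthogonal to the leaves. In a CAT(0) surface an angle sum of $2\pi$ forces the quadrilateral to be flat, hence a rectangle.

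That single observation replaces what you call the hard part. It also controls cone points lying on the two bounding leaves. Your ``no interior cone point, hence it develops as a rectangle'' does not handle these by itself: a boundary cone point with inside angle $>\pi$ would spoil the development.

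Your route does give slightly more explicit information: box leaves through interior points of $I_0$ meet no cone point before $J$. It also avoids the flat-quadrilateral rigidity. But two steps need filling in.
\begin{itemize}
\item In the trapping argument, a box leaf could a priori leave the closed wedge through its apex $c$. You can exclude this. Such a leaf would either cross $l_L$ or $l_R$ transversally, contradicting Corollary \ref{parallel}. Or it would pass through $x$ in the leaf direction, and then Proposition \ref{p.halfspaces} forces it to equal $l_L$ or $l_R$.
\item The claim that the quadrilateral is a disk foliated by box-leaf segments needs a sweeping argument.
\end{itemize}

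A cleaner finish on your route is local. Once each segment from an interior point $x$ of $I_0$ to $\phi(x)$ is regular, it has a flat neighbourhood in which nearby box leaves are parallel. So $\phi$ is a local isometry, and a bijective local isometry between intervals preserves length. This sidesteps both the filling claim and the boundary cone points, and makes your fallback paragraph unnecessary.
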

\begin{proof}
The interior  $I_0$ of $I\cap L(C\times D)$ and the interior $J_0$ of $J\cap L(C\times D)$ are regular geodesic segments (Proposition \ref{l.Iclosed}); we will show that they bound, together with two geodesics tangent to $\widetilde\calF$ an Euclidean rectangle. 

 By Proposition \ref{l.Iclosed}, any leaf of $\wideF$ containing a point $x\in I_0$ and orthogonal to $I$ belongs to $L(C\times D)$ and thus, by construction, also intersects the regular interval $J$. As a result the function $f^l:I_0\to J$ that to a point $x\in I_0$ associates the intersection of the left-turning geodesic from $x$ with $J_0$ is monotone and bijective, and since the same holds for the function $f^r:I_0\to J$ that associates to a point $x\in I_0$  the intersection of the right-turning geodesic from $x$ with $J$, the two functions agree and give continuous bijections between $I_0$ and $J_0$.
 
We can  assume up to switching the roles of $I$ and $J$  that the leaves of $\wideF$ hit $I$ before $J$. We denote by $p,q$ the endpoints of $I_0$, chosen so that $I_0$ is to the left of $p$ and to the right of $q$. Then the left-turning leaf of $\wideF$ from $p$ meets $J$ in the endpoint $f^l(p)$  of $J_0$, and similarly if the right-turning leaf of $\wideF$ from $q$ meets $J$ in the endpoint $f^r(q)$ of $J_0$. Denoting by $L$ the interval between $p$ and $f^l(p)$ and by $R$ the interval between  $q$ and $f^r(q)$, we have that the CAT(0) geodesics $L, J_0, R, I_0$ meet with a right angle; this concludes the proof.
\end{proof}

 We  can now define the function $\mu_\theta$ on small boxes associated to the multi-foliation $\calF_\theta$ by \[
\mu_\theta(C\times D)=
\left\{\begin{array}{ll}
    |I\cap L(C\times D)|,& \text{if $L(C\times D)$ is non-empty;} \\
    0,& \text{otherwise.}
\end{array}
\right.
\]
Here $I$ is any interval that meets all leaves in $L(C\times D)$ orthogonally.  

\begin{lemma}
The function $\mu_\theta$ is invariant under $\pi_1(\Sigma)$: 
\end{lemma}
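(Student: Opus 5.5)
The plan is to check that each $g\in\pi_1(\Sigma)$ acts on $\wideS$ by an isometry of the $1/n$-translation structure. Such a $g$ is a deck transformation, so in local $\R^2$-charts it is an element of $\R^2\rtimes\Z/n\Z$. Hence it maps $\wideF_\theta$ to itself, preserving orientations of leaves, since $\hat{\calF}^n_\theta$ is invariant under translations and rotations by $2\pi/n$. It also extends to a homeomorphism of $\partial\wideS$ preserving the cyclic orientation.

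The steps, in order, are as follows.

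First, $g$ sends straight geodesics to straight geodesics. Left-turning geodesics go to left-turning ones, because $g$ preserves orientation and cone angles on each side. Consequently $g$ maps leaves of $\wideF$ to leaves, and $L(g(C\times D))=g(L(C\times D))$, where $g(C\times D)=gC\times gD$. Here we use that the endpoints of $g\tilde l$ are $(g l^-,g l^+)$.

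Second, if $C\times D$ is a small box with associated segment $I$, then $gI$ is a closed regular geodesic segment. Every leaf in $L(gC\times gD)=gL(C\times D)$ meets $gI$ orthogonally, because $g$ is a Euclidean isometry near $I$ and preserves angles. Since $gC$ and $gD$ are again precompact intervals, $g(C\times D)$ is a small box with segment $gI$.

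Third, $gI\cap L(g(C\times D))=g\big(I\cap L(C\times D)\big)$, and $g$ preserves Euclidean length of regular segments. This gives $|gI\cap L(gC\times gD)|=|I\cap L(C\times D)|$. The emptiness case is handled the same way: $L(gC\times gD)$ is empty exactly when $L(C\times D)$ is. Together these give $\mu_\theta(g(C\times D))=\mu_\theta(C\times D)$. The preceding proposition shows the value does not depend on the choice of segment, so computing with $gI$ is legitimate.

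No step is a serious obstacle. The only point needing care is the first: confirming that deck transformations preserve the \emph{orientation} of leaves and the left/right-turning property. This reduces to the fact that the transition maps lie in $\R^2\rtimes\Z/n\Z$, which consists of orientation-preserving maps, and that the oriented multi-foliation is invariant under this group by construction (Definition \ref{multi-foliation via local charts}).
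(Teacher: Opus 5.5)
Your proposal is correct and follows the same argument as the paper. A deck transformation $g$ is an isometry of the translation structure, so it maps $L(C\times D)$ to $L(gC\times gD)$ and preserves whether this set is empty. It also sends a measuring segment $I$ to a measuring segment $gI$ of the same length. The only difference is that you spell out details the paper leaves implicit: that $g$ preserves leaf orientations and the left/right-turning property, and that the value does not depend on the choice of segment.
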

\begin{proof}
Of course for every $g\in\pi_1(\Sigma)$, and every pair of disjoint intervals $C,D\subset\partial\Sigma$ the set $L(C\times D)$ is non-empty if and only if $L(g.C\times g.D)$ is. Furthermore for every small box $C\times D$, if an interval $I\subset \wS$ meets orthogonally every leaf in $L(C\times D)$, then for every  $g\in\pi_1(\Sigma)$ the interval $g.I$ meets orthogonally every leaf in $L(g.C\times g.D)$, and, as $g$ acts on $\wS$ by isometries, we have $|I\cap L(C\times D)|=|g.I\cap L(g.C\times g.D)|$.
\end{proof}

Next we want to use Carath\'{e}odory's theorem \ref{t.Caratheodory} to extend the function $\mu_\theta$ to a measure on the Borel $\sigma$-algebra. A good reference for the measure theory preliminaries we need is \cite{klenke2013probability}.


\begin{defn}
    A family $\mc R$ of subsets of a set $\Omega$ is a $\textit{ring of sets}$  if it satisfies
    \begin{enumerate}
        \item $\emptyset\in\mc R$;
        \item $\forall A,B\in\mc R$, $A\setminus B\in\mc R$;
        \item $\forall A,B\in\mc R$, $A\cup B\in\mc R$.
    \end{enumerate}
\end{defn}
We denote by $\mc S_\theta$ the family of finite disjoint union of small boxes.

\begin{prop}
The family  $\mc S_\theta$ is a ring.
\end{prop}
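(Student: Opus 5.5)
The plan is to show that small boxes form a semi-ring: the family is closed under finite intersections, and the difference of two small boxes is a finite disjoint union of small boxes. It is then a standard fact (see \cite{klenke2013probability}) that the family of finite disjoint unions of elements of a semi-ring is a ring, and this gives the statement. I also include $\emptyset$ as a degenerate small box, which is allowed since $L(\emptyset)=\emptyset$ trivially meets any segment orthogonally.

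\emph{Intersections.} Let $C_1\times D_1$ and $C_2\times D_2$ be small boxes. Their intersection is $(C_1\cap C_2)\times(D_1\cap D_2)$. An intersection of two intervals of the circle $\partial\wideS$ is in general a union of at most two intervals. However, $C_1,C_2$ are pre-compact proper intervals, and I will first reduce to the case where they are short, i.e.\ together they do not cover the circle, by subdividing if necessary. In that case $C_1\cap C_2$ is a single (possibly empty) interval, and similarly for $D_1\cap D_2$. The key observation, already used in the proof that open small boxes generate the topology, is the monotonicity property: if $C'\subset C$ and $D'\subset D$, then $L(C'\times D')\subset L(C\times D)$. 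So the segment $I$ witnessing that $C_1\times D_1$ is small also witnesses that any sub-product is small. Hence the intersection is a small box, or a disjoint union of at most four small boxes in the non-short case, which is still enough for the semi-ring axiom in its finite-union form.

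\emph{Differences.} Write
\[
(C_1\times D_1)\setminus(C_2\times D_2)=\big((C_1\setminus C_2)\times D_1\big)\sqcup\big((C_1\cap C_2)\times(D_1\setminus D_2)\big).
\]
Each of $C_1\setminus C_2$ and $D_1\setminus D_2$ is a finite disjoint union of intervals, at most two each. These intervals may be half-open or closed; this is why the definition only requires pre-compact intervals, with no condition on openness. Every product appearing is a product of sub-intervals of $C_1$ and $D_1$, so by the same monotonicity it is a small box with the same segment $I$. The union is disjoint by construction. This establishes the semi-ring property, and closure of $\mc S_\theta$ under differences and unions follows by the usual argument. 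To write $A\cup B=A\sqcup(B\setminus A)$ and expand, I need that the difference of two finite disjoint unions is again a finite disjoint union. This follows by iterating the two identities above and distributing intersections over the disjoint unions.

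The main obstacle is the bookkeeping on the circle: making sure intersections and differences of intervals of $\partial\wideS$ remain finite unions of intervals that stay pre-compact and proper. I will handle this by noting that every interval produced is a sub-interval of one of the original $C_i$ or $D_i$, which are proper and pre-compact. Beyond that, the only geometric input needed is the monotonicity of $L(\cdot)$ under inclusion, which is immediate from the definitions.
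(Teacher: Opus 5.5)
Your proof is correct and follows essentially the same route as the paper: the same decomposition $(C_1\times D_1)\setminus(C_2\times D_2)=\big((C_1\setminus C_2)\times D_1\big)\sqcup\big((C_1\cap C_2)\times(D_1\setminus D_2)\big)$, the fact that sub-products of a small box are small boxes witnessed by the same segment $I$, and the identity $A\cup B=(A\setminus B)\sqcup B$. Your semi-ring framing adds details the paper leaves implicit: differences and intersections of intervals on $\partial\wideS$ can have two components, and the difference of two arbitrary finite disjoint unions of small boxes, not just of two single boxes, stays in $\mc S_\theta$.
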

\begin{proof}
1) By construction the empty set belongs to $\mc S_\theta$.

2) Let $A=C_1\times C_2,\, B=D_1\times D_2$ be two disjoint small boxes. Then
  \[
 (C_1\times C_2)\setminus (D_1\times D_2)=K_1\sqcup K_2\]
where $K_1,K_2$ are the small boxes
        \[\begin{array}{l}
        K_1=(C_1\setminus D_1)\times C_2,\\
        K_2=(C_1\cap D_1)\times (C_2\setminus D_2).
        \end{array}.\]

3)Since $\mc S_\theta$ is invariant under disjoint finite unions,  
$$A\cup B=(A\setminus B)\sqcup B \in\mc S_\theta.$$
    
    \end{proof}

The following lemma will be useful to extend the function $\mu_\theta$ to a pre-measure on the ring $\mathcal S_\theta$ of finite disjoint unions of small boxes:
\begin{lemma}\label{l.sigmaadd}
Let $A=C\times D$ be a small box. If there are countably many disjoint small boxes $A_i=C_i\times D_i$ such that $A=\sqcup A_i$, then 
$$\mu_\theta (C\times D)=\sum_i\mu_\theta (C_i\times D_i).$$
\end{lemma}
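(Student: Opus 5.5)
The idea is to transfer the problem to a single regular segment and reduce $\sigma$-additivity to additivity of Lebesgue measure on that segment. Let $I$ be a regular segment meeting every leaf of $L(C\times D)$ orthogonally. If $L(C\times D)$ is empty, every $L(C_i\times D_i)\subset L(C\times D)$ is empty too, and both sides vanish. Otherwise, by Proposition \ref{l.Iclosed}, $I_0:=I\cap L(C\times D)$ is a connected subsegment of $I$, and $\mu_\theta(C\times D)=|I_0|$.

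The first step is to show that for each $i$ the same $I$ computes $\mu_\theta(C_i\times D_i)$. Every leaf of $L(C_i\times D_i)$ lies in $L(C\times D)$, so it meets $I$ orthogonally; hence $I$ is an admissible segment for the small box $C_i\times D_i$. By the proposition showing that $\mu_\theta$ is independent of the chosen segment, $\mu_\theta(C_i\times D_i)=|I_i|$ with $I_i:=I\cap L(C_i\times D_i)$, and each $I_i$ is a connected subsegment of $I_0$ by Proposition \ref{l.Iclosed}.

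The second step is to show that the $I_i$ cover $I_0$ and overlap only on a Lebesgue-null set. For covering, take $x\in I_0$. Some leaf through $x$ orthogonal to $I$ has endpoints in $C\times D=\bigsqcup (C_i\times D_i)$, so it lies in some $L(C_i\times D_i)$ and $x\in I_i$. For overlaps, a regular point $x$ lies on at most two leaves orthogonal to $I$, namely the left-turning and the right-turning one, and their endpoint pairs coincide unless one of them passes through a cone point. When they differ, $x$ may lie in two distinct $I_i$. I therefore need to count the points $x\in I_0$ whose orthogonal left- and right-turning leaves have different endpoints. Each such $x$ lies on a leaf containing a saddle connection or a ray hitting a cone point. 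There are countably many cone points in $\wideS$, and for each cone point and each of the finitely many directions tangent to $\widetilde\calF$ there is a unique straight leaf through it. So there are countably many such leaves, and each meets $I$ in at most one point. The exceptional set is therefore countable, hence null.

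Given these two steps, the $I_i$ are countably many intervals with pairwise intersections contained in a null set, whose union is $I_0$. Countable additivity of Lebesgue measure then gives $|I_0|=\sum_i |I_i|$, which is the claim.

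The main obstacle is the overlap step. The concern is a leaf $l$ of one box that shares a saddle connection with a leaf $l'$ of another box, in which case the two segments $I_i$ and $I_j$ meet at a point. The countability argument above handles this, but I need to check carefully that disjointness of the boxes in $\partial\wideS^2$ really prevents a positive-length overlap. Such an overlap would require a whole open family of leaves shared by two disjoint boxes, which is impossible because each leaf through a non-exceptional point has a single endpoint pair.

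A second issue arises in the cylinder case, where $C\times D$ is a single point and all leaves share the same endpoints. There the decomposition consists of the box itself together with empty boxes, so additivity is trivial.
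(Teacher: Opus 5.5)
Your proof is correct and follows the paper's own argument: both reduce $\sigma$-additivity to the fact that the segments $I\cap L(C_i\times D_i)$ tile $I\cap L(C\times D)$ up to a null set. Two things you do more carefully than the paper: you reuse $I$ itself as the measuring segment for every $C_i\times D_i$, where the paper transfers separate segments $J_i$ onto $I$; and your countability argument justifies the pairwise disjointness of the open intervals $I_i$, which the paper only asserts. The remaining slips are cosmetic and do not affect countability: a cone point carries two straight leaves per direction (left- and right-turning), not one, and for even $n$ the oppositely oriented orthogonal leaf at a non-exceptional point is excluded from $C\times D$ because $C\cap D=\emptyset$.
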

\begin{proof}
The proof of this fact follows along the lines of the proof of Proposition \ref{define measure}: Let $I$ be a regular segment meeting all leaves in $L(C\times D)$ orthogonally, and similarly, for every $i$, let $J_i$ be a regular segment meeting all leaves in $L(C_i\times D_i)$ orthogonally. Then we have bijections between the interiors of the intervals $J_i$ with subintervals $I_i$ of $I$. Then the open intervals $I_i$ are pairwise disjoint and the union of their closures is $I$. As a result the length of $I$ is the sum of the lengths of the intervals $I_i$.
\end{proof}
\begin{cor}
The linear extension of $\mu_\theta$ to $\mathcal S_\theta$ does not depend on the decomposition of an element of $\mathcal S_\theta$ as a disjoint union of small boxes. 
\end{cor}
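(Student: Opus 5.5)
The plan is the standard argument that finite additivity on a semiring-generated ring is well defined once it holds for refinements. Let $X\in\mathcal S_\theta$ have two decompositions $X=\bigsqcup_{i=1}^k A_i=\bigsqcup_{j=1}^m B_j$ into small boxes. We aim to show $\sum_i\mu_\theta(A_i)=\sum_j\mu_\theta(B_j)$.

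The first step is to pass to a common refinement. Write $A_i=C_i\times D_i$ and $B_j=C'_j\times D'_j$. Each intersection $A_i\cap B_j=(C_i\cap C'_j)\times(D_i\cap D'_j)$ is a product of intervals. If it is nonempty, it is itself a small box: as observed in the proof that open small boxes generate the topology, subproducts of a small box are small boxes, witnessed by the same segment $I$. Moreover, $A_i=\bigsqcup_j (A_i\cap B_j)$, because the $B_j$ cover $X\supset A_i$ and are pairwise disjoint. Symmetrically, $B_j=\bigsqcup_i(A_i\cap B_j)$.

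The second step applies Lemma \ref{l.sigmaadd}, in its finite case, to each $A_i$ and to each $B_j$:
\[
\mu_\theta(A_i)=\sum_j\mu_\theta(A_i\cap B_j),\qquad \mu_\theta(B_j)=\sum_i\mu_\theta(A_i\cap B_j).
\]
Summing the first identity over $i$ and the second over $j$ produces the same double sum. This proves the claim. The definition $\mu_\theta(X):=\sum_i\mu_\theta(A_i)$ is therefore independent of the decomposition, and the extension is finitely additive on $\mathcal S_\theta$.

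The main obstacle is a technical point about the intersections $A_i\cap B_j$. Since $A_i\cap B_j$ is an intersection of intervals, it may not be of the same type as the boxes in the decomposition: an interval could degenerate or be half-open. Lemma \ref{l.sigmaadd} is stated for small boxes whose intervals are pre-compact intervals of $\partial\widetilde\Sigma$, so we must check that intersections of such intervals are again pre-compact intervals, or empty. For proper arcs of the circle we have to be careful, because two arcs can intersect in two components. We handle this by noting that each component is again an interval, so $A_i\cap B_j$ is a finite disjoint union of products of intervals, each a small box, and the argument above goes through verbatim. Empty pieces contribute $0$, since $L(\emptyset)=\emptyset$.
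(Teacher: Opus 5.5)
Your argument is correct and is exactly the deduction the paper intends: the paper states this corollary without proof as an immediate consequence of Lemma \ref{l.sigmaadd}, and your common-refinement argument (intersect the two decompositions, use that sub-products of a small box are small boxes, apply finite additivity on each box) is the standard way to make that precise. Your worry about two-component intersections is harmless and in fact cannot arise. Since $C_i\times D_i\subset G^+(\widetilde\Sigma)$ forces $C_i\cap D_i=\emptyset$, a disconnected $C_i\cap C'_j$ implies $C_i\cup C'_j=\partial\widetilde\Sigma$, hence $D_i\cap D'_j=\emptyset$ and $A_i\cap B_j$ is empty.
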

The extension $\mu_\theta:\mc S_\theta\to \R$ is then naturally $\pi_1(\Sigma)$-invariant.

\begin{defn}
    Let $\mc R$ be a ring of subsets of $\Omega$. A function $\mu:\mc R\to [0,\infty]$  is  a \textit{pre-measure} if 
    \begin{itemize}
        \item $\mu(\emptyset)=0$;
        \item whenever $A=\sqcup_1^\infty A_i,A,A_i\in \mc R$ and $A_i$ pairwise disjoint, then $\mu(A)=\sum_1^\infty \mu(A_i)$.
    \end{itemize}
  The function  $\mu$ is $\sigma$\textit{-finite} if there exist elements $\Omega_n\in\mc R$ such that $\mu(\Omega_n)<\infty$ and $\Omega=\cup_n\Omega_n$. 
\end{defn}

\begin{prop}
 The function   $\mu_\theta$ is a $\sigma$-finite pre-measure on $\mc S_\theta$.
\end{prop}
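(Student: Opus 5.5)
We need three things: $\mu_\theta(\emptyset)=0$, countable additivity on $\mathcal S_\theta$, and $\sigma$-finiteness. The first is immediate, since $L(\emptyset)=\emptyset$ and such a box gets measure $0$ by definition.

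\emph{Countable additivity.} Let $A=\bigsqcup_{i\ge1}A_i$ with $A,A_i\in\mathcal S_\theta$. Write $A=\bigsqcup_{j=1}^m B_j$ as a finite disjoint union of small boxes. Then each $B_j=\bigsqcup_i (A_i\cap B_j)$. Each $A_i\cap B_j$ is a finite disjoint union of small boxes, because $\mathcal S_\theta$ is a ring and sub-products of small boxes are small boxes. Refining in this way, the family $\{A_i\cap B_j\}$ decomposes into a countable collection of small boxes partitioning $B_j$. Lemma \ref{l.sigmaadd} applied to each $B_j$ gives
\[
\mu_\theta(B_j)=\sum_i\mu_\theta(A_i\cap B_j).
\]
By the Corollary on independence of the decomposition, $\mu_\theta(A_i)=\sum_j\mu_\theta(A_i\cap B_j)$. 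All terms are nonnegative, so we may exchange the two sums, and this yields $\mu_\theta(A)=\sum_i\mu_\theta(A_i)$.

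The main point to check is that Lemma \ref{l.sigmaadd} really covers \emph{arbitrary} countable partitions, including ones where some pieces have empty $L$ or degenerate $C_I\times D_I$ coming from cylinders. If its proof seems incomplete, I would argue directly on the segment $I$ for $B_j$ via Proposition \ref{l.Iclosed}. Every leaf of $L(B_j)$ meets $I_0=I\cap L(B_j)$, and it lies in exactly one $L(A_i\cap B_j)$. So $I_0$ is the disjoint union of the connected subsegments $I\cap L(A_i\cap B_j)$, up to the finitely or countably many points where two leaves through a point of $I$ (a left-turning and a right-turning one) lie in different pieces. That exceptional set is countable, hence of length zero, and Lebesgue measure on $I_0$ is countably additive. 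The lengths of these subsegments agree with $\mu_\theta(A_i\cap B_j)$ by the well-definedness proposition, since any segment orthogonal to all leaves computes the same length.

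\emph{$\sigma$-finiteness.} By Proposition \ref{define measure}, every point of $G^+(\widetilde\calF)$ lies in an open small box. Every point outside this closed set (Lemma \ref{l.suppclosed}) lies in a small box with empty $L$, hence of measure $0$. Each small box has finite measure, bounded by the length of its associated segment. Since $G^+(\wideS)$ is second countable, it is covered by countably many small boxes $\Omega_n$ of finite measure. Replacing them by the finite unions $\Omega_1\cup\dots\cup\Omega_n\in\mathcal S_\theta$ gives the required exhaustion.
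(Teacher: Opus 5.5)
Your proof is correct. For $\sigma$-additivity you follow the paper, which simply invokes Lemma \ref{l.sigmaadd}. You additionally carry out the standard passage from additivity on single small boxes to additivity on the ring $\mathcal S_\theta$ (refining by the sets $A_i\cap B_j$ and exchanging nonnegative double sums), a step the paper leaves implicit.

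Your fallback argument is also a streamlined version of the proof of Lemma \ref{l.sigmaadd}. Since $L(E)\subset L(B_j)$ for every sub-box $E\subset B_j$, the single segment $I$ measures all the pieces at once. This avoids the bijections between the segments $J_i$ and subintervals of $I$ that the paper uses. Two small precision points there:
\begin{itemize}
\item $I\cap L(A_i\cap B_j)$ is a finite union of subsegments, one for each small box in the decomposition, not a single segment.
\item The countability of the exceptional set deserves a line. A point of $I$ carries two distinct orthogonal leaves only if its leaf runs into a cone point. By the angle bound of Proposition \ref{CAT0 property}, each cone point is the first one hit, on a given side of $I$, by the leaf of at most one point of $I$.
\end{itemize}

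For $\sigma$-finiteness your route genuinely differs. The paper works in $\wideS$: it exhausts $\wideS$ by compact sets and shows, with separate analyses at regular points and at cone points, that the leaves meeting a small neighbourhood $U_x$ are covered by finitely many small boxes. You work directly in $G^+(\wideS)$. The open small boxes of Proposition \ref{define measure} cover $G^+(\wideF)$, boxes with $L=\emptyset$ cover its open complement (Lemma \ref{l.suppclosed}), and second countability gives a countable subcover.

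Your argument is shorter and needs no cone-point case analysis. It also explicitly handles the complement of $G^+(\wideF)$, which the paper's covering leaves implicit. In exchange, the paper's argument directly yields the geometric statement that the set of leaves meeting any compact subset of $\wideS$ has finite measure. Your cover by open boxes of finite measure still gives local finiteness on $G^+(\wideS)$.
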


\begin{proof}
    It is clear that $\mu_\theta(\emptyset)=0$, furthermore the function $\mu_\theta$ is $\sigma$-additive by Lemma \ref{l.sigmaadd}.
    
    In order to show that $\mu_\theta$ is $\sigma$-finite, since $\wideS$ admits an exhaustion of compact sets, it is enough to verify that every point $x\in\wideS$ has an open neighbourhood $U_x$ such that the set $G^+(U_x)$ of endpoints of leaves of $\wideF$ intersecting $U_x$ can be covered by finitely many small boxes. We distinguish two cases:
    
 If $x$ is regular, then there exists a small ball $U_x$ centered in $x$ that does not contain any singular point. Let $I_1\ldots, I_n$ be the maximal regular segments contained in $U_x$ tangent to $v_{[\theta+\pi/2]}$. Every leaf $l\in G^+(U_x)$ intersects orthogonally the interval $I_i$ for some $1\leq i\leq n$. By extending  $I_i$ after possibly sliding it along the foliation  $\wideF$ to  guarantee that $I_i$ crosses the maximal cylinders it intersect and avoids cone points. We obtain $n$ small boxes covering $G^+(U_x)$.

    If $x$ is a cone point of cone angle $\frac{2\pi p}{n}$, there exists a local neighborhood $U_x$ that does not contain other singular points and can be divided into $2p$ sectors separated  by rays of $\wideF_{\theta+\frac{\pi}{2}}$ starting from $x$ of  angle $\frac{\pi}{n}$ at $x$. The union of $n$ consecutive sectors is a sector of angle $\pi$ at $x$. Each leaf of the foliation $\wideF_{\theta+\frac{\pi}{2}}$ contained in $V$ lies in one of the $2p$ $\pi$-sectors and is parallel to its boundary side, it does then belong to a small box associated to the $\pi$-sector and constructed as in the regular case.
\end{proof}

Recall that a \emph{$\sigma$-algebra} on a set $\Omega$ is a family $\mc A$ of subsets of $\Omega$ that contains the empty set and is closed under complements and countable unions.   Given a ring $\mc R$, the \emph{$\sigma$-algebra generated by $\mc R$} is the smallest $\sigma$-algebra $\sigma(\mc R)$ that contains $\mc R$.     If $\Omega$ is a topological space, then the \textit{Borel} $\sigma$\textit{-algebra} is the smallest $\sigma$-algebra generated by all open subsets.

Observe that since  the set of open small boxes is a basis for the topology of $G(\wideS)$, the $\sigma$-algebra generated by $\mathcal S_\theta$ is the Borel $\sigma$-algebra.

\begin{thm}[Carath\'eodory's extension theorem, see e.g. {\cite[Theorem 1.41]{klenke2013probability}}]\label{t.Caratheodory}
    Let $\mc R$ be a ring of sets on $\Omega$ and let $\mu:\mc R\to[0,\infty]$ be a $\sigma$-finite pre-measure. Let $\sigma(\mc R)$ be the $\sigma$-algebra generated by $\mc R$. Then there exists a unique measure $\mu':\sigma(\mc R)\to[0,\infty]$ such that $\mu'$ is an extension of $\mu$.
\end{thm}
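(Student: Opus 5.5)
This is the standard Carathéodory extension theorem, quoted from \cite[Theorem 1.41]{klenke2013probability}, so it is used here as a black box and not reproved. Still, the outline of a proof I would give runs as follows.

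\emph{Existence.} I would first define the outer measure
\[
\mu^*(E)=\inf\Big\{\sum_i\mu(A_i) : A_i\in\mc R,\ E\subset\bigcup_i A_i\Big\}
\]
for an arbitrary $E\subset\Omega$. Monotonicity and countable subadditivity of $\mu^*$ are immediate from the definition. The inequality $\mu^*\le\mu$ on $\mc R$ is clear by covering $A$ by itself. For the reverse inequality, take $A\in\mc R$ and a countable cover $A\subset\bigcup_i A_i$ by elements of $\mc R$. The sets $B_i=A\cap A_i\setminus\bigcup_{j<i}A_j$ lie in $\mc R$ because $\mc R$ is a ring, and $A=\bigsqcup_i B_i$. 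The pre-measure property then gives $\mu(A)=\sum_i\mu(B_i)\le\sum_i\mu(A_i)$. Hence $\mu^*=\mu$ on $\mc R$.

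Next I would take the Carathéodory-measurable sets: those $M$ with $\mu^*(E)=\mu^*(E\cap M)+\mu^*(E\setminus M)$ for every $E$. The classical Carathéodory lemma says these form a $\sigma$-algebra on which $\mu^*$ is countably additive. I expect this lemma to be the most technical step; it is handled by the standard disjointification argument. It remains to show that each $A\in\mc R$ is measurable. Given $E$ and a near-optimal cover $\{A_i\}$ of $E$, the sets $A_i\cap A$ and $A_i\setminus A$ lie in $\mc R$. Additivity of $\mu$ on $\mc R$ gives $\sum_i\mu(A_i)=\sum_i\mu(A_i\cap A)+\sum_i\mu(A_i\setminus A)\ge\mu^*(E\cap A)+\mu^*(E\setminus A)$. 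So $\sigma(\mc R)$ is contained in the measurable sets, and $\mu'=\mu^*|_{\sigma(\mc R)}$ is the desired extension.

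\emph{Uniqueness.} Here I would use $\sigma$-finiteness. Since $\mc R$ is a ring, it is closed under intersections, so it is a $\pi$-system. Let $\nu$ be a second extension and fix $\Omega_n\in\mc R$ with $\mu(\Omega_n)<\infty$. By Dynkin's $\pi$-$\lambda$ theorem, the class of $B\in\sigma(\mc R)$ with $\nu(B\cap\Omega_n)=\mu'(B\cap\Omega_n)$ is a $\lambda$-system containing $\mc R$, hence it is all of $\sigma(\mc R)$. Finiteness of $\mu(\Omega_n)$ is what makes complements work in the $\lambda$-system argument. Finally, writing $\Omega$ as an increasing union of finite unions of the $\Omega_n$ and applying inclusion–exclusion gives $\nu=\mu'$.

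In the paper's setting, the theorem is applied to $\mc R=\mc S_\theta$ with the $\sigma$-finite pre-measure $\mu_\theta$. The hypotheses (that $\mc S_\theta$ is a ring and that $\mu_\theta$ is a $\sigma$-finite pre-measure) were verified just above. Since $\sigma(\mc S_\theta)$ is the Borel $\sigma$-algebra, the result is a Borel measure $\mu_\theta$ on $G^+(\wideS)$. This measure is $\pi_1(\Sigma)$-invariant by uniqueness, because each translate $g_*\mu_\theta$ also extends the invariant pre-measure.
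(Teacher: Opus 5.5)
Your sketch is correct. The paper gives no proof of its own and simply cites \cite[Theorem 1.41]{klenke2013probability}, whose argument is exactly the one you outline: the outer measure built from countable covers by ring elements, Carath\'eodory's lemma, measurability of the elements of $\mc R$, and uniqueness via Dynkin's $\pi$--$\lambda$ theorem using $\sigma$-finiteness. Your closing remark that $\pi_1(\Sigma)$-invariance of the extended $\mu_\theta$ follows from uniqueness is also correct, and it makes explicit what the paper asserts only ``by construction''.
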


With a slight abuse of notation we  denote by $\mu_\theta$ also the Caratheodory extension of the $\sigma$-finite pre-measure $\mu_\theta$ on $\mc S_\theta$ to the Borel $\sigma$-algebra.
By construction $\mu_\theta$ is a $\pi_1(\Sigma)$-invariant and locally finite measure on $G^+(\wideS)$, namely an oriented geodesic current. We will refer to it as the \emph{multi-foliation current}.

The following two easy consequences of the construction play an important role in the study of the multi-foliation current.
\begin{prop}\label{p.support}
The support of the multi-foliation current $\mu_\theta$ is  $G^+(\wideF)$
\end{prop}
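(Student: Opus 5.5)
We prove the two inclusions $\supp{\mu_\theta}\subseteq G^+(\wideF)$ and $G^+(\wideF)\subseteq\supp{\mu_\theta}$ separately, using only that open small boxes form a basis of the topology of $G^+(\wideS)$ and that, on small boxes, $\mu_\theta$ is given by the length formula.

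\emph{First inclusion.} Let $(x,y)\notin G^+(\wideF)$. By Lemma \ref{l.suppclosed} the set $G^+(\wideF)$ is closed, so there is an open product neighbourhood $C\times D$ of $(x,y)$ disjoint from $G^+(\wideF)$. Shrinking it if necessary, we may take $C\times D$ to be a small box with $L(C\times D)=\emptyset$. The definition then gives $\mu_\theta(C\times D)=0$. The Carath\'eodory extension agrees with the pre-measure on $\mc S_\theta$, so $(x,y)\notin\supp{\mu_\theta}$.

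\emph{Second inclusion.} Let $(z,w)\in G^+(\wideF)$ and let $U$ be any open neighbourhood of $(z,w)$. By Proposition \ref{define measure} and the remark that sub-products of a small box are small boxes, we can find a small box $C\times D\subset U$ containing $(z,w)$ in its interior, together with a regular segment $I$ meeting every leaf of $L(C\times D)$ orthogonally. Fix a leaf $l$ with endpoints $(z,w)$, and let $p=l\cap I$, which is a regular point. We need to show $\mu_\theta(C\times D)=|I\cap L(C\times D)|>0$. By Proposition \ref{l.Iclosed}, $I\cap L(C\times D)$ is a segment $I_0$ containing $p$, so it suffices to show that $I_0$ is not reduced to a point.

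There are two cases.
\begin{itemize}
\item If $l$ lies in a cylinder tangent to $\wideF$, every leaf of that cylinder has the same endpoints $(z,w)$. Hence $I_0$ contains the width of the cylinder along $I$, which is positive.
\item Otherwise, consider leaves orthogonal to $I$ through points $p'\in I$ close to $p$. As $p'\to p$ from either side, these leaves converge to the left- or right-turning leaf through $p$, so their endpoints converge to the corresponding endpoints of leaves through $p$. This is the same continuity argument, based on Proposition \ref{p.endpoint-orientation}, used in the proof of Proposition \ref{define measure}. In the construction there, $C\times D$ contains a neighbourhood of the endpoint pairs of both the left- and right-turning leaves through $p$ in the direction of $l$, namely $[z_2,z_1]\times[w_1,w_2]$ with its neighbouring intervals. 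Therefore, for $p'$ in a small open interval around $p$, the orthogonal leaves through $p'$ have endpoints in $C\times D$, and $I_0$ has positive length.
\end{itemize}
Thus every neighbourhood of $(z,w)$ has positive measure, and $(z,w)\in\supp{\mu_\theta}$.

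The main obstacle is this continuity step in the non-cylinder case. Here $z$ or $w$ might lie on the boundary of the interval of endpoints near $p$, so we must check that both one-sided limits land inside $C\times D$. To handle this, we would first shrink $I$ so that it contains no cone points on nearby leaves within a compact window. Then the leaves through $p'$ agree with the left-turning leaf through $p$ (for $p'$ on one side) or the right-turning leaf (for $p'$ on the other side) on larger and larger compact sets. Endpoint convergence then follows from CAT(0) geometry, in particular Proposition \ref{p.endpoint-orientation}(3), which says the endpoints are nested monotonically.
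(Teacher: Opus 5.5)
Your first inclusion and your cylinder case are correct and match the paper, which phrases the cylinder case as: $(z,w)$ is an atom, by Proposition~\ref{l.smallbox_cylinder}. The non-cylinder case contains a false step.

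\textbf{The gap.} You shrink the box into an arbitrary neighbourhood $U$ of $(z,w)$. You then use a property of the box built in Proposition~\ref{define measure}: that it contains neighbourhoods of the endpoint pairs of \emph{both} the left-turning and the right-turning leaf through $p$. That property does not survive shrinking, and for small $U$ it is false.

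Suppose $l$ is left-turning and passes through a cone point. Since cone angles exceed $2\pi$, the angle on its right there exceeds $\pi$. So the right-turning leaf $l'$ through $p$ branches off $l$ at that cone point. Distinct rays from a point of a CAT(0) space are not asymptotic, so the endpoint pair $(z',w')$ of $l'$ differs from $(z,w)$.

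As you say yourself, leaves through points $p'\in I$ just to the right of $p$ converge to $l'$, so their endpoints converge to $(z',w')$. If $U$ avoids a neighbourhood of $(z',w')$, these $p'$ are not in $I_0$. Hence $I_0$ does not contain an open interval around $p$. The claim that ``both one-sided limits land inside $C\times D$'', which your last paragraph singles out as the thing to check, is wrong whenever $l$ is singular.

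\textbf{The repair.} The conclusion $|I_0|>0$ still holds if you use one side only.
\begin{itemize}
\item Since $l$ is straight, say left-turning, every compact piece of $l$ has a flat strip of some width $\epsilon_0>0$ on its left. Cone points on $l$ have a flat half-disc on that side, and the other cone points lie at positive distance.
\item For $p'\in I$ on the left of $p$ at distance less than $\epsilon_0$, the orthogonal leaf through $p'$ runs parallel to $l$ along that piece.
\item So these leaves converge to $l$ itself on compacta, and their endpoints converge to $(z,w)$. For $p'$ close to $p$ they therefore lie in the open set $C\times D$.
\end{itemize}
Thus $I_0$ contains a half-open interval $[p,p+\epsilon)$, and $\mu_\theta(C\times D)=|I_0|>0$.

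\textbf{Comparison with the paper.} With this repair your route is a legitimate alternative. The paper uses the canonical boxes $C_{I_k}\times D_{I_k}$ of shrinking small intervals $I_k$ at $p$. These are vertical sides of boxing rectangles, chosen after moving $I$ away from the finitely many cylinders. Their measure is exactly $|I_k|$, so positivity is automatic. What must be checked there is that these boxes shrink into every neighbourhood of $(z,w)$. That needs the same one-sided convergence: $p$ should be the endpoint of $I_k$ on the flat side of $l$. In short, you measure an arbitrary small box and must prove its mass is positive, while the paper measures a box of known positive mass and must prove it is small.
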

\begin{proof}
The set $G^+(\widetilde{\mc F})$ of endpoints of straight leaves of the foliation $\wideF$ is closed by Lemma \ref{l.suppclosed}. In particular every pair $(z,w)\in G^+(\wideS)\setminus G^+(\widetilde{\mc F})$ is contained in a small box $A$ that does not contain leaves of $\wideF$. Since by construction for such $A$ it holds $\mu_\theta(A)=0$, it follows that $\supp{\mu_\theta}\subset G^+(\widetilde{\mc F})$. 

Conversely, if $(z,w)\in G^+(\wideS)$ corresponds to a cylinder, then it corresponds to an atom of the current, and in particular belongs to the support. Otherwise consider the leaf $l\subset\wideS$ corresponding to $(z,w)$ and choose a regular point $p\in l$ and a sufficiently small interval $I$ satisfying the assumption of Proposition \ref{l.smallbox_noncylinder}. Since there are finitely many cylinders in the direction of $\widetilde\calF$ in $\Sigma$ (Lemma  \ref{l.finite_cylinder}), there is a lower bound on the distance between boundary leaves of such cylinders, and in particular we can assume, up to shrinking $I$ that none of the leaves intersecting $I$ orthogonally belong to a cylinder. This implies that any subinterval $I_k$ of $I$ is small, and as $I_k$ converges to $\{p\}$ the small boxes $C_{I_k}\times D_{I_k}$ give rise to arbitrarily small neighborhoods of $(z,w)$ of positive $\mu_\theta$-measure. This implies that $\supp{\mu_\theta}=G^+(\widetilde{\mc F})$.
\end{proof}

\begin{prop}\label{l.atomic}
    The atomic part of $\mu_\theta$ is exactly the set of endpoints of cylinder leaves of $\wideF$. 
\end{prop}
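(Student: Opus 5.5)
We prove two inclusions: every pair $(z,w)$ of endpoints of a cylinder leaf is an atom of $\mu_\theta$, and every atom of $\mu_\theta$ arises this way.

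\emph{Cylinder leaves are atoms.} Let $(z,w)$ be the endpoints of a leaf $l$ lying in a maximal cylinder $\mathcal C\subset\wideS$ tangent to $\wideF$. Choose a segment $I$ orthogonal to the leaves of $\mathcal C$ with endpoints on the two boundary leaves of $\mathcal C$. By Proposition~\ref{l.smallbox_cylinder}, $I$ is a small interval and $C_I\times D_I=\{(z,w)\}$. So the singleton is a small box. Its $\mu_\theta$-measure is $|I\cap L(\{z\}\times\{w\})|=|I|$. This is the width of the cylinder, which is positive because the cylinder is a nondegenerate flat strip by Proposition~\ref{p.closed_conepoint}. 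Since the Carath\'eodory extension agrees with the pre-measure on $\mathcal S_\theta$, we get $\mu_\theta(\{(z,w)\})>0$.

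\emph{Atoms come from cylinder leaves.} Let $(z,w)$ satisfy $\mu_\theta(\{(z,w)\})>0$. By Proposition~\ref{p.support}, $(z,w)\in G^+(\wideF)$. Suppose, for contradiction, that no leaf with these endpoints lies in a cylinder. Fix a leaf $l$ with endpoints $(z,w)$ and a regular point $p\in l$. As in the proof of Proposition~\ref{p.support}, take a vertical side $I$ of a boxing rectangle through $p$, shrunk so that no leaf meeting $I$ orthogonally is a cylinder leaf. This is possible because Lemma~\ref{l.finite_cylinder} gives finitely many cylinders, hence a positive lower bound on the distance between their boundary leaves. Then every subsegment $I_k\subset I$ containing $p$ is a small interval by Proposition~\ref{l.smallbox_noncylinder}, and $\{(z,w)\}\subset C_{I_k}\times D_{I_k}$.

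By Proposition~\ref{l.Iclosed}, $\mu_\theta(C_{I_k}\times D_{I_k})\le |I_k|$. Since $|I_k|\to0$, monotonicity gives $\mu_\theta(\{(z,w)\})=0$, a contradiction.

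\emph{Main obstacle.} The delicate point is to check that $(z,w)$ really lies in $C_{I_k}\times D_{I_k}$ for every $k$, including when $p$ is an endpoint of $I_k$, and that several distinct leaves with endpoints $(z,w)$ cannot contribute extra measure. For the first, choose the $I_k$ with $p$ in their interior. Then $l$ separates the left-turning leaf through one endpoint of $I_k$ from the right-turning leaf through the other, so Proposition~\ref{p.endpoint-orientation} places $z$ in $C_{I_k}$ and $w$ in $D_{I_k}$. For the second, Proposition~\ref{p.bound_cylinder} shows that two distinct leaves with the same endpoints bound a cylinder, which we have excluded. Hence $L(\{(z,w)\})$ meets $I$ in the single point $p$.
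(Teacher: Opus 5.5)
Your two-part strategy is the paper's own. For cylinder pairs, the singleton small box of Proposition~\ref{l.smallbox_cylinder} has measure equal to the (positive) cylinder width. Otherwise, boxing-rectangle boxes $C_{I_k}\times D_{I_k}$ of measure at most $|I_k|$ shrink to $(z,w)$. You also add details the paper leaves implicit: atoms lie in $G^+(\wideF)$, $(z,w)\in C_{I_k}\times D_{I_k}$, and $l$ is unique.

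\textbf{The gap.} One step fails as written, and the paper takes the same shortcut in the proof of Proposition~\ref{p.support}. For an \emph{arbitrary} regular $p\in l$, you cannot always shrink $I$, keeping $p$ in its interior, so that no leaf meeting $I$ orthogonally is a cylinder leaf. A non-cylinder straight leaf can run along a saddle connection $s$ on the boundary of a cylinder of its own direction, then branch off at a cone point. This happens, e.g., when such a cylinder borders a minimal component. If $p\in s$, then:
\begin{itemize}
\item every segment through $p$ orthogonal to $l$ enters the cylinder, so the corresponding endpoint of $I_k$ lies on a regular cylinder leaf;
\item condition (3) of Definition~\ref{d.boxing} therefore fails;
\item $C_{I_k}\times D_{I_k}$ contains the cylinder pair, so $\mu_\theta(C_{I_k}\times D_{I_k})$ stays at least the cylinder width for every $k$.
\end{itemize}
The lower bound on widths from Lemma~\ref{l.finite_cylinder} does not address this situation.

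\textbf{First repair: choose $p$ on a regular ray of $l$.} Such a ray exists. Otherwise $l$ is a bi-infinite concatenation of saddle connections tangent to $\calF$. Since $l$ is left- (or right-)turning, each of these determines the next and the previous one, and there are finitely many in $\Sigma$. Hence $l$ is invariant under a deck transformation. The flat half-strip on the side where all its angles are $\pi$ then gives a parallel regular leaf at bounded distance, so $(z,w)$ would be a cylinder pair. A point on a regular ray lies neither inside nor on the boundary of a cylinder in the direction of $l$. Lifted cylinders are locally finite, so such a $p$ has positive distance from them, and your shrinking argument goes through.

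\textbf{Second repair: skip the limit.} Proposition~\ref{define measure} gives a small box $C_z\times C_w\ni(z,w)$ with a measuring segment $I$. The degenerate sub-box $\{z\}\times\{w\}$ is again a small box measured by the same $I$. Its measure $|I\cap L(\{z\}\times\{w\})|$ is the cylinder width if $(z,w)$ is a cylinder pair. Otherwise $L(\{z\}\times\{w\})=\{l\}$ meets $I$ in one point, so the measure is $0$. This is the same device you already used in the first half.
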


\begin{proof}
    It follows from Proposition \ref{l.smallbox_cylinder} that endpoints of cylinder leaves is an atom of $\mu_\theta$. If $(z,w)$ corrresponds to endpoints of a non-cylinder leaf, then by Proposition \ref{l.smallbox_noncylinder} there is a small box $C_I\times D_I$ for any small enough regular segment $I$. Then $\mu_\theta(C_I\times D_I)=|I\cap L(C_I\times D_I)|=\abs I$. As we shrink $I$, the measure of the small box goes to zero, so $(z,w)$ is not an atom.
\end{proof}
\section{Geometric properties of multi-foliation currents}\label{sec:propertiesmulti}
We study in this section geometric properties of the multi-foliation current. We discuss intersection patterns in the support of the current in Section \ref{s.kcrossing}, the intersection of the symmetrization of $\mu_\theta$ with closed curves in Section \ref{ss.intersection} and the link with measured lamination for translation structures induced by quadratic differentials in Section \ref{s.abelian}.
\subsection{Positive $k$-crossing}\label{s.kcrossing}
We discuss in this section further properties of the support of the multi-foliation current $\mu_\theta$ inspired by the work of Charlie Reid \cite[Theorem 1]{Reid}.

\begin{defn}
We say that a $k$-tuple $(x_1,y_1),\ldots,(x_k,y_k)\in G^+(\Sigma)$ form a \emph{positive k-crossing} if the $2k$-tuple $(x_1,\ldots, x_k,y_1,\ldots,y_k)\in (\partial\widetilde\Sigma)^{2k}$ is counterclockwise oriented. We say that $k$ oriented geodesics $l_1,\ldots, l_k\subset\wideS$ form a positive $k$-crossing if their endpoints form a positive $k$-crossing.
\end{defn}

\begin{figure}[htbp]
    \centering
    \includegraphics[width=0.3\linewidth]{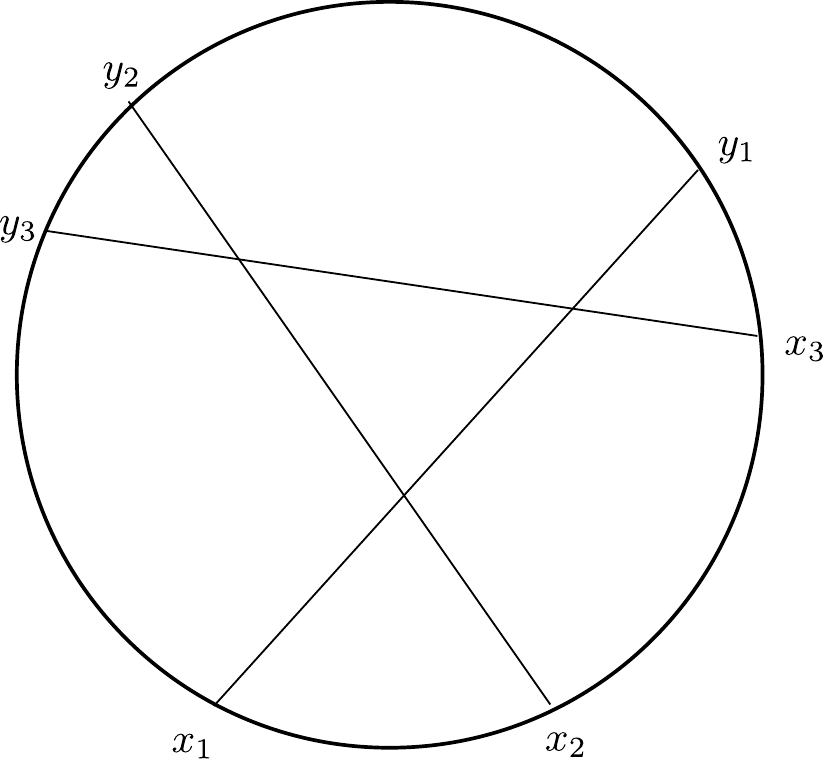}
    \caption{An example of 3-crossing.}
    \label{fig:placeholder}
\end{figure}

Note that if $k$ oriented geodesics form a positive $k$-crossing, then any ordered subset consisting of $m$ geodesics $(m\le k)$ forms a positive $m$-crossing. In particular, any pair of geodesics in a positive $k$-crossing intersect each other transversely.

\begin{defn}
We say that a geodesic current $\mu$ is \emph{$k$-crossing free} if its support, $\supp\mu$, contains no positive $k$-crossings.
%
    We say that a geodesic current $\mu$ is  \emph{maximal among $k$-crossing free geodesic currents} if it is $k$-crossing free, but for any  pair  $(x,y)\in G^+(\Sigma)\setminus \supp{\mu_\theta}$ linked to a pair in $\supp{\mu_\theta}$, $\supp{\mu_\theta}\cup\{(x,y)\}$ contains a positive $k$-crossing.
\end{defn}


\begin{thm}\label{thm:crossing_free}
Let $\Sigma$ be a $1/n$-translation surface. For every direction $\theta$, the associated current $\mu_\theta$ is maximal among $(\lfloor n/2\rfloor+1)$-crossing free geodesic current. 
\end{thm}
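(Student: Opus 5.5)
By Proposition \ref{p.support}, $\supp{\mu_\theta}=G^+(\wideF)$. The statement therefore becomes a purely geometric one about oriented leaves of $\wideF$, and the proof splits into two parts: showing that $G^+(\wideF)$ contains no positive $(m+1)$-crossing, where $m=\lfloor n/2\rfloor$, and proving maximality.

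\textbf{No positive $(m+1)$-crossing.} Let $l_1,\dots,l_{m+1}$ be leaves forming a positive crossing. Any two of them are transverse. By Proposition \ref{p.transverse_intersection} they are straight, so each pair meets in exactly one point, and transversally there. Two leaves in the same local direction $\theta+2\pi k/n$ cannot cross transversally, since parallel leaves do not cross (Corollary \ref{parallel}).

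To each leaf assign its direction class in $\Z/n$, read at its intersection points. The idea is to transport all the intersection data to a single Euclidean picture. I will use the following fact: in a positive crossing, $l_j$ crosses $l_i$ from right to left whenever $i<j$, because of the cyclic order $(x_1,\dots,x_{m+1},y_1,\dots,y_{m+1})$.

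At a regular intersection point, an oriented leaf of direction $\alpha$ is crossed from right to left only by leaves whose direction lies in $(\alpha,\alpha+\pi)$ mod $2\pi$. Since all directions lie in $\theta+\frac{2\pi}{n}\Z$, the leaves $l_2,\dots,l_{m+1}$ have directions in the open half-turn after $l_1$, read in the chart at the crossing point. The delicate point is comparing charts: the direction is only defined up to the rotation group, and it changes after passing through cone points. I would handle this with Proposition \ref{p.endpoint-orientation}(2), which shows that the cyclic order of endpoints agrees with the cyclic order of tangent directions at a point. This gives a well-defined ``angular order'' of the leaves, taking values in the circle of directions modulo $2\pi$ lifted via the endpoint order. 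From it I want to conclude that $m+1$ leaves, pairwise crossing positively, need $m+1$ distinct directions contained in an open half-circle. An open half-circle contains at most $\lceil n/2\rceil$ directions from $\theta+\frac{2\pi}{n}\Z$, and the sharper count required is $m=\lfloor n/2\rfloor$. For odd $n$ the bound $m+1\le\lceil n/2\rceil$ still permits the configuration, so the argument needs refinement. Presumably the right statement is that the total turning from $l_1$ to $l_{m+1}$ and back to the reverse of $l_1$ must be strictly less than $\pi$ at each step, and I would set up the angle bookkeeping along a geodesic polygon using the CAT(0) angle bound of Proposition \ref{CAT0 property}. Using Gauss--Bonnet on the geodesic polygon formed by the leaves, whose exterior angles are multiples of $2\pi/n$ and at least $2\pi/n$, the positive-crossing condition forces these angles to sum to less than $\pi$ in the appropriate half. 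This is the step I expect to be the main obstacle.

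\textbf{Maximality.} Take $(x,y)\notin G^+(\wideF)$ linked with some leaf. I will produce $m$ leaves which, together with the geodesic $g$ from $x$ to $y$, form a positive $(m+1)$-crossing. Choose a regular point $p$ on $g$ where $g$ is not tangent to $\wideF$; such a point exists by the hypothesis that $g$ is not always tangent to the foliation. Its direction $\beta$ lies strictly between two consecutive web directions. The $m$ consecutive web directions in the half-turn $(\beta-\pi,\beta)$ give straight leaves through $p$ (Proposition \ref{p.halfspaces}), which pairwise cross at $p$ with the correct rotational order. Proposition \ref{p.endpoint-orientation}(2) converts the cyclic order of tangent vectors at $p$ into the cyclic order of endpoints. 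This yields a positive $(m+1)$-crossing that includes $(x,y)$. The same construction at any point of a leaf, using the $m$ directions in a half-turn that contains it, shows that every leaf lies in many positive $m$-crossings.
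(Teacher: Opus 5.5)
\textbf{The crossing-free half is a genuine gap.} Your proposal does not prove it, and the route you outline cannot be finished as stated. The leaves of a crossing meet pairwise at different points, often cone points. Proposition \ref{p.endpoint-orientation}(2) only compares tangent vectors at a single point, and there is no global direction to transport, so no ``angular order'' of the leaves is available.

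Your local count is right. What is missing is the mechanism that globalizes it, which in the paper has two steps.
\begin{enumerate}
\item For a positive 2-crossing $(l_1,l_2)$ of leaves, measure $\angle(l_1,l_2)$ at $l_1\cap l_2$ on the side where $l_1$ makes angle $\pi$ (left if $l_1$ is left-turning, right if right-turning). A Euclidean half-disc is then available even at a cone point. This angle equals $2k\pi/n$ with $1\le k<n/2$, the values $0$ and $\pi$ being excluded by Proposition \ref{p.transverse_intersection} (Lemma \ref{lem:cross_possible_angle}).
\item For a positive 3-crossing, apply the bound of Proposition \ref{CAT0 property} to the geodesic triangle with vertices $p_1=l_1\cap l_2$, $p_2=l_2\cap l_3$, $p_3=l_1\cap l_3$. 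Since the leaves are straight and geodesics make angles at least $\pi$ on both sides, its interior angles are at least $\angle(l_1,l_2)$, $\angle(l_2,l_3)$ and $\pi-\angle(l_1,l_3)$. There are four cases, according to the turning of $l_2$ and the side of $l_1$ containing $p_2$. This gives $\angle(l_1,l_3)\ge\angle(l_1,l_2)+\angle(l_2,l_3)$ (Proposition \ref{lem:cross_angle_sum}).
\end{enumerate}
Induction then gives $\angle(l_1,l_k)\ge (k-1)2\pi/n$, and $\angle(l_1,l_k)<\pi$ forces $k-1<n/2$. Your Gauss--Bonnet sketch points in this direction but lacks exactly this lower bound on the triangle's angles.

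\textbf{Your worry about odd $n$ is justified.} This argument yields $k\le\lceil n/2\rceil$, and for odd $n$ the problem is not a defect of your method: it is a counterexample to the statement as written. Take the leaves through a regular point in the directions $\theta+2\pi j/n$, $0\le j\le (n-1)/2$. They lie in an open half-turn, so by Proposition \ref{p.endpoint-orientation}(2), exactly as in your maximality step, they form a positive $(\lfloor n/2\rfloor+1)$-crossing in $\supp{\mu_\theta}$. For $n=3$ these are just two transverse leaves, and for $n=1$ the statement would forbid even a single leaf.

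So the crossing-free part is correct as stated only for even $n$. For odd $n$ the right threshold is $\lceil n/2\rceil+1$, which is what the paper's written argument actually rules out: it starts from an $(m+1)$-crossing with $m=\lfloor n/2\rfloor+1$. Do not look for a refinement towards the floor bound.

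\textbf{Maximality.} Your maximality argument is essentially the paper's and works for even $n$. It does need the hypothesis you took from the introduction, since the formal definition only asks for linking. Geodesics tangent to $\wideF$ at every regular point are genuine exceptions: the same angle argument shows they lie in no positive $(n/2+1)$-crossing with leaves.

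For odd $n$ and the corrected threshold you need $\lceil n/2\rceil$ leaves through $p$. Take the open half-turn on whichever side of $\beta$ contains $(n+1)/2$ web directions. Such a side exists as long as neither $\beta$ nor $\beta+\pi$ is a web direction. So the hypothesis should ask for a regular point where $g$ is parallel to no line of $\wideF$, orientations forgotten.
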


In order to prove Theorem \ref{thm:crossing_free}, it is enough to understand the possible positive intersection of straight leaves in a $1/n$-translation surface. For this we need a few geometric definitions.


\begin{defn}\label{d.angle}
 Let $l_1,l_2$ be leaves of the multi-foliation $\calF$ with endpoints $(x_1,y_1),(x_2,y_2)$ respectively. Assume that $l_1,l_2$ is a positive 2-crossing and $l_1\cap l_2=p$. The \emph{angle} $\angle (l_1,l_2)$ is
 \begin{enumerate}
\item the angle at $p$ formed by rays $py_1$ and $py_2$ if $l_1$ is left-turning;
\item the angle at $p$ between the rays $px_1$ and $px_2$ if $l_1$ is right-turning.
 \end{enumerate}
\end{defn}

\begin{figure}[htbp]
    \centering
    \includegraphics[width=0.65\linewidth]{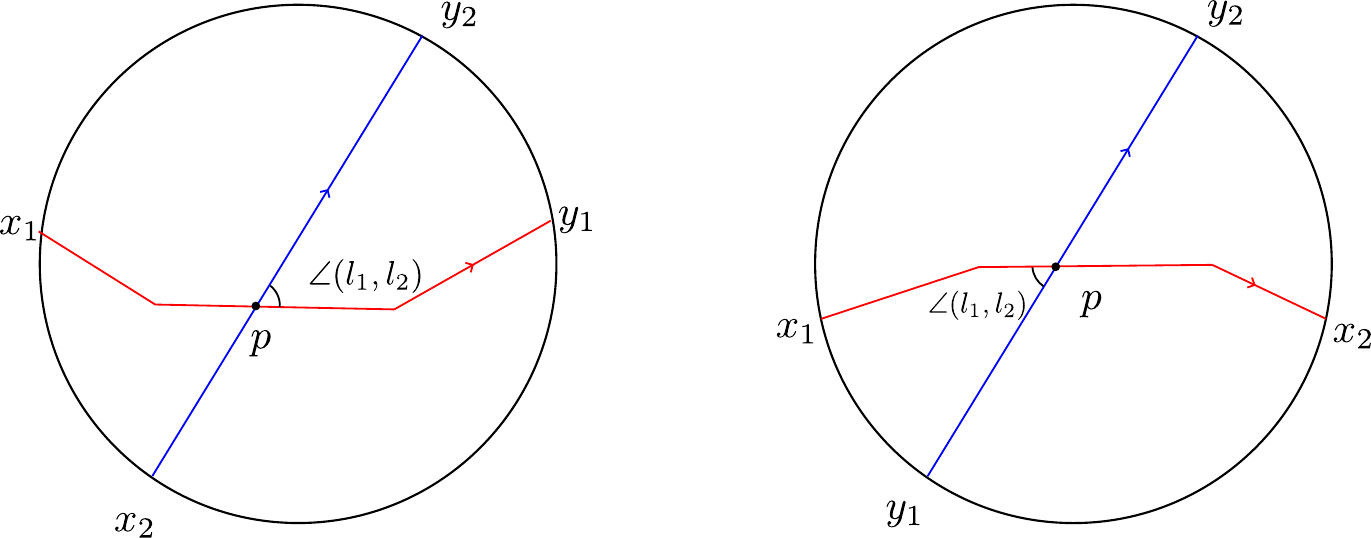}
    \caption{The angle of a positive 2-crossing.}
    \label{fig:placeholder}
\end{figure}

With these choices the angle is well defined regardless of $p$ being regular or not and it is always less than $\pi$. Furthermore the angle doesn't change reversing both orientations. 

\begin{lemma}\label{lem:cross_possible_angle}
If $l_1,l_2$ form a positive 2-crossing, then $\angle (l_1,l_2)=2k\pi/n$ for some $1\le k<n/2$.
\end{lemma}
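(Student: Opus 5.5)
We first reduce to the local geometry at the intersection point. Since $l_1,l_2$ form a positive 2-crossing, they are transverse, so by Proposition \ref{p.transverse_intersection} (leaves are straight, hence left- or right-turning) they meet in a single point $p$ and cross transversely there; this makes $\angle(l_1,l_2)$ in Definition \ref{d.angle} well defined. We treat the case where $l_1$ is left-turning; the right-turning case follows by reversing both orientations, which preserves positivity of the crossing, leaves the angle unchanged, and exchanges left- and right-turning.

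We then show the angle is a multiple of $2\pi/n$. Along the ray $py_1$, the leaf $l_1$ is tangent to $\wideF$, and the same holds for $l_2$ along $py_2$. If $p$ is regular, choose a Euclidean chart around $p$. In this chart both outgoing directions belong to the $n$ oriented directions $2\pi j/n$ of the horizontal multi-foliation, so the angle between them is a multiple of $2\pi/n$. If $p$ is a cone point, its neighborhood is obtained by gluing Euclidean sectors using elements of $\R^2\rtimes\Z/n\Z$. Developing the sector bounded by the rays $py_1$ and $py_2$ into $\R^2$ identifies the two rays with oriented lines of $\hat{\calF}^n_0$. The rotational part of the holonomy lies in $\Z/n\Z$, so the difference of their directions is again a multiple of $2\pi/n$. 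This argument uses only that the sector is isometric to a planar sector, which holds whenever its angle is less than $\pi$.

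Finally we bound $k$. The angle is positive because the geodesics cross transversely at the single point $p$, so the rays $py_1$ and $py_2$ are distinct. To show it is strictly less than $\pi$, we use positivity. With $l_1$ left-turning, the left side of $l_1$ at $p$ has angle exactly $\pi$. Positivity of $(x_1,x_2,y_1,y_2)$ places $y_2$ to the left of $l_1$, since $x_2$ and $y_2$ are separated by $\{x_1,y_1\}$ in the appropriate cyclic order. Hence the ray $py_2$ lies in the open left half-sector of angle $\pi$ bounded by $px_1$ and $py_1$, and so the angle from $py_1$ to $py_2$ lies strictly between $0$ and $\pi$.

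The main obstacle is this orientation bookkeeping: we must check that positivity forces $py_2$ into the flat side of $l_1$ rather than the side carrying excess cone angle. We would verify this using Proposition \ref{p.endpoint-orientation}(2) applied to the tangent vectors at $p$, namely that the cyclic order of the directions at $p$ matches the cyclic order of the endpoints. Combining the three steps gives $\angle(l_1,l_2)=2k\pi/n$ with $0<2k\pi/n<\pi$, that is, $1\le k<n/2$.
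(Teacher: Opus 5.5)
Your proof is correct and follows essentially the same route as the paper. You reduce to $l_1$ left-turning, read off the angle as a multiple of $2\pi/n$ in a Euclidean chart (or in the flat half-disk to the left of $l_1$ at a cone point), and use positivity together with Proposition \ref{p.transverse_intersection} to place the angle strictly inside $(0,\pi)$. Your orientation bookkeeping is right and more explicit than the paper's. The step you flag as the main obstacle closes most quickly via Proposition \ref{p.halfspaces}: since $l_1\cap l_2=\{p\}$, the ray $py_2$ minus $p$ lies in a single open half-space of $l_1$, namely the left one, because $y_2$ lies in its boundary arc.
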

\begin{proof}
    Assume $l_1$ is left-turning for simplicity. Then by Definition \ref{multi-foliation via local charts}, if two leaves $l_1,l_2$ intersect at a regular point $p$, then the angle between them is $2k\pi/n$ for some $1\leq k<n$. If they intersect at a cone point, since $l_1$ is left-turning, there is a Euclidean half disk neighborhood of $p$ on the left side of $l_1$, which contains sub-segments of $py_1$ and $py_2$, so the angle is still well defined and is a multiple of $2\pi/n$. 
    Since $l_1,l_2$ Form a positive 2-crossing, the angle $\angle (l_1,l_2)\in (0,\pi)$. Moreover, by Proposition \ref{p.transverse_intersection}, two straight leaves with transverse intersection cannot share saddle connections, so the angle cannot be 0 or $\pi$. Thus, we have $1\le k<n/2$.  
\end{proof}

\begin{prop}\label{lem:cross_angle_sum}
If $l_1,l_2,l_3$ are leaves of the multi-foliation $\calF$ and they form a positive 3-crossing, then $\angle (l_1,l_3)\ge\angle (l_1,l_2)+\angle (l_2,l_3)$.
\end{prop}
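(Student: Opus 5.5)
The three pairwise intersection points $p_{12}=l_1\cap l_2$, $p_{23}=l_2\cap l_3$, $p_{13}=l_1\cap l_3$ span a (possibly degenerate) geodesic triangle in the $\CAT(0)$ space $\wideS$. I would compare the three angles of the crossings with the interior angles of this triangle and then apply Proposition \ref{CAT0 property}(2).

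First, by Proposition \ref{p.transverse_intersection}, straight leaves that are pairwise transverse meet pairwise in single points. If the three points coincide, the three leaves pass through one point $p$. The rays toward $y_1,y_2,y_3$ then leave $p$ in this cyclic order on the side where the angles are measured, because the endpoints are positively ordered and Proposition \ref{p.endpoint-orientation}(2) applies. The angle $\angle(l_1,l_3)$ then splits as the sum of the other two, with equality. I would check carefully that the left/right-turning conventions of Definition \ref{d.angle} pick the same side of $p$ for all three pairs. This is the first technical point, and it is handled by looking at which side of $l_1$ (or $l_2$) is a Euclidean half-disk.

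In the nondegenerate case, the positive ordering $(x_1,x_2,x_3,y_1,y_2,y_3)$ fixes the combinatorics. Going along $l_2$ from $x_2$ to $y_2$, one crosses $l_1$ and $l_3$. Since $x_1,x_3$ lie on opposite sides of $l_2$ relative to the arcs, the triangle $T$ with vertices $p_{12},p_{23},p_{13}$ sits so that at $p_{12}$ and $p_{23}$ its interior angles are supplementary to the crossing angles. More precisely, the interior angle of $T$ at $p_{12}$ is $\pi-\angle(l_1,l_2)$ and at $p_{23}$ it is $\pi-\angle(l_2,l_3)$, while at $p_{13}$ it is $\angle(l_1,l_3)$ itself. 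These identifications should be checked with the half-disk argument of Lemma \ref{lem:cross_possible_angle}. At a cone-point vertex, the angle on the side of $T$ is at least the Euclidean value computed on the straight side of the turning leaf. The inequality then goes in the favorable direction for the first two vertices, since straightness makes the angle on the half-disk side exactly $\pi$ and the remaining interior angle is $\pi$ minus the crossing angle. At $p_{13}$ the interior angle is at most the crossing angle measured on the relevant side.

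Proposition \ref{CAT0 property}(2) then gives
\[
(\pi-\angle(l_1,l_2))+(\pi-\angle(l_2,l_3))+\angle_T(p_{13})\le\pi .
\]
Since the interior angles of $T$ are nonnegative, this yields $\angle(l_1,l_2)+\angle(l_2,l_3)\ge \pi+\angle_T(p_{13})$. That has the wrong shape, which means the labeling of $T$ should be reversed. The crossing angles $\angle(l_1,l_2)$ and $\angle(l_2,l_3)$ should be the interior angles at $p_{12}$ and $p_{23}$, and $\pi-\angle(l_1,l_3)$ should be the interior angle at $p_{13}$. Then
\[
\angle(l_1,l_2)+\angle(l_2,l_3)+\pi-\angle(l_1,l_3)\le\pi
\]
is exactly the claim.

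The main obstacle is establishing this correct configuration: that for a positive 3-crossing the triangle's interior angles at $p_{12}$ and $p_{23}$ are (at most) the crossing angles and at $p_{13}$ is (at least) the supplement. I would get this from the cyclic order of endpoints together with Proposition \ref{p.endpoint-orientation}, and handle cone-point vertices using the left/right-turning conventions. Degenerate cases where two of the points coincide are limits of the same computation.
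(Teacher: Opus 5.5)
Your strategy is the same as the paper's: compare the crossing angles with the interior angles of the geodesic triangle $T$ with vertices $p_{12},p_{23},p_{13}$, then apply the angle-sum bound of Proposition \ref{CAT0 property}. But the step that carries all the content is missing, and what you say about it is wrong. You first assign the angles of $T$ one way, see that the inequality has the wrong shape, and then declare the other assignment; that reads the configuration off the desired conclusion. More seriously, the comparison you call the ``main obstacle'' points the wrong way. Proposition \ref{CAT0 property} only bounds the sum of interior angles from above, so you need \emph{lower} bounds
\[
\angle_T(p_{12})\ge\angle(l_1,l_2),\qquad \angle_T(p_{23})\ge\angle(l_2,l_3),\qquad \angle_T(p_{13})\ge\pi-\angle(l_1,l_3).
\]
Upper bounds (``at most the crossing angles'') give nothing, and they are false in general. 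Suppose that at $p_{12}$ or $p_{23}$ the triangle lies on the non-straight side of the leaf used in Definition \ref{d.angle}. Then its interior angle there is the vertical angle opposite the crossing angle, and at a cone point this can be strictly larger.

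What is needed is a local argument at each vertex, combining two facts:
\begin{itemize}
\item The straight leaf has a Euclidean half-disk on its straight side, so the angle there between the relevant rays is exactly the crossing angle or $\pi$ minus it.
\item The other leaf is a geodesic, so the angle on each of its sides is at least $\pi$.
\end{itemize}
Together these give the lower bound for whichever angle of $T$ sits at that vertex. Whether that angle equals the crossing angle or is its vertical angle is not fixed by the cyclic order of the endpoints. It depends on whether $l_2$ is left- or right-turning, and on which side of $l_1$ the point $p_{23}$ lies. Already in $\R^2$ the latter depends on the order of $p_{12},p_{13}$ along $l_1$. This is why the paper, after assuming $l_1$ left-turning, runs through four cases. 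Finally, in the case where all three points coincide, your claimed equality need not hold when $l_1$ and $l_2$ have opposite turning types, because the two angles are then measured on different sides of the common point. Only the inequality survives there, again via the geodesic condition on $l_3$.
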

\begin{figure}[htbp]
    \centering
    \includegraphics[width=0.85\linewidth]{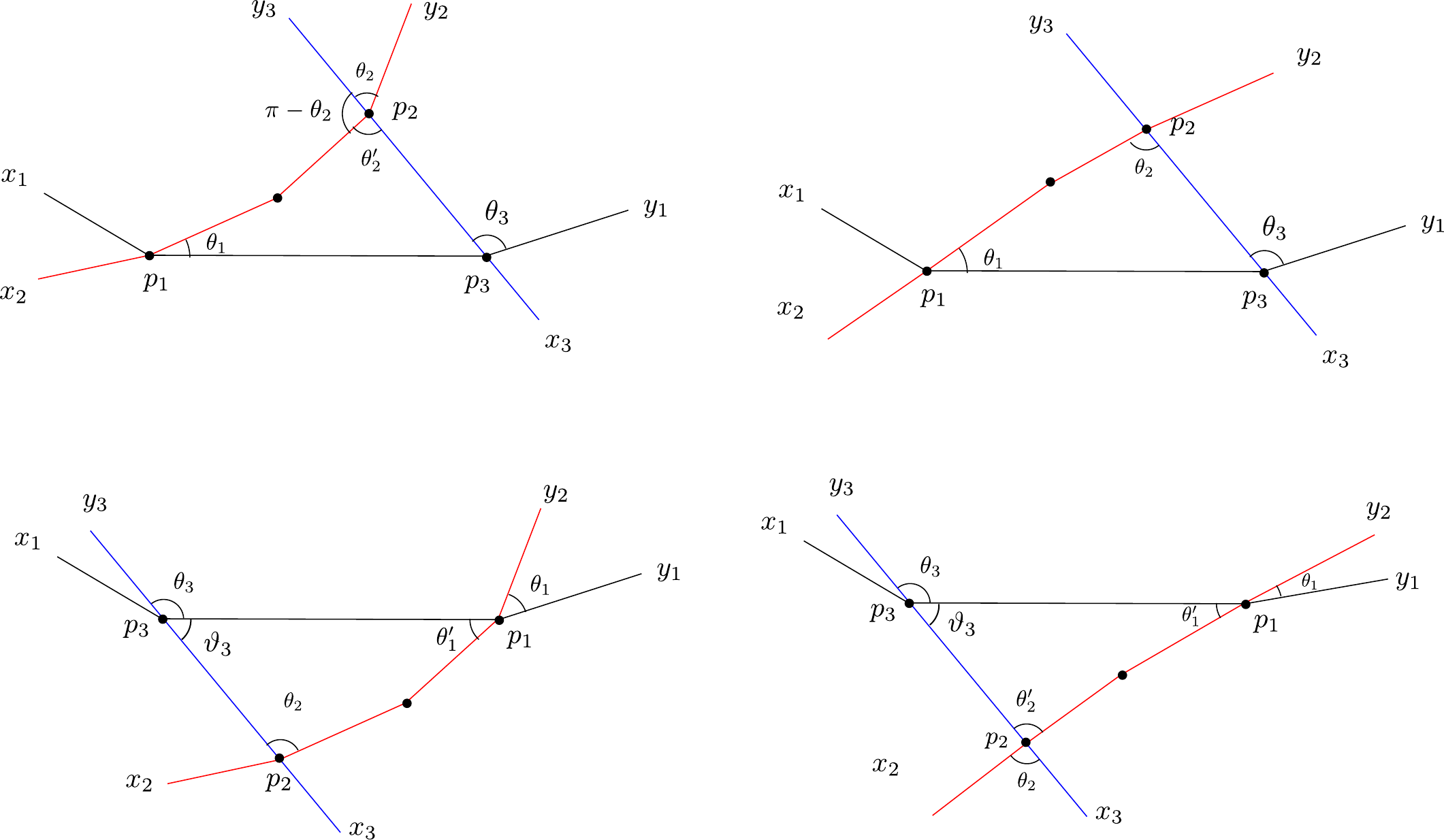}
    \caption{Different cases of a positive 3-crossing when $l_1$ is left turning. Top left: $l_2$ left-turning, $p_2$ on the left of $l_1$. Top right: $l_2$ right-turning, $p_2$ on the left of $l_1$. Bottom left: $l_2$ left-turning, $p_2$ on the right of $l_1$. Bottom right: $l_2$ right-turning, $p_2$ on the right of $l_1$.}
    \label{fig:3_crossing}
\end{figure}
\begin{proof}
    We may assume $l_1,l_2,l_3$ intersect pairwise at three different points, as intersection at the same point is the degenerate case and the argument follows from the general case.

    We further assume that $l_1$ is left-turning, the case in which $l_1$ is right-turning is symmetric. We distinguish four cases, depending on whether  $l_2$ is left or right-turning and $p_2=l_2\cap l_3$ is on the left or right side of $l_1$, see Figure \ref{fig:3_crossing}. Let $p_1=l_1\cap l_2$,  $p_3=l_1\cap l_3$ and $\theta_i$ be the angle of the two leaves intersecting at $p_i$, according to Definition \ref{d.angle}. We focus in each case on the geodesic triangle $T$ with vertices $p_1,p_2,p_3$. Note that albeit the sides of the triangle $T$ are geodesic,  they might contain cone points and make angles greater than $\pi$ there.

    Case 1) $l_2$ is left-turning and $p_2$ is on the left of $l_1$.  The angles at $p_1$ and $p_3$ are respectively $\theta_1$ and $\pi-\theta_3$. For the remaining angle $\theta_2'$, note that $l_2$ is left-turning, so the angle formed by rays $p_2x_2$ and $p_2y_3$ is $\pi-\theta_2$. Since $l_3$ is a geodesic, the sum $\theta_2'+(\pi-\theta_2)\ge\pi$ and thus $\theta_2'\ge \theta_2$. Apply Proposition \ref{CAT0 property} to $T$, we have $\theta_1+\theta_2'+(\pi-\theta_3)\le\pi$, which implies $\theta_3\ge \theta_1+\theta_2$  as desired.
    
    Case 2) $l_2$ is right-turning and $p_2$ is on the left of $l_1$.  In this case the angle of $T$ at $p_1$ is $\theta_1$, the angle of $T$ at $p_2$ is $\theta_2$ and the angle of $T$ at $p_3$ is $\pi-\theta_3$, so once again  Proposition \ref{CAT0 property}  implies that $\theta_1+\theta_2+(\pi-\theta_3)\le\pi$, or equivalently $\theta_3\ge \theta_1+\theta_2$ as desired.

    Case 3) $l_2$ is left-turning and $p_2$ is on the right of $l_1$.  In this case the angle  of $T$ at $p_2$ is $\theta_2$, while the angle $\theta_1'$ of $T$ at $p_1$ is opposite to $\theta_1=\angle (l_1,l_2)$ and thus satisfies $\theta_1'\geq \theta_1$: indeed $l_1$ is left turning and thus the angle between $p_1y_2$ and $p_1x_1$ is $\pi-\theta_1$, and $l_2$ is geodesic and so $\theta_1'+\pi-\theta_1\geq \pi$. Furthermore the angle $\vartheta_3$ of $T$ at $p_3$ is bigger or equal to $\pi-\theta_3$, so once again we have with Proposition \ref{CAT0 property}  $$\theta_1+\theta_2+(\pi-\theta_3)\le \theta_1'+\theta_2+\vartheta_3 \le\pi,$$ which implies $\theta_3\ge \theta_1+\theta_2$ as desired.

    Case 4) $l_2$ is right-turning and $p_2$ is on the right of $l_1$. In this case the angle $\theta_1'$ of $T$ at $p_1$ is opposite to $\theta_1$ and thus we have $\theta_1'\geq \theta_1$ and for the same reason the angle $\theta_2'$ of $T$ at $p_2$ satisfies $\theta_2'\geq \theta_2$. Furthermore the angle $\vartheta_3$ of $T$ at $p_3$ is bigger or equal to $\pi-\theta_3$, so once again we have with Proposition \ref{CAT0 property}  $$\theta_1+\theta_2+(\pi-\theta_3)\le \theta_1'+\theta_2+\vartheta_3 \le\pi,$$ which implies $\theta_3\ge \theta_1+\theta_2$ as desired.        \end{proof}

\begin{proof}[Proof of Theorem \ref{thm:crossing_free}]
    Let $m=\lfloor n/2\rfloor+1$. Suppose there is a positive $(m+1)$-crossing formed by leaves $l_1,\ldots,l_{m+1}$. By Lemma \ref{lem:cross_possible_angle}, the angle of any positive 2-crossing is a non-zero multiple of $2\pi/n$. By induction, using Proposition \ref{lem:cross_angle_sum}, the angle $\angle (l_1,l_{m+1})$ is at least $m\cdot 2\pi/n>\pi$, which is a contradiction. Thus, the geodesic current $\mu_\theta$ is $(\lfloor n/2\rfloor+1)$-crossing free.

 In order to discuss maximality among $(\lfloor n/2\rfloor+1)$-crossing free currents we first show that for every pair $(x,y)$ in $\supp{\mu_\theta}$, for every leaf $l$ with endpoints $(x,y)$ and every  point $p$ in $l$ there is a positive $\lfloor n/2\rfloor$-crossing contained in $\supp{\mu_\theta}$, such that any pair in the crossing is an endpoint of a leaf passing through $p$. 
 Indeed the $\lfloor n/2\rfloor$ left-turing straight leaves through $p$ consecutive to $l$ form a   positive $\lfloor n/2\rfloor$-crossing,  and they belong to the support of $\mu_\theta$ by Proposition \ref{p.support}. 

 Turning to maximality let $(x_0, y_0)\in G^+(\Sigma)\setminus \supp\mu$ which is linked with a leaf $(x,y)\in\supp{\mu_\theta}$. Choose leaves $l_0$ and $l$ with endpoints $(x_0,y_0)$ and $(x,y)$ respectively. Since $l$ is a leave $l\cap l_0=p$ and the two geodesics intersect transversely at $p$ (recall Proposition \ref{p.transverse_intersection}). We consider all  the left-turning straight leaves $l_i$ of $\calF$, including $l$, that contain $p$ and from  an angle less then $\pi/2$ with  $\overline l$ at $\overline p$. There are precisely $\lfloor n/2\rfloor$ of them and, together with $\overline l$ they make a positive $(\lfloor n/2\rfloor+1)$-crossing.
 
\end{proof}

\subsection{Intersection number of measured multi-foliation}\label{ss.intersection}
There is a natural involution on $G^+(\wideS)$ given by $(x,y)\mapsto (y,x)$. We consider  the \emph{space of unoriented geodesics}\[
G(\widetilde\Sigma):=(\partial\wideS \times \partial\wideS)\setminus\Delta/_{(x,y)\sim (y,x)}.\]

Let $\pi:G^+(\wideS)\to G(\wideS)$ be the  map  forgetting the ordering. 
\begin{defn}
    A \emph{geodesic current} is a $\pi_1(\Sigma)$-invariant, locally finite, Borel measure on $G(\wideS)$.\footnote{Some reference requires a geodesic current to be regular. By \cite[II, Theorem 3.3]{malliavin2012integration}, any locally finite $\sigma$-compact Borel measure defined on $G^+(\wideS)$ or $G(\wideS)$ will be automatically regular.} We denote by  $\mc C(\Sigma)$ the space of (symmetric) geodesic currents.
\end{defn}
We denote by $\mu^s=\pi_*\mu$ the  geodesic current associated to an oriented geodesic current $\mu$, in other words $\mu^s(A)=\mu(\pi^{-1}(A))$ for any Borel subset $A\subset G(\wideS)$.
The  symmetric geodesic current $\mu_\theta^s$ induced by  the multi-foliation current $\mu_\theta$ has support $\pi(\supp{\mu_\theta})=\pi(G^+(\til{\mc F}))$, the set $G(\til{\mc F})=\pi(G^+(\widetilde\calF))$ of unordered pairs of endpoints of leaves in $\wideF$.

Recall from Definition \ref{d.smallinterval} that a regular segment $I\subset\wideS$ is a small interval if $C_I\times D_I$ is a small box.
\begin{lemma}\label{l.mus}
Let $I$ be small. Then
$$\mu_\theta^s(\pi(C_I\times D_I))=\left\{\begin{array}{ll}|I|&\text{ if $n$ is odd}\\
2|I|&\text{ if $n$ is even}
\end{array}\right.$$
\end{lemma}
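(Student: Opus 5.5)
By definition $\mu_\theta^s(\pi(C_I\times D_I))=\mu_\theta(\pi^{-1}\pi(C_I\times D_I))=\mu_\theta\big((C_I\times D_I)\cup(D_I\times C_I)\big)$. The plan is to compute the measure of each of the two boxes separately and add them.

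\textbf{Step 1: the box $C_I\times D_I$.} The two sets are disjoint unless the box is degenerate, so additivity applies. Since $I$ is small, $C_I\times D_I$ is a small box whose leaves meet $I$ orthogonally. By Proposition \ref{l.Iclosed} the intersection $I\cap L(C_I\times D_I)$ is all of $I$: the leaves through $x_1$ and $x_2$ lie in $L(C_I\times D_I)$ by construction, and every leaf meeting $I$ orthogonally in between is then captured. Hence
\[
\mu_\theta(C_I\times D_I)=|I|.
\]

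\textbf{Step 2: the reversed box $D_I\times C_I$.} This set consists of the same unoriented geodesics with the opposite orientation. Each leaf $l$ crossing $I$ orthogonally, with its orientation reversed, is tangent to the direction $\theta+\pi$ (plus a multiple of $2\pi/n$).
\begin{itemize}
\item If $n$ is even, $\pi$ is a multiple of $2\pi/n$. So the reversed line is again an oriented leaf of $\wideF$, and the reversal of a left-turning straight geodesic is right-turning, so it is still a straight leaf. Thus $L(D_I\times C_I)$ is exactly the set of reversed leaves, and they meet $I$ orthogonally. The box $D_I\times C_I$ is therefore small with associated segment $I$, and its measure is $|I|$. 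Here one should note that $C_{I}\times D_{I}$ for the reversed orientation of $I$ is precisely $D_I\times C_I$, with the left-turning and right-turning boundary leaves exchanged.
\item If $n$ is odd, the direction $\theta+\pi$ is not among the web directions $\theta+2\pi k/n$. A leaf with endpoints in $D_I\times C_I$ would then meet $I$ at an angle $\pi+2\pi k/n$ relative to the original leaves. The argument of Proposition \ref{l.smallbox_noncylinder} and Proposition \ref{define measure} applies: such a non-orthogonal leaf crosses $I$ transversally, so by Proposition \ref{p.endpoint-orientation} it has endpoints outside $C_I\times D_I$ in the reversed order, hence outside $D_I\times C_I$. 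Therefore $L(D_I\times C_I)=\emptyset$, and the measure of $D_I\times C_I$ is $0$.
\end{itemize}

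\textbf{Step 3: conclusion.} Summing the two contributions gives $|I|$ when $n$ is odd and $2|I|$ when $n$ is even.

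\textbf{Main obstacle.} The delicate step is the odd case, where one must show that no leaf has endpoints in $D_I\times C_I$. The approach is to argue that such a leaf $l'$ must intersect $I$, because $D_I\times C_I$ is linked with the boundary leaves through $x_1$ and $x_2$. It would then meet $I$ at an angle $\pi+2\pi k/n\neq\pi$. Using the boxing-rectangle and small-interval hypotheses, $l'$ crosses one of the boundary leaves $l_1$ or $l_2$ transversally, and Proposition \ref{p.endpoint-orientation}(2) then places an endpoint of $l'$ outside $D_I$ or outside $C_I$.
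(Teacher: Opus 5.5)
Your decomposition $\mu_\theta^s(\pi(C_I\times D_I))=\mu_\theta(C_I\times D_I)+\mu_\theta(D_I\times C_I)$, Step 1 and the even case are correct, and they coincide with the paper's very short proof. The odd case, however, has a genuine gap.

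\textbf{The linking claim is false.} Pairs in $D_I\times C_I$ are \emph{not} linked with $(l_1^-,l_1^+)$ or with $(l_2^-,l_2^+)$. Both $C_I$ and $D_I$ lie in the closed ideal arc on the side of $l_1$ that contains $l_2$, and symmetrically for $l_2$. A leaf $l'$ with endpoints in $D_I\times C_I$ does meet $I$, but for a different reason. It stays in the closed convex region between $l_1$ and $l_2$. The segment $I$ separates that region into a piece with ideal boundary $C_I$ and a piece with ideal boundary $D_I$. So $l'$ crosses $I$ from the $D_I$-side to the $C_I$-side, hence non-orthogonally when $n$ is odd.

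\textbf{The next step has no support.} The hypothesis that $I$ is small only says that leaves oriented from $C_I$ to $D_I$ meet $I$ orthogonally. It says nothing about leaves oriented from $D_I$ to $C_I$, and no boxing rectangle is among the hypotheses. Even the paper's boxing rectangles would not be enough. For odd $n$ the direction $\pi-\pi/n$ is a leaf direction, and it makes angle only $\pi/n$ with the horizontal. With diagonals allowed up to angle $2\pi/n$ (a vacuous condition when $n=3$), such a leaf can leave the rectangle through the opposite vertical side. The rectangle alone then no longer forces it to meet $l_1$ or $l_2$.

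\textbf{The odd case fails for general small $I$.} So this gap cannot be closed as stated. Take $n=3$.
\begin{itemize}
\item The only leaf direction crossing $I$ from the $C_I$-side to the $D_I$-side is the orthogonal one. By the separation remark above, \emph{every} regular segment orthogonal to $\widetilde{\mathcal{F}}$ is therefore small.
\item Let $V$ be a bi-infinite regular geodesic orthogonal to $\widetilde{\mathcal{F}}$ (almost every point lies on one), and set $I_T=V([-T,T])$.
\item $V(T)$ is the nearest-point projection of $V(0)$ onto $l_2$, at distance $T$. Gromov hyperbolicity of $\wideS$ then forces both endpoints of $l_2$ to converge to $V^+$, and likewise both endpoints of $l_1$ to converge to $V^-$.
\item A leaf $m$ of direction $2\pi/3$ through $V(0)$ crosses $V$ transversally, so $m^\pm\neq V^\pm$. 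Take a product neighbourhood of $(m^-,m^+)$ avoiding $V^\pm$; it has positive $\mu_\theta$-measure by Proposition \ref{p.support}. For $T$ large it is contained in $D_{I_T}\times C_{I_T}$.
\item Hence $\mu_\theta^s(\pi(C_{I_T}\times D_{I_T}))>|I_T|$.
\end{itemize}
The paper's proof has the same hole, since a leaf with endpoints in $D_I\times C_I$ need not be the reversal of a leaf in $L(C_I\times D_I)$.

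\textbf{What you can prove.} The odd case holds for short intervals.
\begin{itemize}
\item For the cylinder intervals of Proposition \ref{l.smallbox_cylinder}: a leaf with endpoints $(l^+,l^-)$ would be parallel to $l$ in a common cylinder by Proposition \ref{p.bound_cylinder}, with direction $\theta+\pi$, which is not a leaf direction.
\item For vertical sides of Euclidean rectangles with horizontal sides on $l_1,l_2$ and diagonals at angle at most $\pi/n$ with the horizontal. Here your argument does go through: $l'$ must cross a horizontal side transversally, so it is linked with $l_1$ or $l_2$, and one of its endpoints lies outside $C_I\cup D_I$.
\end{itemize}
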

\begin{proof}
By definition of $\mu_\theta^s$, it holds $\mu_\theta^s(\pi(C_I\times D_I))=\mu_\theta(C_I\times D_I)+\mu_\theta(D_I\times C_I)$. When $n$ is even $\mu_\theta(D_I\times C_I)=\mu_\theta(C_I\times D_I)$ because every leaf comes with two opposite orientations, so that $\mu_\theta^s(\pi(C_I\times D_I))=2|I|$, while when $n$ is odd $\mu_\theta^s(C_I\times D_I)=\mu_\theta(C_I\times D_I)=|I|$. 
\end{proof}


We denote by $\delta_\gamma$ the current  associated to a closed curve: Let $\gamma\subset \Sigma$ be a closed curve, the preimage in $\wideS$ under the covering map $\wideS\to \Sigma$ of the geodesic representative of $\gamma$ is a countable disjoint union of bi-infinite geodesics. The sum of the Dirac masses on their endpoints, is a well defined locally finite Borel measure since the endpoints form a discrete subset of $G(\wideS)$,  and is clearly $\pi_1(\Sigma)$-invariant. 

Bonahon constructed in \cite{bonahon1988geometry} a bilinear function on $\mc C(\Sigma) \times \mc C(\Sigma)$ which extends the geometric intersection number. Recall that two pairs $(x_1,y_1),(x_2,y_2)\in G(\widetilde\Sigma)$ are \emph{linked} if $x_2,y_2$ are in different connected components of $\partial\wideS\setminus\{x_1,y_1\}$. Let $\mc P\subset G(\wideS)\times G(\wideS)$ be the subset containing linked pair of endpoints. Two geodesic currents $\alpha,\beta$ induce a $\pi_1(\Sigma)$-invariant product measure $\alpha\times \beta$ on $\mc P$, which descends to a measure on $\mc P/\pi_1(\Sigma)$ still denoted by $\alpha\times \beta$.

\begin{thm}[\cite{bonahon1988geometry}]\label{intersection.number}
    The function 
    \[\begin{array}{cccl}
    i: &\mc C(\Sigma) \times \mc C(\Sigma) &\to &\R\\
    &(\alpha,\beta)&\mapsto&\int_{\mc P/\pi_1(\Sigma)}\alpha\times\beta
    \end{array}
    \]
    is bilinear and extends the geometric intersection number on currents associated to closed curves.
\end{thm}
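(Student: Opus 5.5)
Since this is Bonahon's theorem, I would give the proof in the topological setting of $\partial\wideS$. This is legitimate because $\mc C(\Sigma)$, the set $\mc P$ and the action of $\pi_1(\Sigma)$ depend only on the boundary circle and its $\pi_1(\Sigma)$-action. So we may fix an auxiliary hyperbolic metric on $\Sigma$, whose geodesics are uniquely determined by their endpoints and meet at most once. The plan has three steps: finiteness of $i(\alpha,\beta)$, bilinearity, and the identification with the geometric intersection number on closed curves.

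\textbf{Finiteness.} A linked pair of geodesics meets in exactly one point of $\wideS$. This gives a $\pi_1(\Sigma)$-equivariant map $\mc P\to\wideS$. Let $K$ be a compact fundamental domain for the action on $\wideS$. Then $\mc P/\pi_1(\Sigma)$ is measurably identified with the set $\mc P_K$ of linked pairs whose intersection point lies in $K$, up to a null overlap along $\partial K$ which we handle with a Borel fundamental domain. The geodesics meeting $K$ form a compact subset $G_K\subset G(\wideS)$, and $\mc P_K\subset G_K\times G_K$. Local finiteness of $\alpha$ and $\beta$ then gives
$$i(\alpha,\beta)\le \alpha(G_K)\,\beta(G_K)<\infty .$$

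\textbf{Bilinearity.} The product measure $\alpha\times\beta$ is bilinear in $(\alpha,\beta)$ for nonnegative combinations. The descent to $\mc P/\pi_1(\Sigma)$ is computed by integrating over the fixed Borel fundamental domain $\mc P_K$, and this operation is linear. Hence $i$ is bilinear on the cone of currents.

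\textbf{Extension of the intersection number.} This step needs the most care. Take closed curves $a,b$ and assume first that they are primitive. Then $\delta_a$ is the sum of Dirac masses on the endpoint pairs of the lifts of the geodesic representative of $a$, and similarly for $\delta_b$. Consequently $\delta_a\times\delta_b$ restricted to $\mc P$ is the counting measure on linked pairs of lifts $(\tilde a,\tilde b)$. Its total mass on the quotient is the number of $\pi_1(\Sigma)$-orbits of such pairs.

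Each orbit corresponds to exactly one transverse intersection point of the geodesic representatives in $\Sigma$, counted with multiplicity at self-crossings or multiple crossings. This uses two facts: the intersection point of $\tilde a$ and $\tilde b$ is unique, and distinct orbits project to distinct branches at the crossing.

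The geodesic representatives realize the minimal intersection number, because in the hyperbolic metric they bound no bigons; this is the analogue of Proposition \ref{CAT0 property}. Hence the orbit count equals the geometric intersection number $i(a,b)$. The remaining points are:
\begin{itemize}
\item for non-primitive $a=c^k$, the current is $\delta_a=k\,\delta_c$, and this multiplicative factor is consistent with bilinearity;
\item for $a=b$, the stabilizers of the lifts must be handled so that each self-intersection point is counted with the intended convention.
\end{itemize}
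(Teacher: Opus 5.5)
The paper gives no proof of this statement: it quotes it from Bonahon and uses it as a black box. Your outline is a correct reconstruction of Bonahon's own argument, so the comparison is with that argument:
\begin{itemize}
\item finiteness, via the equivariant intersection-point map $\mc P\to\wideS$ and a compact fundamental domain, giving $i(\alpha,\beta)\le\alpha(G_K)\beta(G_K)$;
\item bilinearity, via linearity of the product measure;
\item the extension to closed curves, via counting $\pi_1(\Sigma)$-orbits of linked pairs of lifts.
\end{itemize}
Compared with the citation, your version is self-contained. It also makes explicit a convention the paper leaves implicit. You need $\delta_{c^k}=k\,\delta_c$. The paper's wording (Dirac masses on the endpoints of the lifts of the geodesic representative) literally gives $\delta_{c^k}=\delta_c$, and with that convention the extension claim fails for non-primitive curves.

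Two points in your last step should be tightened.

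\textbf{The case $a=b$.} The issue here is not stabilizers. The action on $\mc P$ is free, because an element preserving a linked pair fixes its unique intersection point in $\wideS$. Stabilizers of lifts matter only in the non-primitive case, where they produce the factor $k$. What actually happens is that $\mc P$ consists of ordered pairs. So each transverse double point of the geodesic representative of $a$ yields two distinct orbits, those of $(\tilde a_1,\tilde a_2)$ and of $(\tilde a_2,\tilde a_1)$. They are distinct because an element swapping the two lifts would fix their intersection point. Hence $i(\delta_a,\delta_a)$ is twice the number of self-intersection points, counted with multiplicity. This is the geometric intersection number of $a$ with a homotopic push-off, which is the intended convention.

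\textbf{Minimal position.} Rather than a bigon criterion (for non-simple curves this is the harder Hass--Scott statement about singular bigons and monogons), argue by lifting:
\begin{itemize}
\item Take arbitrary representatives $a',b'$. Lifts with linked endpoints must cross, since their closures in the compactified disk are arcs with linked endpoints.
\item Distinct orbits give distinct parameter pairs $(s,t)$ with $a'(s)=b'(t)$. So the orbit count is a lower bound for the number of intersections of $a'$ and $b'$.
\item The hyperbolic geodesics attain this bound, because their lifts meet at most once, and only when their endpoints are linked.
\end{itemize}
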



We now give a concrete way to compute the intersection of $\mu_\theta$ with a closed CAT(0)-geodesic $\gamma\subset\Sigma$.  We choose a lift $\til \gamma\subset\til\Sigma$ of $\gamma$ and a cone point $x_0\in\til\gamma$ (we can assume that it exists by Proposition \ref{p.closed_conepoint}). The geodesic segment $J_\gamma=[x_0,\gamma.x_0)$ is a fundamental domain of the $\gamma$-action on $\til\gamma$. 
To simplify the notation, we choose an orientation of $\til\gamma$, and orient the geodesics transverse to it so that they go from the left to the right. 

\begin{defn}\label{d.transverse_intersection_segment}
    We say a geodesic $l$ is \emph{transverse} to $\til\gamma$ at $J_\gamma$ if $l$ is transverse to $\til\gamma$ and one of the following holds:
    \begin{itemize}
        \item if $l$ belongs to a cylinder, then the central leaf of the cylinder crosses $J_\gamma$;
        \item otherwise the last point in $l\cap\til\gamma$ belongs to $J_\gamma$.
    \end{itemize}
\end{defn}
We denote by $G(J_\gamma)\subset G(\wideS)$ the unordered pairs of endpoints of geodesics transverse to $\til\gamma$ at $J_\gamma$, and $G^\mc F(J_\gamma)= G(J_\gamma)\cap G(\wt {\mc F})$.

\begin{prop}\label{p.fundamental_domain}
    It holds 
    \[i(\mu_\theta^s,\delta_\gamma)=
    \mu_\theta^s(G^\mc F(J_\gamma)).\]
\end{prop}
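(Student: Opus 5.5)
Our plan is to unwind Bonahon's intersection as an integral over a fundamental domain for the $\pi_1(\Sigma)$-action on the linked pairs $\mc P$. By Theorem \ref{intersection.number}, $i(\mu_\theta^s,\delta_\gamma)$ is the $\mu_\theta^s\times\delta_\gamma$-measure of $\mc P/\pi_1(\Sigma)$.

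\textbf{Reduction to a single lift.} Since $\delta_\gamma$ is a sum of Dirac masses on the endpoints of the lifts $g\til\gamma$, $g\in\pi_1(\Sigma)/\langle\gamma\rangle$, every $\pi_1(\Sigma)$-orbit of linked pairs in the support of $\mu_\theta^s\times\delta_\gamma$ has a representative whose second coordinate is $\{\til\gamma^-,\til\gamma^+\}$. This representative is unique up to the stabilizer $\langle\gamma\rangle$. Hence
\[i(\mu_\theta^s,\delta_\gamma)=\mu_\theta^s\big(\mathcal T/\langle\gamma\rangle\big),\]
where $\mathcal T\subset G(\wideS)$ is the set of unordered pairs linked with $(\til\gamma^-,\til\gamma^+)$, that is, geodesics transverse to $\til\gamma$. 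Here $\gamma$ is primitive (otherwise one also accounts for the multiplicity carried by $\delta_\gamma$, which works out the same way). It then suffices to show that $G(J_\gamma)$ is a Borel fundamental domain for the $\langle\gamma\rangle$-action on $\mathcal T$, at least up to a $\mu_\theta^s$-null set. Intersecting with $\supp{\mu_\theta^s}=G(\wideF)$ then gives the formula.

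\textbf{Fundamental domain.} Take $(x,y)\in\mathcal T$, and orient it from left to right of $\til\gamma$.
\begin{itemize}
\item \emph{Non-cylinder case.} By Proposition \ref{p.bound_cylinder} the corresponding geodesic $l$ is unique. By Proposition \ref{p.transverse_intersection}, $l\cap\til\gamma$ is either a point or a compact segment of saddle connections, so its last point is well defined. Since $\gamma$ translates $\til\gamma$ and $J_\gamma=[x_0,\gamma.x_0)$ is a half-open fundamental domain, exactly one translate $\gamma^k l$ has its last intersection point in $J_\gamma$. This is equivariant because $\gamma$ maps the last point of $l\cap\til\gamma$ to the last point of $\gamma l\cap\til\gamma$.
\item \emph{Cylinder case.} The pair $(x,y)$ is represented by a whole flat strip. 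Its central leaf meets $\til\gamma$ transversely in a single regular point, since it stays at positive distance from the boundary and so avoids cone points. The same counting applies to that point. We must check that the central leaf meets $\til\gamma$ in a point rather than a segment; this holds because a segment of intersection would require cone points on the central leaf.
\end{itemize}
Borel measurability of $G(J_\gamma)$ follows because it is described by closed and open conditions on the endpoints, or by writing it as a countable union of small boxes.

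\textbf{Main obstacle.} The delicate point is the boundary behaviour at $x_0$ and at the cone points of $\til\gamma$. Several leaves of $\wideF$ may pass through the cone point $x_0$ (or $\gamma.x_0$), and leaves may share saddle connections with $\til\gamma$. Choosing "last point" together with the half-open interval is exactly what avoids double counting, but I need to make sure every such leaf is counted once. Here the idea is to compare "last point" on $l$ and on $\gamma l$ via the equivariance above. Also, only finitely many leaves pass through a cone point, and these have $\mu_\theta^s$-measure zero unless they are cylinder leaves (Proposition \ref{l.atomic}). Cylinder leaves are handled by the central-leaf convention, so any ambiguity at these points is harmless.
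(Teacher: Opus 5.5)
Your proposal is correct and follows the paper's route: you fix the lift $\til\gamma$ in the second coordinate, show that $G(J_\gamma)$ is a strict fundamental domain for the stabilizer $\langle\gamma\rangle$ acting on geodesics transverse to $\til\gamma$, and then restrict to $\supp{\mu_\theta^s}=G(\wideF)$. Your version is in fact somewhat more careful than the paper's: you make explicit that $\gamma$ must be primitive for the stabilizer to be $\langle\gamma\rangle$, and you check that the last-point and central-leaf conventions of Definition \ref{d.transverse_intersection_segment} are well defined and $\gamma$-equivariant.
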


\begin{proof}
Since $\delta_\gamma$ is supported on  endpoints of lifts of $\gamma$, it suffices to compute the $\mu_\theta^s\times \delta_\gamma$-measure of a fundamental domain of 
    \[\mc P_\gamma=\{(\underline{x},\underline{y})\in\mc P |\;\underline{x},\underline{y}\in G(\wideS), \;\underline{y} \text{ are the endpoints of a lift of }\gamma  \}\]
 We claim that $\mc P(J_\gamma):=G(J_\gamma)\times\{ (\til\gamma^+,\til\gamma^-)\}\subset \mc P$
   is a strict fundamental domain of $\mc P_\gamma/\pi_1(\Sigma)$, and thus \[i(\mu_\theta^s,\delta_\gamma)=
    \int_{\mc P(J_\gamma)}\mu_\theta^s\times\delta_\gamma=
    \mu_\theta^s(G(J_\gamma))=\mu_\theta^s(G^\mc F(J_\gamma)),\]
where the last equality holds because $\supp{\mu_\theta^s}= G(\til\calF)$ (Proposition \ref{p.support}).

    If there existed a non-trivial element $\eta\in\pi_1(\Sigma)$ such that $\eta.\mc P(J_\gamma)\cap \mc P(J_\gamma)\ne\emptyset$, then $\eta.(\til\gamma^+,\til\gamma^-)=(\til\gamma^+,\til\gamma^-)$ implies that $\eta$ is a non-trivial power of $\gamma$, but since $J_\gamma$ is a strict fundamental domain for the $\gamma$-action on $\til\gamma$, $\gamma^kG(J_\gamma)\cap G(J_\gamma)=\emptyset$ unless $k=0$. 
    

    In order to conclude the proof, we need to show that $\mc P_\gamma=\bigcup \eta.\mc P(J_\gamma)$. Given any $(\underline{x},\underline{y})\in \mc P_\gamma$, $\underline{y}$ consists of endpoints of a lift $\til\gamma$ of $\gamma$. There exists an $\eta\in\pi_1(\Sigma)$ such that $\eta.\underline{y}$ coincides with the endpoints of $\til\gamma$. Then composing $\eta$ with a suitable power of the deck transformation corresponding to $\gamma$ (which is again denoted by $\eta$ by abusing notation), we may assume that the geodesic corresponding to $\eta.\underline{x}$ intersects $\til\gamma$ at a point in $J_\gamma$. Thus, $(\eta.\underline{x},\eta.\underline{y})\in \mc P(J_\gamma)$, which implies that $(\underline{x},\underline{y})\in \eta^{-1}.\mc P(J_\gamma)$.
\end{proof}



We want to cover $\pi^{-1}(G^\mc F(J_\gamma))$ with small boxes using Proposition \ref{l.smallbox_cylinder} and \ref{l.smallbox_noncylinder}. Let $x_0,x_1,\ldots,x_s$ be the ordered  cone points in $J_\gamma$ by  where $x_s=\gamma.x_0$, and denote by $J_{j}=(x_{j-1},x_j)$ the open saddle connection. Let $L_j\subset G^+(\wideS)$ be the set of leaves transverse to $\til\gamma$ at $J_j$. The leaves in $L_j$ can intersect $J_j$ in $n$ possible angles $\theta_j+\frac{2m\pi}{n}, m=1,\ldots,n$, for some fixed $\theta_j$ depending on the slope of $J_j$. Denote by $L_{j,k}$ the set of leaves transverse to $J_j$ with intersection angle $\theta_j+\frac{2k\pi}{n}$ and by $G_{j,k}$ the pairs of endpoints of leaves in $L_{j,k}$.

\begin{lemma}\label{l.set_disjoint}
    If $(j,k)\ne (l,m)$, then $G_{j,k}\cap G_{l,m}=\emptyset$.
\end{lemma}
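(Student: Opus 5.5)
We argue by contradiction: suppose a pair $(x,y)\in G_{j,k}\cap G_{l,m}$ with $(j,k)\neq(l,m)$, realized by leaves $l_1\in L_{j,k}$ and $l_2\in L_{l,m}$ (possibly different leaves, since a pair of endpoints may be realized by several leaves). First I would dispose of the case where $l_1$ and $l_2$ are distinct leaves with the same endpoints. By Proposition \ref{p.bound_cylinder} they then lie in a common cylinder, and all leaves of this cylinder are parallel and cross $\til\gamma$ at the same angle. Definition \ref{d.transverse_intersection_segment} assigns such leaves to $J_\gamma$ through the \emph{central} leaf of the cylinder. So I would adopt the same convention for the $J_j$: a cylinder class is recorded by the saddle connection crossed by its central leaf, or by the cone point $x_j$ lying on it. Then $(j,k)$ is determined by the cylinder alone. (If the central leaf passes through a cone point $x_j$, the pair lies in neither open saddle connection, or one fixes a consistent convention there.) This step is mostly bookkeeping, and I would make it explicit.

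The main case is a single leaf $l$ transverse to $\til\gamma$ that lies in both $L_{j,k}$ and $L_{l,m}$. Since $l$ is a straight geodesic (left- or right-turning), the last part of Proposition \ref{p.transverse_intersection} says that $l\cap\til\gamma$ is a single point $p$ and the intersection is transverse. So $l$ crosses exactly one point of $\til\gamma$. Because the $J_j$ are pairwise disjoint open saddle connections, $p$ lies in at most one of them, which forces $j=l$. On the saddle connection $J_j$, which is a regular segment, the angle of $l$ at $p$ is a single well-defined value in a Euclidean chart. Hence it equals $\theta_j+2k\pi/n$ for exactly one residue $k$ mod $n$, giving $k=m$. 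The only subtle point is orientation: we orient transverse geodesics from left to right of $\til\gamma$, so the ordered pair $(x,y)$ determines the oriented leaf, and the angle is measured for that orientation without ambiguity.

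The main obstacle I expect is the first step, that is, making sure two different leaves sharing endpoints cannot land in different $(j,k)$ classes. I would handle it with the cylinder convention, noting that the cylinder's leaves are parallel and so share the same intersection angle with each saddle connection they cross. Otherwise I would restrict $G_{j,k}$ to the representative leaf chosen by Definition \ref{d.transverse_intersection_segment}, so that each pair of endpoints corresponds to exactly one leaf.
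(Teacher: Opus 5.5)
Your single-leaf case is correct and matches the paper: a straight leaf meets $\til\gamma$ in one point, so it crosses at most one open $J_j$, at a single angle. So is your remark that parallel leaves of one cylinder meet a given $J_j$ at the same angle.

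The gap is in the case you call bookkeeping: two distinct leaves with the same endpoints, one crossing $J_j$ and the other crossing $J_l$ with $l\neq j$. You handle it by assigning every cylinder to the saddle connection crossed by its central leaf. That is a change of definition, not an argument. Here $L_{j,k}$ is the set of leaves that actually cross the open segment $J_j$. This is how it is used in Lemma \ref{l.projection_length}, where $G_{j,k}$ is the union of small boxes built along a subdivision of $J_j$; those boxes contain every leaf meeting $J_j$, including boundary leaves of cylinders. Your convention gives the same sets $G_{j,k}$ only once you know that no cylinder has leaves crossing two different open saddle connections. That is exactly what this case asks you to prove. Your parenthetical about the central leaf possibly passing through a cone point $x_j$ shows that the geometric input is missing.

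The missing idea is that a cylinder cannot straddle a cone point of $\til\gamma$. Take a leaf with endpoints in $G_{j,k}$ that does not itself cross $J_j$. It shares its endpoints with a leaf crossing $J_j$, so by Proposition \ref{p.bound_cylinder} it is a boundary leaf of their common cylinder, and it passes through $x_{j-1}$ or $x_j$. Suppose its endpoints were also in $G_{l,m}$ with $l\neq j$. Then $l=j\pm 1$, say $l=j+1$. Since $\til\gamma$ crosses this leaf only at $x_j$, the leaves crossing $J_j$ and those crossing $J_{j+1}$ lie on opposite sides of it. The leaf would then bound flat strips on both sides, so it would be both left- and right-turning at $x_j$. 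That forces total angle $2\pi$ at $x_j$, contradicting that $x_j$ is a cone point.

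Equivalently, the interior of the flat strip of leaves with given endpoints contains no cone point. Hence its crossing with $\til\gamma$ has relative interior inside a single open $J_j$, and its central leaf can never pass through a cone point. Once you add this step, your central-leaf convention agrees with the actual definition and your argument is complete.
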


\begin{proof}
    If $k\ne m$, then $G_{j,k}\cap G_{j,m}=\emptyset$ because two leaves with the same endpoints are parallel (Proposition \ref{p.bound_cylinder}), in particular they intersect $J_j$ with the same angle.

    Assume $j\ne l$.  Let $L(\wideF)$ be the set of bi-infinite straight leaves of $\wideF$ and consider the endpoint map $\partial:L(\wideF)\to G(\wideS)$. Of course  $L_{j,k}\subset \partial\inv(G_{j,k})$.
    We will prove that $\partial\inv(G_{j,k})\cap \partial\inv(G_{l,m})=\emptyset$. Of course $L_{j,k}\cap L_{l,m}=\emptyset$, furthermore a leaf  $l\in \partial\inv(G_{j,k})\setminus L_{j,k}$ passes through either $x_{j-1}$ or $x_j$ and has the same endpoints as a leaf in $L_{j,k}$.  By Proposition \ref{p.bound_cylinder}, $l$ is thus the  boundary of a cylinder. If, by contradiction, also $\partial l\in G_{l,m}$, then $l=j\pm1$, we assume without loss of generality $l=j+1$, or equivalently $x_j\in l$.  Moreover, $l$ is the boundary leaf of a cylinder intersecting $J_j$ and another cylinder intersecting $J_{j+1}$. Then it must be both left-turning and right-turning. But this contradicts the assumption that $x_j$ is a cone point. 
\end{proof}
\begin{lemma}\label{l.measure_decomposition}
It holds\[
    \mu_\theta^s(G^\mc F(J_\gamma))=\sum_{j,k}\mu_\theta(G_{j,k}).
    \]
\end{lemma}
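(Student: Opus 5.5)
The plan is to show that $\pi^{-1}(G^{\mc F}(J_\gamma))$ splits, up to a $\mu_\theta$-null set, as the disjoint union of the sets $G_{j,k}$. Then
\[
\mu_\theta^s(G^{\mc F}(J_\gamma))=\mu_\theta(\pi^{-1}(G^{\mc F}(J_\gamma)))=\sum_{j,k}\mu_\theta(G_{j,k})
\]
by the definition $\mu^s=\pi_*\mu$ and countable (here finite) additivity. Disjointness of the $G_{j,k}$ is exactly Lemma \ref{l.set_disjoint}. So it remains to compare the two sets, after checking that each $G_{j,k}$ is Borel.

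For measurability, I would cover $G_{j,k}$ by finitely many small boxes. This uses Proposition \ref{l.smallbox_cylinder} for cylinder leaves and Proposition \ref{l.smallbox_noncylinder} for segments obtained by cutting $J_j$ into short pieces that are vertical sides of boxing rectangles in direction $\theta_j+2k\pi/n$. Then $G_{j,k}$ is the intersection of such boxes with the closed set $G^+(\wt{\mc F})$ (Lemma \ref{l.suppclosed}) together with the transversality condition.

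Next, each oriented leaf in $\pi^{-1}(G^{\mc F}(J_\gamma))$ lies in some $G_{j,k}$. Take an oriented leaf $l$ whose unordered endpoints lie in $G^{\mc F}(J_\gamma)$. Since the unordered pair is linked with $(\til\gamma^-,\til\gamma^+)$, Proposition \ref{p.transverse_intersection} (the leaf is straight) gives that $l\cap\til\gamma$ is a single transverse point $p$, or, for cylinder leaves, we use the central leaf. If $p$ lies in the interior of some saddle connection $J_j$, then $l\in L_{j,k}$ for the unique $k$ given by its angle. For $n$ odd only one orientation is a leaf, and for $n$ even both orientations appear with different $k$. In both cases $\pi^{-1}$ of the pair is correctly accounted for, since the $G_{j,k}$ are defined for oriented leaves, and $\mu_\theta$ already counts both orientations when $n$ is even, matching Lemma \ref{l.mus}.

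The remaining case, which I expect to be the main obstacle, is a leaf crossing $\til\gamma$ exactly at a cone point $x_j$. Such a leaf is not in any $L_j$, but its endpoints may still lie in $G^{\mc F}(J_\gamma)$. I would argue that the set of such pairs is $\mu_\theta$-null. At each cone point $x_j$ only finitely many straight leaves pass, at most two per direction. They are not cylinder leaves crossing transversely at a cone point unless they bound a cylinder, in which case their endpoints coincide with those of interior cylinder leaves already in some $G_{j,k}$ by Proposition \ref{p.bound_cylinder}. So the leftover set is finite. By Proposition \ref{l.atomic}, non-cylinder endpoints are not atoms, hence this finite set has measure zero. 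One must also check that the ``last point in $J_\gamma$'' convention of Definition \ref{d.transverse_intersection_segment} selects $x_0$ but not $x_s=\gamma.x_0$. The same atom argument shows this boundary choice does not affect the measure.
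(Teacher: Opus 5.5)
Your proposal is correct and follows essentially the same route as the paper's proof: disjointness of the $G_{j,k}$ via Lemma \ref{l.set_disjoint}, the identity $\mu_\theta^s=\pi_*\mu_\theta$, and the observation that the leftover pairs come from the finitely many non-cylinder leaves through the cone points $x_j$. Cylinder boundary leaves are absorbed by parallel interior leaves, and the remaining leftover leaves are non-atomic by Proposition \ref{l.atomic} and hence null. Your extra remarks are refinements the paper leaves implicit: measurability of $G_{j,k}$, the endpoint convention at $x_0$ versus $\gamma.x_0$, and, implicitly, that for odd $n$ the reversed orientations lie outside $\supp{\mu_\theta}$ and so contribute nothing.
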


\begin{proof}
    By disjointness (Lemma \ref{l.set_disjoint}) it holds $\mu_\theta(\cup G_{j,k})=\sum_{j,k}\mu_\theta(G_{j,k})$. The set $\pi^{-1}(G^\mc F(J_\gamma))\setminus \big(\cup G_{j,k}\big)$ consists of endpoints of leaves that contain one of the cone points and do not belong to any cylinder, otherwise a parallel leaf with the same endpoints would belong to $L_{j,k}$.  By Lemma \ref{l.atomic}, none of such point is atomic. Since there are only finitely many leaves passing through each cone point,  the set $\pi^{-1}(G^\mc F(J_\gamma))\setminus \big(\cup G_{j,k}\big)$ is a null set with respect to $\mu_\theta$. Furthermore, by definition of $\mu_\theta^s$ it holds $\mu_\theta^s(G^\mc F(J_\gamma))=\mu_\theta(\pi^{-1}(G^\mc F(J_\gamma)))$.
\end{proof}
The following will be key in the computation of the measure of $G_{j,k}$.
Recall from Proposition \ref{l.smallbox_noncylinder} that a boxing rectangle is a Euclidean rectangle, whose horizontal sides are contained in leaves of $\calF$ that are not regular cylinder leaves, and such that the diagonal of the rectangle makes an angle not bigger than $2\pi k/n$ with the horizontal sides. In this case by Proposition \ref{l.smallbox_noncylinder} the vertical sides $I$ of the rectangle are small intervals and $C_I\times D_I$ is a small box with $I$ as measuring interval.
\begin{lemma}\label{l.subdivision}
Given a saddle connection $J$ and a direction $v_k$ of $v_{[\theta]}$ transverse to $J$ there is a subdivision $J=\bigcup_\alpha J_\alpha$ such that for any $\alpha$ one of the following holds
\begin{enumerate}
    \item $J_\alpha$ is precisely transverse to a cylinder in direction $v_k$, namely the endpoints of $J_\alpha$ lie in the boundary leaves of the cylinder
    \item $J_\alpha$ is contained in a boxing rectangle. 
\end{enumerate}
\end{lemma}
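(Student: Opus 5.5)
Work in a single Euclidean chart around the open saddle connection $J$ and rotate so that $v_k$ is horizontal. Then $J$ is a segment making a fixed angle $\varphi\in(0,\pi)$ with the horizontal direction, and the leaves of $\wideF$ in direction $v_k$ crossing $J$ are horizontal lines.

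The first step removes the cylinders. By Lemma \ref{l.finite_cylinder} there are finitely many maximal cylinders in direction $v_k$, so $J$ crosses only finitely many lifts of them. For each such cylinder, the part of $J$ between its two boundary leaves is a closed subsegment. These subsegments are the pieces of type (1); by construction their endpoints lie on the boundary leaves. Removing them leaves finitely many closed subsegments $K_1,\dots,K_r$ of $J$. At an endpoint of $J$ the boundary leaf through the cone point still counts as a boundary leaf, so nothing breaks there. The open interior of each $K_i$ meets no regular cylinder leaf in direction $v_k$, except possibly at its endpoints, which lie on singular boundary leaves.

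The second step cuts each $K_i$ into short pieces. Fix $\varepsilon>0$ and subdivide $K_i$ into subsegments $J_\alpha$ of length $\le\varepsilon$. For each piece, consider the Euclidean rectangle $R_\alpha$ with horizontal sides through the endpoints of $J_\alpha$, so that $J_\alpha$ is a diagonal of $R_\alpha$. Its height is $|J_\alpha|\sin\varphi$ and its horizontal width is $|J_\alpha|\,|\cos\varphi|$.

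Condition (2) of Definition \ref{d.boxing} asks that the diagonal of $R_\alpha$ make an angle at most $2\pi/n$ with the horizontal. That angle is just $\min(\varphi,\pi-\varphi)$, which is not something we control. To fix this, widen the rectangle horizontally: take $R_\alpha$ to have height $h=|J_\alpha|\sin\varphi$ and width $w\ge h/\tan(2\pi/n)$, positioned so that $J_\alpha\subset R_\alpha$. The diagonal angle is then $\arctan(h/w)\le2\pi/n$. The rectangle must be Euclidean, meaning it contains no cone point in its interior and is embedded in a chart. Since $J$ is a regular open segment, compact subsegments of its interior have a positive distance $\delta$ to the cone points. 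Taking $\varepsilon$ small compared with $\delta$ gives a Euclidean rectangle of width $w\sim\varepsilon/\tan(2\pi/n)$.

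Near the endpoints $x_{j-1},x_j$ of $J$ the distance $\delta$ degenerates, and this is the main obstacle. There I would shrink the pieces geometrically, with lengths decreasing toward the cone point, so that each rectangle fits inside the Euclidean sector at the cone point. The subdivision is then countable rather than finite, which is harmless for the later measure computation: only countable additivity is needed, via Lemma \ref{l.sigmaadd}. Alternatively, place the rectangle on the side of $J$ where the angle at the cone point is at least $\pi$, since geodesic angles are $\ge\pi$ on both sides.

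The last step is to check condition (3). The horizontal sides of $R_\alpha$ lie on horizontal leaves through the endpoints of $J_\alpha$. Choose the subdivision points, which is possible by a perturbation since the cylinders are finitely many, so that no horizontal side lies on a regular cylinder leaf. At the endpoints of the $K_i$ the leaf is a boundary leaf, hence singular, so condition (3) holds there as well.
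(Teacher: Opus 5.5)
Your proposal is correct and has the same two-step structure as the paper's proof. First you cut out the finitely many crossings of $J$ with cylinders in direction $v_k$. Then you refine the complementary pieces until each one lies in a boxing rectangle. Since the interiors of those pieces meet no regular cylinder leaf, condition (3) of Definition \ref{d.boxing} holds automatically for horizontal sides through interior subdivision points, so your perturbation step is unnecessary.

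The two arguments differ only in how they control the rectangles near the cone-point endpoints of $J$.

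\textbf{The paper's route.} The paper uses a lower bound, coming from discreteness of the cone points, on how far one can travel from $J$ in direction $v_k$ before meeting a cone point. This bound stays uniform up to an endpoint $x$ of $J$. The reason is that the cone angle at $x$ exceeds $\pi$, so there is a Euclidean half-disk centred at $x$, with diameter in direction $v_k$, containing the initial part of $J$. The leaves through points of $J$ near $x$ cross this half-disk at positive height. This gives a finite subdivision, at the cost that the piece adjacent to $x$ lies in a rectangle with $x$ on a horizontal side.

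\textbf{Your route.} You use the distance from $J_\alpha$ to the cone points, which does degenerate at $x$. You compensate with a geometric, hence countable, subdivision. This yields rectangles bounded away from cone points and is harmless downstream. Lemma \ref{l.projection_length} only needs countable additivity of length and of the extended measure $\mu_\theta$, together with the pairwise intersections $B_\alpha\cap B_\beta$ being null. That countable additivity is the relevant fact, not Lemma \ref{l.sigmaadd}, which concerns the pre-measure on small boxes.

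Your \emph{alternative} remark is essentially the paper's route. Once made precise, the input needed is the half-disk at $x$ described above, which exists because the cone angle exceeds $\pi$. The fact that the geodesic makes angles at least $\pi$ there is not what is used.

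Two minor points:
\begin{itemize}
\item The width must also be at least $h\,|\cot\varphi|$ for $R_\alpha$ to contain $J_\alpha$.
\item Condition (2) is vacuous for $n\le 4$, so the bound $w\ge h/\tan(2\pi/n)$ is needed, and meaningful, only for $n\ge 5$.
\end{itemize}
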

\begin{proof}
Since there are only finitely many cylinders parallel to $\calF$ in $\Sigma$, there is a lower bound on the width of each such cylinder, which implies that $J$ only intersects finitely many cylinders. We consider a first finite refinement of $J=\bigcup J_\beta$ where $J_\beta$ is either the intersection of $J$ with a cylinder in direction $v_k$, or is a complementary subinterval. Let now $J_\beta$ be a subinterval of the second kind. Then $J_\beta$ does not intersect any regular cylinder leaf. Since the set of cone points is discrete there is a lower bound on length of segments in direction $v_k$ with one endpoint in $J$ and the other in a saddle connection. This implies that, up to refining the decomposition, we can assume that each subinterval is contained in a boxing rectangle.
\end{proof}

\begin{lemma}\label{l.projection_length}
    The measure of $G_{j,k}$ is given by \[
    \mu_\theta(G_{j,k})=\abs{J_j}\left|{\sin\left(\theta_j+\frac{2k\pi}{n}\right)}\right|.
    \]  
\end{lemma}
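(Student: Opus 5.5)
The plan is to decompose $J_j$ using Lemma \ref{l.subdivision} applied to the saddle connection $J_j$ and the direction $v_k$ of the web making angle $\theta_j+\frac{2k\pi}{n}$ with $J_j$. This gives a finite subdivision $J_j=\bigcup_\alpha J_\alpha$ into pieces of two types: pieces crossing a cylinder in direction $v_k$, and pieces contained in a boxing rectangle. Correspondingly, $L_{j,k}$ splits into the sets $L_{j,k}^\alpha$ of leaves of $L_{j,k}$ crossing $J_\alpha$. Let $G^\alpha_{j,k}$ be their endpoint sets. The claim will follow from two facts:
\[
\mu_\theta(G^\alpha_{j,k})=|J_\alpha|\left|\sin\left(\theta_j+\tfrac{2k\pi}{n}\right)\right|,
\]
and the sets $G^\alpha_{j,k}$ are pairwise disjoint up to $\mu_\theta$-null sets. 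Summing over $\alpha$ then gives the statement.

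For the measure computation, fix $\alpha$. Let $I_\alpha$ be the orthogonal projection of $J_\alpha$, along direction $v_k$, onto a segment orthogonal to $v_k$. Concretely, I take $I_\alpha$ to be a segment orthogonal to the leaves through one endpoint of $J_\alpha$, cut off by the leaves through the two endpoints of $J_\alpha$. By elementary Euclidean trigonometry in the chart containing the strip swept by the leaves between $J_\alpha$ and $I_\alpha$,
\[
|I_\alpha|=|J_\alpha|\left|\sin\left(\theta_j+\tfrac{2k\pi}{n}\right)\right|.
\]
In the cylinder case, Proposition \ref{l.smallbox_cylinder} shows that $I_\alpha$ (taken inside the cylinder) is a small interval, $G^\alpha_{j,k}$ is the single atom $C_{I_\alpha}\times D_{I_\alpha}$, and its measure is $|I_\alpha|$. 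In the boxing-rectangle case, I choose $I_\alpha$ to be a vertical side of a boxing rectangle, which may be a sub-rectangle of the one given by Lemma \ref{l.subdivision}, whose horizontal sides pass through the endpoints of $J_\alpha$. Proposition \ref{l.smallbox_noncylinder} then says $I_\alpha$ is a small interval, and by definition $\mu_\theta(C_{I_\alpha}\times D_{I_\alpha})=|I_\alpha|$. It remains to identify $C_{I_\alpha}\times D_{I_\alpha}$ with $G^\alpha_{j,k}$ up to null sets. Every leaf crossing $J_\alpha$ in direction $v_k$ crosses $I_\alpha$ orthogonally, so its endpoints lie in $C_{I_\alpha}\times D_{I_\alpha}$. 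Conversely, a leaf with endpoints in the small box meets $I_\alpha$ orthogonally (Proposition \ref{l.Iclosed}), hence traverses the rectangle and crosses $J_\alpha$ at the right angle. The crossing is transverse to $\til\gamma$ because the leaf is straight (Proposition \ref{p.transverse_intersection}).

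Disjointness and boundary effects are handled as in Lemma \ref{l.set_disjoint} and Lemma \ref{l.measure_decomposition}. Two distinct $G^\alpha_{j,k}$ can only share endpoint pairs of leaves through the common endpoint of adjacent pieces. Such pairs are finitely many, and they are non-atomic unless they are cylinder leaves. Cylinder leaves are counted once, since a whole cylinder is assigned to a single piece. The orientation convention (leaves going left to right across $\til\gamma$) fixes which of the two oriented copies is counted, so only $\mu_\theta$, not $\mu^s_\theta$, appears.

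The main obstacle I expect is the identification of $C_{I_\alpha}\times D_{I_\alpha}$ with $G^\alpha_{j,k}$ in the boxing case. One has to make sure that a leaf in the small box crosses $J_\alpha$ itself, and not merely $I_\alpha$, without first hitting a cone point. I would handle this by choosing the rectangle so that the region between $J_\alpha$ and $I_\alpha$ is a regular Euclidean trapezoid foliated by $v_k$-segments, which Lemma \ref{l.subdivision} allows after refining.
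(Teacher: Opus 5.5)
Your proposal is correct and is essentially the paper's argument: you subdivide $J_j$ via Lemma \ref{l.subdivision}, attach to each piece an orthogonal segment $I_\alpha$ of length $|J_\alpha|\,\bigl|\sin\bigl(\theta_j+\frac{2k\pi}{n}\bigr)\bigr|$ whose small box $C_{I_\alpha}\times D_{I_\alpha}$ has measure $|I_\alpha|$, note that distinct boxes overlap in at most a non-atomic pair, and sum. For the endpoint bookkeeping, take $G^\alpha_{j,k}:=C_{I_\alpha}\times D_{I_\alpha}$ as the paper does: these boxes are bounded by the inward-turning leaves at the ends of $I_\alpha$ (left-turning at $x_1$, right-turning at $x_2$), so the cylinder boundary leaf through a shared endpoint, which is an atom, is automatically excluded from the adjacent boxing piece, whereas with your definition via leaves meeting the closed piece $J_\alpha$ that atom would lie in two sets and your ``up to null sets'' identification would fail there.
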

\begin{proof}
Consider the subdivision $J_j=\bigcup_\alpha J_\alpha$ given by Lemma \ref{l.subdivision}. We denote, for each $\alpha$, by $I_\alpha$ an interval orthogonal to $v_k$ with endpoints in the same leaf as $J_\alpha$, and by $B_\alpha\subset G^+(\wideS)$ the small box $B_\alpha=C_{I_\alpha}\times D_{I_\alpha}$. If $J_\alpha$ corresponds to a cylinder then $B_\alpha=\{(l^-,l^+)\}$.  By construction for $\alpha\neq\beta$ $B_\alpha\cap B_\beta$ is either empty or consists of the endpoints of the non-atomic regular leaf through the intersection $J_\alpha\cap J_\beta$ so $\mu_\theta(B_\alpha\cap B_\beta)=0$. Furhtermore $G_{j,k}=\bigcup_\alpha B_\alpha$, and $|I_\alpha|=|J_\alpha|\abs*{\sin\paren*{\theta_j+\frac{2k\pi}{n}}}$.  We then have
\begin{align*}
        \mu_\theta(G_{j,k})&=\mu_\theta(\cup_\alpha  B_\alpha)=\sum_\alpha\mu_\theta( B_\alpha)=\sum_\alpha\abs{I_\alpha}=\sum_\alpha\abs{J_\alpha}\left|{\sin\left(\theta_j+\frac{2k\pi}{n}\right)}\right|\\
        &=\abs{J_j}\left|{\sin\left(\theta_j+\frac{2k\pi}{n}\right)}\right|.
    \end{align*}
\end{proof}

We now have all the ingredients to compute the intersection of the geodesic current $\mu_\theta^s$ with a closed curve.  Recall from Definition \ref{d.follength} that, for a closed piecewise $C^1$-curve $\gamma\subset\Sigma$, we denote by $\ell_\theta([\gamma])$ the infimum of the length function of closed curves freely homotopic to $\gamma$ with respect to the metric given by the measured multi-foliation, this agrees with the Finsler norm induced by the $(\theta+\frac\pi2)$-web. We can always take a CAT(0) geodesic representative of $\gamma$ to compute the length by Theorem \ref{thmA}.

\begin{thm}\label{foliation intersection}
The multi-foliation current $\mu_\theta^s$ satisfies 
\[
i(\mu_\theta^s,\delta_\gamma)=\ell_{\theta}([\gamma]).
\]
\end{thm}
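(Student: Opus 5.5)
The plan is to combine the reductions already established. By Theorem \ref{thmA} applied to the Finsler metric of the $(\theta+\frac\pi2)$-web (Corollary \ref{theta length}), we have $\ell_\theta([\gamma])=\ell_\theta(\gamma)$ when $\gamma$ is taken to be the closed CAT(0) geodesic representative. By Proposition \ref{p.closed_conepoint} we may choose a lift $\til\gamma$ containing a cone point $x_0$; if $\gamma$ is a cylinder curve we replace it by a boundary curve of its cylinder, which is freely homotopic to it and has the same length. Proposition \ref{p.fundamental_domain} and Lemma \ref{l.measure_decomposition} then give
\[
i(\mu_\theta^s,\delta_\gamma)=\mu_\theta^s(G^{\mathcal F}(J_\gamma))=\sum_{j=1}^{s}\sum_{k}\mu_\theta(G_{j,k}),
\]
and Lemma \ref{l.projection_length} evaluates each term as $|J_j|\,|\sin(\theta_j+\frac{2k\pi}{n})|$.

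The second step is to match this sum with the length. Each saddle connection $J_j$ is a straight segment of length $|J_j|$ whose direction $u_j$ makes angle $\theta_j+\frac{2k\pi}{n}$ with the leaf directions $v_k\in v_{[\theta]}$. Hence $|\sin(\theta_j+\frac{2k\pi}{n})|=|\langle u_j,v_k^\perp\rangle|$, where $v_k^\perp$ runs over the orthogonal web $v_{[\theta+\frac\pi2]}$. Summing over $k$ gives $|J_j|\sum_{w\in v_{[\theta+\pi/2]}}|\langle u_j,w\rangle|$, which is exactly the $\ell_\theta$-length of $J_j$ by Definition \ref{d.follength} and \eqref{e.Finsler_example}. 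Summing over $j$ gives $\ell_\theta(\gamma)$, since $J_\gamma$ is a fundamental domain that projects once around $\gamma$.

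The main obstacle is the bookkeeping of the index $k$ in relation to orientations and the parity of $n$. We have $\mu^s_\theta(A)=\mu_\theta(\pi^{-1}A)$, and each unoriented transverse leaf is counted once for each orientation in which it is a leaf of $\wideF$. With the convention that transverse geodesics are oriented from left to right, I would check the following:
\begin{itemize}
\item For $n$ odd, the $n$ oriented directions crossing $J_j$ from left to right, together with the reversed ones, exactly account for the $n$ terms of the web sum.
\item For $n$ even, the $2m$ oriented directions pair up into lines, and Lemma \ref{l.mus} explains the factor $2$, which matches the fact that the web sum also contains $\pm w$.
\end{itemize}

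Directions $k$ with $\sin=0$, meaning leaves parallel to $J_j$, contribute zero on both sides, so they cause no problem. I also need to confirm that leaves meeting $J_j$ "from right to left" appear in $G_{j,k}$ with their reversed orientation, so that no term is lost or doubled; this follows because $\pi^{-1}(G^{\mathcal F}(J_\gamma))$ contains both orientations of every leaf. Finally, leaves through the cone points $x_j$ form a null set, as shown in Lemma \ref{l.measure_decomposition}, so they do not affect the count.
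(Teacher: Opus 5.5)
Your proposal is correct and follows the paper's proof: you use the chain $i(\mu_\theta^s,\delta_\gamma)=\mu_\theta^s(G^{\mathcal F}(J_\gamma))=\sum_{j,k}\mu_\theta(G_{j,k})=\sum_{j,k}|J_j|\,|\sin(\theta_j+\frac{2k\pi}{n})|$ from Proposition \ref{p.fundamental_domain} and Lemmas \ref{l.measure_decomposition} and \ref{l.projection_length}, then recognize each inner sum as the $\ell_\theta$-length of $J_j$ against the orthogonal web, with Theorem \ref{thmA} justifying the use of the CAT(0) representative. The orientation and parity bookkeeping you flag is already handled by the paper's indexing of $G_{j,k}$ by the $n$ oriented leaf directions, which match the $n$ vectors of $v_{[\theta+\frac\pi2]}$ term by term for either parity, so you do not need a separate check via Lemma \ref{l.mus}.
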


\begin{proof}
    By Proposition \ref{p.fundamental_domain}, Lemma \ref{l.measure_decomposition} and \ref{l.projection_length}, we have \[
    i(\mu_\theta^s,\delta_\gamma)=\mu_\theta^s(G^\mc F(J_\gamma))=\sum_{j,k}\mu_\theta(G_{j,k})=\sum_{j,k}\abs{J_j}\left|{\sin\left(\theta_j+\frac{2k\pi}{n}\right)}\right|.\]
    By Corollary \ref{theta length}, the length function $\ell_\theta$ is induce by the inner product with the $\theta+\frac{\pi}{2}$-web (in fact $\frac{\pi}{2}$-web since we assume $\mc F$ is horizontal). Therefore, \[
    \ell_\theta(I_j)=\sum_k\abs{I_j}\left|\cos\left(-\theta_j+\frac{\pi}{2}+\frac{2k\pi}{n}\right)\right|=\sum_{k}\abs{J_j}\left|{\sin\left(\theta_j+\frac{2k\pi}{n}\right)}\right|.\]
    Which shows the result since  $J_\gamma$ is the concatenation of the saddle connections $J_j$.
%
\end{proof}

\subsection{Relation with measured laminations for special $1/n$-translation structures}\label{s.abelian}
We will show the $1/n$-translation structure is induced by an (half)-translation structure, the $\theta$-multi-foliation current $\mu_\theta^s$ can be expressed as a sum of measured laminations.

\begin{defn}\label{d.measured lamination} A \emph{measured lamination} $\lambda$ is a geodesic current with $i(\lambda,\lambda)=0$.  An \emph{oriented measured lamination} is oriented geodesic current $\lambda_0$ whose image $\lambda^s_0$ under the symmetrization operator  satisfies $i(\lambda^s,\lambda^s)=0$.
\end{defn}
See \cite[Proposition 8.3.9]{martelli2016introduction} for the relation with more classical definition of measured laminations on surfaces. In the case of (half)-translation structures, the multi-foliation $\calF_\theta$ is a singular measured foliation on the surface and our construction of the (multi)-foliation current specializes to the classical correspondence between measured foliations and measured laminations (compare \cite[\S 11.3]{kapovich2001hyperbolic}). 

Note that it follows from Theorem \ref{thm:crossing_free} that if $n=1,2$ then the oriented (multi)-foliation current is an oriented measured lamination:
\begin{prop}\label{p.lamination}
    An oriented geodesic current $\mu$ is $2$-crossing free if and only if it is an oriented measured lamination.
\end{prop}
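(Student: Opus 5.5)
The plan is to unwind both sides to the same statement about linked pairs in the support. By Bonahon's definition (Theorem \ref{intersection.number}), $i(\mu^s,\mu^s)=\int_{\mc P/\pi_1(\Sigma)}\mu^s\times\mu^s$. So the oriented current $\mu$ is an oriented measured lamination exactly when the set $\mc P\cap(\supp{\mu^s}\times\supp{\mu^s})$ of linked pairs has $\mu^s\times\mu^s$-measure zero. Here $\supp{\mu^s}=\pi(\supp\mu)$. On the other side, a positive $2$-crossing is a pair $(x_1,y_1),(x_2,y_2)$ with $(x_1,x_2,y_1,y_2)$ counterclockwise, which means precisely that the two pairs are linked. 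Conversely, given two linked pairs in $\supp\mu$, either the ordered pair or the pair with the two entries swapped is a positive $2$-crossing. So $\mu$ is $2$-crossing free if and only if no two points of $\supp\mu$ are linked, which is the same as no two points of $\supp{\mu^s}$ being linked. The statement therefore reduces to showing that $\mu^s\times\mu^s$ vanishes on linked pairs if and only if the support contains no linked pair.

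First direction. If $\supp{\mu^s}$ contains no linked pair, then $\mu^s\times\mu^s$ is supported on $\supp{\mu^s}\times\supp{\mu^s}$, which is disjoint from $\mc P$. Hence the integral is zero and $\mu$ is an oriented measured lamination.

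Converse. Suppose $(a,b)\in\supp{\mu^s}\times\supp{\mu^s}$ is linked. Linking is an open condition in $G(\wideS)\times G(\wideS)$, so there are open neighbourhoods $U\ni a$ and $V\ni b$ with $U\times V\subset\mc P$. Since $a,b$ lie in the support, $\mu^s(U)>0$ and $\mu^s(V)>0$, so $(\mu^s\times\mu^s)(U\times V)>0$. Shrinking $U$ and $V$, we may assume $U\times V$ injects into $\mc P/\pi_1(\Sigma)$. This uses that the action of $\pi_1(\Sigma)$ on $\mc P$ is properly discontinuous: a linked pair determines a point of $\wideS$, namely the intersection of the two geodesics, up to bounded error, and the action on $\wideS$ is proper. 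It then follows that $i(\mu^s,\mu^s)>0$, contradicting the lamination hypothesis.

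The main obstacle is this last injectivity/proper-discontinuity step, needed to pass positivity from $\mc P$ down to the quotient. I would handle it with the standard fact that for pairs in a small neighbourhood of a linked pair, the geodesics meet in a fixed compact set $K\subset\wideS$. I would cite \cite[\S4.14]{ballmann2006manifolds} as in Proposition \ref{define measure}; only finitely many deck transformations move $K$ to meet itself, so small enough $U$ and $V$ avoid all of these translates. The remaining subtlety is that CAT(0) geodesics can overlap along saddle connections (Proposition \ref{p.transverse_intersection}); I would handle this by using a compact set containing the whole common segment.
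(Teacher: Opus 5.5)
Your proof is correct and follows the same route as the paper: in both directions the paper also identifies positive $2$-crossings in $\supp{\mu}$ with transverse (linked) pairs in the support, and uses that $i(\mu^s,\mu^s)=0$ exactly when no such pairs exist. You make explicit what the paper leaves implicit: how a linked pair is reordered into a positive $2$-crossing, and the open-neighbourhood plus proper-discontinuity argument on $\mc P$ showing that a linked pair in the support forces $i(\mu^s,\mu^s)>0$.
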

\begin{proof}
    If $\mu$ is an oriented measured lamination, then its support contains no intersecting geodesics, so it is $2$-crossing free. Conversely, if $\mu$ is  $2$-crossing free, then pairs of geodesics in its support do not intersect transversely, so $i(\mu^s,\mu^s)=0$ and $\mu$ is an oriented measured lamination.
\end{proof}

We say that a $1/n$-translation structure is \emph{induced by a $1/m$-translation structure} (here $n=km$ for some natural number $k$) if its regular locus $\Sigma\setminus P$ admits an $(\R^2\rtimes\Z/ m\Z,\R^2)$-structure compatible with the $1/n$-translation structure, or equivalently there exists a subatlas of the atlas defining the $1/n$-translation structure whose change of charts are in $\R^2\rtimes\Z/ m\Z$. Of course for every $m,k$ an $1/m$-translation structure induces a $1/mk$-translation structure, while the $1/km$-translation structures induced by a $1/m$-translation structure form a smaller locus in the relevant moduli space. 

Note that, if the $1/n$ translation structure is induced by a $1/m$-translation structure, then for any angle $\theta$ the multifoliation $\calF_\theta$ is the union of $k$ globally well defined $m$-multifoliations.
We denote by $\mu_\theta^n$ (respectively $\mu_\theta^m$) the $\theta$-multifoliation induced by the $1/n$- (respectively $1/m$-)translation structure.

\begin{prop}\label{degenerate multi-foliationGen}
If $\Sigma$ is a $1/mk$-translation surface induced by a $1/m$-translation structure, then
\[\mu_\theta^n=\sum_{j=1}^{k}\mu^m_{\theta+\frac{2\pi k}n}.\]
\end{prop}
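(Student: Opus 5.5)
The identity compares measures on $G^+(\wideS)$. By Carath\'eodory uniqueness (Theorem \ref{t.Caratheodory}) it suffices to check it on a generating ring on which every measure involved is $\sigma$-finite. The index in the sum should be read as $\theta+\frac{2\pi j}{n}$, $j=1,\dots,k$, with $n=mk$.

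\textbf{Step 1: leaves and supports.} Fix a subatlas with changes of charts in $\R^2\rtimes\Z/m\Z$. In such a chart the oriented directions of $\calF^n_\theta$ are $\theta+\frac{2\pi i}{n}$ for $i=0,\dots,n-1$. Write $i=j+km'$ with $0\le m'<m$. The class of $i$ modulo $k$ is preserved by rotations of order $m$. Hence the directions split into $k$ globally well-defined families, and the $j$-th family is exactly the $m$-multi-foliation $\calF^m_{\theta+2\pi j/n}$. The families are disjoint as sets of oriented directions at every regular point.

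A bi-infinite straight geodesic tangent to $\wideF^n_\theta$ stays in one family. Along a regular segment the direction is locally constant. At a cone point, the turning side has angle exactly $\pi$ inside a Euclidean half-disk chart, so the outgoing direction is the rotation by $\pi$ of the incoming one. That chart belongs to the $1/m$ subatlas, so the family index is still preserved. Therefore the set of leaves $L(\wideF^n_\theta)$ is the disjoint union over $j$ of the sets $L(\wideF^m_{\theta+2\pi j/n})$.

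\textbf{Step 2: comparing on small boxes.} Let $C\times D$ be a small box for $\calF^n_\theta$, with measuring segment $I$. Every leaf in $L(C\times D)$ meets $I$ orthogonally, so all of them have a single direction at $I$. By Step 1 they all belong to one family $j_0$. It follows that $C\times D$ is also a small box for each $\calF^m_{\theta+2\pi j/n}$, using the same $I$. The set $L^{(j)}(C\times D)$ is empty for $j\ne j_0$, and equals $L(C\times D)$ for $j=j_0$. The measures then agree, since both are $|I\cap L(C\times D)|$:
\[
\mu^n_\theta(C\times D)=\sum_j\mu^m_{\theta+\frac{2\pi j}n}(C\times D).
\]
By additivity the identity extends to the ring $\mathcal S^n_\theta$.

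\textbf{Step 3: extension to Borel sets.} The ring $\mathcal S^n_\theta$ generates the Borel $\sigma$-algebra. The $\sigma$-finite exhaustion used earlier consists of small boxes for $\calF^n_\theta$, so both sides are $\sigma$-finite on this ring. Uniqueness in Theorem \ref{t.Caratheodory} then gives equality of the two measures.

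\textbf{Main obstacle.} The delicate point is the turning behaviour at cone points in Step 1. One must check that the $\pi$-angle side of a straight leaf lies in a single chart of the $1/m$ subatlas. This holds because a cone point has a neighborhood built from sectors glued by $\R^2\rtimes\Z/m\Z$ maps, and a sector of angle $\pi$ embeds isometrically into $\R^2$ compatibly with that subatlas.

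A secondary point is that each small box for $\calF^n$ is a legitimate small box for every $\calF^m_{\theta+2\pi j/n}$, including the families with no leaves in it. For those families this is vacuous, because $L^{(j)}(C\times D)$ is empty.
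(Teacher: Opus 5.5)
Your proposal is correct and follows the paper's route. You split the $n$-web into the $k$ disjoint $m$-webs at angles $\theta+\frac{2\pi j}{n}$, show that each straight leaf of $\wideF^n_\theta$ stays in exactly one of the resulting globally defined $m$-multi-foliations, and conclude that the currents agree; your small-box comparison plus Carath\'eodory uniqueness is a precise version of what the paper calls a ``direct consequence of the construction''. One step needs tightening: from ``meets $I$ orthogonally'' you conclude that all leaves in $L(C\times D)$ have a single direction at $I$, but for even $n$ there are two opposite orthogonal orientations, and these lie in different families when $m$ is odd. You should add that antiparallel leaves cannot both have endpoints in $C\times D$, because $C\cap D=\emptyset$ and their $C$- and $D$-endpoints would alternate around $\partial\wideS$.
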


\begin{proof}
This follows readily from our construction. Indeed if $n=mk$ the $(n,\theta)$-web $v_{[\theta]}^n$ can be written as the  union of $k$ disjoint $(m,\theta)$-webs
$$v_{[\theta]}^n=\bigsqcup_{j=1}^k v_{[\theta+\frac{2\pi j}n]}^m.$$

This decomposition induces a decomposition of the leaves of the multifoliation $\til\calF_\theta^n$ as a disjoint union of leaves of the foliations $\til\calF_{\theta+\frac{2\pi j}n}^m$, which are globally well defined because the $1/n$-translation structure is induced by a $1/m$-translation structure. 

This decomposes the support of $\mu_\theta^n$ as a disjoint union of the supports $S_j$ of the currents $\mu_{\theta+\frac{2\pi j}n}^m$ and it is a direct consequence of the construction that $\mu_\theta^n|_{S_j}=\mu_{\theta+\frac{2\pi j}n}^m$.
 %
\end{proof}
Consistently with the literature, we say that a $1/n$-translation structure is induced by a (half)-translation structure if it is induced by a $1$ (resp. $1/2$-)translation structure.

\begin{cor}\label{degenerate multi-foliation}
If the $1/n$-translation structure is induced by a (half)-translation structure on $\Sigma$, then the multifoliation current $\mu^s_\theta$ is a sum of $n$  measured laminations.
\end{cor}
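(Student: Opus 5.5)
The plan is to reduce to Proposition \ref{degenerate multi-foliationGen} with $m\in\{1,2\}$, and then check that each summand is a measured lamination. Suppose the $1/n$-translation structure is induced by a $1/m$-translation structure with $m=1$ or $m=2$, and write $n=mk$. Proposition \ref{degenerate multi-foliationGen} gives
\[\mu_\theta^n=\sum_{j=1}^{k}\mu^m_{\theta+\frac{2\pi j}{n}}\]
as oriented currents. Since symmetrization $\mu\mapsto\mu^s=\pi_*\mu$ is linear, we get $(\mu^n_\theta)^s=\sum_j(\mu^m_{\theta+2\pi j/n})^s$.

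It then suffices to show that each $\mu^m_\varphi$, the multi-foliation current of a translation or half-translation structure, symmetrizes to a measured lamination. The $1/m$-multi-foliation current is itself a multi-foliation current in the sense of Section \ref{s.mutheta2}, now for the $1/m$-structure. By Theorem \ref{thm:crossing_free} it is $(\lfloor m/2\rfloor+1)$-crossing free, and since $\lfloor m/2\rfloor+1\le 2$ it is in particular $2$-crossing free. Proposition \ref{p.lamination} then says it is an oriented measured lamination, that is, $i((\mu^m_\varphi)^s,(\mu^m_\varphi)^s)=0$. One point needs care: the support of the symmetrization is $\pi$ of the oriented support. So I must also rule out transverse pairs coming from a leaf and the reverse of another leaf. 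For $m=1,2$, however, leaves of $\calF^m_\varphi$ are leaves of a genuine singular measured foliation, so two of them never cross transversely: they are locally parallel, and Proposition \ref{p.transverse_intersection} excludes transverse crossings of straight geodesics through shared saddle connections. This gives vanishing self-intersection directly.

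Counting summands finishes the argument. The case $m=1$ gives $k=n$ summands. The case $m=2$ gives $k=n/2$ summands, each of which symmetrizes to twice a lamination, in accordance with Lemma \ref{l.mus}. Alternatively, split each $1/2$-foliation current into its two orientation classes, which exist locally; a more uniform choice is to note that a positive multiple of a measured lamination is a measured lamination. Either way we can write $\mu^s_\theta$ as a sum of $n$ measured laminations, possibly with some equal terms.

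I expect the main obstacle to be this bookkeeping of the exact count $n$ in the half-translation case, together with the passage from oriented crossing-freeness to symmetric self-intersection zero. I would handle the latter using the foliation argument above rather than Proposition \ref{p.lamination} alone.
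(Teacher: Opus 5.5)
Your proposal is correct and follows the paper's implicit argument. The paper gives no separate proof: it combines Proposition \ref{degenerate multi-foliationGen} with $m\in\{1,2\}$ and the remark that a $1/m$-foliation current with $m\le 2$ has no transverse pairs in its support, so its symmetrization is a measured lamination; your count of $n$ summands in the half-translation case, via writing each $(\mu^2_\varphi)^s$ as two equal halves, fills in a detail the paper leaves implicit.

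Two small cautions. For $m=1$ the index $\lfloor m/2\rfloor+1=1$ in Theorem \ref{thm:crossing_free} is degenerate, so cite Lemma \ref{lem:cross_possible_angle} (there is no $k$ with $1\le k<m/2$) or your direct foliation argument instead. Also drop the alternative of splitting into orientation classes, since a $\pi_1$-invariant splitting of that kind need not exist when the foliation is non-orientable.
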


\section{Liouville currents for Finsler metrics}\label{sec:Liouville_current}

Fix a metric $g$ on a surface $\Sigma$, and denote by $\ell_g$ the associated length function, so that, for every closed curve $\gamma$, $\ell_g(\gamma)$ is is the smallest length with respect to the metric $g$ of a closed curve in the homotopy class of $\gamma$. We say that a geodesic current $\mu\in\mc C(\Sigma)$ is  the \textit{Liouville current} of $g$ if $i(\mu,\delta_\gamma)=\ell_g(\gamma)$ for all $\gamma\in\pi_1(\Sigma)$. 
Note that by \cite{otal1990spectre} a geodesic current is uniquely determined by its intersection number with all closed geodesics, so if a Liouville current exists, it is unique.

The following example will be key:
\begin{ex}
Let, as in Section \ref{sec:Finsler}, $F_\theta$ be the $\theta$-metric, namely the Finsler metric associated with a $(\theta+\frac{\pi}{2})$-web $v_{[\theta+\frac{\pi}{2}]}$ on a $1/n$-translation surface $\Sigma$. By Theorem \ref{foliation intersection}, the multi-foliation current $\mu_\theta^s$ is the Liouville current for $F_\theta$.
\end{ex}

In this section, we  prove Theorem \ref{current_quadratic_differential}: any compatible Finsler metric on a $1/n$-translation surface admits a Liouville current, and more precisely we show that the Liouville currents can be expressed as an integral combination of the symmetrized version of $\theta$-multi-foliations.
The result will follow from a detour in convex geometry in $\mathbb R^2$.



\subsection{Polygonal norms on $\mathbb R^2$}
In this subsection we show that all symmetric polygonal norms on $\R^2$ arise as finite convex combinations of $n$-webs (Corollary \ref{discrete measure}). We will generalize this result to all norms in the next subsection (see Proposition \ref{measure_for_vector}).

\begin{prop}\label{finite vectors}
    For any symmetric norm $\norm{\cdot}$ on $\R^2$ whose unit sphere is a polygon,  there exists finitely many vectors $v_i$ such that for every $v\in\R^2$ it holds $\norm v=\sum_i \abs{\innerproduct{v}{v_i}}$.
\end{prop}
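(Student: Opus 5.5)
The plan is to pass to the dual norm and use the classical fact that a centrally symmetric polygon is a zonotope, that is, a Minkowski sum of segments. Write $Q$ for the unit ball of $\norm\cdot$ and define the support function $h_K(v)=\max_{u\in K}\innerproduct{v}{u}$ of a convex body $K$. By the symmetry of $Q$, the dual ball $Q^\circ=\{u:\innerproduct{v}{u}\le 1 \text{ for all } v\in Q\}$ satisfies $\norm v=h_{Q^\circ}(v)$. Since $Q$ is a symmetric polygon, $Q^\circ$ is also a centrally symmetric polygon: its vertices correspond to the edges of $Q$, and the symmetry $-\mathrm{Id}$ is preserved. For a segment $[-v_i,v_i]$ the support function is $h_{[-v_i,v_i]}(v)=\abs{\innerproduct{v}{v_i}}$, and support functions are additive under Minkowski sums. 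So it suffices to show that $Q^\circ=\sum_i[-v_i,v_i]$ for finitely many vectors $v_i$, because then $\norm v=\sum_i\abs{\innerproduct{v}{v_i}}$ for every $v\in\R^2$.

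The key step is the zonotope decomposition of a centrally symmetric polygon $P=Q^\circ$. List its edges counterclockwise as $e_1,\dots,e_{2m}$, regarded as vectors. Central symmetry gives $e_{i+m}=-e_i$. Set $v_i=e_i/2$ for $i=1,\dots,m$ and $Z=\sum_{i=1}^m[-v_i,v_i]$.

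I would identify $Z$ with $P$ by comparing support functions direction by direction. Take a unit direction $u$ that is not normal to any edge. Then $h_P(u)=\innerproduct{u}{p_u}$, where $p_u$ is the unique maximizing vertex. Going around $P$, the point $p_u$ equals the center (the origin) plus half the sum of the edge vectors traversed from the antipodal vertex $-p_u$ to $p_u$. These are exactly $m$ consecutive edges, one from each antipodal pair, each taken with the sign making $\innerproduct{u}{\cdot}\ge 0$. Hence
\[
h_P(u)=\tfrac12\sum_{i=1}^m\abs{\innerproduct{u}{e_i}}=\sum_{i=1}^m\abs{\innerproduct{u}{v_i}}=h_Z(u).
\]
The equality extends to all $u$ by continuity and homogeneity, and convex bodies are determined by their support functions, so $Z=P$.

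The only delicate point is the sign bookkeeping in the boundary walk. Specifically, one needs that the edges on the path from $-p_u$ to $p_u$ all have nonnegative inner product with $u$, and that together they contain exactly one representative of each pair $\pm e_i$. This follows from convexity: the edge directions turn monotonically, so $\innerproduct{u}{\cdot}$ changes sign exactly at $p_u$ and at $-p_u$. This is routine. As an alternative, one can avoid the duality entirely and argue directly that a symmetric polygonal norm is piecewise linear with finitely many break lines, each break contributing a term $\abs{\innerproduct{v}{v_i}}$. The zonotope route seems cleaner.
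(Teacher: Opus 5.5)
Your proposal is correct and is essentially the paper's proof. The paper also takes $v_i=\tfrac12(E_i-E_{i+1})$, half the edge vectors of the dual polygon $P^*$ with one vector from each antipodal pair, and checks on each sector $S_i$ (the normal cone of the vertex $E_i$, which is your $p_u$) that the signed sum telescopes via $E_{n+1}=-E_1$ to $a_ix+b_iy=\langle v,E_i\rangle$. That is exactly your identity $h_{P^*}(u)=\langle u,p_u\rangle=\tfrac12\sum_i\lvert\langle u,e_i\rangle\rvert$, so the zonotope and support-function language only repackages the same sign bookkeeping.
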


\begin{figure}[h]
    \centering
    \includegraphics[width=0.3\linewidth]{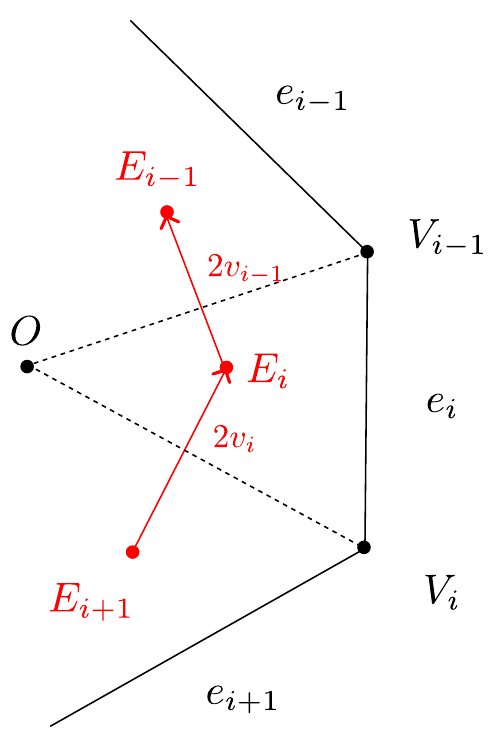}
    \caption{The polygon $P$ in black and its dual $P^*$ in red.}
    \label{polygonanddual}
\end{figure}

\begin{proof}
    Observe that, as soon as there are two linearly independent vectors in the collection $v_i$, the function $\sum_i \abs{\innerproduct{v}{v_i}}$ is a norm, indeed it is positive, positively homogeneous and satisfies the triangular inequality. We will refer to it as a type one norm.   Since a norm is uniquely determined by its unit sphere, it is enough to construct, for every polygon $P$ a type one norm with $P$ as unit sphere. 
    
    We fix some notation as illustrated in Figure \ref{polygonanddual}. We will use the standard coordinates $(x,y)$ on $\R^2$. We label the edges of $P$ in clockwise order as $e_1,e_2,\ldots,e_{2n}$, and let $a_i,b_i\in \R$  be so that  $e_i$ has the equation $a_ix+b_iy=1$. The pair $E_i=(a_i,b_i)$ is a vertex of the dual polygon $P^*$ and the line spanned by $(a_i,b_i)$ is orthogonal to the edge $e_i$. Considering the indices modulo $2n$, we denote the common vertex of $e_{i-1}$ and $e_i$ by $V_{i-1}$. 


    Since the polygon $P$ is symmetric, the dual polygon $P^*$ is symmetric as well, that is $(a_i,b_i)=-(a_{i+n},b_{i+n})$. For every $1\leq i\leq 2n$ we set $v_i=\frac{1}{2}(a_{i}-a_{i+1},b_i-b_{i+1})$. The vector $v_i$ is the half-edge of $P^*$ and is orthogonal to the ray spanned by the vertex $V_i$. Observe that by construction $v_i=-v_{i+n}$.

    We consider the type one norm\[
    \norm v= \sum_1^{n}\abs{\innerproduct{v}{v_i}}
    \]
    where we only sum from $1$ to $n$. Note that the rays $OV_i$ through the vertices of $P$ divide the plane in sectors, and since the vectors $v_i$ are orthogonal to the respective rays the function $\norm v$ is linear on each sector. Let us then focus on the $i$-th sector $S_i$ bound by the rays $OV_{i-1}$ and $OV_i$. For a vector $v=(x,y)$ in  this sector it holds 
\[
    \begin{cases}
        \innerproduct{v}{v_k}>0,& k=i,i+1,\ldots,n\\
        \innerproduct{v}{v_k}<0,& k=1,2,\ldots, i-1\\
    \end{cases}
    \]
    Thus, $\norm{v}=1$ descends to 
    \begin{align*}
        1 &=\norm{v}=\sum_{j=i}^n\innerproduct{v}{v_j}-\sum_{k=1}^{i-1}\innerproduct{v}{v_k} \\
        &=\frac{1}{2}(a_i-a_{n+1})x+\frac{1}{2}(b_i-b_{n+1})y-\frac{1}{2}(a_{1}-a_{i})x-\frac{1}{2}(b_{1}-b_i)y \\
        &=a_{i}x+b_{i}y 
    \end{align*}
where in the  last equality we used that $(a_1,b_1)=-(a_{n+1},b_{n+1})$. This shows that  the set of vectors of norm one in the sector $S_i$  are precisely the vectors in the edge $e_i$, which concludes the proof.
\end{proof}

From the proof above, one can also get the following corollary.

\begin{cor}\label{c.polygonalnorms}
    For every symmetric norm $\norm{\cdot}$ on $\R^2$ whose unit sphere is a polygon $P$, there exists a finite sum $\nu$ of Dirac masses  such that if  $v_\theta$ is the vector in direction $\theta$ with unit Euclidean norm, then \[
    \norm{v}=\int_{\mb S^1} \abs{\innerproduct{v}{v_{\theta+\frac{\pi}{2}}}} d\nu.
    \]
    More precisely, $\nu=\sum  K_i\delta_{V_i}/4$, where $V_i$ is the vertex of $P$, $\delta_{V_i}$  is the Dirac measure supported at the direction $OV_i$, and and $K_i$ is  the length of the edge of the dual polygon $P^*$ corresponding to the vertex $V_i$ of $P$.
\end{cor}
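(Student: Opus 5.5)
The plan is to read the formula off directly from the type one norm built in the proof of Proposition \ref{finite vectors}. That proof gives
\[
\norm v=\sum_{i=1}^{n}\abs{\innerproduct{v}{v_i}},\qquad v_i=\tfrac12(E_i-E_{i+1}),
\]
where $E_i=(a_i,b_i)$ are the vertices of the dual polygon $P^*$. I will identify each $v_i$ with a rescaled unit vector orthogonal to the direction of a vertex of $P$. After that, I will symmetrize the finite sum so that it runs over all $2n$ vertices.

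\textbf{Step 1: the direction of $v_i$.} Since $V_i$ is the common vertex of $e_i$ and $e_{i+1}$, it satisfies both edge equations, so $\innerproduct{V_i}{E_i}=\innerproduct{V_i}{E_{i+1}}=1$. Hence $\innerproduct{V_i}{v_i}=0$. Writing $\varphi_i$ for the angle of the ray $OV_i$, this gives $v_i=\pm\frac{K_i}{2}\,v_{\varphi_i+\frac\pi2}$. Here $K_i=\abs{E_i-E_{i+1}}$ is the length of the edge of $P^*$ dual to $V_i$. The sign is irrelevant inside the absolute value, so
\[
\norm v=\sum_{i=1}^n \frac{K_i}{2}\abs*{\innerproduct{v}{v_{\varphi_i+\frac\pi2}}}.
\]

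\textbf{Step 2: symmetrization.} By central symmetry of $P$ and $P^*$ we have $V_{i+n}=-V_i$, so $\varphi_{i+n}=\varphi_i+\pi$, and $E_{i+n}=-E_i$ gives $K_{i+n}=K_i$. Moreover $\abs{\innerproduct{v}{v_{\varphi+\pi+\frac\pi2}}}=\abs{\innerproduct{v}{v_{\varphi+\frac\pi2}}}$. Therefore each term $\frac{K_i}{2}\abs{\innerproduct{v}{v_{\varphi_i+\pi/2}}}$ splits equally between $V_i$ and $V_{i+n}$, and
\[
\norm v=\sum_{i=1}^{2n}\frac{K_i}{4}\abs*{\innerproduct{v}{v_{\varphi_i+\frac\pi2}}}=\int_{\mb S^1}\abs*{\innerproduct{v}{v_{\theta+\frac\pi2}}}\,d\nu(\theta),
\qquad \nu=\sum_{i=1}^{2n}\frac{K_i}{4}\,\delta_{V_i}.
\]

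The only delicate point is bookkeeping rather than a genuine obstacle. One must check that the half-edge $v_i$ is matched with the vertex $V_i$ and not with $V_{i-1}$, given the paper's indexing convention that $V_{i-1}=e_{i-1}\cap e_i$. One must also check that the factor $\frac14$ correctly combines the $\frac12$ from the half-edge with the $\frac12$ from doubling the vertex set.
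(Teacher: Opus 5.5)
Your proposal is correct and follows essentially the paper's argument. The paper simply notes that $\norm{v}=\sum_1^n\abs{\innerproduct{v}{v_i}}=\frac12\sum_1^{2n}\abs{\innerproduct{v}{v_i}}$ with $v_i\perp OV_i$; you supply the details it leaves implicit, namely $\abs{v_i}=K_i/2$, $V_{i+n}=-V_i$ and $K_{i+n}=K_i$. Your matching of $v_i=\frac12(E_i-E_{i+1})$ with $V_i=e_i\cap e_{i+1}$ is the right one and agrees with the verbal description of $K_i$; the explicit formula $K_i=\abs{E_i-E_{i-1}}$ written after the corollary in the paper is off by one index, since it gives the dual edge at $V_{i-1}$.
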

In other words if the edge $e_i$ between $V_{i-1}$ and $V_i$ satisfies $a_ix+b_iy=1$, then $$K_i=\sqrt{(a_i-a_{i-1})^2+(b_i-b_{i-1})^2}.$$
\begin{proof}
    From the proof of Proposition \ref{finite vectors}, we know $\norm{v}=\sum_1^n\abs{\innerproduct{v}{v_i}}=\frac{1}{2}\sum_1^{2n}\abs{\innerproduct{v}{v_i}}$. Moreover the vector $v_i$ is orthogonal to the ray $OV_i$.
\end{proof}

If the Finsler metric has additional rotation symmetry, we can similarly represent it via inner product with an $n$-web in the sense of Definition \ref{web_definition}.

\begin{cor}\label{discrete measure}
For every symmetric norm $\norm{\cdot}$ on $\R^2$ invariant under $\Z/ n\Z$ and whose unit sphere is a polygon $P$, there exists a finite sum $\nu$ of Dirac masses such that if  $v_{[\theta]}$ is the $n$-web in direction $\theta$ 
\[
    \norm{v}=\int_{\mb S^1} \abs{\innerproduct{v}{v_{[\theta+\frac{\pi}{2}]}}} d\nu.
    \]
    With the notations of Corollary \ref{c.polygonalnorms}, $\nu=\sum  K_i\delta_{V_i}/4n$. 
\end{cor}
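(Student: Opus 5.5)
The plan is to deduce the statement directly from Corollary \ref{c.polygonalnorms} by averaging over the rotation group $\Z/n\Z$. Let $\nu_0=\sum_i K_i\delta_{V_i}/4$ be the finite measure on $\mb S^1$ provided by Corollary \ref{c.polygonalnorms}. For every $v\in\R^2$ it satisfies
\[
\norm v=\int_{\mb S^1}\abs{\innerproduct{v}{v_{\theta+\frac{\pi}{2}}}}\,d\nu_0(\theta).
\]
Here, as in Definition \ref{multi-foliation in R}, $\abs{\innerproduct{v}{v_{[\theta+\frac\pi2]}}}$ denotes $\sum_{k=0}^{n-1}\abs{\innerproduct{v}{v_{\theta+\frac\pi2+\frac{2k\pi}{n}}}}$.

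The first step is to show that $\nu_0$ is invariant under the rotation $R$ by $2\pi/n$. Since $\norm\cdot$ is $\Z/n\Z$-invariant, $R$ maps $P$ to itself, so it permutes the vertices $V_i$ and the edges $e_i$. The dual polygon is defined by the conditions $a_ix+b_iy=1$ on the edges, so $P^*$ is mapped to itself by $R$ (recall $(RP)^*=R(P^*)$ because $R$ is orthogonal). If $R$ sends the vertex $V_i$ to $V_j$, it sends the dual edge corresponding to $V_i$ to the dual edge corresponding to $V_j$. Because $R$ is an isometry, this gives $K_i=K_j$, and the direction of $OV_i$ is carried to the direction of $OV_j$. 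Hence $R_*\nu_0=\nu_0$.

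The second step is the averaging computation. Set $\nu=\nu_0/n=\sum_i K_i\delta_{V_i}/4n$. Then
\[
\int_{\mb S^1}\abs{\innerproduct{v}{v_{[\theta+\frac\pi2]}}}\,d\nu
=\frac1n\sum_{k=0}^{n-1}\int_{\mb S^1}\abs{\innerproduct{v}{v_{\theta+\frac{2k\pi}{n}+\frac\pi2}}}\,d\nu_0(\theta)
=\frac1n\sum_{k=0}^{n-1}\int_{\mb S^1}\abs{\innerproduct{v}{v_{\theta+\frac\pi2}}}\,d(R^k_*\nu_0)(\theta).
\]
By the first step each $R^k_*\nu_0$ equals $\nu_0$. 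So every one of the $n$ integrals equals $\norm v$, and the average is $\norm v$, as claimed.

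The only point needing some care is the first step, namely that the weights $K_i$ are genuinely rotation-equivariant. The argument above settles it because the correspondence between vertices of $P$ and edges of $P^*$ is canonical and commutes with orthogonal maps. Everything else is formal bookkeeping of the web notation.
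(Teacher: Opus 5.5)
Your proposal is correct, and it is essentially the argument the paper leaves implicit. The paper reads the $n$-web formula off Corollary \ref{c.polygonalnorms} using the $\Z/n\Z$-invariance of $P$, so that the Dirac masses $K_i\delta_{V_i}/4$ group into rotation orbits of equal weight; your averaging with $R_*\nu_0=\nu_0$ encodes exactly this. As a minor simplification, you can skip your first step entirely: $\int_{\mathbb{S}^1}\lvert\langle v, v_{\theta+\frac{\pi}{2}+\frac{2k\pi}{n}}\rangle\rvert\,d\nu_0(\theta)=\lVert R^{-k}v\rVert=\lVert v\rVert$ follows directly from Corollary \ref{c.polygonalnorms} applied to $R^{-k}v$ together with the rotation invariance of the norm.
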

\subsection{General norms on $\R^2$}
In order to treat  general  symmetric norms on $\R^2$ that are invariant under $\Z/n \Z$ we consider successive  approximations via polygons. 

We fix some notation. 
Choose a countable dense subset $\{p_i\}$ of the unit sphere of $\norm\cdot$ which is invariant under $\pi$ and $\frac{2\pi}{n}$ rotation. We say that two points are in the same equivalence class if they are related by a rotation of angle $a\pi+b\frac{2\pi}{n}$ for some $a,b\in\Z$. We get in this way countably many equivalence classes of points, denoted by $[p_1],[p_2],\ldots$

We consider the nested sequence of polygons $P_i$ where  $P_i$ has vertices $\bigcup_{1=k}^i [p_k]$. By construction $P_i$ are symmetric and invariant under $\Z/ n\Z$, we denote by $\norm\cdot_i$ the Finsler metric with unit sphere $P_i$. By Corollary \ref{discrete measure}, we get a sequence of discrete measure $\nu_i$ on $\mb S^1$. The following lemma gives a criterion for convergence of $\nu_i$.

\begin{lemma}[{\cite[Theorem 1.3.9]{martelli2016introduction}} ]\label{measure_convergence}
    Let $\nu_i$ be a sequence of measures  on a topological space $X$ such that $\nu_i(K)$ is bounded on every Borel compact set $K\subset X$. Then $\nu_i$ converges up to a subsequence. 
\end{lemma}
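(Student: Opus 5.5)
The plan is to deduce the lemma from the Riesz representation theorem together with the Banach--Alaoglu theorem; for the intended application $X=\mathbb S^1$ is compact metrizable, so I will first run the argument there and then indicate how it extends to locally compact, $\sigma$-compact, second countable $X$. The convergence meant is weak-$*$ (vague) convergence: $\int f\,d\nu_{i_j}\to\int f\,d\nu$ for every compactly supported continuous $f$.

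First, for compact metrizable $X$, the hypothesis with $K=X$ gives $\sup_i\nu_i(X)=M<\infty$. Each $\nu_i$ is then a positive functional of norm at most $M$ on the Banach space $C(X)$. Since $X$ is compact metrizable, $C(X)$ is separable, so the closed ball of radius $M$ in $C(X)^*$ is weak-$*$ sequentially compact by Banach--Alaoglu. More concretely, I would fix a countable dense set $\{f_m\}\subset C(X)$, use a diagonal argument to extract a subsequence along which every $\int f_m\,d\nu_i$ converges, and then use the uniform mass bound $M$ to get convergence of $\int f\,d\nu_i$ for all $f\in C(X)$. The limit functional $\Lambda$ is positive and bounded. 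By the Riesz representation theorem, $\Lambda(f)=\int f\,d\nu$ for a unique regular Borel measure $\nu$, and this $\nu$ is the required limit.

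For general locally compact, $\sigma$-compact, second countable $X$, I would take an exhaustion of $X$ by compact sets $K_1\subset \mathrm{int}K_2\subset\cdots$. The space $C_c(X)$ is then a countable union of the separable spaces $C(K_m)$ restricted to supports. I would repeat the diagonal extraction over a countable dense family in each of these spaces, using the bound $\sup_i\nu_i(K_m)<\infty$ to control approximation errors. The limit is a positive linear functional on $C_c(X)$, and Riesz again yields a Radon measure.

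The main obstacle is the approximation step: passing from convergence on the countable dense family to convergence for all test functions. This is exactly where the hypothesis that $\nu_i(K)$ is bounded on compacta enters, through the estimate $\left|\int f\,d\nu_i-\int f_m\,d\nu_i\right|\le \|f-f_m\|_\infty\,\nu_i(K)$ for functions supported in $K$. Everything else in the argument is standard.
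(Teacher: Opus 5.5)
Your argument is correct. The paper gives no proof of its own and only cites Martelli, and your Riesz representation plus diagonal extraction (Banach--Alaoglu) argument is the standard proof of that result; the bound on $\nu_i(K)$ enters exactly in the $\varepsilon/3$ step where you pass from the countable dense family to all test functions.

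You were also right to supply the hypotheses the paper's statement leaves implicit. These are: $X$ locally compact Hausdorff and second countable, and convergence meaning vague convergence against $C_c(X)$. That is precisely the sense in which the lemma is used for $X=\mathbb S^1$ in Proposition~\ref{measure_for_vector}.
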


We will apply the lemma to the circle $X=\mb S^1$. To apply Lemma \ref{measure_convergence}, it suffices to check the total measure of the sequence $\nu_i$ is bounded. To prove the boundedness, we first give a geometric description of the total measure, which follows directly from Corollary \ref{discrete measure}.
\begin{lemma}\label{circumference}
    Let $P$ be a symmetric star-shaped convex polygon in $\R^2$ which is invariant under $\Z/ n\Z$. Let $\nu$ be the measure on $\mb S^1$ corresponding to $P$ in Corollary \ref{discrete measure}. Then the total measure $\nu(\mb S^1)$ is the perimeter of its dual polygon divided by $4n$.
\end{lemma}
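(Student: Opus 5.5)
The plan is to compute $\nu(\mb S^1)$ directly from the explicit formula for $\nu$ in Corollary \ref{discrete measure} and then identify the resulting sum with the perimeter of $P^*$.

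By Corollary \ref{discrete measure} (via Corollary \ref{c.polygonalnorms}), $\nu=\sum_i K_i\delta_{V_i}/4n$, where the sum runs over the vertices $V_i$ of $P$. Here $K_i=\sqrt{(a_i-a_{i-1})^2+(b_i-b_{i-1})^2}$ is the Euclidean distance between the consecutive dual vertices $E_{i-1}=(a_{i-1},b_{i-1})$ and $E_i=(a_i,b_i)$ of $P^*$. Evaluating on the whole circle gives
\[
\nu(\mb S^1)=\frac{1}{4n}\sum_i K_i .
\]

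Next we check that the sum over the vertices of $P$ of $K_i$ is exactly the sum of the edge lengths of $P^*$. Under polar duality, each edge $e_i$ of $P$ (with equation $a_ix+b_iy=1$) corresponds to the vertex $E_i$ of $P^*$. Each vertex $V_{i-1}=e_{i-1}\cap e_i$ of $P$ corresponds to the edge $[E_{i-1},E_i]$ of $P^*$, whose points $E$ are those with $\langle E,V_{i-1}\rangle=1$. Since $P$ is convex with $0$ in its interior (it is symmetric), $P^*$ is a convex polygon whose vertices are exactly the $E_i$, in the same cyclic order as the edges of $P$. Hence the map $V_i\mapsto [E_{i-1},E_i]$ (with the index shift fixed as in the paper's convention) is a bijection between vertices of $P$ and edges of $P^*$. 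It follows that $\sum_i K_i$ is the perimeter of $P^*$.

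The only subtle point is bookkeeping. We must make sure that the Dirac masses in Corollary \ref{discrete measure} are indexed by all $2N$ vertices of $P$ (not only half of them, as in the partial sum $\sum_1^n$ used in the proof of Proposition \ref{finite vectors}), and that the normalisation $1/4n$ already absorbs both the factor $\tfrac12$ from $v_i$ being a half-edge and the symmetrisation. I would verify this by recomputing, from $\norm v=\tfrac12\sum_{i=1}^{2N}|\langle v,v_i\rangle|$ with $|v_i|=K_i/2$, that the mass at direction $OV_i$ is $K_i/4$ per vector. Then averaging over the $n$ rotations of the $n$-web, using the $\Z/n\Z$-invariance of $P$, divides by $n$. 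With this consistent, the identity $\nu(\mb S^1)=\mathrm{perimeter}(P^*)/4n$ follows.
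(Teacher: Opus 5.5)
Your proposal is correct and is essentially the paper's argument: the paper says the lemma follows directly from Corollary \ref{discrete measure}, that is, from summing the coefficients $K_i/4n$ of $\nu=\sum_i K_i\delta_{V_i}/4n$ over all vertices of $P$ and recognizing $\sum_i K_i$ as the perimeter of $P^*$ through the vertex--edge correspondence of polar duality. Your normalization check is also consistent with the paper's formula: mass $K_i/4$ per vertex from $\norm{v}=\tfrac12\sum_{i=1}^{2N}\abs{\innerproduct{v}{v_i}}$ with $\abs{v_i}=K_i/2$, then division by $n$ using the $\Z/n\Z$-invariance. It spells out what the paper leaves implicit.
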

\begin{prop}\label{convergent sequence}
The measures    $\nu_i$ converge up to a subsequence.
\end{prop}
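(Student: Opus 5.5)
By Lemma \ref{measure_convergence} applied on the compact space $X=\mb S^1$, it suffices to show that the total masses $\nu_i(\mb S^1)$ are uniformly bounded in $i$. By Lemma \ref{circumference}, $\nu_i(\mb S^1)$ equals the perimeter of the dual polygon $P_i^*$ divided by $4n$. The plan is therefore to bound these perimeters uniformly.

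First, the polygons $P_i$ are nested and increasing, since the vertex sets $\bigcup_{k\le i}[p_k]$ are increasing. Each $P_i$ is contained in the unit ball $B^1_{\norm\cdot}$, because its vertices lie on the unit sphere and the ball is convex. Moreover $P_i\supseteq P_1$ for every $i$. Here we may assume $P_1$ has nonempty interior containing the origin: $[p_1]$ is invariant under the rotation by $\pi$ and by $2\pi/n$, so it contains at least two non-collinear points as soon as $n\ge 3$. If $n\le 2$, I would instead start the sequence at the first index $i_0$ for which $P_{i_0}$ has nonempty interior, which exists by density of $\{p_i\}$. In either case $P_1$ (or $P_{i_0}$) contains a Euclidean disk $B(0,r)$ with $r>0$.

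Second, duality reverses inclusions. From $B(0,r)\subseteq P_i$ we get $P_i^*\subseteq B(0,r)^*=B(0,1/r)$, where the dual is $P^*=\{u:\innerproduct{u}{v}\le 1\ \forall v\in P\}$. The vertices $E_j=(a_j,b_j)$ used in the proof of Proposition \ref{finite vectors} are exactly the vertices of this dual polygon, so $P_i^*$ is a convex polygon contained in the disk of radius $1/r$.

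Third, perimeter is monotone under inclusion of convex sets; this follows for instance from the Cauchy formula, perimeter $=\int_0^\pi$ width. Hence $\mathrm{per}(P_i^*)\le 2\pi/r$, and so $\nu_i(\mb S^1)\le 2\pi/(4nr)$ for all $i$. Lemma \ref{measure_convergence} then yields a convergent subsequence.

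The only real subtlety is the nondegeneracy of the first polygon in the small-$n$ cases; the rest is the standard monotonicity of perimeter for nested convex sets.
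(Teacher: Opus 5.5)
Your argument is correct. It uses the same reduction as the paper: by Lemma \ref{measure_convergence} and Lemma \ref{circumference}, it suffices to bound the perimeters of the dual polygons $P_i^*$. Where you differ is in how you bound them.

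The paper shows by hand that these perimeters are non-increasing in $i$. Inserting a new vertex between two vertices of $P$ amounts, on the dual side, to cutting off a vertex $E_2$ of $P^*$ along a segment $E_4E_5$ whose endpoints lie on the two adjacent sides. The triangle inequality then shows the perimeter drops.

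You instead use two general facts: polar duality reverses inclusions, and perimeter is monotone for nested planar convex sets (Cauchy's formula). Comparing with a disk then gives the explicit bound $\nu_i(\mb S^1)\le \pi/(2nr)$.

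The trade-offs are as follows.
\begin{itemize}
\item The paper's route is elementary and self-contained, and it gives monotonicity of the total masses.
\item Yours imports a standard fact from convex geometry, but it is more robust. It never has to track how the vertex structure changes when a whole orbit $[p_{i+1}]$ is added. On flat parts of the unit sphere a new point need not be extreme, and it can make old vertices non-extreme.
\item You also explicitly treat $n\le 2$. There $[p_1]=\{\pm p_1\}$, so $P_1$ is a segment that defines no norm and $P_1^*$ is unbounded. The paper passes over this case.
\end{itemize}

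Incidentally, the disk is not needed. The inclusion $P_i\supseteq P_{i_0}$ already gives $P_i^*\subseteq P_{i_0}^*$, and monotonicity of perimeter then yields exactly the paper's bound by the perimeter of $P_{i_0}^*$.
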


\begin{proof}
    By Lemma \ref{measure_convergence} and Lemma \ref{circumference}, it is enough to show that the perimeter of the dual polygon $P_i^*$ of $P_i$ is uniformly bounded for all $i$. Recall that $P_{i+1}$ is obtained from $P_i$ by adding an equivalence class of vertices. We will show that the perimeter of $P_i$ is monotone decreasing in $i$, by showing that adding a vertex to a polygon $P$ decreases the perimeter of its dual polygon $P^*$.

    We fix the notation as illustrated by Figure \ref{dual polygon}. Let $P_+$ be the polygon obtained adding a  vertex to $P$. The added vertex has two neighboring vertices in $P_+$, which are also vertices of $P$. Denote the three edges incident with the two common vertices by $e_1,e_2,e_3$, which are labeled in the clockwise order. Denote the new edges of $P_{+}$ by $e_4,e_5$. The edge $e_k$ satisfies $a_kx+b_ky=1$. We further denote by $V_{jk}$ the vertex incident with $e_j,e_k$.  In particular, $V_{12}=V_{14}$  is a common vertex of $P$ and $P_{+}$. Finally we denote by $E_k$ the vertex of the dual polygon corresponding to the edge $e_k$.

    If $V_{12}=(x_0,y_0)$, it holds $a_1x_0+b_1y_0=a_2x_0+b_2y_0=1$ and thus $(a_2-a_1)x_0+(b_2-b_1)y_0=0$. In particular, $OV_{12}$ is orthogonal to the edge $E_1E_2$ of the dual polygon. 
    Since $V_{12}=V_{14}$, similarly we can see that $OV_{12}$ is also orthogonal to $E_1E_4$. Thus, $E_1,E_4,E_2$ are on the same line. Moreover, $E_4$ lies between $E_1$ and $E_2$ because of the cyclic ordering of $e_1,e_4,e_5$.

    Similarly, we know that $E_2, E_5, E_3$ are on the same line and $E_5$ is between $E_2$ and $E_3$. This shows that the dual polygon $P_+^*$ is obtained from $P^*$ by truncating the vertex $E_2$ and creating the two new vertices $E_4, E_5$ in the two adjacent sides. 
    By triangle inequality, the perimeter of $P_{+}^*$ is less than that of $P^*$.
\end{proof}

\begin{figure}[h]
    \centering
    \includegraphics[width=0.7\linewidth]{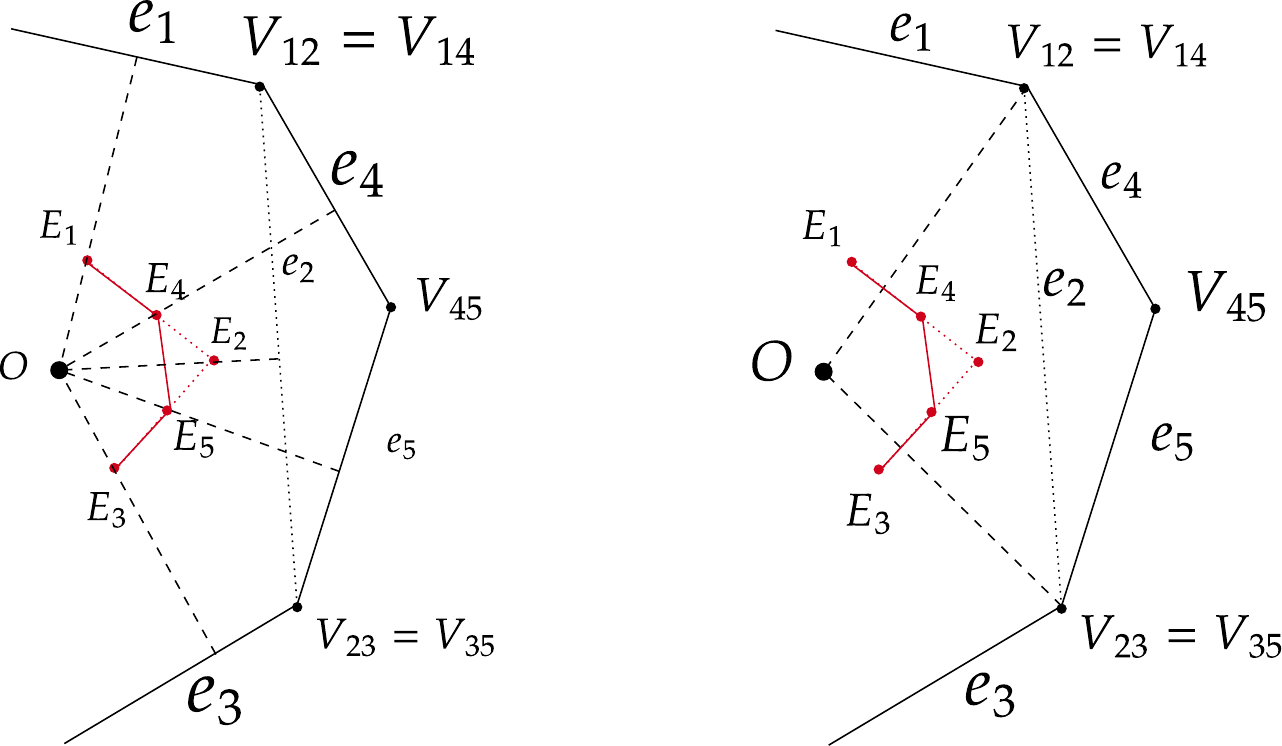}
    \caption{Polygons $P_i,P_{i+1}$ and their dual polygons. The picture on the left shows $OE_i$ must be orthogonal to $e_i$, while the one on the right shows $OV_{jk}$ must be orthogonal to the edge $E_jE_k$.}
    \label{dual polygon}
\end{figure}

\begin{prop}\label{measure_for_vector}
    For every symmetric norm $\norm\cdot$ on $\R^2$ invariant under $\Z/n\Z$, there exists a measure $\nu$ on $\mb S^1$ such that for all $v\in\R^2$ it holds  $\norm v=\int_{\mb S^1}\abs{\innerproduct{v}{v_{[\theta+\frac{\pi}{2}]}}} d\nu$.
\end{prop}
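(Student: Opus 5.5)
We approximate $\norm\cdot$ by the nested polygonal norms $\norm\cdot_i$ with unit spheres $P_i$, take a weak limit of the associated discrete measures $\nu_i$ from Corollary \ref{discrete measure}, and pass to the limit in the integral formula.

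\emph{Step 1: the polygonal norms converge to $\norm\cdot$.} The vertices of $P_i$ lie on the unit sphere $S$ of $\norm\cdot$. Since $\norm\cdot$ is convex, $P_i$ is contained in the unit ball, so $\norm v_i\ge\norm v$. The vertex sets increase to the dense set $\{p_k\}$, so the Hausdorff distance between $P_i$ and the unit ball tends to $0$. Both norms are comparable to the Euclidean norm with uniform constants: $P_1$ already contains a small Euclidean ball, since it has at least two pairs of antipodal vertices, and $P_i$ is nested increasing. It follows that $\norm v_i\to\norm v$ uniformly on the Euclidean unit circle, hence pointwise on $\R^2$. I will check this via the gauge function: if $P_i$ contains $(1-\epsilon)B$, then $\norm v\le\norm v_i\le(1-\epsilon)^{-1}\norm v$.

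\emph{Step 2: extract a limit measure.} By Proposition \ref{convergent sequence}, which uses the perimeter bound of Lemma \ref{circumference} and Lemma \ref{measure_convergence}, a subsequence $\nu_{i_j}$ converges to a finite measure $\nu$ on $\mb S^1$. Because $\mb S^1$ is compact and the total masses are uniformly bounded (the perimeter of $P_i^*$ decreases in $i$), this convergence is weak-$*$ convergence against continuous functions.

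\emph{Step 3: pass to the limit.} Fix $v\in\R^2$. The function
\[
f_v(\theta)=\abs{\innerproduct{v}{v_{[\theta+\frac{\pi}{2}]}}}=\sum_{k=0}^{n-1}\abs{\innerproduct{v}{e^{i(\theta+\frac{\pi}{2}+\frac{2\pi k}{n})}}}
\]
is continuous in $\theta$. Weak convergence therefore gives $\int f_v\,d\nu_{i_j}\to\int f_v\,d\nu$. The left-hand side equals $\norm v_{i_j}$ by Corollary \ref{discrete measure}, and this tends to $\norm v$ by Step 1. Hence $\norm v=\int f_v\,d\nu$ for all $v$.

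\emph{Main obstacle.} The only delicate point is making sure the convergence supplied by Lemma \ref{measure_convergence} is strong enough to pair with continuous test functions. If that lemma only gives vague convergence, compactness of $\mb S^1$ upgrades it to weak convergence, since every continuous function on $\mb S^1$ is compactly supported. Step 1 is routine but should be stated carefully, because $\{p_k\}$ is only dense on $S$ and $\norm\cdot$ may have corners. Hausdorff convergence of convex bodies still handles this, since it does not require $S$ to be smooth.
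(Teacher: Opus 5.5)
Your proposal is correct and follows the paper's route: polygonal approximations $P_i$, the discrete measures of Corollary~\ref{discrete measure}, a convergent subsequence from Proposition~\ref{convergent sequence}, and passage to the limit against the continuous test function $f_v$. The one difference is how the limit is identified: the paper skips your Hausdorff-convergence step by observing that $\norm{p}_i=1=\norm{p}$ for every vertex $p$ of $P_k$ once $i\ge k$, so the continuous limit norm agrees with $\norm\cdot$ on a dense subset of the unit sphere. Separately, your claim that $P_1$ contains a Euclidean ball fails for $n\le 2$, where $[p_1]=\{\pm p_1\}$ and $P_1$ is a segment; this is harmless, since you only need the unit ball of $\norm\cdot$ to contain such a ball, and you can start the sequence at the first nondegenerate $P_i$.
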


\begin{proof}
    We get from  Corollary \ref{discrete measure}  a sequence of measures $\nu_i$. Proposition \ref{convergent sequence} ensures the existence of a convergent subsequence. We will just assume $\nu_i$ is a convergent sequence. Let $\norm\cdot_i$ be the norm determined by $\nu_i$ and let $\norm v_\nu=\int \abs{\innerproduct{v}{v_{[\theta+\frac{\pi}{2}]}}} d\nu$.
    
    By definition, the measures $\nu_i$ converge  if and only if for every compactly supported function $g$, the integrals $\int g d\nu_i$ converge. Note that for a fixed vector $v$, the function $\theta\mapsto\abs{\innerproduct{v}{v_{[\theta+\frac{\pi}{2}]}}}$ is  compactly supported on $\mb S^1$. Thus  $\norm v_{i}=\int \abs{\innerproduct{v}{v_{[\theta+\frac{\pi}{2}]}}} d\nu_{i}$ converge for every $v$. In other words, the norms $\norm\cdot_i$ converge pointwise to $\norm\cdot_\nu$.

    Let $v,w$ be two vectors. Then\[
    \abs{\innerproduct{v}{v_{[\theta+\frac{\pi}{2}]}}}-\abs{\innerproduct{w}{v_{[\theta+\frac{\pi}{2}]}}}\le \abs{\innerproduct{v-w}{v_{[\theta+\frac{\pi}{2}]}}}\le n\norm{v-w}_E
    \]
    here $\norm{v-w}_E$ is the Euclidean norm. Therefore, $\abs{\norm v_i-\norm w_i}\le n\nu_i(\mb S^1)\norm{v-w}_E$, which implies that $\norm\cdot_i$ is a Lipschitz function with Lipschitz constant $n\nu_i(\mb S^1)$. By Arzela-Ascoli Theorem, $\norm\cdot_i$ converges uniformly to $\norm\cdot_\nu$. In particular, $\norm\cdot_\nu$ is continuous.

    By construction, $\norm\cdot_\nu$ agrees with $\norm\cdot$ on the union of vertices of all polygons, which is a dense subset of the unit sphere. Therefore, $\norm\cdot_\nu=\norm\cdot$.
\end{proof}


\subsection{The Liouville current of a compatible Finsler metric}
We now have all the results needed to prove the central result of the section.
\begin{thm}\label{current_quadratic_differential}
    Given a $1/n$-translation surface with a compatible Finsler metric $F$, there exists a measure $\nu$ on $\mb S^1$ such that \[
    L=\int_{\mb S^1}\mu^s_\theta d\nu
    \]
    is the Liouville current for this Finsler metric. Moreover, the current $L$ is unique.
\end{thm}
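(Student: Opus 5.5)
Take $\nu$ to be the measure on $\mb S^1$ given by Proposition \ref{measure_for_vector} for the norm $\norm\cdot$ inducing $F$, so that $\norm v=\int_{\mb S^1}\abs{\innerproduct{v}{v_{[\theta+\frac{\pi}{2}]}}}\,d\nu(\theta)$ for every $v\in\R^2$. Here we assume, as in Proposition \ref{measure_for_vector}, that $\norm\cdot$ is symmetric and $\Z/n\Z$-invariant. We then set $L=\int_{\mb S^1}\mu^s_\theta\, d\nu(\theta)$. There are three things to check: $L$ is a geodesic current, it has the right intersection numbers with closed curves, and it is unique.

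\textbf{$L$ is a current.} We define $L(A)=\int\mu^s_\theta(A)\,d\nu(\theta)$ for Borel $A\subset G(\wideS)$. For this we need $\theta\mapsto\mu^s_\theta(A)$ to be measurable. We prove it first for small boxes, and more generally for sets of the form $\pi(C\times D)$ with $C,D$ intervals. Then a monotone class argument extends measurability to all Borel sets, since these sets generate the Borel $\sigma$-algebra. Countable additivity and $\pi_1(\Sigma)$-invariance of $L$ follow from those of each $\mu^s_\theta$ by monotone convergence. Local finiteness needs a uniform bound on $\mu^s_\theta(K)$ for compact $K$, independent of $\theta$. We get it as in the $\sigma$-finiteness proof: cover $K$ by the set of geodesics meeting a compact ball $B\subset\wideS$. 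Its $\mu^s_\theta$-measure is bounded by a constant times the total length of the finitely many transversal segments used there, and this constant is uniform in $\theta$ because the segments can be chosen in any direction. Since $\nu$ is finite, $L(K)<\infty$.

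\textbf{The measurability step is the main obstacle.} The small boxes themselves depend on $\theta$, so we cannot fix a box and vary $\theta$. The first thing I would try is to avoid pointwise measurability. Choose a sequence of atomic measures $\nu_i$ converging to $\nu$ (the polygonal approximations from Proposition \ref{convergent sequence}). For each $i$, $L_i=\sum K_j\mu^s_{\theta_j}/4n$ is evidently a current. The uniform local bound above gives a subsequence converging weak-$*$ to some current $L'$. In that approach, continuity of $\theta\mapsto i(\mu^s_\theta,\delta_\gamma)$ is needed to identify intersection numbers in the limit. This continuity holds: by Theorem \ref{foliation intersection}, $i(\mu^s_\theta,\delta_\gamma)$ is an explicit finite sum of $\abs{\sin}$ terms. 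It is also compatible with weak-$*$ convergence, since $i(\cdot,\delta_\gamma)$ is continuous on currents by Bonahon's continuity of $i$. Thus $i(L',\delta_\gamma)=\lim i(L_i,\delta_\gamma)$ is the $F$-length. The same argument proves that the integral formula for $L$ holds in the weak sense, which is enough to define $L=L'$.

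\textbf{Intersection numbers.} Let $\gamma$ be a closed curve with CAT(0) geodesic representative decomposed into saddle connections $J_j$. By Theorem \ref{foliation intersection} and Fubini (or the limit argument above),
\[ i(L,\delta_\gamma)=\int_{\mb S^1} i(\mu^s_\theta,\delta_\gamma)\,d\nu=\int_{\mb S^1}\sum_j\abs{J_j}\,\abs{\innerproduct{u_j}{v_{[\theta+\frac{\pi}{2}]}}}\,d\nu=\sum_j \abs{J_j}\norm{u_j}, \]
where $u_j$ is the unit tangent of $J_j$. The right-hand side is the $F$-length of the CAT(0) geodesic, and by Theorem \ref{thmA} this equals $\ell_F([\gamma])$.

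\textbf{Uniqueness.} By Otal's theorem \cite{otal1990spectre}, a current is determined by its intersection numbers with all closed curves, so the Liouville current is unique.
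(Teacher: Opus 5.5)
Your proposal is correct and follows essentially the same route as the paper: take $\nu$ from Proposition \ref{measure_for_vector}, integrate $i(\mu^s_\theta,\delta_\gamma)=\ell_\theta([\gamma])$ (Theorem \ref{foliation intersection}) against $\nu$, use Theorem \ref{thmA} so that the single CAT(0) representative computes every $\ell_\theta$ and also $\ell_F$, and get uniqueness from Otal. Your extra treatment of measurability and local finiteness of $L$ covers a point the paper silently assumes; if you go through the weak-$*$ limit $L'$ of the polygonal currents $L_i$, note that identifying $L'$ with $\int\mu^s_\theta\,d\nu$ needs weak-$*$ continuity of $\theta\mapsto\mu^s_\theta$, which follows from your uniform local bound, continuity of $\theta\mapsto i(\mu^s_\theta,\delta_\gamma)$, and Otal's injectivity.
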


\begin{proof}
    The Finsler metric $F$ is induced by a norm $\norm\cdot_F$ on $\R^2$ that is symmetric and invariant under $\Z/n\Z$. Let $\nu$ be the measure associated to $\norm\cdot_F$ in Proposition \ref{measure_for_vector}. We consider the geodesic current $ L=\int_{\mb S^1}\mu_\theta^s d\nu$.

   In order to show that $L$ is indeed a Liouville current for $F$, we need to show that 
   for any closed curve $\gamma$, \[
    i(\delta_\gamma, L)=\int_{\mb S^1}i(\delta_\gamma, \mu^s_\theta)\ d\nu= \ell_F([\gamma]).
    \]
    By Proposition \ref{foliation intersection}, we have $i(\delta_\gamma, \mc F_\theta)=\ell_\theta([\gamma])$. Thus, if we parametrize $\gamma=\gamma(t)$ with $t\in[0,1]$, then 
    \begin{align*}
        \int_{\mb S^1}i(\delta_\gamma, \mc F_\theta)\ d\nu=&\int_{\mb S^1}\ell_\theta([\gamma])\,d\nu \\
        =& \int_{\mb S^1}d\nu\int_0^1\abs{\innerproduct{\gamma'(t)}{v_{[\theta]}}}\,dt\\
        =&\int_0^1dt\int_{\mb S^1}\abs{\innerproduct{\gamma'(t)}{v_{[\theta]}}}\,d\nu \\
        =& \int_0^1 \norm{\gamma'(t)}_F\, dt\\
        =&\ell_F([\gamma]).
    \end{align*}

    By \cite{otal1990spectre}, a geodesic current is uniquely determined by its intersection number with all closed curves, which proves the uniqueness of $L$.
\end{proof}

\bibliographystyle{alpha}
\bibliography{ref}
\end{document}